\setlist[enumerate]{itemsep=0pt}
\newtheorem{theorem}{Theorem}[section]
\newtheorem{proposition}[theorem]{Proposition}
\newtheorem{lemma}[theorem]{Lemma}
\newtheorem{corollary}[theorem]{Corollary}
\newtheorem{definition}[theorem]{Definition}
\theoremstyle{remark}
\newtheorem{remark}[theorem]{Remark}
\newtheorem{example}[theorem]{Example}
\newcommand{\R}{\mathbb R}
\newcommand{\N}{\mathbb N}
\newcommand{\E}{\mathbb E}
\newcommand{\cF}{\mathcal{F}}
\newcommand{\cG}{\mathcal{G}}
\newcommand{\cL}{\mathcal{L}}
\newcommand{\cH}{\mathcal{H}}
\newcommand{\cP}{\mathcal{P}}
\newcommand{\cQ}{\mathcal{Q}}
\newcommand{\cR}{\mathcal{R}}
\newcommand{\cU}{\mathcal{U}}
\newcommand{\fS}{\mathfrak{S}}
\newcommand{\one}{\mathds{1}}
\DeclareMathOperator*{\supp}{supp}
\DeclareMathOperator*{\trace}{trace}
\def\xC{{\rm C}}
\def\xLip{ {\rm Lip} }
\def\xBL{ {\rm BL} }
\def\xL{{\rm L}}
\def\xvar{{\rm Var}}
\def\xdiam{{\rm diam}}
\def\xSym{{\rm Sym}}
\def\xM{{\rm M}}
\renewcommand{\phi}{\varphi}
\renewcommand{\epsilon}{\varepsilon}
\newcommand{\Gr}{{\rm G}_{d,n}}
\title{A varifold-type estimation for data sampled on a rectifiable set}
\author{Charly Boricaud}
\author{Blanche Buet}
\address{Universit\'e Paris-Saclay, Inria DataShape, Cnrs, Laboratoire de math\'matiques d'Orsay (Orsay, France)}
\email{charly.boricaud@universite-paris-saclay.fr; charly.boricaud AT gmail.com}
\email{blanche.buet@universite-paris-saclay.fr}
\date{\today}
\keywords{Varifold, geometric inference, estimation, discrete surfaces.}
\thanks{B. Buet acknowledges support from the French National Research Agency (ANR) under grant ANR-21-CE40-0013-01 (project GeMfaceT)}
\subjclass[2010]{49Q15; 62G05}
\begin{document}

\begin{abstract}
We investigate the inference of varifold structures in a statistical framework: assuming that we have access to i.i.d. samples in $\R^n$ obtained from an underlying $d$--dimensional shape $S$ endowed with a possibly non uniform density $\theta$, we propose and analyse an estimator of the varifold structure associated to $S$. The shape $S$ is assumed to be piecewise $\xC^{1,a}$ in a sense that allows for a singular set whose small enlargements are of small $d$--dimensional measure. The estimators are kernel--based both for inferring the density and the tangent spaces and the convergence result holds for the bounded Lipschitz distance between varifolds, in expectation and in a noiseless model. The mean convergence rate involves the dimension $d$ of $S$, its regularity through $a \in (0, 1]$ and the regularity of the density $\theta$.
\end{abstract}

\maketitle

\section{Introduction}
\label{secIntro}

The so called "Manifold Hypothesis" postulates that a wide range of possibly high-dimensional data sets actually lie on a low-dimensional manifold. One can then think of point cloud data in $\R^n$ as one or several instances of a sampling $(X_1, \ldots, X_N)$ of a $d$--dimensional submanifold $S \subset \R^n$. A widely investigated question is then to {\it infer} (recover) information about $S$, topological and geometric features for instance, from the sample.
More precisely, we assume that there exists an underlying "regular" object $S$ given through a probability measure $\mu$ carried by $S$ and we furthermore assume that our data are obtained by sampling $\mu$ with $N$ points: $(X_1, \ldots, X_N) \sim \mu$ is an i.i.d. sample and our data are an instance of the empirical measure 
$$
 \mu_N = \frac{1}{N} \sum_{i = 1}^N \delta_{X_i} \: .
 $$
The specific and important case where $S$ is a $d$--dimensional submanifold of $\R^n$ and $\mu$ is the volume form of $S$ (possibly weighted by some density) has been investigated with varying degrees of formality, different assumptions on the regularity of the manifold and for various choices of metric on shapes (Hausdorff distance, Wasserstein distances, Bounded Lipschitz distance e.g.) \cite{GarciaTrillosSlepcev2015,aamariLevrard18,LevradAamari,tinarrage,Divol21,Divol22,TangYang22,stephanovitchAamariLevrard2023wassersteingansminimaxoptimal}.

Let us recall that the weak convergence of $\mu_N$ towards $\mu$ holds with probability $1$ for very general $S$ and $\mu$ (far beyond our Euclidean scope): it is known as the Glivenko-Cantelli theorem.
Moreover, in the fundamental work \cite{dud}, such a weak convergence is quantified in terms of Bounded Lipschitz distance $\beta (\mu, \mu_N)$ and Prokhorov distance.
In \cite{GarciaTrillosSlepcev2015,trillos2018error}, the authors estimate the convergence of $\mu_N$ in terms of the $\infty$--optimal transportation distance which is stronger than the aforementioned distance, and thus under the stronger assumptions that $S$ is a $d$--submanifold with curvature bounds.
In such a context, one can devise and analyse estimators of geometric quantities such as tangent space and second fundamental form, as well as estimators of $S$ itself. A very complete analysis is carried out in \cite{LevradAamari}: the authors establish general minimax bounds for the aforementioned estimators, with rates of convergence involving the size of the sample $N$, the dimension of the manifold $d$ and its order of regularity. They also evidence the need of a global assumption: in addition to ${\rm C}^k$--regularity, one has to work with submanifolds $S$ sharing a uniform lower bound on their reach to obtain convergence of the geometric estimators in strong enough topology (pointwise convergence in $S$ concerning tangent spaces or second fundamental form and Hausdorff distance for estimating $S$ itself). While the ${\rm C}^k$--manifold ($k \geq 2$) setting has been well--investigated, handling lower regularity frameworks is a natural next issue. Lower regularity may be thought in terms of lowering local parametric regularity, relaxing global structural assumptions such as reach bounds and even questionning the global manifold assumption allowing immersions, more general singular sets as well as multi--dimensional models as in the recent work \cite{AamariBerenfeld}.

In the present work, we consider the problem at hand from a measure perspective: our main purpose is to infer the so-called {\it varifold} structure $V_S = \cH^d_{| S} \otimes \delta_{T_x S}$ associated with $S$ (see Definition~\ref{dfnRectifVarifold}). The varifold $V_S$ can be defined for a smooth $d$--submanifold $S$ but also for less regular objects such as $d$--rectifiable sets, and it encodes the order $1$ structure of $S$: for a $d$--submanifold $S$, $V_S$ is a measure in $\R^n \times \Gr$ whose support is $\{ (x, T_x S) \: : \: x \in S \}$. Let us describe the concrete differences that come along with such a change of standpoint. First, while the manifold setting naturally allows to consider the tangent space as a continuous function in $S$, we only have a weaker counterpart in the varifold setting and we consequently do not expect that an associated estimator converge pointwise in $S$ but rather as measures or almost everywhere in $S$. Similarly, we do not wish to estimate $S$ for the Hausdorff distance but rather to estimate the $d$--dimensional Hausdorff measure $\cH^d_{| S}$ carried by $S$ and then the varifold structure $V_S$ both for the so called {\it Bounded Lipschitz distance} $\beta$, which is related to weak convergence of measures.
Coming along with the measure perspective is the issue of the possible non uniformity of $\mu$. Indeed,
depending on the nature of the collected data, it is not adapted to assume that $\mu$ is well distributed in $S$: such lack of uniformity can be modelled assuming that $\mu = \theta \cH^d_{| S}$ for some positive density function $\theta$ uniformly lower and upper bounded in $S$: $0 < \theta_{min} \leq \theta \leq \theta_{max} < \infty$.
Such an assumption is common in the literature, however, the usual issue that is addressed is the estimation of $\theta$ and $\mu$, whereas we are interested in estimating $\nu = \cH^d_{| S} = \frac{1}{\theta} \mu$ (similarly to what is considered in \cite{Divol22} in a manifold framework), which is a closely related yet slightly different question.
In such a case, it is important to decouple the geometric information contained in $S$ from the whole information encoded by $\mu$ and we naturally adopt a two--step approach, we first estimate $\theta$ and then we design an estimator for $\cH^d_{| S}$. Once the reconstruction of $\cH^d_{| S}$ has been carried out, we can tackle the varifold estimation. Again, we first analyse the a.e. pointwise convergence of a tangent space estimator under rectifiability assumption and we then move to the issue of estimating the whole varifold structure $V_S = \cH^d_{| S} \otimes \delta_{T_x S}$. Let us describe more precisely both the density and the tangent estimation leading to the reconstruction of the varifold structure.

\subsection{Reconstruction of the varifold structure}
\label{secIntroRecVarifold}

\subsubsection*{Density and measure estimation.}  
Note that if we were considering the case of a probability measure $\mu = \theta \cL^n$ which is absolutely continuous with respect to the Lebesgue measure $\cL^n$, we could rely on Lebesgue differentiation theorem with respect to balls: for a.e. $x$, \[
\theta(x) = \lim_{\delta \to 0_+} \frac{\mu(B(x,\delta))}{\cL^n(B(x,\delta))} = \frac{1}{\cL^n(B(0,1))} \lim_{\delta \to 0_+} \frac{\mu(B(x,\delta))}{ \delta^d} \: ,
\]
in order to estimate $\theta$. There is no equivalent property applicable to any Radon measure $\mu$ absolutely continuous with respect to $\cH^d$, however, assuming $\mu = \theta \cH^d_{| S}$ for a {\it $d$--rectifiable} set $S$ is enough to similarly obtain for $\cH^d$--a.e. $x \in S$,
\[
\theta(x) = \frac{1}{\cL^d(B(0,1))} \lim_{\delta \to 0_+} \frac{\mu(B(x,\delta))}{\delta^d} \quad \text{and also}\quad \theta(x) = \frac{1}{C_\eta} \lim_{\delta \to 0_+} \frac{\int_{B(x,\delta)} \eta \left( \frac{|y|}{\delta} \right) \: d \mu (y) }{\delta^d} = \frac{1}{C_\eta} \lim_{\delta \to 0_+} \frac{ \mu \ast \eta_\delta(x) }{\delta^d}
\]
for a smooth radial profile $\eta$.
Such an observation leads to consider a usual kernel-based approximate density (see \eqref{eqThetaN})
$\theta_\delta(x) = ( C_\eta \delta^{d})^{-1} \mu \ast \eta_\delta$ associated with the natural estimator $\theta_{\delta,N}  = ( C_\eta \delta^{d})^{-1} \mu_N \ast \eta_\delta $. We recall in Proposition~\ref{propDensityPointwiseCv} the control of the fluctuation $\E \left[ | \theta_{\delta,N}(x) -  \theta_\delta(x) | \right]$ (see \cite{Berenfeld}). Regarding the convergence of $\theta_\delta$ towards $\theta$, it holds almost everywhere in $S$ as soon as $S$ is $d$--rectifiable, however, its quantification requires to strengthen the regularity assumptions and leads to Proposition~\ref{propFinalCV}.
Once $\theta$ has been estimated, we propose an estimator $\nu_{\delta,N} \simeq \frac{1}{\theta_{\delta,N}} \mu_N$ that converges to $\frac{1}{\theta} \mu = \cH^d_{| S}$ in terms of the Bounded Lipschitz distance $\beta$ under rectifiability assumptions (see Theorem~\ref{thmBetaLocNuN}, Corollary~\ref{corNuDeltaN}) though again not in a minimax sense that also requires to strengthen the regularity class at hand as done in Theorem~\ref{thmCvHolderSplit}.

\subsubsection*{Tangent space and varifold estimation.} The very practical problem of estimating tangent spaces is a long-standing question that has already been substantially addressed. However, there are few theoretical guarantees when the set $S$ is only assumed to be rectifiable.
Relying on the Principal Component Analysis approach,
we directly consider the $\phi$--weighted (and properly renormalized) covariance matrix $\Sigma_r(x,\mu)$ in order to approximate $T_x S$ with scale parameter $r> 0$:
\[
\Sigma_r(x,\mu) = \frac{1}{C_\phi r^{d}}\int_{B(x,r)} \phi \left( \frac{y-x}{r} \right) \frac{y-x}{r} \otimes \frac{y-x}{r} \: d \mu(y) \: .
\]
We point out that in \cite{tinarrage}, such a tangent plane estimator $\Sigma_r$ has proven to be efficient in the reconstruction of the varifold structure associated with an immersed manifold $S$, obtaining explicit convergence rate for the $p$--Wasserstein distance between the exact and the reconstructed varifolds. Our purpose however differs both because of the weaker regularity we assume and because we do not consider the deterministic reconstruction issue (as done in \cite{tinarrage}) but the mean convergence reconstruction of the varifold structure. We first analyse the pointwise reconstruction of the approximate tangent space in Propositions~\ref{propCVSigmar} and \ref{propCVSigmaN},
the reconstruction of the whole varifold then requires to put density and tangent estimation together as done in Proposition~\ref{prop:CV_WrdeltaN}. Once again, one has to strengthen the regularity assumptions to obtain ``minimax convergence'' (we do not mean minimax optimal but uniform in some regularity class as explained below) for both tangent space and varifold structure estimators as done in Theorem~\ref{thmCvHolderSplit}.
It is a very important stage to reach since higher order estimators will rely on the quality of the approximation of the varifold structure. More specifically, we believe that inferring the varifold structure would allow to infer curvature information thanks to the approximations that have been proposed in \cite{BuetLeonardiMasnou} and \cite{BuetLeonardiMasnou2}.

\subsubsection*{Piecewise Hölder regularity class and main result.} As mentioned above, the convergence results stated in Sections~\ref{secInferenceFramework}--\ref{secMeasureEstimator}--\ref{secVarifoldLikeEstimator} do not fit a minimax convergence setting. Let us be more explicit: let 
\[
 \cQ = \{ \mu = \theta \cH^d_{| S} \: : \: S, \: \theta \text{ satisfy  \hyperref[hypH1]{$(H_1)$}--\hyperref[hypH2]{$(H_2)$}--\hyperref[hypH3]{$(H_3)$}} \}
\]
be a regularity class of measures depending on $d, C_0, \theta_{min/max}$. In short, $\mu \in \cQ$ if it is $d$--Ahlfors regular with constant $C_0$, $d$--rectifiable and the density $0 < \theta_{min} \leq \theta \leq \theta_{max} < \infty$ is upper and lower bounded. Let $\hat{\vartheta}_N = \hat{\vartheta}(X_1, \ldots, X_N)$ be an estimator of some quantity $\vartheta (\mu)$ depending of $\mu$.
A convergence result for $\hat{\vartheta}$ fits the minimax setting in the regularity class $\cQ$ if the convergence is uniform with respect to the class $\cQ$:
\[
 \sup_{\mu \in \cQ} \E \left[ \ell ( \hat{\vartheta}_N ,  \vartheta (\mu) ) \right] \xrightarrow[N \to \infty]{} 0 \: ,
\]
for some loss $\ell$ between $\hat{\vartheta}_N$ and $\vartheta$. A natural question is then to find the best estimators possible and investigate both (asymptotic $N \to \infty$) lower and upper bounds for the so-called minimax risk that consider all possible estimators $\hat{\vartheta}_N$:
\[
 \cR_N (\cQ) = \inf_{\hat{\vartheta}_N} \sup_{\mu \in \cQ} \E \left[ \ell ( \hat{\vartheta}_N ,  \vartheta (\mu) ) \right] \: .
\]
An estimator $\hat{\vartheta}_N$ is {\it minimax optimal} over $\cQ$ if its asymptotical convergence rate is $\cR_N (\cQ)$ up to a constant.
In Sections~\ref{secInferenceFramework}--\ref{secMeasureEstimator}--\ref{secVarifoldLikeEstimator}, the convergence results we obtain do not fit the minimax setting since they are not uniform over the regularity class $\cQ$. More precisely, we are only able to prove results of the form: for a given $\mu \in \cQ$ and adapting some parameter (denoted by $\delta_N$ thereafter) we have $\E \left[ \ell ( \hat{\vartheta}_N ,  \vartheta (\mu) ) \right] \xrightarrow[N \to \infty]{} 0$. In Sections~\ref{secUniformPWHolder} and~\ref{secSplit}, we consequently strengthen the assumptions on $\mu$ and consider instead of $\cQ$, the regularity class:
\[
\cP = \{ \mu = \theta \cH^d_{| S} \: : \: S, \: \theta \text{ satisfy  \hyperref[hypH1]{$(H_1)$} to \eqref{hypH7}} \} 
\]
that now depends on $d, \widetilde{C_0}, \theta_{min/max}$ and $C = \max(C_{\theta, sg}, C_{S,sg}), R = \min(R_{\theta, sg}, R_{S,sg})$ as introduced in Definitions~\ref{dfnPwHolderS} and \ref{dfnPwHolderTheta}. Loosely speaking, we consider in $\cP$ measures of the form $\mu = \theta \cH^d_{| S}$ for which $\theta$ and $S$ are uniformly piecewise respectively $\xC^{0,b}$ and $\xC^{1,a}$ up to a singular set $\fS$ of zero $\mu$--measure, and we assume moreover that $\fS$ has co-dimension at least $1$ in $S$ in the sense that $\fS$ is a union of $l$--Ahlfors regular sets for $0 \leq l \leq d-1$. We therefore in particular allow for $S$ to be a smooth manifold with boundary or to be piecewise smooth up to auto-crossings and junctions along a set $\fS$ that is reasonably small in the aforementioned sense. We introduce and investigate the convergence of estimators in the regularity class $\cP$ that we refer to as the {\it piecewise Hölder regularity class}. The main result we obtain is Theorem~\ref{thmCvHolderSplit} that establishes a convergence result for an estimator $\widehat{V}_N$ of the varifold structure $W_S = \cH^d_{| S} \otimes \delta_{\Pi_{T_x S}}$ (see \eqref{eqWS}) that is uniform with respect to the $\mu \in \cP$ (with $S = \supp \mu$) when the loss $\ell = \beta$ is the so-called {\it Bounded Lipschitz distance} (Definition~\ref{dfnBeta}):
\[
 \sup_{\mu \in \cP} \E \left[ \beta ( \widehat{V}_N , W_S ) \right] \leq M \: N^{-\frac{\min(a,b)}{d + 2 \min(a,b)}}
\]
where the constant $M$ only depends on $d, n, \widetilde{C_0}, \theta_{min/max},C = \max(C_{\theta, sg}, C_{S,sg}), R = \min(R_{\theta, sg}, R_{S,sg})$ that are the parameters of the regularity class $\cP$.
In other words, we obtain an explicit upper bound of the minimax risk $\cR_N (\cP)$ concerning the inference of the varifold structure with respect to the Bounded Lipschitz distance $\beta$.
According to us, a very important point achieved with such a result is the following: it proves that it is possible to obtain minimax convergence results for reconstructing the order $1$ information of $S = \supp \mu$ despite the presence of a singular set, hence beyond the manifold hypothesis. A crucial point is to consider a loss, in our case $\ell = \beta$, associated with a topology weaker than a pointwise loss.

\medskip Let us now comment on some achievements, limitations and perspectives of our study.

\subsection{Comments and perspectives}
\label{SecCommentsPerspectives}

\subsubsection*{Reach assumption.} In addition to local smoothness requirements, it has been evidenced in the literature the necessity of global regularity assumptions in order to obtain minimax inference results in at least $\xC^2$ manifolds models, we refer to \cite{LevradAamari} as well as \cite{Berenfeld} (Thm 2.6 in Section 2.5) in the case of the pointwise density inference where the necessity of lower bounding the {\it reach} of the manifold is established.
Let us recall that the reach of a compact set $K\subset \R^n$ is defined as
\[ 
 {\rm reach}(K)=\sup\{r\geq 0 \:|\: \forall x \in \R^n, d(x,K)\leq r \implies \exists ! y \in K , \, d(x,K)= |x-y| \} \: .
\]
Assuming that ${\rm reach}(S) \geq \kappa > 0$ for instance ensures that $S$ does not have auto-intersection in a quantative way: $S$ has a $\kappa$--neighborhood in which no piece of $S$ that would be far in terms of geodesic distance can go through.
Let us comment on the fact that our results are however obtained without explicitly requiring any lower bound on the {\it reach} of $S$.
First of all, as already mentioned above, the results of Sections~\ref{secInferenceFramework},~\ref{secMeasureEstimator} and~\ref{secVarifoldLikeEstimator} do not meet the usual minimax framework: indeed,
we obtain convergence results of the following type, for instance considering the density estimator $\theta_{\delta_N,N}$: one can chose $\delta_N \to 0$ such that for each $\mu \in \cQ$ and for $\mu$--a.e. $x$,
\begin{equation} \label{eqRiskUpperBound}
 \E \left[ | \theta_{\delta_N,N}(x) - \theta(x) | \right] \xrightarrow[N \to +\infty]{} 0 \: ,
\end{equation} 
but the convergence holds without any uniform bounds. %
This first observation explain the absence of lower bounded reach assumption in those three Sections~\ref{secInferenceFramework},~\ref{secMeasureEstimator} and~\ref{secVarifoldLikeEstimator}. However, in Sections~\ref{secUniformPWHolder} and \ref{secSplit}, we obtain uniform convergence results with respect to the piecewise Hölder regularity class $\cP$ as stated in Theorem~\ref{thmCvHolderSplit} and Proposition~\ref{propFinalCV}.
Our understanding is that such a result illustrates that the necessity of lower bounding the reach holds when establishing pointwise convergence results but not for ``weaker'' convergence like the Bounded Lipschitz or the $\xL^1$ convergence (see also \eqref{eqThetaNL1D}), for which it is sufficient to control the measure of the bad set $\fS$ in some quantitative way. We believe that it is an interesting starting point in finding a consistent inference model allowing for low-regularity and singularities in $S$, beyond the $\xC^2$--manifold model.
We also obtain a pointwise result assuming that the singular set $\fS$ is empty (see \eqref{eqThetaNHolder} in Proposition~\ref{propFinalCV}) in which we recover known minimax pointwise rate for the density $\theta$ of order $N^{-\frac{\min(a,b)}{d + 2 \min(a,b)}}$, see \cite{Berenfeld}. Note that in this particular case, though we do not explicitly assume any lower bound on the reach, Definition~\ref{dfnPwHolderS} however requires that $S$ is a $d$--manifold (only in this particular case where $\fS = \emptyset$) and furthermore a $\xC^{1,a}$ graph in any ball of radius less than $R$ with uniform Hölder constant. On one hand, such assumptions prevent that parts of $S$ could get arbitrarily close in the ambient space while distant in the instrinsic metric of $S$ (induced by the ambient one) similarly to what a reach bound would ensure. On the other hand, we also emphasize that we assume that the density $\theta$ is Hölder with respect to the ambient metric (see Definition~\ref{dfnPwHolderTheta}), which is different from the model investigated in \cite{Berenfeld} where the Hölder regularity of the density is assumed in terms of geodesic distance in $S$, which makes sense since $S$ is a regular manifold in their work.

\subsubsection*{Minimax lower bound and optimal rates.}
To the best of our knowledge, there is no existing result to infer the varifold structure and more generally, there are few inference results in such a low regularity framework. In particular, lower bounds for the minimax rates are yet to established. Nonetheless, we first point out that our analysis provides the same rate for the estimation of $\nu = \cH^d_{| S}$ as for the estimation of the varifold $W_S$ (see Theorem~\ref{thmCvHolderSplit}), yet, $W_S = \cH^d \otimes \delta_{\Pi_{T_x S}}$ involves the order $1$ structure: we expect that $\nu$ can be inferred with a better rate than $W_S$. Note that in the case where $S$ is at least a $\xC^2$ manifold with lower bounded reach,  
\cite{Divol22} establishes minimax rates concerning the inference of $\nu$ in terms of Wasserstein distance, assuming Besov regularity $B_{p,q}^b (S)$ for the density and similar lower and upper bound $\theta_{min/max}$. In particular, for $d \geq 3$, they establish a minimax rate of order $N^{-\frac{1 +b}{d + 2 b}}$ that is, at least loosely speaking, of order $\sim \delta_N N^{-\frac{\min(a,b)}{d + 2 \min(a,b)}}$ and we observe a gain of the factor $\delta_N$ when compared to the rate we obtain in Theorem~\ref{thmCvHolderSplit}. From a technical point of view, a key point in their proof to achieve such a rate is to perform the estimation of $\theta$ in Sobolev ${\rm H}^{-1}(S)$--norm rather than pointwisely. Unfortunately, the counterpart of this approach is not clear in our regularity framework.
As for the estimation of the order $1$ structure, we are not aware of results beyond (uniform) pointwise estimator of the tangent space. In \cite{LevradAamari}, a $\xC^k$--manifold regularity model is investigated and concerning the pointwise estimation of tangent spaces, a lower bound for the minimax risk of order $N^\frac{k-1}{d}$ is established (in the noise free model), and an almost optimal (up to a logarithmic factor) estimator is given. Formally replacing $k$ with $1 + a$ in our regularity class, we can compare our convergence rate $N^\frac{\min(a,b)}{d + 2 \min(a,b)}$ with $N^\frac{k-1}{d} \sim N^\frac{a}{d}$ that indicates some room for improvement if the known $\xC^k$--manifolds minimax rates consistently extend for $1 < k < 2$, which is yet another point that emphasizes the importance of searching for minimax lower bounds for low regularity models. We note that recent results establish minimax rates of order $N^\frac{a + \gamma}{d + 2 a}$ ($1 \leq \gamma \leq a +1$) for the estimation of $\nu$ (though not $\mu$ up to our understanding) with respect to a distance $d_{\mathcal{H}^\gamma}$ similar to the bounded Lipschitz distance but adding a Hölder condition on the first order derivative of test functions. Such minimax rates are valid in a manifold framework and it would be worth understanding wether such manifold regularity can be relaxed to establish minimax rates in a piecewise manifold regularity framework similar to the one we investigate.

\subsubsection*{Impact of noise and higher order structure inference.} We restricted our setting to an idealistic noise free estimation to carry out the analysis of a varifold estimator. The natural and important next step is to include noise, for instance through an additive noise model as suggested in \cite{aamariLevrard18} (for the estimation of $S$ through a simplicial complex and in terms of Hausdorff distance). It would also be important to understand the respective impacts of tangential and normal component of such an additive noise: loosely speaking, we expect that the tangential perturbations are connected to the uniformity of the sampling while the normal perturbations directly affect the geometry of $S$. To this end, it is for instance possible to explore a ``tubular noise'' model, as already done in \cite{LevradAamari} for the $\xC^k$--model.
An important motivation to carry out the analysis of such kernel-based estimators in a low regularity framework roots in \cite{BuetLeonardiMasnou,BuetLeonardiMasnou2} where the authors introduce approximations of mean curvature and more generally second fundamental form based on varifolds theory. They show the convergence of such approximations and some partial stability involving localized Bounded Lipschitz distances under low regularity assumptions compatible with the piecewise Hölder regularity class $\cP$ introduced 
in Section~\ref{secUniformPWHolder}. However, in \cite{BuetLeonardiMasnou,BuetLeonardiMasnou2}, the deterministic setting is a limitation to obtain more tractable convergence rates and we hope that transferring the issue into a statistical inference setting would lead to more explicit bounds. We expect that in the case of the mean curvature vector estimation, tangential noise would produce tangential error, and the mean curvature vector being normal to $S$, it is then possible to deal with such error as long as we have a good estimation of the tangent space: concretely projecting onto the normal space, as implemented in \cite{BuetLeonardiMasnou}. In this particular case at least, it is relevant to consider noise model in which it is possible to split tangential and normal components. 

\subsubsection*{Unknown parameters of the estimators.} We finally underline that though uniform in the regularity class $\cP$:
\begin{enumerate}[$-$]
 \item The parameter $\tau$ used to define the estimators (see \eqref{eqPhiChi}, Proposition~\ref{nudel} and Remark~\ref{eqtauchoice}) depends on $d$, $C_0$ and $\eta$. Though $\eta$ is known, and $d$ might be known in some cases, $C_0$ has to be estimated.
 \item The definitions of our estimators also rely on the choice of the parameter $\delta_N$ of order $N^{\frac{1}{d+2\min(a,b)}}$, and $a$ and $b$ are not known either in general.
\end{enumerate}

\subsection{Organisation of the paper}

The paper can be divided into $3$ parts: Sections~\ref{secBasic} and~\ref{secInferenceFramework} are preliminary sections. Sections~\ref{secMeasureEstimator} and~\ref{secVarifoldLikeEstimator} investigate the varifold inference issue in the regularity class $\cQ$ \eqref{eqClassQ} in which convergence comes without uniform bounds, see Corollary~\ref{coroCvWrdeltaNW}. Sections~\ref{secUniformPWHolder} and~\ref{secSplit} then address the issue in the class $\cP \subset \cQ$ \eqref{eqClassP}: strengthening the regularity setting allows to obtain uniform convergence as stated in the main result Theorem~\ref{thmCvHolderSplit}.

\section*{Notations}
We fix $n \in \N$, $n \geq 1$ and $d \in \R$, $0 < d \leq n$.
\begin{itemize}
\item In Sections~\ref{secBasic} to \ref{secMeasureEstimator}, $d$ is real, unless otherwise specified. In Sections~\ref{secVarifoldLikeEstimator} to \ref{secSplit}, $d$ is an integer. Generally speaking, rectifiability and $d$--varifold structure require $d$ to be an integer, while $d$--Ahlfors regularity is well-defined for $d$ real.
\item We use a generic positive constant $M$ whose default dependency is given at the beginning of each sections (in Remark~\ref{remkCstMSecTgt},~\ref{remkCstMSecHolder} and~\ref{remkCstMSecSplitt}).
\item Given  $x\in \R^{n}$ and $r>0$, we set $B(x,r) = \left\lbrace y\in \R^{n} \, | \, |y-x| < r \right\rbrace$.

\item For $\delta >0,$ the open $\delta$-neighbourhood ($\delta$-thickening) of $A \subset \R^n$ is
\begin{equation}
\label{eqThick}
A^\delta = \{ x\in \R^n : d(x,A)<\delta \}=\bigcup\limits_{x\in A} B(x,\delta) \: .
\end{equation} 

\item $\cL^n$ is the $n$--dimensional Lebesgue measure and $\omega_n = \cL^n (B_1(0))$.

\item $\cH^d$ is the $d$--dimensional Hausdorff measure in $\R^{n}$.

\item ${\rm M}_n(\R)$ is the space of square matrices with real entries and of size $n$. We consider $\| \cdot \|$ the matrix (operator) norm associated with the euclidean norm in $\R^n$.

\item $\xSym(n) \subset {\rm M}_n(\R)$ is the subspace of symmetric matrices and $\xSym_+(n) \subset \xSym(n)$ is the set of positive semi-definite matrices. 


\item ${\rm P}_{d,n} = \left\lbrace A \in \xSym(n) \: : \: A^2 = A \text{ and } \trace A = d \right\rbrace$ is the compact subset of rank--$d$ orthogonal projectors.

\item $\Gr$ denotes the Grassmannian of $d$-dimensional vector subspaces of $\R^n$:
\[
\Gr = \{ d\text{--dimensional vector subspace of } \R^n \} \: .
\]
A $d$-dimensional subspace $T$ will be often identified with the orthogonal projection onto $T$, denoted as $\Pi_{T} \in {\rm M}_n(\R)$. $\Gr$ is equipped with the metric $d(T,P) = \Vert \Pi_T - \Pi_P \Vert$ so that
\[
\begin{array}{rccl}
i : & \Gr & \rightarrow & {\rm P}_{d,n} \\
    &  T &  \mapsto    & \Pi_T
\end{array} \quad \text{is a bijective isometry.}
\]

\item $\xC_c(X)$ is the space of continuous real--valued and compactly supported functions defined on a topological space $X$ 

\item $\xLip (X)$ is the space of real--valued Lipschitz functions $f$ defined on a metric space $(X,\delta)$ with Lipschitz constant $\xLip(f)$.

\item Given $k \in \N$, $k \geq 1$, $\xC_c^k (\Omega)$ is the space of real--valued functions of class $\xC^k$ with compact support in the open set $\Omega \subset \R^n$.


\item The constant $C_0 \geq 1$ (respectively $\widetilde{C_0} \geq 1$) is a Ahlfors regularity constant for $\mu$ (respectively $\nu$), see Definition~\ref{dfnAhlfors}.

\end{itemize}

\section{Preliminary notions around rectifiability and varifolds}
\label{secBasic}

We recall some elementary facts concerning Radon measures, Ahlfors regularity, rectifiability and varifolds. We restrict ourselves to what is necessary to the understanding of the paper and we refer to \cite{mattila}, \cite{evg}, \cite{ambrosio2000fbv} and \cite{simon} for more details.

\subsection{Radon measures and Ahlfors regularity} 
\label{BNM}

We restrict ourselves to the case of Radon measures in $\R^n$, $\R^n \times \Gr$ or $\R^n \times \xSym_+(n)$, whence $X =\R^n$, $X = \R^n \times \Gr$ or $X = \R^n \times \xSym_+(n)$ hereafter.

\begin{definition}[Radon measure]
We say that $\mu$ is a Radon measure on $X$ if $\mu$ is a locally finite Borel measure.
\end{definition}
We know from Riesz representation theorem that Radon measures can alternatively be defined as continuous linear form on $\xC_c(X)$, leading to the so called {\it weak star} convergence of Radon measures.
\begin{definition}[Weak star convergence of Radon measures]
Let $\mu,(\mu_k)_{k\in \mathbb{N}}$ be Radon measures on $X$. We say that $(\mu_k)_{k\in\mathbb{N}}$ weak star converges to $\mu$, and we write $\mu_k \stackrel{\ast}{\rightharpoonup}\mu$ if
\[
\forall f \in \xC_c(X), \quad \int_X f \: d \mu_k \xrightarrow[k \to \infty]{} \int_X f \: d \mu \: .
\]
\end{definition}
We also introduced the following distance among Radon measures, that sometimes is referred to as flat distance in the context of varifolds.
\begin{definition}[Bounded Lipschitz distance]
\label{dfnBeta}
Let $\lambda_1, \lambda_2$ be two Radon measures in $X$, then
$$\beta(\lambda_1,\lambda_2)=\sup\left\{ \left|\int_{X} f \: d \lambda_1 - \int_{X} f \: d \lambda_2 \right| : f\in \xC_c(X,\mathbb{R}), \|f \|_\infty\leq 1, \xLip(f) \leq 1 \right\} $$
defines a distance in the space of Radon measures.
\end{definition}
We will also consider a localized version that allows to compare measures in a given open set $D$:
\begin{definition}[Localized Bounded Lipschitz distance] \label{dfnBetaLoc}
Let $\lambda_1$, $\lambda_2$ be Radon measures in $X$ and let $D \subset X$ be an open set, we consider
\[
\beta_D(\lambda_1,\lambda_2)=\sup\left\{ \left|\int_{X} f \: d \lambda_1 - \int_{X} f \: d \lambda_2 \right| : f\in {\xC}_c(X ,\mathbb{R}), \|f \|_\infty\leq 1, {\xLip}(f) \leq 1, \supp f \subset D \right\} .
\]
In the case where $X = \R^n \times \xSym_+(n)$, we only localize with respect to the spatial part, that is considering $\beta_{D \times \xSym_+(n)}$ that we abusively denote by $\beta_D$.
\end{definition}
Such a localized version is symmetric and satisfies the triangular inequality, yet note that $\beta_D (\lambda_1, \lambda_2) = 0$ only implies that $\lambda_1 = \lambda_2$ in the open set $D$ thus defining a distance in the space of Radon measures in $D$.

The following notion of $d$--Ahlfors regularity for a measure $\mu$ requires the measure $\mu(B(x,r))$ of balls centered at $x \in \supp \mu$ to be comparable with the $d$--volume $\omega_d r^d$ of such a ball.

\begin{definition}[Ahlfors regularity] \label{dfnAhlfors}
Let $\mu$ be a Radon measure in $\mathbb{R}^n$, for $d>0$, we say that $\mu$ is $d$--Ahlfors regular (or simply $d$--Ahlfors) if
$\exists C_0 \geq 1$ such that $\forall x \in \supp ( \mu )$, $\forall r \in (0, \xdiam ( \supp \mu)]$,
\begin{equation}
\frac{1}{C_0}r^d\leq \mu( B(x,r)) \leq C_0r^d \: .
\label{eqAhlfors}
\end{equation}
Such a constant $C_0 \geq 1$ will be referred as a Ahlfors regularity constant for $\mu$. We also say that a closed set $E \subset \R^n$ is $d$--Ahlfors regular if $\cH^d_{| E}$ is $d$--Ahlfors regular.
\end{definition}

\begin{remark}[Finite Ahlfors regular measure]
\label{remkAhlfors}
Let $\mu$ be a $d$--Ahlfors regular measure in the sense of Definition~\ref{dfnAhlfors} above and furthermore assume that $\mu$ is finite (we will assume that $\mu$ is a probability measure later in the paper). Then,
\begin{enumerate}
 \item the measure $\mu$ is compactly supported. Indeed, assume by contradiction that $\xdiam( \supp \mu) = \infty$, then for all $r > 0$ and for some $x \in \supp \mu$,
 \[
  C_0^{-1} r^d \leq \mu(B(x,r)) \leq \mu(\R^n) < \infty \: ,
 \]
which is impossible letting $r \to + \infty$.
 \item It is equivalent (up to adapting $C_0$) to require \eqref{eqAhlfors} only for small radii $r \in (0, r_0]$ with $0 < r_0 < R := \xdiam ( \supp \mu)$. Indeed, let $x \in \supp \mu$ then $R < \infty$ thanks to $(i)$ and thus, for $r_0 < r \leq R$, one has
\[
0 < C_0^{-1} \frac{r_0^d}{R^d} \leq \frac{ \mu( B(x,r_0)) }{R^d} \leq \frac{ \mu( B(x,r)) }{r^d} \leq \frac{ \mu( B(x,R)) }{r_0^d} < +\infty \: .
\]
\end{enumerate}
\end{remark}

\begin{example}[$d$--Ahlfors measures.]
\label{AhlParam}
Examples of $d$--Ahlfors regular measures include Lebesgue measure on a $d$--subspace, $d$--Hausdorff measure on a smooth closed $d$--submanifold. More generally, $d$--Ahlfors regularity is preserved through bi--Lipschitz mappings so that Lipschitz graphs are Ahlfors regular. Yet, there are also less regular examples:
for instance, the $d$--Hausdorff measure on the Koch snowflake is $d$--Ahlfors regular for $d = \log_3 4$. Even worse, the ``$4$--corners Cantor
set'' (see \cite{David1993AnalysisOA} chapter 1) obtained by dividing a unit square into $4 \times 4$ equally sized squares, deleting all but the $4$ corners squares and iterating the process is known to be $1$--Ahlfors regular though purely--$1$--unrectifiable. Ahlfors regularity conveys information on the dimensionality of an object but is not restrictive in terms of smoothness. Another interesting example (rather counter-example) is given by the following highly oscillating curve
$\mathcal{C}=\left\lbrace \left(x,\sin\left(\frac{1}{x}\right)\right) \: : \:  x\in (0,1]\right\rbrace$ which is $\xC^1$ while $\cH^1_{|\mathcal{C}}$ is not $1$--Ahlfors regular. Note that in this example $\mathcal{C}$ is not closed and its closure $\overline{\mathcal{C}} = \mathcal{C} \cup \{0\} \times [0,1]$ is not a $\xC^1$ curve. On the other hand, the graph $\left\lbrace \left(x, f(x) \right) \: : \:  x\in K \right\rbrace$ of a $\xC^1$ function $f : K \subset \R^d \rightarrow \R^{n-d}$ over a compact set $K$ is $d$--Ahlfors regular (since $f$ is Lipschitz in $K$).
We refer to \cite{DavidSemmes1997BrokenDreams} for additional examples and properties connected to $d$--Ahlfors regularity.
\end{example}

The following proposition (Proposition~\ref{propPackingPartition}) gives a bound on the number of balls with common diameter $\delta$ needed to cover the support of a $d$--Ahlfors measure. Such estimates will be crucial in the proof of Proposition~\ref{propBetaLoc}. For the sake of clarity, we give the proof of Proposition~\ref{propPackingPartition} and we refer to \cite{mattila} (Section 5) for further details.

\begin{proposition} \label{propPackingPartition}
Let $\mu$ be a Radon measure in $\mathbb{R}^n$, $S= \supp (\mu)$. Assume that there exists $ d >0, C_0\geq 1$
such that $\forall x \in S,$ $\forall 0<r< \xdiam(S)$,
\begin{equation}
\label{alm}
  \mu(B(x,r))\geq C_0^{-1}r^d \: . 
\end{equation}
Then, for all bounded set $B\subset \mathbb{R}^n$ and for all $\delta > 0$, the minimal number $m(B \cap S, \delta)$ of sets of diameter smaller than $\delta$ needed to form a partition of $B\cap S$ satisfies
\begin{equation} \label{eqmPartition}
m(B\cap S,\delta)\leq 4^dC_0\delta^{-d}\mu \left( B^{\frac{\delta}{4}}\right)\: .
\end{equation}
\end{proposition}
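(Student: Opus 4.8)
The plan is a Vitali-type packing argument at scale $\delta$. Fix $\delta>0$ and the bounded set $B$; we may assume $B\cap S\neq\emptyset$, and note $\mu(B^{\delta/4})<\infty$ since $\mu$ is locally finite and $B^{\delta/4}$ is bounded. For a parameter $r\in(0,\delta/2)$ with $r/2<\xdiam(S)$, I would pick a maximal $r$-separated subset $\{x_1,\dots,x_m\}\subseteq B\cap S$, i.e.\ $|x_i-x_j|\geq r$ for $i\neq j$ with no point of $B\cap S$ adjoinable without breaking this (existence by Zorn's lemma; finiteness follows below). Since the $x_i$ are $r$-separated, the open balls $B(x_i,r/2)$ are pairwise disjoint; each lies in $B^{r/2}\subseteq B^{\delta/4}$ because $x_i\in B$ and $r/2\leq\delta/4$; and each satisfies $\mu(B(x_i,r/2))\geq C_0^{-1}(r/2)^d$ by \eqref{alm}. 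Summing,
\[
m\,C_0^{-1}(r/2)^d\leq\sum_{i=1}^m\mu\big(B(x_i,r/2)\big)=\mu\Big(\bigcup_{i=1}^m B(x_i,r/2)\Big)\leq\mu\big(B^{\delta/4}\big),
\]
so $m\leq C_0(2/r)^d\mu(B^{\delta/4})<\infty$.

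Next I would turn the $x_i$ into a partition. By maximality every $y\in B\cap S$ has $|y-x_i|<r$ for some $i$, hence $B\cap S\subseteq\bigcup_{i=1}^m B(x_i,r)$. Disjointifying, the sets $A_i:=(B\cap S)\cap B(x_i,r)\setminus\bigcup_{j<i}B(x_j,r)$ (keeping the nonempty ones) partition $B\cap S$, and $\xdiam(A_i)\leq 2r<\delta$ since any two points of $B(x_i,r)$ are at distance less than $2r$. Therefore $m(B\cap S,\delta)\leq m\leq C_0(2/r)^d\mu(B^{\delta/4})$. This holds for every admissible $r$, and letting $r\uparrow\delta/2$ — admissible as soon as $\delta\leq 4\,\xdiam(S)$, in particular at the small scales $\delta$ relevant to the applications — yields $m(B\cap S,\delta)\leq 4^dC_0\,\delta^{-d}\mu(B^{\delta/4})$.

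The argument is elementary; the points that require care are bookkeeping rather than substance: (i) using a separation scale $r$ \emph{strictly} below $\delta/2$ and passing to the supremum only at the end, so that the partition pieces have diameter strictly less than $\delta$ while the constant still sharpens to exactly $4^d$; (ii) the harmless constraint $r/2<\xdiam(S)$ needed to invoke \eqref{alm}; and (iii) deducing a priori finiteness of the maximal $r$-separated family from the disjoint-ball count rather than assuming it. No global regularity of $S$ (compactness, reach, rectifiability) enters, only the one-sided Ahlfors estimate \eqref{alm} and local finiteness of $\mu$; this is the argument of \cite{mattila}.
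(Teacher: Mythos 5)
Your proof is correct and follows essentially the same route as the paper's: a maximal $r$-separated set is exactly the object realizing the paper's chain $m(A,\delta)\le V(A,\delta/2)\le P(A,\delta/4)$, with the disjoint balls of radius $r/2$ giving the packing/measure bound and the balls of radius $r$ giving the covering that you then disjointify into a partition. The constant $4^d$ arises identically in both arguments (balls of radius $\delta/4$ in the paper, radius $r/2\uparrow\delta/4$ in yours), and your extra care about strict inequalities and the constraint $r/2<\xdiam(S)$ only tightens bookkeeping that the paper leaves implicit.
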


\begin{remark}
We note that \eqref{alm} is in particular satisfied by $d$--Ahlfors measures
\end{remark}

\begin{proof} 
Given a bounded set $A \subset \R^n$ and $\delta > 0$, $m(A,\delta)$ is the minimal number of sets of diameter smaller than $\delta$ needed to form a partition of $A$ and we additionally introduce:
\begin{enumerate}[$\bullet$]
 \item the packing number $P(A,\delta)$ as the greatest number of disjoint balls of radius $\delta$ with center in $A$,
 \item the smallest number of balls with radius $\delta$ needed to cover $A$, denoted by $V(A,\delta)$.
\end{enumerate}
{\bf Step $1$}: We will prove that 
\begin{equation*} \label{eqPackingPartitionNb}
 m(A,\delta) \leq V \left(A , \frac{\delta}{2} \right) \leq P \left(A , \frac{\delta}{4} \right) \: .
\end{equation*}
Indeed, denoting $k = P(A,\delta)$, let $B(x_1,\delta),\dots ,B(x_k,\delta)$ be disjoint balls with centers $x_1,\dots,x_k \in A$.
Suppose that $V(A,2\delta)>k$ then $\bigcup_{i=1}^k B(x_i,2\delta)$ does not cover $A$ and thus there exists $x\in A \setminus \bigcup_{i=1}^k B(x_i,2\delta)$.
Consequently $B(x,\delta), B(x_1,\delta),\dots,B(x_k,\delta)$ are $k+1$ disjoint balls of radius $\delta$ with center in $A$, that is impossible by definition of $k$. We conclude that $V(A,2\delta)\leq P(A,\delta)$.

\noindent Let now $\displaystyle k^\prime = V \left(A , \frac{\delta}{2} \right)$ and $B_1,\dots,B_{k^\prime}$ be balls of radius $\displaystyle \frac{\delta}{2}$ such that $\displaystyle A\subset \bigcup \limits_{j=1}^{k^\prime} B_j$ .
We can take $A_1=A\cap B_1$ then $A_2=A\cap B_2\setminus A_1$ up to $A_{k^\prime}=(A\cap B_{k^\prime})\setminus{\bigcup\limits_{j=1}^{k^\prime-1}}A_j$
Then $A_1,\dots,A_{k^\prime}$ form a partition of $A$ with sets of diameter smaller than $\delta$ and $m(A,\delta) \leq k^\prime$.

\smallskip
\noindent {\bf Step $2$}: Let $B \subset \R^n$ be a bounded set, then $\displaystyle P(B \cap S, \delta) \leq C_0 \mu(B^\delta)\delta^{-d}$.
Indeed, let $k=P(B\cap S,\delta)$ and $B_1,\dots,B_k$ be disjoint balls of radius $\delta$ and centered in $B\cap S$. Then for $i=1,\dots,k$, \eqref{alm} gives $\mu(B_i)\geq C_0^{-1}\delta^d$ and therefore
\[
k \: C_0^{-1}\delta^d\leq \sum_{i=1}^k \mu(B_i)= \mu \left( \bigsqcup\limits_{i=1}^k B_i \right) \leq  \mu(B^\delta)
\quad \Rightarrow \quad P(B\cap S , \delta)=k\leq C_0 \mu(B^\delta)\delta^{-d} \: .
\]
Step $1$ and Step $2$ lead to \eqref{eqmPartition}.
\end{proof}

\subsection{Rectifiability}

We first recall some classical definitions and results on rectifiability and we refer to \cite{ambrosio2000fbv} for more details.
\begin{definition}[$d$--rectifiable set]
Let $S \subset \R^n$ be a Borel set, we say that $S$ is countably $d$--rectifiable if there exists a countable family $(f_i)_{i\in \mathbb{N}}$ of Lipschitz maps from $\mathbb{R}^d$ to $\mathbb{R}^n$ such that
$$\mathcal{H}^d\left(S\setminus \bigcup_{i\in \mathbb{N}}f_i(\mathbb{R}^d)\right)=0 \: .
$$
We say that $S$ is $d$--rectifiable if moreover $\cH^d(S)< \infty$ (which will be the case hereafter).
\end{definition}
\noindent Note that the above notion of rectifiability are referred to as (countable) $\cH^d$--rectifiability in \cite{ambrosio2000fbv}, Definition~2.57.
We then define (non-negative) rectifiable measures as those carried by rectifiable sets:
\begin{definition}[Rectifiable measures, \cite{ambrosio2000fbv} Definition~2.59]
Let $\mu$ be a Radon measure in $\mathbb{R}^n$. We say that $\mu$ is $d$--rectifiable if there exist a countably $d$--rectifiable set $S \subset \R^n$ and a Borel function $\theta : S \mapsto \R_+$ such that $ \mu=\theta\mathcal{H}_{|S}^d$.
\end{definition}
In other words, a $d$--rectifiable set $S$ is included in a countable union of $d$--dimensional Lipschitz graphs up to a $\cH^d$--negligible set. Then recalling that Lipschitz functions in $\R^d$ are a.e. differentiable, it is natural (to try) to define tangent planes a.e on rectifiable sets. Yet, it is crucial to have a notion that does not depend on the choice of
a covering of $S$ with Lipschitz graphs:
\begin{definition}[Approximate tangent plane, \cite{ambrosio2000fbv} Definition~2.79]
\label{ATP} 
Let $\mu$ be a Radon measure in $\R^n$ and let $x\in \mathbb{R}^n$. We say that $\mu$ has {\em approximate tangent space} (or plane) $P\in G_{d,n}$ with multiplicity $\theta\in \mathbb{R}_+$ at $x$, if for all $f \in \xC_c(\R^n)$,
$$\lim\limits_{r\to 0_+} r^{-d}\int_{\mathbb{R}^n} f \left(\frac{y-x}{r}\right) \: d\mu(y) = \theta(x) \int_P f(y)d\mathcal{H}^d(y) \: .
$$
\end{definition}

\begin{proposition}[A.e. existence of approximate tangent space for rectifiable measures, \cite{ambrosio2000fbv} Proposition~2.83]
\label{propTgtSpaceToRectifMeasure}
Let $\mu = \theta \cH^d_{| S}$ be a $d$--rectifiable measure in $\R^n$. Then, for $\cH^d$--a.e. $x \in S$, 
\begin{enumerate}[$\bullet$]
 \item $\mu$ admits an approximate tangent space with multiplicity $\theta(x)$, denoted by $T_x S$,
 \item in particular $\displaystyle \theta(x) =
\lim_{r \to 0_+} \frac{\mu(B(x,r))}{\omega_d r^d}$.
\end{enumerate}
\end{proposition}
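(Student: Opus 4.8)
The plan is to follow the classical route for this result: reduce the $d$--rectifiable set $S$ to countably many disjoint $\xC^1$ pieces, show that at $\cH^d$--a.e.\ point of each piece the approximate tangent plane exists and coincides with the classical tangent plane of that piece, and then read the density off the resulting blow-up. \emph{Step~1 (reduction to $\xC^1$ pieces).} Since $S$ is $d$--rectifiable, the Lusin-type approximation of Lipschitz maps by $\xC^1$ maps (equivalently Whitney's extension theorem) shows that $S$ is covered up to an $\cH^d$--negligible set $N$ by a countable family of embedded $\xC^1$ $d$--submanifolds $M_i$, which may be taken to be $\xC^1$ graphs over bounded domains, so that $\cH^d(M_i)<\infty$, $\cH^d_{|M_i}$ is a Radon measure, and $M_i$ has a classical tangent plane $T_yM_i$ at every $y\in M_i$. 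Disjointifying, set $S_i=(S\cap M_i)\setminus\bigcup_{j<i}M_j$, so that $S=N\cup\bigsqcup_i S_i$ with $\cH^d(N)=0$. Since $\mu$ is Radon, $\theta\in\xL^1_{loc}(\cH^d_{|S})$, hence $\theta(x)<\infty$ for $\cH^d$--a.e.\ $x$.

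\emph{Step~2 (a.e.\ pointwise properties).} Fix $i$. Applying the Besicovitch differentiation theorem to the Radon measure $\cH^d_{|S}$ with the indicator of $S_i$, to the Radon measure $\cH^d_{|M_i}$ with the indicator of $S_i$, and to $\cH^d_{|S}$ with $\theta$, and combining this with the standard a.e.\ upper-density bound $\limsup_{r\to0^+}r^{-d}\cH^d(S\cap B(x,r))<\infty$ (valid $\cH^d$--a.e.\ on $S$ because $\cH^d(S)<\infty$), one obtains that for $\cH^d$--a.e.\ $x\in S_i$: (a)~$\cH^d\big((S\setminus S_i)\cap B(x,r)\big)=o(r^d)$ as $r\to0$; (b)~$x$ is a point at which $M_i$ has classical tangent plane $T_xM_i$ and $\cH^d\big((M_i\setminus S_i)\cap B(x,r)\big)=o(r^d)$; (c)~$\theta$ is approximately continuous at $x$ with value $\theta(x)$, meaning $\tfrac{1}{\cH^d(S\cap B(x,r))}\int_{S\cap B(x,r)}|\theta-\theta(x)|\,d\cH^d\to0$. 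Combining (a), (c) and the upper-density bound then yields also $\mu\big((S\setminus S_i)\cap B(x,r)\big)=o(r^d)$.

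\emph{Step~3 (blow-up and density formula).} Fix such an $x\in S_i$ and set $T_xS:=T_xM_i$; this is well defined for $\cH^d$--a.e.\ $x\in S$ because the $S_i$ are disjoint. For $f\in\xC_c(\R^n)$ with $\supp f\subset B(0,\rho)$, splitting the integral over $S_i$ and $S\setminus S_i$, discarding the latter part by the weighted bound of Step~2, then replacing $\theta(y)$ by $\theta(x)$ (error controlled by (c)) and $S_i$ by $M_i$ (error controlled by (b) and $\|f\|_\infty<\infty$), one gets
\[
r^{-d}\int_{\R^n}f\!\left(\tfrac{y-x}{r}\right)d\mu(y)=\theta(x)\,r^{-d}\!\int_{M_i}f\!\left(\tfrac{y-x}{r}\right)d\cH^d(y)+o(1).
\]
Since $M_i$ is a $\xC^1$ graph near $x$ with tangent plane $T_xS$, an elementary change of variables in the graph chart shows that $r^{-d}(y\mapsto(y-x)/r)_{\#}\cH^d_{|M_i}\stackrel{\ast}{\rightharpoonup}\cH^d_{|T_xS}$, so the right-hand side tends to $\theta(x)\int_{T_xS}f\,d\cH^d$; this is exactly the statement that $\mu$ has approximate tangent plane $T_xS$ with multiplicity $\theta(x)$ at $x$. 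For the density formula, the same convergence reads $\mu_{x,r}:=r^{-d}(y\mapsto(y-x)/r)_{\#}\mu\stackrel{\ast}{\rightharpoonup}\theta(x)\cH^d_{|T_xS}$, and since the limit charges neither $\partial B(0,1)$ (a $\cH^d$--null $(d-1)$--sphere) nor escapes to infinity, $\mu(B(x,r))/r^d=\mu_{x,r}(B(0,1))\to\theta(x)\omega_d$, which is the claimed identity.

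\emph{Main obstacle.} The one genuinely global point is Step~2(a) and its weighted counterpart: controlling the part of $S$ lying \emph{outside} the chosen $\xC^1$ sheet $M_i$, which a priori bears no relation to it. This is precisely where the finiteness $\cH^d(S)<\infty$ and the Besicovitch differentiation theorem applied to the whole measure $\cH^d_{|S}$ (rather than to a single smooth sheet) are essential; once properties (a)--(c) are secured, the $\xC^1$ blow-up and the passage from test functions to balls are routine.
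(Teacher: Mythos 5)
The paper does not actually prove this proposition: it is quoted directly from \cite{ambrosio2000fbv} (Proposition~2.83) with no in-text argument, so there is no internal proof to compare yours against. Your write-up is the standard textbook proof of that cited result --- disjoint decomposition of $S$ into pieces of $\xC^1$ graphs, the density-zero lemma for $\cH^d\big((S\setminus S_i)\cap B(x,r)\big)$, Lebesgue/approximate-continuity points of $\theta$ via Besicovitch differentiation, and the $\xC^1$ blow-up --- and it is correct, including the final passage from weak-$\ast$ convergence of the rescaled measures to the ball-density identity via the fact that $\partial B(0,1)\cap T_xS$ is $\cH^d$--null in $T_xS$.
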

\noindent Note that Proposition~2.83 in \cite{ambrosio2000fbv} actually characterize the rectifiability of $\mu$ through the existence of approximate tangent space, but it is not necessary in this paper. Note also that such an approximate tangent space to $\mu = \theta \cH^d_{| S}$ actually does not depend on $\theta$ (see \cite{ambrosio2000fbv} Proposition 2.85) and we will use the notation $T_x S$ for the approximate tangent space.

\subsection{Varifolds}
\label{secVarifolds}

We introduce in this section some basic definitions concerning $d$--varifolds in $\R^n$. Our purpose is not to describe the formalism of varifolds in an abstract setting, for instance we do not define the class of rectifiable $d$--varifolds but we only explain how to associate a $d$--varifold $V_S$ with a $d$--rectifiable set $S$. We refer to \cite{simon} for a comprehensive reference. We also point out \cite{Menne2017} for a short introduction through examples. We recall that varifolds have been introduced and investigated to tackle the existence of minimal surfaces and we refer to the seminal paper of \cite{allard}. 

First of all, note that there is a one-to-one correspondence between $\Gr$ and the subset of rank--$d$ orthogonal projectors ${\rm P}_{d,n}$ of $\xM_n(\R)$ via $i : T \in \Gr \mapsto \Pi_T$, where we recall that $\Pi_T \in \xM_n (\R)$ is the orthogonal projector onto the $d$--vector subspace $T$. We endow $\Gr$ with the distance $d$ satisfying for $T_1$, $T_2$ in $\Gr$, $d(T_1, T_2) = \| \Pi_{T_1} - \Pi_{T_2} \|$, in other words, with such a metric in $\Gr$, $i$ is an isometry.

\begin{definition}[$d$--varifold in $\R^n$]\label{dfnVarifold} 
A $d$--varifold in $\R^n$ is a non-negative Radon measure in $\R^n \times \Gr$. 
\end{definition}

\noindent Beyond such a general definition, there are natural $d$--varifolds associated with geometric objects such as $d$--submanifolds and $d$--rectifiable sets:

\begin{definition}[Rectifiable $d$--varifold associated with a $d$--rectifiable set]\label{dfnRectifVarifold}
Let $S \subset \R^n$ be a $d$--rectifiable set. We can define the \emph{rectifiable $d$--varifold} $V_S= \mathcal{H}^d_{| S} \otimes \delta_{T_x S}$ in the sense:
\[
\int_{\R^n \times \Gr} f (x,P) \, dV_ S(x,P) = \int_S f (x, T_x S) \:  d \mathcal{H}^d (x) \quad \forall f \in \xC_c (\R^n \times \Gr) \: ,
\] where $T_x S$ is the approximate tangent space at $x$ which exists $\mathcal{H}^d$--almost everywhere in $S$.
\end{definition}

We recall that using Riesz representation theorem, we can define $d$--varifolds as non-negative linear forms on $\xC_c ( \R^n \times \Gr)$, as done in Definition~\ref{dfnRectifVarifold} above.

\noindent An important feature of varifolds is that they can model discrete geometric objects as well. In our context, we are more specifically in \emph{point cloud varifolds} (see \cite{BuetLeonardiMasnou}) that are associated with sets of points:
\begin{definition}[Point cloud varifold]\label{dfnPointCloudVarifold}
Let $N \in \N^\ast$ and assume that we are given: a finite set of points $\{x_i\}_{i=1 \ldots N} \subset \R^N$, associated with positive weights $(m_i)_{i=1 \ldots N}$ and $d$--dimensional directions $(P_i)_{i=1 \ldots N}$ in $\Gr$.
We can associate with such data the \emph{point cloud $d$--varifold}
\[
V_N = \sum_{i=1}^N m_i \delta_{(x_i,P_i)} \: .
\]
\end{definition}
\noindent We observe that unlike the case of rectifiable $d$--varifolds, the dimension of the geometric object, i.e. $0$ for the set of point, can differ from the dimension $d$ of the $d$--varifold which is fixed by the choice of the Grassmannian $\Gr$.

\subsubsection*{Varifolds and Radon measures in $\R^n \times \xSym_+(n)$}

For the purpose of the paper, we will also consider Radon measures in $\R^n \times \xSym_+(n)$, similarly to what is done in \cite{tinarrage,BuetPennec}, see also \cite{ambrosioSoner}. As already mentioned, the identification between $T \in \Gr$ and $\Pi_T$ the rank $d$ orthogonal projector onto $T$ provides a natural embedding $i : \Gr \hookrightarrow \xSym_+(n)$, $T \mapsto \Pi_T$ that induces the following embedding of $d$--varifolds into Radon measures in $\R^n \times \xSym_+(n)$:
\[
 \mathcal{I} : V \in \{ d\text{--varifold in } \R^n \} \mapsto W = ( {\rm id}, i)_\# V \: ,
\]
or equivalently, by definition of pushforward measure, for $f \in \xC_c (\R^n \times \xSym_+(n))$, 
\[
 \int_{\R^n \times \xSym_+(n)} f \: dW  = \int_{\R^n \times \Gr} f(x, i(T))  \: dV (x,T) =  \int_{\R^n \times \Gr} f(x, \Pi_T)  \: dV (x,T) \: .
\]
In the particular case of the $d$--varifold $V_S = \cH^d_{| S} \otimes \delta_{T_x S}$ associated with the rectifiable set $S$ (see Definition~\ref{dfnRectifVarifold}), we obtain the Radon measure 
\begin{equation}
\label{eqWS}
 \mathcal{I} (V_S) = W_S = \cH^d_{| S} \otimes \delta_{\Pi_{T_x S}} \: .
\end{equation}
Recalling the notation ${{\rm P}_{d,n}} = \{ \text{rank } d\text{--orthogonal projectors} \} \subset \xSym_+(n)$, we observe that for a $d$--varifold $V$, $W = \mathcal{I}(V)$ is supported in $\R^n \times {{\rm P}_{d,n}}$.
We recall that $i : \Gr \rightarrow {{\rm P}_{d,n}}$ is a bijective isometry, we can denote by $s_0 : {{\rm P}_{d,n}} \rightarrow \Gr$ its inverse and in particular, $i$ and $s_0$ are $1$--Lipschitz maps. Then, $\mathcal{I}$ is bijective from $\{ d\text{--varifolds in } \R^n \}$ onto $\{ \text{Radon measures in } \R^n \times \xSym_+(n) \text{ with support in } \R^n \times {{\rm P}_{d,n}} \}$, with inverse $\mathcal{I}^{-1} : W \mapsto V = \one_{{{\rm P}_{d,n}}} \left( ({\rm id}, s_0)_\# W \right)$ i.e., for $g \in \xC_c (\R^n \times \Gr)$,
\begin{equation*}
    \int_{\R^n \times \Gr} g \: d V = \int_{\R^n \times \xSym_+^1(n)} g(x,s_0(A)) \one_{{{\rm P}_{d,n}}} (A) \: d W(x,A) \: .
\end{equation*}
As we detail in the proof of Proposition~\ref{propBiLipI}, it is not difficult to check that $\mathcal{I}$ is $1$--Lipschitz using that $i$ is, whereas it is more subtle to check
that its inverse $\mathcal{I}^{-1}$ is Lipschitz since $s_0$ is only defined on ${{\rm P}_{d,n}}$. More precisely, we have the following bi--Lipschitz correspondence:

\begin{proposition}
\label{propBiLipI}
 The map $\mathcal{I} : V \mapsto ( {\rm id}, i)_\# V$ induces a bi--Lipschitz bijection from $\{ d\text{--varifolds in } \R^n \}$ onto $\{ \text{Radon measures in } \R^n \times \xSym_+(n) \text{ with support in } \R^n \times {{\rm P}_{d,n}} \}$ endowed with the bounded Lipschitz distance $\beta$. More precisely, there exists $L \geq 1$ depending on $n$ such that for any $d$--varifolds $V_1$, $V_2$ in $\R^n$,
 \begin{equation*}
  \beta (\mathcal{I}(V_1), \mathcal{I}(V_2)) \leq \beta (V_1, V_2) \leq L \beta (\mathcal{I}(V_1), \mathcal{I}(V_2)) \: .
 \end{equation*}
\end{proposition}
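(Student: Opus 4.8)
The plan is to exploit the fact that $\mathcal{I} = (\mathrm{id}, i)_\#$ is induced by the map $i : \Gr \to {\rm P}_{d,n} \subset \xSym_+(n)$ which, by the discussion preceding Definition~\ref{dfnVarifold}, is a \emph{bijective isometry} onto its image ${\rm P}_{d,n}$. Thus $(\mathrm{id}, i) : \R^n \times \Gr \to \R^n \times {\rm P}_{d,n}$ is a bijective isometry as well (for the product metric on each side), and it is the restriction to $\R^n \times {\rm P}_{d,n}$ of the $1$--Lipschitz inclusion $\R^n \times \Gr \hookrightarrow \R^n \times \xSym_+(n)$ composed with $i$. The first inequality $\beta(\mathcal{I}(V_1), \mathcal{I}(V_2)) \leq \beta(V_1, V_2)$ should then be the easy direction: given a competitor $f \in \xC_c(\R^n \times \xSym_+(n))$ with $\|f\|_\infty \le 1$ and $\xLip(f) \le 1$ in the test for $\beta(\mathcal{I}(V_1), \mathcal{I}(V_2))$, its pullback $g := f \circ (\mathrm{id}, i)$ lies in $\xC_c(\R^n \times \Gr)$ with $\|g\|_\infty \le 1$ and, since $(\mathrm{id}, i)$ is $1$--Lipschitz, $\xLip(g) \le 1$; and by the definition of pushforward $\int f \, d\mathcal{I}(V_j) = \int g \, dV_j$, so the supremum defining $\beta(\mathcal{I}(V_1), \mathcal{I}(V_2))$ is over a subfamily of the functionals appearing for $\beta(V_1, V_2)$.

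For the reverse inequality the issue is the converse extension problem: given $g \in \xC_c(\R^n \times \Gr)$ with $\|g\|_\infty \le 1$ and $\xLip(g) \le 1$, we must produce $f \in \xC_c(\R^n \times \xSym_+(n))$ that agrees with $g$ on $\R^n \times {\rm P}_{d,n}$ (so that $\int f \, d\mathcal{I}(V_j) = \int g \, dV_j$), with $\|f\|_\infty$ and $\xLip(f)$ controlled by an absolute constant. Since $(\mathrm{id}, i)$ is an isometry onto $\R^n \times {\rm P}_{d,n}$, the function $h := g \circ (\mathrm{id}, i)^{-1}$ is defined on the closed subset $\R^n \times {\rm P}_{d,n}$ of $\R^n \times \xSym_+(n)$ with $\|h\|_\infty \le 1$ and $\xLip(h) \le 1$. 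I would then invoke a Lipschitz extension theorem — either McShane's one-dimensional extension (giving $\xLip(f) \le 1$ on the nose but one must also truncate to keep $\|f\|_\infty \le 1$, which preserves the Lipschitz bound), or, to be safe about compact support, Kirszbraun/a partition-of-unity cutoff argument — to obtain $f \in \xC(\R^n \times \xSym_+(n))$ with $\|f\|_\infty \le 1$ and $\xLip(f) \le C(n)$ extending $h$; a cutoff supported near the compact set $\supp \mathcal{I}(V_1) \cup \supp \mathcal{I}(V_2)$ (both compact since $V_1, V_2$ are Radon and, in applications, finite) makes $f$ compactly supported at the cost of a dimensional factor in the Lipschitz constant. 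This yields $\beta(V_1, V_2) \le C(n)\, \beta(\mathcal{I}(V_1), \mathcal{I}(V_2))$, i.e. the claim with $L = C(n)$.

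The main obstacle is precisely this extension step: McShane's theorem extends Lipschitz real-valued functions with the \emph{same} Lipschitz constant, so if one is willing to work with that one gets $L$ depending only on the truncation/cutoff bookkeeping, but ${\rm P}_{d,n}$ is a curved subset of $\xSym_+(n)$ and one must be careful that the metric on $\R^n \times {\rm P}_{d,n}$ used to define $\beta$ for $\mathcal{I}(V)$ — namely the ambient metric of $\R^n \times \xSym_+(n)$ restricted to ${\rm P}_{d,n}$ — coincides with (or is bi-Lipschitz to) the intrinsic Grassmannian metric $d(T_1,T_2) = \|\Pi_{T_1} - \Pi_{T_2}\|$ transported from $\Gr$. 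By construction of the metric on $\Gr$ these are literally \emph{equal}, since $d(T_1, T_2) = \|\Pi_{T_1} - \Pi_{T_2}\|$ is exactly the operator-norm distance in $\xSym_+(n)$; so $i$ really is an isometric embedding and no bi-Lipschitz distortion is incurred there. The only genuine loss of constant comes from extending a scalar Lipschitz function off a closed set while simultaneously keeping it bounded by $1$ and compactly supported, which is routine and produces the dimensional constant $L \geq 1$. I would also remark that the spatial-only localization convention in Definition~\ref{dfnBetaLoc} for $X = \R^n \times \xSym_+(n)$ is compatible with this argument, so that the proposition also holds for the localized distances $\beta_D$, which is what is actually needed downstream.
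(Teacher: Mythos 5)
Your first inequality is exactly the paper's argument: pull back a test function through the isometry $(\mathrm{id}, i)$ and observe that the resulting family of competitors for $\beta(\mathcal{I}(V_1),\mathcal{I}(V_2))$ is a subfamily of that for $\beta(V_1,V_2)$. For the reverse inequality, however, you take a genuinely different route. The paper does not invoke an abstract extension theorem: it explicitly constructs a Lipschitz retraction of a neighbourhood $\mathcal{V}=\{A : \dist(A,{\rm P}_{d,n})\le \tfrac13\}$ onto ${\rm P}_{d,n}$ by spectral truncation (keeping the top $d$ eigenspaces), using Weyl's inequality to guarantee a uniform spectral gap $\lambda_d(A)-\lambda_{d+1}(A)\ge \tfrac13$ on $\mathcal{V}$ and the Davis--Kahan theorem to get the Lipschitz bound; the test function is then $f=(g\circ(\mathrm{id},s))\,\xi_{\mathcal{V}}$ with a $4$--Lipschitz cutoff $\xi_{\mathcal{V}}$ vanishing away from ${\rm P}_{d,n}$. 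Your McShane route is shorter and works: extend $h=g\circ(\mathrm{id},i)^{-1}$ from the closed set $\R^n\times{\rm P}_{d,n}$ with the same Lipschitz constant, truncate to $[-1,1]$, and multiply by a Lipschitz cutoff. What the paper's construction buys is an explicit, canonical retraction $s$ (reused elsewhere, e.g.\ it is morally the same spectral truncation $\pi_{r,\delta,N}$ used in Section~\ref{secCVvarifold}), at the price of the Weyl/Davis--Kahan machinery and a dimensional constant from the Frobenius-vs-operator norm comparison; your route gets a universal constant from softer tools.

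One point in your write-up needs correcting. The cutoff must not be tied to $\supp\mathcal{I}(V_1)\cup\supp\mathcal{I}(V_2)$: a general Radon measure (and a general $d$--varifold in the sense of Definition~\ref{dfnVarifold}) need not be finite or compactly supported, and even when it is, a cutoff adapted to the measures would make $L$ depend on them. The right bookkeeping is to cut off relative to the \emph{test function}: take $f=\widetilde{h}\,\chi_1(x)\,\xi(A)$ where $\widetilde{h}$ is the (truncated) McShane extension of $h$ from all of $\R^n\times{\rm P}_{d,n}$, $\chi_1$ equals $1$ on the compact spatial projection of $\supp g$ and drops to $0$ over distance $1$, and $\xi$ equals $1$ on ${\rm P}_{d,n}$ and is supported in a bounded neighbourhood. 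Since $\widetilde{h}$ restricted to $\R^n\times{\rm P}_{d,n}$ equals $h$, and $h$ vanishes wherever $\chi_1\xi<1$ on that set, one still has $f=h$ on all of $\R^n\times{\rm P}_{d,n}$, hence $\int f\,d\mathcal{I}(V_j)=\int g\,dV_j$ for arbitrary $V_j$, with $\|f\|_\infty\le 1$ and $\xLip(f)$ bounded by a universal constant. With that adjustment your argument is complete.
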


\begin{proof}
Given two $d$--varifolds $V_1$, $V_2$ and using the notations $W_1 = \mathcal{I}(V_1)$ and $W_2 = \mathcal{I}(V_2)$, it is not difficult to check that
\begin{equation} \label{eqBilip1}
 \beta (W_1, W_2) \leq \beta (V_1, V_2) \: .
\end{equation}
Indeed, given $f \in \xC_c(\R^n \times \xSym_+(n))$ a $1$--Lipschitz function such that $\| f \|_\infty \leq 1$, we have that $g = f \circ ({\rm id}, i) \in \xC_c (\R^n \times \Gr)$ is $1$--Lipschitz and satisfies $\| g \|_\infty \leq 1$ as well whence
\begin{equation*}
 \int_{\R^n \times \xSym_+(n)} f \: ( dW_1 - dW_2)  = \int_{\R^n \times \Gr} g  \: (dV_1 - dV_2) \leq \beta (V_1, V_2) \: ,
\end{equation*}
and one can take the supremum with respect to such maps $f$ to infer \eqref{eqBilip1}.

\noindent The converse inequality holds true as well, up to some dimensional constant but requires to extend the above map $s_0$ in a neighbourhood of ${{\rm P}_{d,n}}$ while controlling the Lipschitz constant of the extension. We introduce the neighbourhood $\mathcal{V} = \left\lbrace A \in \xSym_+(n) \: : \:  {\rm dist} (A , {\rm P}_{d,n}) \leq \frac{1}{3} \right\rbrace$ of ${{\rm P}_{d,n}}$ and we first define $\tilde{s} : \mathcal{V} \rightarrow {{\rm P}_{d,n}}$ such that $\tilde{s}_{| {{\rm P}_{d,n}}} = {\rm id}_{{\rm P}_{d,n}}$ as follows. Let $A \in \mathcal{V}$ so that there exists $P\in {\rm P}_{d,n}$ satisfying $\|A-P\|\leq\frac{1}{3}$. We denote by $\lambda_1(A)\geq \ldots \geq\lambda_n(A)$ the ordered eigenvalues of $A$. 
Applying Weyl inequality for symmetric matrices, that is for all $k = 1 \ldots n$, $|\lambda_k (A) - \lambda_k(P)| \leq \|A- P\|$, we obtain:
\begin{equation*}
   \lambda_{d+1}(A)\leq  \underbrace{\lambda_{d+1}(P)}_{=0} +\underbrace{\|A-P\|}_{\leq \frac{1}{3}} \leq \frac{1}{3} 
   \quad \text{and} \quad
   \underbrace{\lambda_{d}(P)}_{=1}\leq  \lambda_{d}(A) +\underbrace{\|A-P\|}_{\leq \frac{1}{3}} \quad \Rightarrow \quad \lambda_d(A) \geq \frac{2}{3} > \frac{1}{3} \geq \lambda_{d+1}(A) \: .
\end{equation*}
We then define $\tilde{s}(A) \in {\rm P}_{d,n}$ to be the orthogonal projection matrix onto the direct sum of eigenspaces of $A$ from $\lambda_1(A)$ up to $\lambda_d(A)$.
In other words, $\tilde{s}(A)$ is obtained by replacing the eigenvalues of $A$ with $1$ for $\lambda_1,\dots,\lambda_{d}$ and $0$ for the others, while not changing the eigenspaces.
The Lipschitz continuity of $\tilde{s}$ in $\mathcal{V}$ is a direct consequence of Davis--Kahan Theorem (\cite{davisK}, see also Theorem~2 in \cite{Samworth}): let $A,B \in \mathcal{V}$, by Davis--Kahan Theorem,
\[
 \| \tilde{s}(A)- \tilde{s}(B) \|_F \leq 2 \sqrt{2} \frac{\| A-B \|_F}{|\lambda_{d+1}(A) - \lambda_d(A) |} \leq \frac{2}{3} \sqrt{2} \| A-B \|_F \: ,
\]
where $\| \cdot \|_F$ is the Frobenius norm that is equivalent to the operator norm $\| \cdot \|$ then implying the Lipschitz continuity of $\tilde{s}$ in $\mathcal{V}$ with a Lipschitz constant $\tilde{L} \geq 1$ only depending on $n$. Then $s = s_0 \circ \tilde{s}$ is $\tilde{L}$--Lipschitz as well and extends $s_0$ to $\mathcal{V}$.
Given $g \in \xC_c (\R^n \times \Gr)$, we can then write
\begin{equation*}
 \int_{\R^n \times \Gr} g \: (dV_1 - dV_2)  = \int_{\R^n \times \xSym_+(n)} g(x, s(A))  \xi_{\mathcal{V}} (A)  \: (dW_1 - dW_2)  \: .
\end{equation*}
where $\xi_{\mathcal{V}} (A) = 1 - 4 \: {\rm dist}(A, {\rm P}_{d,n})$ if ${\rm dist}(A, {\rm P}_{d,n}) \leq \frac{1}{4}$ and $0$ otherwise so that $\xi_{\mathcal{V}}$ is $4$--Lipschitz continuous and compactly supported in $\left\lbrace A \in \xSym_+(n) \: : \:  {\rm dist} (A , {\rm P}_{d,n}) \leq \frac{1}{4} \right\rbrace$. Therefore, $f = (g \circ ({\rm id} , s)) \xi_\mathcal{V}$ if $\tilde{L} + 4$--Lipschitz and satisfies $\| f \|_\infty \leq 1$. We can eventually take the supremum with respect to such maps $g$ to infer that for some dimensional constant $L \geq 1$,
\begin{equation} \label{eqBilip12}
 \beta (V_1, V_2) \leq L \beta (W_1, W_2) \: .
\end{equation}
Note that we showed that we could express $\mathcal{I}^{-1}$ as $\mathcal{I}^{-1} : W \mapsto V = \xi_{\mathcal{V}} \left( ({\rm id}, s)_\# W \right)$.
\end{proof}
Building upon such an explicit bi--Lipschitz correspondence $\mathcal{I}$ between $d$--varifolds and Radon measures in $\R^n \times \xSym_+(n)$ with support in $\R^n \times {\rm P}_{d,n}$, we estimate $W_S$ hereafter, rather than $V_S$. We will also abusively call $d$--varifolds such Radon measures in $\R^n \times \xSym_+(n)$ with support in $\R^n \times {{\rm P}_{d,n}}$.

\section{Inference framework}
\label{secInferenceFramework}

Consistently with what we explained in the introduction, we fix the following setting from now on and all along the paper: we are given a $d$--dimensional closed set $S \subset \R^n$ satisfying $\cH^d (S) < +\infty$ and we assume that we have access to samples of $S$, but possibly not uniformly distributed in $S$. More formally, for $N \in \N^\ast$, $(X_1, \ldots, X_N)$ are $\R^n$--valued independent random variables identically distributed according to a law $\mu = \theta \cH^d_{| S}$. In this setting, ``uniformly distributed in $S$'' would mean that the density $\theta$ is constant. We consider hereafter more general Borel densities $\theta : S \rightarrow \R_+$, however, a crucial requirement is that $\theta \geq \theta_{min} > 0$ is lower bounded. Loosely speaking, we exclude the case where the sampling process would produce holes in some parts of $S$. We also assume that $\theta \leq \theta_{max} < +\infty$ is upper bounded. We mention that such uniform assumptions could be relaxed regarding the pointwise results in Section~\ref{secPointwiseDensity} (see Remark~\ref{rmkUpperDensityBound}) while the ``non pointwise'' convergence results (involving the Bounded Lipschitz distance) strongly rely on such uniform bounds. 
An important point is that the following assumptions 
\hyperref[hypH1]{$(H_1)$} and \hyperref[hypH2]{$(H_2)$} would automatically imply that $S$ is compact even though we do not require it (see Remark~\ref{rkHyp}) and we directly assume it then.
For the sake of clarity, we will thereafter refer to the following hypotheses:

\begin{description}
 \item[$(H_1)$  \label{hypH1}] The set $S \subset \R^n$ is a compact set satisfying $\cH^d(S) < +\infty$ and $\exists \widetilde{C_0} \geq 1$ such that $\forall x \in S$, $\forall 0<r \leq \xdiam (S)$,
\begin{equation*}
{\widetilde{C_0}}^{-1} r^d\leq \cH^d (S \cap B(x,r)) \leq \widetilde{C_0} r^d \: ,
\end{equation*}
and the probability measure $\mu$ is defined as $\mu = \theta \cH^d_{| S}$ with $\theta : \R^n \rightarrow \R_+ \in \xL^1 (\cH^d_{| S})$ such that $\int_S \theta \: d \cH^d = 1$.
\item[$(H_2)$  \label{hypH2}] We assume that there exist $0 < \theta_{min} \leq \theta_{max} < +\infty$ such that for $\cH^d$--a. e. $x \in S$, $$\theta_{min} \leq \theta(x) \leq \theta_{max} \: .$$
\item[$(H_3)$  \label{hypH3}] We assume $d \in \N$ and $S$ is $d$--rectifiable.
\end{description}

We recall that we introduced in Section~\ref{secIntroRecVarifold} the regularity class 
\begin{equation} \label{eqClassQ}
\cQ = \{ \mu = \theta \cH^d_{| S} \: : \: S, \: \theta \text{ satisfy  \hyperref[hypH1]{$(H_1)$}--\hyperref[hypH2]{$(H_2)$}--\hyperref[hypH3]{$(H_3)$}} \}
\end{equation}
that depends on $d, \widetilde{C_0}, \theta_{min/max}$ or equivalently on $d, C_0, \theta_{min/max}$.

\begin{remark} \label{rkHyp}
Note that:
\begin{enumerate}[$(i)$]
 \item If $S$ and $\theta$ satisfy \hyperref[hypH1]{$(H_1)$} and \hyperref[hypH2]{$(H_2)$} then the measure $\nu := \cH^d_{| S}$ is $d$--Ahlfors regular with regularity constant $\widetilde{C_0}$, $\supp \nu = S$, the measure $\mu$ is $d$--Ahlfors regular with regularity constant $C_0 = \widetilde{C_0} \max (\theta_{min}^{-1}, \theta_{max})$ and $S$ is bounded even if not required.
 \item If $S$ and $\theta$ satisfy \hyperref[hypH1]{$(H_1)$}, it would be equivalent to require that $\mu = \theta \cH^d_{| S}$ is $d$--Ahlfors regular or \hyperref[hypH2]{$(H_2)$}.
 \item \hyperref[hypH3]{$(H_3)$} implies that $\mu$ is $d$--rectifiable.
\end{enumerate}
\end{remark}

\noindent Indeed, if $S$ and $\theta$ satisfy \hyperref[hypH1]{$(H_1)$}, let us check that $S = \supp \nu$ and then the Ahlfors regularity of $\nu$ follows from \hyperref[hypH1]{$(H_1)$}. Let $x \in \R^n \setminus S$, which is an open set, then for $r > 0$ small enough $S \cap B(x,r) = \emptyset$ hence $x \notin \supp \nu$ and thus $\supp \nu \subset S$. Conversely, if $x \in  S$, then for all $r > 0$, $\nu(B(x,r)) \geq \widetilde{C_0}^{-1} r^d > 0$ hence $x \in \supp \nu$ and thus $S \subset \supp \nu$.
The Ahlfors regularity of $\mu$ is then straightforward thanks to \hyperref[hypH2]{$(H_2)$}:
let $x \in S$ and $0 < r \leq \xdiam(S)$, then
\[
  \widetilde{C_0}^{-1} \theta_{min} r^d \leq \theta_{min} \nu(B(x,r))
  \leq \mu(B(x,r)) = \int_{B(x,r)} \theta \: d \nu \leq \theta_{max} \nu(B(x,r)) \leq \theta_{max} \widetilde{C_0} r^d
\]
and $S = \supp \mu$ (as for $\nu$) so that $\mu$ is $d$--Ahlfors regular with regularity constant $C_0 = \widetilde{C_0}\max ( \theta_{min}^{-1} , \theta_{max} )$. As $\mu$ is a finite $d$--Ahlfors measure in $\R^n$, $S = \supp \mu$ is automatically bounded thanks to Remark~\ref{remkAhlfors}.
We thus checked $(i)$.
As for $(ii)$, if $S$ and $\theta$ satisfy \hyperref[hypH1]{$(H_1)$} and $\mu$ is $d$--Ahlfors regular with regularity constant $C_0$, then $\theta$ satisfy \hyperref[hypH2]{$(H_2)$} by differentiation of Radon measures: for a. e. $x \in S$,
\[
 \theta (x) = \lim_{r \to 0_+} \frac{\mu(B(x,r))}{\nu(B(x,r))} \quad \text{and} \quad \forall r > 0, \, (C_0 \widetilde{C_0})^{-1} \leq \frac{\mu(B(x,r))}{\nu(B(x,r))} \leq C_0 \widetilde{C_0} \: .
\]
In Sections~\ref{secUniformPWHolder} and \ref{secSplit}, further assumptions \eqref{hypH4} to \eqref{hypH7} will be added to this minimal setting leading to the regularity class $\cP$ \eqref{eqClassP}.

\subsection{Some preliminary facts concerning the empirical measure}

Let $(\Omega,\mathcal{A},\mathds{P})$ be a probability triplet. We fix hereafter some notations and collect some preliminary computations that will be useful in the sequel (more particularly in the proof of Proposition~\ref{propBetaLoc}).

\begin{definition}[Empirical measure]
\label{empmea}
Let $N \in \mathbb{N}^\ast$ and $(X_1,\dots,X_N)$ be $N$ independent $\R^n$--valued random variables with the same law $\mu$, the associated empirical measure $\mu_N$ is defined as:
\[
 \mu_N=\frac{1}{N}\sum_{i=1}^N\delta_{X_i} \: .
\]
\end{definition}

We recall the following basic properties of the empirical measure. Given $N \in \mathbb{N}^*$ and an i.i.d. sample $(X_1,\dots,X_N)$ with common distribution $\mu$ in $\R^n$, we have for any Borel sets $B \subset \R^n$,
\begin{equation} \label{eqBasicMuN}
 \E \left[ \mu_N(B) \right] = \mu (B) \quad \text{and} \quad \xvar \left( \mu_N(B) \right) = \frac{\mu(B) - \mu(B)^2}{N} \quad \text{and} \quad \E \left[ \mu_N(B)^2 \right] \leq \mu(B)^2 + \frac{1}{N} \mu(B) \: .
\end{equation}
We will also need the following similar though slightly more technical estimates (in the proof of Lemma~\ref{L1}): 

\begin{lemma}
For any Borel set $A, B \subset \R^n$ satisfying $\displaystyle \frac{1}{N} \leq C \min (\mu(A), \mu(B))$ (with $C \geq 1$), we have
\begin{equation} \label{eqBasicMuNBis}
 \E \left[ \mu_N(A) \mu_N(B) \right] \leq 2 C \mu(A) \mu(B) 
 \quad \text{and} \quad
 \E \left[ \mu_N(A)^2 \mu_N(B)^2 \right] \leq 15 C^3 \mu(A)^2 \mu(B)^2 \: .
\end{equation}
Assume in addition that $\mu$ is $d$--Ahlfors regular with constant $C_0 \geq 1$, then, for any Borel set $T \subset \R^n$ and for all $0 < N^{-\frac{1}{d}} \leq R, \widetilde{R}$, we have
\begin{equation} \label{eqBasicMuNTer}
 \E \left[ \int_T \mu_N (B(x,R)) \: d (\mu_N + \mu)(x) \right] \leq 3 C_0 R^d \mu(T) \: ,
\end{equation}
\begin{equation} \label{eqBasicMuN4}
 \E \left[ \int_T \mu_N (B(x,R)) \mu_N (B(x, \widetilde{R})) \: d (\mu_N + \mu)(x) \right] \leq 7 C_0^2 R^d \widetilde{R}^d \mu(T) \: .
\end{equation}
\end{lemma}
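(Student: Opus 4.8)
The plan is to prove the four inequalities in sequence, using the i.i.d.\ structure and the elementary moment bounds \eqref{eqBasicMuN} as building blocks, then converting ``mass'' control into ``measure of balls'' control via $d$--Ahlfors regularity.

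\textbf{Step 1: the products $\mu_N(A)\mu_N(B)$.} Expanding $\mu_N(A)\mu_N(B) = N^{-2}\sum_{i,j} \one_A(X_i)\one_B(X_j)$ and taking expectations, the $N$ diagonal terms $i=j$ contribute $N^{-2}\cdot N \cdot \mu(A\cap B) \leq N^{-1}\min(\mu(A),\mu(B)) \leq C\mu(A)\mu(B)$ using the hypothesis $N^{-1}\leq C\min(\mu(A),\mu(B))$ (which gives $N^{-1}\mu(A\cap B)\le N^{-1}\mu(A)\le C\mu(A)\mu(B)$, say). The $N(N-1)$ off-diagonal terms give $N^{-2}N(N-1)\mu(A)\mu(B)\leq \mu(A)\mu(B)$. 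Summing, $\E[\mu_N(A)\mu_N(B)]\leq (1+C)\mu(A)\mu(B)\leq 2C\mu(A)\mu(B)$ since $C\geq 1$. For the fourth moment $\E[\mu_N(A)^2\mu_N(B)^2]$ one expands the quadruple sum $N^{-4}\sum_{i,j,k,l}\one_A(X_i)\one_A(X_j)\one_B(X_k)\one_B(X_l)$ and groups terms according to the partition of $\{i,j,k,l\}$ into blocks of equal indices. Each block forces a factor $N^{-(\#\text{blocks merged})}$ but also an intersection of the relevant sets; using $\mu(A\cap B)\le\min(\mu(A),\mu(B))$ repeatedly together with $N^{-1}\le C\min(\mu(A),\mu(B))$ to absorb each ``lost'' power of $N$ into a factor $C\mu(A)$ or $C\mu(B)$, every term is bounded by a constant times $C^{3}\mu(A)^2\mu(B)^2$ (the worst case, all four indices equal, loses three powers of $N$). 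A careful but routine count of the number of index-patterns of each type yields the constant $15$.

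\textbf{Step 2: the ball integrals \eqref{eqBasicMuNTer} and \eqref{eqBasicMuN4}.} Here the idea is to first integrate against $\mu$ and then handle the $\mu_N$ part via conditioning. Write $\mu_N(B(x,R)) = N^{-1}\sum_i \one_{B(x,R)}(X_i)$, so $\mu_N(B(x,R)) = N^{-1}\sum_i \one_{|X_i-x|<R}$. For the $\mu$ part, Fubini gives $\E\!\int_T \mu_N(B(x,R))\,d\mu(x) = \int_T \mu(B(x,R))\,d\mu(x) \leq C_0 R^d \mu(T)$ by the upper Ahlfors bound applied at each $x\in T\subset S$. For the $\mu_N$ part, $\E\!\int_T \mu_N(B(x,R))\,d\mu_N(x) = N^{-2}\E\sum_{i,j}\one_{|X_i-X_j|<R}$; the $N$ diagonal terms give $N^{-1}\leq R^d$ (using $N^{-1/d}\leq R$, i.e.\ $N^{-1}\le R^d$), wait — one must also multiply by $\one_{X_j\in T}$; the diagonal contributes $N^{-1}\mu(T)\le R^d\mu(T)$, and the off-diagonal terms give, after conditioning on $X_j$ and using $\E[\one_{|X_i-X_j|<R}\mid X_j]=\mu(B(X_j,R))\leq C_0 R^d$ for $X_j\in S$, at most $\mu(T)C_0R^d$. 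Summing all pieces: $\E\!\int_T\mu_N(B(x,R))\,d(\mu_N+\mu)(x)\le (C_0 + 1 + C_0)R^d\mu(T)\le 3C_0R^d\mu(T)$. Inequality \eqref{eqBasicMuN4} is the same computation with a product of two ball indicators $\one_{|X_i-x|<R}\one_{|X_k-x|<\widetilde R}$; expanding and conditioning, the cases where the two new indices $i,k$ coincide with each other or with the integration point force one extra $N^{-1}\le \min(R,\widetilde R)^d \le R^d$ or $\widetilde R^d$, and one applies $\mu(B(y,R))\le C_0R^d$, $\mu(B(y,\widetilde R))\le C_0\widetilde R^d$; collecting a bounded number of such terms gives the constant $7C_0^2$.

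\textbf{Main obstacle.} The conceptual content is light; the real work — and the only place a reader might stumble — is the bookkeeping in \eqref{eqBasicMuN4} (and the fourth-moment bound in \eqref{eqBasicMuNBis}): one has a sum over several index-coincidence patterns, each with its own combinatorial multiplicity and its own number of lost powers of $N$, and one must check in each case that the lost $N^{-1}$'s are compensated either by $N^{-1}\le R^d$ (resp.\ $\le\widetilde R^d$) or, in Step 1, by $N^{-1}\le C\mu(A)$ (resp.\ $\le C\mu(B)$), so that the final bound has exactly the claimed form and constant. I would organize this by listing the partitions of the index set explicitly, bounding each class, and adding up; the constants $15$ and $7$ then come out of that enumeration (and are surely not optimal, only convenient).
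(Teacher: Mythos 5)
Your proposal is correct and follows essentially the same route as the paper: expand the multiple sums over sample indices, split according to index-coincidence patterns, use independence for distinct indices, and absorb each lost factor of $N^{-1}$ via $N^{-1}\leq C\min(\mu(A),\mu(B))$ or $N^{-1}\leq R^d$ together with the Ahlfors bound $\mu(B(x,R))\leq C_0R^d$. The only difference is that the paper carries out the combinatorial enumeration explicitly (which is where the constants $15$ and $7$ come from), whereas you defer it as "routine bookkeeping"; filling that in would reproduce the paper's argument.
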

\begin{proof}
Let us start with the proof of \eqref{eqBasicMuNBis}.
First note that due to the symmetry of the estimates, we can assume that $\mu(A) \leq \mu(B)$ without loss of generality.
Then,
\[
 \mu_N (B) = \frac{1}{N} \sum_{i=1}^N \delta_{X_i} (B) \quad \text{and} \quad \E \left[ \mu_N(A) \mu_N(B) \right] = \frac{1}{N^2} \sum_{i,j = 1}^N \E \left[ \delta_{X_i} (A) \delta_{X_j} (B) \right] \: .
\]
If $i = j$, then $\E \left[ \delta_{X_i} (A) \delta_{X_j} (B) \right] = \E \left[ \delta_{X_i} (A \cap B) \right] = \mathds{P} ( X_i \in A \cap B) = \mu(A \cap B)$ and otherwise $X_i$ and $X_j$ are independent and thus $\E \left[ \delta_{X_i} (A) \delta_{X_j} (B) \right] = \E \left[ \delta_{X_i} (A) \right] \E \left[ \delta_{X_j} (B) \right] =\mu(A) \mu(B)$
so that
\[
 \E \left[ \mu_N(A) \mu_N(B) \right] \leq \mu(A) \mu(B) + \underbrace{\frac{N}{N^2}}_{\leq C \mu(A)} \underbrace{\mu(A \cap B)}_{\leq \mu(B)} \leq (1 + C) \mu(A) \mu(B) \: .
\]
We are left with the second estimate in \eqref{eqBasicMuNBis} and we perform similar computations:
\[
 \E \left[ \mu_N(A)^2 \mu_N(B)^2 \right] = \frac{1}{N^4} \sum_{i,j,k,l = 1}^N \E \left[ \delta_{X_i} (A) \delta_{X_j} (A) \delta_{X_k} (B) \delta_{X_l} (B) \right] \: .
\]
We can then use independence when the four indices are all different and enumerate the other cases, which leads to
\begin{align*}
 \E \left[ \mu_N(A)^2 \mu_N(B)^2 \right]  \leq & \mu(A)^2 \mu(B)^2 + \frac{1}{N} \mu(A) \mu(B) \left( \mu(A) + \mu(B) + 4 \mu(A \cap B) \right) + \frac{1}{N^2} \left( \mu(A) \mu(B) + 2 \mu(A \cap B)^2 \right) \\
 & + \frac{2}{N^2} \mu(A \cap B) \left( \mu(A) + \mu(B) \right) + \frac{1}{N^3} \mu(A \cap B) \\
 \leq & \mu(A)^2 \mu(B)^2 + \frac{6}{N} \mu(A) \mu(B)^2 + \frac{7}{N^2} \mu(B)^2  + \frac{1}{N^3} \mu(B) \: ,
\end{align*}
where we used $\mu(A \cap B) \leq \mu(A) \leq \mu(B)$ and we conclude the proof of \eqref{eqBasicMuNBis} with $\frac{1}{N} \leq C \mu(A)$.

\noindent We proceed with the proof of \eqref{eqBasicMuNTer}. By linearity and Ahlfors regularity, we have
\begin{equation} \label{eqPrelimMuNMu1}
\mathbb{E}\left[\int_T \mu_N(B(x,R))d\mu(x)\right]=\int_T \mathbb{E}\left[\mu_N(B(x,R))\right]d\mu(x) = \int_T\mu(B(x,R))d\mu(x) \leq C_0 R^d \mu(T) \: .
\end{equation}
Furthermore,
\begin{align*}
\mathbb{E}\left[\int_T \mu_N(B(x,R))d\mu_N(x)\right]  = \frac{1}{N}\sum_{i=1}^N \E \left[ \mu_N(B(X_i,R)) \: \delta_{X_i}(T) \right]  = \frac{1}{N^2} \sum_{i,j=1}^N \E \left[ \mathds{1}_{\{|X_i - X_j | < R \}}  \: \mathds{1}_{\{X_i \in T\}} \right] 
\end{align*} 
and for $i \neq j$, 
\begin{align}
\E \left[ \mathds{1}_{\{|X_i - X_j | < R \}}  \: \mathds{1}_{\{X_i \in T\}} \right] & = \E [ u(X_i, X_j) ] \quad \text{with} \quad u(x,y) =  \mathds{1}_{\{|x - y| < R \}}  \: \mathds{1}_{\{x\in T\}} \nonumber \\
& = \int_{(x,y) \in (\R^n)^2} u(x,y) \: d\mu(x) d\mu(y) \quad \text{since } (X_i,X_j) \sim \mu \otimes \mu \nonumber \\
& = \int_{x \in T} \int_{\{ y \: : \: |x-y| < R  \}} \: d\mu(y) d\mu(x) \nonumber \\
& = \int_T \mu \left( B(x,R) \right) \: d \mu(x) \: ,
\label{eqPrelimMuNMu12}
\end{align}
whereas for $i = j$,
\begin{equation*}
 \E \left[ \mathds{1}_{\{|X_i - X_j | < R \}}  \: \mathds{1}_{\{X_i \in T\}} \right] = E \left[ \mathds{1}_{\{X_i \in T\}} \right] = \mu(T) \: .
\end{equation*}
Therefore, using $N^{-1} \leq R^d$,
\begin{equation} \label{eqPrelimMuNMu2}
 \mathbb{E}\left[\int_T \mu_N(B(x,R))d\mu_N(x)\right]  \leq \int_T \mu \left( B(x,R) \right) \: d \mu(x) + \frac{1}{N} \mu(T) \leq (C_0 +1) R^d \mu(T) \leq 2 C_0 R^d \mu(T) \: ,
\end{equation}
and we infer \eqref{eqBasicMuNTer} from \eqref{eqPrelimMuNMu1} and \eqref{eqPrelimMuNMu2}.\\
We are left with the proof of \eqref{eqBasicMuN4} which is similar. We first note that by symmetry of the statement, we can assume for instance $R \leq \widetilde{R}$. Then by assumption $\frac{1}{N} \leq R^d$ and by Ahlfors regularity,
\begin{equation*}
\E \left[ \mu_N(B(x,R)) \mu_N(B(x, \widetilde{R})) \right] \leq \mu(B(x,R)) \mu(B(x, \widetilde{R})) + \frac{1}{N} \mu(B(x,R)) \leq 2C_0^2 R^d \widetilde{R}^d \: ,
\end{equation*}
so that
\begin{equation} \label{eqPrelimMuNMu3}
\mathbb{E}\left[\int_T \mu_N(B(x,R)) \mu_N(B(x, \widetilde{R})) \:  d\mu(x)\right]  \leq 2C_0^2 R^d \widetilde{R}^d \mu(T) \: .
\end{equation}
Furthermore,
\begin{align*}
\mathbb{E}\left[\int_T \mu_N(B(x,R))  \mu_N(B(x, \widetilde{R})) \: d\mu_N(x)\right]  = \frac{1}{N^3}\sum_{i,j,k=1}^N \E \left[ \mathds{1}_{\{|X_i - X_j | < R \}}  \: \mathds{1}_{\{|X_i - X_k | < R \}} \: \mathds{1}_{\{X_i \in T\}} \right] \: ,
\end{align*} 
and we review the disjoint possibilities hereafter.\\
If $i = j = k$, we have $\displaystyle \E \left[ \mathds{1}_{\{|X_i - X_j | < R \}}  \: \mathds{1}_{\{|X_i - X_k | < R \}} \: \mathds{1}_{\{X_i \in T\}} \right] = \E \left[ \mathds{1}_{\{X_i \in T\}} \right] = \mu(T)$. \\
Otherwise, if $i = k$ (or similarly $i = j$, replacing $R$ with $\widetilde{R}$), we have thanks to \eqref{eqPrelimMuNMu12}:
\begin{equation*}
 \E \left[ \mathds{1}_{\{|X_i - X_j | < R \}}  \: \mathds{1}_{\{|X_i - X_k | < \widetilde{R} \}} \: \mathds{1}_{\{X_i \in T\}} \right] = \E \left[ \mathds{1}_{\{|X_i - X_j | < R \}} \mathds{1}_{\{X_i \in T\}} \right] = \int_T \mu \left( B(x,R) \right) \: d \mu(x) \leq C_0 R^d \mu(T) \: ,
\end{equation*}
whereas if $j = k$, with $R \leq \widetilde{R}$, 
\begin{align*}
 \E \left[ \mathds{1}_{\{|X_i - X_j | < R \}}  \: \mathds{1}_{\{|X_i - X_k | < \widetilde{R} \}} \: \mathds{1}_{\{X_i \in T\}} \right] & = \E \left[ \mathds{1}_{\{|X_i - X_j | < R \}} \mathds{1}_{\{X_i \in T\}} \right]\\
 & = \int_T \mu \left( B(x, R) \right) \: d \mu(x) \leq C_0 R^d \mu(T) \: .
\end{align*}
Finally, if $(i,j,k)$ are distinct,
\begin{align*}
 \E \left[ \mathds{1}_{\{|X_i - X_j | < R \}}  \: \mathds{1}_{\{|X_i - X_k | < \widetilde{R} \}} \: \mathds{1}_{\{X_i \in T\}} \right] & = \E \left[ u(X_i, X_j, X_k) \right] \text{ with } u(x,y,z) = \mathds{1}_{\{|x - y | < R \}}  \mathds{1}_{\{|x - z | < \widetilde{R} \}} \: \mathds{1}_{\{x \in T\}} \\
 & = \int_{(\R^n)^3} u(x,y,z) \: d \mu(x) d \mu(y) d \mu(z) \\
 & = \int_T \mu \left( B(x, R) \right)  \mu \left( B(x, \widetilde{R}) \right) \: d \mu(x) \leq C_0^2 R^d \widetilde{R}^d \mu(T) \: .
\end{align*}
Enumerating and combining the different cases, we thus obtain using $\frac{1}{N} \leq R^d$,
\begin{align*}
\mathbb{E}\left[\int_T \mu_N(B(x,R))  \mu_N(B(x, \widetilde{R})) \: d\mu_N(x)\right] & \leq C_0^2 R^d \widetilde{R}^d \mu(T) + \frac{C_0}{N} (2 R^d + \widetilde{R}^d ) \mu(T) + \frac{1}{N^2} \mu(T) \\
\leq 5C_0^2 R^d \widetilde{R}^d \mu(T) \: ,
\end{align*}
which concludes the proof of \eqref{eqBasicMuN4} thanks to \eqref{eqPrelimMuNMu3}.
\end{proof}

\subsection{Pointwise estimator of density} \label{secPointwiseDensity}

As a first step in the estimation of $S$, we recall in this section a usual kernel estimator of the density $\theta$ and its pointwise mean convergence rate (see Corollary~\ref{coroDensityPointwiseCv}). Note that the weak regularity framework does not affect the convergence rate of the ``concentration'' part (see Proposition~\ref{propDensityPointwiseCv} and Proposition 3.5 in \cite{Berenfeld}) whose proof is straightforward.
We fix a family of radial kernels in the usual way: we fix a Lipschitz even function $\eta : \mathbb{R}\mapsto \mathbb{R}_+$ with support in $(-1,1)$ and we additionally assume that $\eta > 0$ in $\left[ - \frac{1}{2}, \frac{1}{2} \right]$ and
we define
\begin{equation} \label{eqKernelEta}
C_\eta = d \omega_d \int_{r=0}^1 \eta(r)r^{d-1} \: dr \quad \text{and for } \delta > 0, \: x \in \R^n, \, 
\eta_\delta(x) =  \eta \left(\frac{|x|}{\delta} \right) \: .
\end{equation}
We introduce the following notations that will be used throughout the next sections: given a Radon measure $\lambda$ in $\R^n$, $x \in \R^n$ and $\delta > 0$, $\displaystyle \Theta_\delta  (x, \lambda) = \frac{\lambda \ast \eta_\delta}{C_\eta \delta^d}$ and when there is no ambiguity, we will use the following shortened notations in the cases $\lambda = \mu$ and $\lambda = \mu_N$,
\begin{equation} \label{eqThetaN}
 \theta_{\delta} = \Theta_\delta  (\cdot , \mu) = \frac{\mu \ast \eta_\delta}{C_\eta \delta^d} \quad \text{and} \quad  \theta_{\delta, N} = \Theta_\delta  (\cdot, \mu_N) = \frac{\mu_N \ast \eta_\delta}{C_\eta \delta^d} \: .
\end{equation}
We first check in Proposition~\ref{lemDensityPointwiseCv} that we have the convergence of $\theta_\delta$ to $\theta$ a.e. in $S$ when $\delta$ tends to $0$, only assuming that $\mu = \theta \cH^d_{| S}$ is rectifiable.
We then recall in Proposition~\ref{propDensityPointwiseCv}) the mean convergence of $\theta_{\delta,N}$ to $\theta_\delta$ for a given $\delta > 0$. We conclude in Corollary~\ref{coroDensityPointwiseCv} that we can choose a sequence $(\delta_N)_N$ tending to $0$ such that $\theta_{\delta_N, N}$ is a pointwise convergent estimator of the density $\theta$.

\begin{proposition} \label{lemDensityPointwiseCv}
Let $1 \leq d \leq n$ be an integer and let $\mu = \theta \cH^d_{| S}$ be a $d$--rectifiable measure. 
Let $\theta_\delta :\mathbb{R}^n \mapsto \mathbb{R}_+$ be as in \eqref{eqThetaN}.
Then for $\mathcal{H}^d$--a.e. $x \in S$, $\displaystyle \lim_{\delta \to 0_+} \theta_\delta(x) = \theta(x)$.
\end{proposition}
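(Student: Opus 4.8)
The plan is to reduce the convergence $\theta_\delta(x) \to \theta(x)$ to the existence of the approximate tangent space of $\mu$ at $x$, which is guaranteed for $\cH^d$-a.e. $x\in S$ by Proposition~\ref{propTgtSpaceToRectifMeasure} since $\mu$ is $d$--rectifiable. The key observation is that $\theta_\delta(x)$ is, up to the normalization constant $C_\eta \delta^d$, a convolution-type quantity:
\[
\theta_\delta(x) = \frac{1}{C_\eta \delta^d}\int_{\R^n} \eta\!\left(\frac{|y-x|}{\delta}\right) d\mu(y) = \frac{1}{C_\eta} \cdot \delta^{-d}\int_{\R^n} f\!\left(\frac{y-x}{\delta}\right) d\mu(y),
\]
where $f(z) = \eta(|z|)$ is a fixed continuous compactly supported function on $\R^n$ (supported in the unit ball). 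This is exactly the type of rescaled integral appearing in Definition~\ref{ATP}.

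First I would fix a point $x\in S$ at which $\mu$ admits an approximate tangent space $P = T_x S$ with multiplicity $\theta(x)$; by Proposition~\ref{propTgtSpaceToRectifMeasure} this holds $\cH^d$--a.e. on $S$, and at such points $\theta(x)$ equals the density $\lim_{r\to 0_+}\mu(B(x,r))/(\omega_d r^d)$, so $\theta(x)<\infty$. Applying Definition~\ref{ATP} with the test function $f(z)=\eta(|z|)\in \xC_c(\R^n)$ and $r=\delta$ gives
\[
\lim_{\delta\to 0_+}\delta^{-d}\int_{\R^n}\eta\!\left(\frac{|y-x|}{\delta}\right)d\mu(y) = \theta(x)\int_P \eta(|y|)\, d\cH^d(y).
\]
Then I would compute the right-hand integral: since $P\in\Gr$ is a $d$--plane, $\cH^d$ restricted to $P$ is Lebesgue measure on $P\cong\R^d$, so by the coarea/polar-coordinates formula $\int_P\eta(|y|)\,d\cH^d(y) = d\omega_d\int_0^1\eta(r)r^{d-1}\,dr = C_\eta$ by the definition \eqref{eqKernelEta}. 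Dividing by $C_\eta$ yields $\lim_{\delta\to 0_+}\theta_\delta(x) = \theta(x)$, which is the claim.

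The only genuine subtlety — and the step I would be most careful about — is the justification that $f(z)=\eta(|z|)$ is an admissible test function in Definition~\ref{ATP}: $\eta$ is Lipschitz, even, nonnegative and supported in $(-1,1)$, so $z\mapsto\eta(|z|)$ is continuous (the potential issue at $z=0$ is harmless since $\eta$ is continuous there) and compactly supported in $\overline{B(0,1)}$, hence in $\xC_c(\R^n)$; so no approximation argument is needed and Definition~\ref{ATP} applies directly. One should also note that the set of "good" points $x$ where the approximate tangent space exists has full $\cH^d$-measure in $S$ but a priori depends only on $\mu$ (and not on the particular $\eta$), so the conclusion indeed holds for $\cH^d$-a.e.\ $x\in S$ simultaneously for the fixed kernel $\eta$. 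There is no serious obstacle here; the proposition is essentially an unpacking of the definition of approximate tangent space together with the bookkeeping of the normalization constant $C_\eta$.
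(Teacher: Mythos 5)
Your proof is correct and follows essentially the same route as the paper's: apply Definition~\ref{ATP} at an $\cH^d$--a.e.\ point where the approximate tangent space exists (via Proposition~\ref{propTgtSpaceToRectifMeasure}) with the test function $z\mapsto\eta(|z|)$, then evaluate $\int_{T_xS}\eta(|y|)\,d\cH^d(y)=C_\eta$ by polar/coarea coordinates. No gaps.
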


\begin{proof}
We use the rectifiability of $\mu$ implying that $\mathcal{H}^d$--a.e. in $S$, $\mu$ has an approximate tangent plane $T_x S$. Let $x \in S$ be such a point and apply Definition~\ref{ATP} with $\eta_1 = \eta (|\cdot|) \in \xC_c(\R^n)$ to obtain 
\begin{equation*}
\theta_\delta (x) = \frac{1}{C_\eta \delta^d}\int_{\mathbb{R}^n} \eta \left(\frac{|y-x|}{\delta}\right) \: d\mu(y)\xrightarrow[\delta\to 0]{}\theta(x) \frac{1}{C_\eta}\int_{T_x S} \eta(|y|) \: d\mathcal{H}^d(y) = \theta(x) \: ,
\end{equation*}
where the last equality follows from the coarea formula applied on concentric spheres:
\begin{align}
\int_{T_xS}\eta(|y|)d\mathcal{H}^d(y)=&\int_{r=0}^1\int_{ \{ y\in T_x S \: : \: |y| = r \} }\eta(|y|)d\mathcal{H}^{d-1}(y)dr   
= \int_{r=0}^1\eta(r) \underbrace{\mathcal{H}^{d-1}(T_x S \cap \partial B(0,1))}_{= d \omega_d} r^{d-1} \: dr \nonumber  \\
=& d\omega_d\int^1_0\eta (r)r^{d-1}dr=C_{\eta} \: . \label{eqCetaRn}
\end{align}
\end{proof}

\begin{proposition} \label{propDensityPointwiseCv}
Let $0 < d \leq n$ and assume that $S$ and $\theta$ satisfy \hyperref[hypH1]{$(H_1)$} and \hyperref[hypH2]{$(H_2)$} and let $C_0 \geq 1$ be a regularity constant for $\mu = \theta \cH^d_{| S}$ (see Remark~\ref{rkHyp}$(i)$).
Let $\mu_N$ be the empirical measure associated with $\mu$ and $\theta_\delta$, $\theta_{\delta, N} :\mathbb{R}^n \mapsto \mathbb{R}_+$ be as in \eqref{eqThetaN}.
Then there exists a constant $M = M(d,C_0,\eta) > 0$ such that for all $x \in S$ and for all $\delta > 0$, $N \in \N^\ast$,
\begin{equation} \label{densityestim}
\mathbb{E}\left[\left|\theta_{\delta, N}(x) -\theta_\delta (x)\right|\right]\leq \frac{M}{\sqrt{N \delta^d}} \: .
\end{equation}
\end{proposition}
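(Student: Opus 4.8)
The plan is to apply the Efron--Stein inequality (Theorem~\ref{thmEfron}) to the random variable $Z = \theta_{\delta,N}(x)$ for a fixed $x \in S$ and a fixed $\delta > 0$, and then to pass from the resulting variance bound to the claimed $\xL^1$ bound via Jensen's inequality. First I would note that, by \eqref{eqThetaN}, $\theta_{\delta,N}(x) = (C_\eta \delta^d)^{-1} \mu_N \ast \eta_\delta(x) = (C_\eta \delta^d N)^{-1} \sum_{i=1}^N \eta(|x - X_i|/\delta)$, so it is of the form $g(X_1,\dots,X_N)$ with $g$ a (normalized) sum of one-variable functions. Since $\E[\theta_{\delta,N}(x)] = \theta_\delta(x)$ by the first identity in \eqref{eqBasicMuN}, Jensen gives $\E[|\theta_{\delta,N}(x) - \theta_\delta(x)|] \leq \sqrt{\xvar(\theta_{\delta,N}(x))}$, so it suffices to show $\xvar(\theta_{\delta,N}(x)) \leq M^2/(N\delta^d)$.

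Next I would estimate the Efron--Stein sum. Replacing $X_i$ by an independent copy $X_i'$ changes only the $i$-th summand, so
\[
Z - Z_i' = \frac{1}{C_\eta \delta^d N}\left( \eta\!\left(\tfrac{|x - X_i|}{\delta}\right) - \eta\!\left(\tfrac{|x - X_i'|}{\delta}\right) \right),
\]
and since $0 \leq \eta \leq \|\eta\|_\infty$ with $\supp \eta \subset (-1,1)$, we have $(Z - Z_i')^2 \leq (C_\eta \delta^d N)^{-2}\|\eta\|_\infty^2 \left( \one_{\{|x-X_i|<\delta\}} + \one_{\{|x-X_i'|<\delta\}} \right)$ (using that the difference vanishes unless at least one of the two points is within $\delta$ of $x$, and bounding the square of the difference of two terms each in $[0,\|\eta\|_\infty]$ by $\|\eta\|_\infty^2$ times the indicator that at least one is nonzero). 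Taking expectations, $\E[(Z-Z_i')^2] \leq (C_\eta \delta^d N)^{-2}\|\eta\|_\infty^2 \cdot 2\mu(B(x,\delta))$. Summing over $i = 1,\dots,N$ and applying Efron--Stein,
\[
\xvar(\theta_{\delta,N}(x)) \leq \frac{1}{2} \cdot N \cdot \frac{2\|\eta\|_\infty^2 \mu(B(x,\delta))}{C_\eta^2 \delta^{2d} N^2} = \frac{\|\eta\|_\infty^2 \mu(B(x,\delta))}{C_\eta^2 \delta^{2d} N}.
\]
Now the $d$--Ahlfors regularity of $\mu$ (available since $S,\theta$ satisfy \hyperref[hypH1]{$(H_1)$}--\hyperref[hypH2]{$(H_2)$}, by Remark~\ref{rkHyp}$(i)$) gives $\mu(B(x,\delta)) \leq C_0 \delta^d$ for $x \in S = \supp\mu$, so $\xvar(\theta_{\delta,N}(x)) \leq \|\eta\|_\infty^2 C_0 / (C_\eta^2 N \delta^d)$, and setting $M = \|\eta\|_\infty \sqrt{C_0}/C_\eta$ (which depends only on $d$, $C_0$, $\eta$) and taking square roots yields \eqref{densityestim}.

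One technical point deserves care: the Ahlfors bound $\mu(B(x,\delta)) \leq C_0\delta^d$ in Definition~\ref{dfnAhlfors} is stated for $0 < \delta \leq \xdiam(\supp\mu)$, whereas the Proposition is claimed for all $\delta > 0$; for $\delta > \xdiam(S)$ one simply uses $\mu(B(x,\delta)) \leq \mu(\R^n) = 1 \leq C_0 (\xdiam S)^d / (\xdiam S)^d$, and since $\delta^{-d} \leq (\xdiam S)^{-d}$ one can absorb this into the constant (or invoke Remark~\ref{remkAhlfors}$(ii)$), so the bound persists up to adjusting $M$ by a factor depending on $\xdiam S$; alternatively the statement is really of interest for small $\delta$. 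I do not expect any genuine obstacle here: the only mild subtlety is the clean bookkeeping of the constant in the pointwise bound $(Z-Z_i')^2 \leq \cdots$, i.e. observing that the difference of the two kernel evaluations is supported on $\{|x-X_i|<\delta\} \cup \{|x-X_i'|<\delta\}$ and bounded by $\|\eta\|_\infty$ in absolute value, so its square is $\leq \|\eta\|_\infty^2(\one_{\{|x-X_i|<\delta\}} + \one_{\{|x-X_i'|<\delta\}})$; everything else is a direct application of Efron--Stein, linearity of expectation, and Ahlfors regularity.
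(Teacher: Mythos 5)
Your proof is correct and follows essentially the same route as the paper's: apply Efron--Stein to $Z=\theta_{\delta,N}(x)$, note that $Z-Z_i'$ involves only the $i$-th kernel evaluation and is supported on $\{X_i\in B(x,\delta)\}\cup\{X_i'\in B(x,\delta)\}$, bound the expectation of the square via the $d$--Ahlfors regularity of $\mu$, sum over $i$, and finish with Jensen. The only differences are cosmetic (your pointwise bound on $(Z-Z_i')^2$ saves a factor of $2$ over the paper's, and you explicitly flag the $\delta>\xdiam(S)$ edge case that the paper silently absorbs via Remark~\ref{remkAhlfors}); neither affects the validity of the argument.
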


\begin{proof}
We fix $x\in S$ and define for $j \in \{ 1, \ldots, N\}$,
$$
Z_j =\frac{1}{C_\eta \delta^d} \eta\left(\frac{|x-X_j|}{\delta}\right) \quad \text{and} \quad Z = \theta_{\delta,N}(x) = \frac{1}{N} \sum_{j=1}^N Z_j  \:  .
$$
Since $Z_1, \ldots, Z_N$ are i.i.d., we have
\[
\E \left[ |\theta_{\delta,N}(x)-\theta_\delta(x)|^2 \right] = \xvar(Z) = \frac{\xvar(Z_1)}{N} \leq \frac{\E[(Z_1)^2]}{N} \leq \frac{\| \eta \|_\infty^2}{C_\eta^2 N \delta^{2d}} \E [ \one_{\{ |X_j - x| < \delta\} }] \leq \frac{C_0 \| \eta \|_\infty^2}{C_\eta^2 N \delta^{d}}  \: .
\]
We conclude the proof of Proposition~\ref{propDensityPointwiseCv} thanks to Jensen inequality:
\[ \E \left[ |\theta_{\delta,N}(x)-\theta_\delta(x)| \right] \leq \sqrt{ \E \left[ |\theta_{\delta,N}(x)-\theta_\delta(x)|^2 \right] }\leq M(d, C_0, \eta) \frac{1}{\sqrt{ N \delta^{d}}} \: .
\]
\end{proof}
Note that we do not need $\eta$ to be Lipschitz in the proof of Proposition~\ref{propDensityPointwiseCv}, it will be however necessary in Lemma~\ref{lemPropertiesPhiThetaN} and thus in Theorem~\ref{thmBetaLocNuN} as well. Combining Proposition~\ref{lemDensityPointwiseCv} and Proposition~\ref{propDensityPointwiseCv} yields the mean convergence of the kernel estimator of density $\theta_{\delta,N}$ provided that $\delta = \delta_N \to 0$ is well-chosen, as stated in Corollary~\ref{coroDensityPointwiseCv}.

\begin{corollary}
\label{coroDensityPointwiseCv}
Let $d\in \mathbb{N}^\ast$ and assume that $S$ and $\theta$ satisfy \hyperref[hypH1]{$(H_1)$}, \hyperref[hypH2]{$(H_2)$} and \hyperref[hypH3]{$(H_3)$}.
Let $\mu_N$ be the empirical measure associated with $\mu = \theta \cH^d_{| S}$ and $\theta_\delta$, $\theta_{\delta, N} :\mathbb{R}^n \mapsto \mathbb{R}_+$ as in \eqref{eqThetaN}.
Let $(\delta_N)_{N \in \N^\ast}$ be a positive sequence tending to $0$ and such that $\delta_N N^{\frac{1}{d}} \xrightarrow[N \to +\infty]{} +\infty$, then for $\cH^d$--a.e. $x \in S$,
\begin{equation} \label{eqDensityPointwiseCv}
\mathbb{E}\left[\left|\theta_{\delta_N, N}(x) -\theta(x)\right|\right] \leq M \frac{1}{\sqrt{N \delta_N^d}} + |\theta_{\delta_N}(x) - \theta(x) | \xrightarrow[N \to +\infty]{} 0 \: .  
\end{equation}
\end{corollary}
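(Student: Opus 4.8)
The plan is to combine the two ingredients just established: the deterministic pointwise convergence $\theta_\delta(x) \to \theta(x)$ from Proposition~\ref{lemDensityPointwiseCv} (which is valid $\cH^d$--a.e. on $S$ under rectifiability, hence under \hyperref[hypH3]{$(H_3)$}), and the fluctuation bound from Proposition~\ref{propDensityPointwiseCv}, which gives $\E[|\theta_{\delta,N}(x) - \theta_\delta(x)|] \leq M (N \delta^d)^{-1/2}$ for every $\delta > 0$ and every $x \in S$. First I would fix a point $x \in S$ in the full--measure set where both statements apply. For such an $x$, the triangle inequality
\[
 \E\left[\left|\theta_{\delta_N,N}(x) - \theta(x)\right|\right]
 \leq \E\left[\left|\theta_{\delta_N,N}(x) - \theta_{\delta_N}(x)\right|\right] + \left|\theta_{\delta_N}(x) - \theta(x)\right|
\]
combined with Proposition~\ref{propDensityPointwiseCv} applied at scale $\delta = \delta_N$ immediately gives the displayed upper bound $M (N\delta_N^d)^{-1/2} + |\theta_{\delta_N}(x) - \theta(x)|$.

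It then remains to check that each of the two terms on the right tends to $0$ as $N \to \infty$. For the first term, the hypothesis $\delta_N N^{1/d} \to +\infty$ is exactly the statement $N \delta_N^d \to +\infty$, so $(N\delta_N^d)^{-1/2} \to 0$. For the second term, the hypothesis that $(\delta_N)$ is a positive sequence tending to $0$, together with Proposition~\ref{lemDensityPointwiseCv} (which says $\theta_\delta(x) \to \theta(x)$ as the \emph{continuous} parameter $\delta \to 0_+$ at our fixed $x$), yields $\theta_{\delta_N}(x) \to \theta(x)$, i.e. $|\theta_{\delta_N}(x) - \theta(x)| \to 0$. Summing the two limits gives the claimed convergence to $0$.

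There is no real obstacle here; the only point requiring a word of care is bookkeeping of the exceptional null sets. Proposition~\ref{lemDensityPointwiseCv} holds for $\cH^d$--a.e.\ $x \in S$ and Proposition~\ref{propDensityPointwiseCv} holds for \emph{every} $x \in S$, so their conjunction is valid on the same full--measure subset of $S$ on which Proposition~\ref{lemDensityPointwiseCv} holds; since $\theta_{min} > 0$ this set has positive (indeed full) $\mu$--measure as well, so the statement ``$\cH^d$--a.e.\ $x \in S$'' in the corollary is legitimate. One should also note that \hyperref[hypH3]{$(H_3)$} guarantees $d \in \N^\ast$ and the $d$--rectifiability of $S$, which is what is needed to invoke Proposition~\ref{lemDensityPointwiseCv}, while \hyperref[hypH1]{$(H_1)$}--\hyperref[hypH2]{$(H_2)$} provide via Remark~\ref{rkHyp}$(i)$ the Ahlfors regularity constant $C_0$ needed to invoke Proposition~\ref{propDensityPointwiseCv}. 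Assembling these observations completes the proof.
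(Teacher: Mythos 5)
Your proposal is correct and is exactly the paper's argument: the paper simply states that the corollary is a direct consequence of Propositions~\ref{propDensityPointwiseCv} and~\ref{lemDensityPointwiseCv}, and your triangle-inequality decomposition with the two limits (fluctuation term via $N\delta_N^d\to\infty$, bias term via the a.e.\ deterministic convergence) is the intended proof, spelled out with appropriate care about the null set.
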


\begin{proof}
Corollary~\ref{coroDensityPointwiseCv} is a direct consequence of Proposition~\ref{propDensityPointwiseCv} and Proposition~\ref{lemDensityPointwiseCv}.
\end{proof}

As we already explained in the introduction, convergence of $\theta_{\delta_N,N}$ does not hold uniformly for $\mu \in \cQ$.
Indeed, in order to be more precise concerning both the choice of $\delta_N$ and the mean convergence rate, it is important to quantify the convergence rate of $\theta_\delta$ to $\theta$. However, as 
illustrated in Example~\ref{exQuantifyL1}, we need to strengthen the regularity framework to this end, which is done in Section~\ref{secUniformPWHolder} and \ref{secSplit} where a piecewise Hölder regularity class $\cP$ \eqref{eqClassP} is considered. It will be then possible to establish the mean convergence of the estimator $\theta_{\delta_N,N}$ with a uniform choice of $\delta_N \to 0$ and a uniform convergence rate in the specified regularity class, see Proposition~\ref{propFinalCV}.

Before moving to the estimation of the measure $\cH^d_{| S}$, we comment on the upper bound assumption on $\theta$.
\begin{remark}[Uniform bounds on the density] \label{rmkUpperDensityBound}
As mentioned in the introduction of the section, it would be possible to relax the uniform density bounds (or equivalently the Ahlfors regularity assumptions) regarding the convergence of $\theta_{\delta,N}$ since Corollary~\ref{coroDensityPointwiseCv} is a pointwise result: given $x \in S$, by definition of $\theta(x) = \lim_{\delta \to 0_+} \frac{\mu(B(x,\delta))}{\delta^d}$ it is possible to assume that for radii $\delta$ small enough, $\mu(B(x,\delta)) \leq C_x \delta^d$ where the constant $C_x > 0$ is not uniform as was $C_0$ but now depends on $x$.
\end{remark}

\section{Estimation of the {\it d}--dimensional measure carried by {\it S}}
\label{secMeasureEstimator}

In this section, we analyse the convergence of an estimator $\nu_{\delta,N}$ (see \eqref{eqNuN}) of the measure $\nu = \cH^d_{| S}$ obtained by weighting the empirical measure according to the estimated local density $\theta_{\delta,N}$. Such an estimator is commonly implemented as a simple way of balancing a non-uniform sampling, hence worthing some investigations. Assuming stronger regularity of $\theta$ and of the underlying set $S$, more intricate constructions interpolating information would achieve better convergence rates as established in \cite[Theorem~$6$]{LevradAamari} for the estimation of $S$ in Hausdorff distance, as well as in \cite[Theorem~$1$]{TangYang22} concerning the estimation of $\mu = \theta \cH^d_{| S}$. It is however interesting to analyse such a commonly used kernel-based estimator in a low-regularity framework despite a deteriorated convergence rate. Of course, a crucial point would be to establish minimax rates in terms of Bounded Lipschitz distance for estimators of $\nu = \cH^d_{| S}$ in a low-regularity model, for instance in the regularity class $\cP$. Yet, we were not able to obtain such minimax rates up to now  
and we intend to carry on our investigations on the question in future work.

In \cite{dud} the author quantifies the mean speed convergence of the empirical measure $\mu_N$ towards the sampled probability law $\mu$ in terms of Bounded Lipschitz distance. More precisely (see Theorem~$3.2$ in \cite{dud}) under some technical assumption ensuring that $\mu$ behaves like a $d$--dimensional measure in $\R^n$, the author established that
\begin{equation} \label{eqDudleyBeta}
\E \left[ \beta (\mu_N, \mu) \right] \lesssim \frac{1}{N^d} \: .
\end{equation}
In order to show the convergence of $\nu_{\delta,N}$ towards $\nu = \cH^d_{| S}$, we adapt the arguments of \cite{dud} but passing from $1$--Lipschitz and bounded by $1$ test functions to random test functions with only local bounds on the random Lipschitz constant (see technical assumption \eqref{eqHypg}).
The proof of Proposition~\ref{propBetaLoc} then follows the structure of the proof of Theorem~$3.2$ in \cite{dud}: we localize the Bounded Lipschitz distance (see Definition~\ref{dfnBetaLoc}) and we adapt the choice of partition larger scale (denoted by ``$\varepsilon_s$'' in the proof) when localizing in balls. Such a localization is natural since the pointwise value of a kernel-based estimator (e.g. $\theta_{\delta,N}$ but also the notions of approximate curvature introduced in \cite{BuetLeonardiMasnou,BuetLeonardiMasnou2}) at some point $x$ depends on the empirical measure $\mu_N$ restricted to the ball $B(x,\delta)$.
From Proposition~\ref{propBetaLoc}, we straightforwardly infer the convergence of $\nu_{\delta,N}$ towards $\cH^d_{| S}$ in Theorem~\ref{thmBetaLocNuN} and Corollary~\ref{corNuDeltaN}. As explained in the introduction, uniform convergence rates are established in Sections~\ref{secUniformPWHolder} and \ref{secSplit} (see more precisely Theorem~\ref{thmCvHolderSplit}) assuming stronger regularity (in the piecewise Hölder regularity class $\cP$ \eqref{eqClassP} as defined in Section~\ref{secPwHolderDef}). We will subsequently use Proposition~\ref{propBetaLoc} when investigating the estimation of the whole varifold measure in Section~\ref{secCVvarifold}.

The section is organised as follows, in Section~\ref{secPartitions} we first recall and adapt technical ingredients that are then applied in Section~\ref{secTechnicalDud} in order to establish Proposition~\ref{propBetaLoc}. Section~\ref{secEstimNudeltaN} is dedicated to the convergence of $\nu_{\delta,N}$ to $\nu = \cH^d_{| S}$ in the regularity class $\cQ$ (i.e. under assumptions \hyperref[hypH1]{$(H_1)$}-\hyperref[hypH2]{$(H_2)$}-\hyperref[hypH3]{$(H_3)$}) as stated in Theorem~\ref{thmBetaLocNuN} and Corollary~\ref{corNuDeltaN}.

\subsection{Nested partitions and mean discrepancy over partitions}
\label{secPartitions}

In this section, we introduce technical ingredients already contained in \cite{dud}, except Lemma~\ref{L1} $(ii)$, which is an adaptation that will be used in the proof of Proposition~\ref{propBetaLoc}.
The following lemma estimates the mean discrepancy between $\mu$ and $\mu_N$ over a given partition.
\begin{lemma}
\label{L1}
Let $T\subset \mathbb{R}^n$ be a Borel set and let $S_j$, $j=1,\dots,m$, be disjoint Borel sets with union $T$. Then,
\begin{enumerate}[$(i)$]
 \item see Proposition 3.1 in \cite{dud} \begin{equation} \label{eqMr}
\mathbb{E}\left[ \sum_{j=1}^m|\mu_N(S_j)-\mu(S_j)|\right]\leq \left( \frac{m\mu(T)}{N} \right)^\frac{1}{2} \: ,
\end{equation}
\item assume that $\mu$ is $d$--Ahlfors regular, then for $0 <R \leq \widetilde{R}$ such that $R \geq N^{-\frac{1}{d}}$ and for $x_j \in S$, $j = 1, \ldots, m$,
\begin{equation} \label{eqMballs}
\mathbb{E}\left[ \sum_{j=1}^m \mu_N (B(x_j, R)) \: |\mu_N(S_j)-\mu(S_j)|\right]\leq 2 C_0^\frac{3}{2} \left( \frac{m \mu(T)}{N} \right)^\frac{1}{2} R^d 
\end{equation}
and
\begin{equation} \label{eqMballsBis}
\mathbb{E}\left[ \sum_{j=1}^m \mu_N (B(x_j, R)) \mu_N (B(x_j, \widetilde{R})) \: |\mu_N(S_j)-\mu(S_j)|\right]\leq 4 C_0^{\frac{7}{2}} \left( \frac{m \mu(T)}{N} \right)^\frac{1}{2} R^d \widetilde{R}^d \: .
\end{equation}
\end{enumerate}
\end{lemma}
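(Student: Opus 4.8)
The plan is to treat the three estimates in turn, each reducing to the Cauchy--Schwarz inequality combined with the second-moment bounds for the empirical measure collected above. For $(i)$, which is the baseline, I would write $\E\left[\sum_j |\mu_N(S_j)-\mu(S_j)|\right] \leq \sum_j \left(\E[(\mu_N(S_j)-\mu(S_j))^2]\right)^{1/2} = \sum_j \left(\xvar(\mu_N(S_j))\right)^{1/2}$ by Jensen. Using $\xvar(\mu_N(S_j)) = N^{-1}(\mu(S_j)-\mu(S_j)^2) \leq N^{-1}\mu(S_j)$ from \eqref{eqBasicMuN}, this is $\leq N^{-1/2}\sum_j \mu(S_j)^{1/2}$, and a final Cauchy--Schwarz over the $m$ indices (with the constant weight $1$) gives $\sum_j \mu(S_j)^{1/2} \leq m^{1/2}\left(\sum_j \mu(S_j)\right)^{1/2} = (m\,\mu(T))^{1/2}$, which is \eqref{eqMr}. (In fact this is quoted from \cite{dud}, so I would only recall it.)

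For $(ii)$, the new ingredient, I would follow the same skeleton but insert the extra factor $\mu_N(B(x_j,R))$ before applying Cauchy--Schwarz. Thus
\[
\E\left[\sum_{j=1}^m \mu_N(B(x_j,R))\,|\mu_N(S_j)-\mu(S_j)|\right] \leq \sum_{j=1}^m \left(\E\left[\mu_N(B(x_j,R))^2(\mu_N(S_j)-\mu(S_j))^2\right]\right)^{1/2}.
\]
Here I expand $(\mu_N(S_j)-\mu(S_j))^2 = \mu_N(S_j)^2 - 2\mu(S_j)\mu_N(S_j) + \mu(S_j)^2$ and bound $\E[\mu_N(B(x_j,R))^2\mu_N(S_j)^2]$, $\E[\mu_N(B(x_j,R))^2\mu_N(S_j)]$ and $\E[\mu_N(B(x_j,R))^2]$ using \eqref{eqBasicMuNBis} and \eqref{eqBasicMuN} respectively, invoking the hypothesis $N^{-1/d}\leq R$, i.e.\ $N^{-1}\leq R^d$, to write $\mu(B(x_j,R)) \leq C_0 R^d$ and to control the $1/N$ terms by $R^d$. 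The key point is that all resulting terms carry a factor $C_0^? R^{2d}\,($power of $\mu(S_j))$; collecting them, $\E[\mu_N(B(x_j,R))^2(\mu_N(S_j)-\mu(S_j))^2] \lesssim C_0^3 R^{2d}\mu(S_j)$ (the $\mu(S_j)^2$-order terms should cancel or be dominated, just as the $\mu(S_j)^2$ term cancels against $-2\mu(S_j)\mu_N(S_j)$'s leading part in the scalar case). Taking square roots and applying Cauchy--Schwarz over $j$ exactly as in $(i)$ produces the bound $2C_0^{3/2}(m\mu(T)/N)^{1/2}R^d$; some care is needed to verify the numerical constant $2$, but any explicit constant would suffice for the applications, so I would not belabour it. For \eqref{eqMballsBis} the computation is identical with one more spatial factor $\mu_N(B(x_j,\widetilde R))$; one uses that $R\leq\widetilde R$ so $N^{-1}\leq R^d\leq \widetilde R^d$, and now the relevant fourth-moment-type quantities $\E[\mu_N(B(x_j,R))^2\mu_N(B(x_j,\widetilde R))^2\cdots]$ reduce, after bounding each ball measure by $C_0 R^d$ resp.\ $C_0\widetilde R^d$ and each $1/N$ by $R^d$, to $C_0^7 R^{2d}\widetilde R^{2d}\mu(S_j)$ up to the constant, whence \eqref{eqMballsBis}.

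The main obstacle I anticipate is purely bookkeeping: in $(ii)$ one is manipulating products of up to four empirical measures $\mu_N$ evaluated on overlapping sets (the balls $B(x_j,R)$, $B(x_j,\widetilde R)$ and $S_j$ need not be disjoint, and indeed $x_j\in S_j$ typically), so the moment expansion generates many cross terms indexed by the coincidence pattern of the sample indices, and one must check that after using the Ahlfors bound $\mu(B(x_j,R))\leq C_0 R^d$ and $N^{-1}\leq R^d$ every term is $O(R^{2d}\mu(S_j))$ (resp.\ $O(R^{2d}\widetilde R^{2d}\mu(S_j))$) with a controllable power of $C_0$. This is exactly the content of the bounds \eqref{eqBasicMuNBis} already proved, specialised to $A=S_j$, $B=B(x_j,R)$ (after checking the admissibility condition $N^{-1}\leq C\min(\mu(A),\mu(B))$, which holds with $C=C_0$ since $\mu(B(x_j,R))\geq C_0^{-1}R^d\geq C_0^{-1}N^{-1}$ and we may assume $\mu(S_j)\geq N^{-1}$ as the terms with $\mu(S_j)<1/N$ contribute negligibly, or simply restrict to nonempty parts of the partition); so the obstacle is really already dispatched by the preliminary lemma and the proof is a matter of assembling those estimates and Cauchy--Schwarz.
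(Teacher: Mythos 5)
Part $(i)$ is fine and matches the standard argument. For part $(ii)$, however, your plan has a genuine gap, and it comes from the order in which you apply Cauchy--Schwarz. You apply it term by term in $j$ at the level of expectations, which forces you to bound the \emph{mixed centered} moment $\E\bigl[\mu_N(B(x_j,R))^2(\mu_N(S_j)-\mu(S_j))^2\bigr]$ by (essentially) $C_0^3R^{2d}\mu(S_j)/N$. This bound is true, but it does \emph{not} follow from \eqref{eqBasicMuNBis} and \eqref{eqBasicMuN} applied to the three pieces of the expansion $\mu_N(S_j)^2-2\mu(S_j)\mu_N(S_j)+\mu(S_j)^2$ as you propose: \eqref{eqBasicMuNBis} gives $\E[\mu_N(B_j)^2\mu_N(S_j)^2]\leq 15C^3\mu(B_j)^2\mu(S_j)^2$ with a constant strictly larger than $1$, while the cross term must be bounded from \emph{below} to handle its minus sign, so after assembling the three bounds you are left with a residual term of order $\mu(B_j)^2\mu(S_j)^2$ (no $1/N$), which after summation in $j$ destroys the $N^{-1/2}$ rate. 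The cancellation of the $\mu(B_j)^2\mu(S_j)^2$--order terms is exact only at the level of the full index-coincidence expansion of the centered fourth moment, which is a genuinely new computation not "already dispatched by the preliminary lemma." In addition, \eqref{eqBasicMuNBis} with $A=S_j$ requires $1/N\leq C\mu(S_j)$, and the partition pieces $S_j$ have no lower bound on their measure; your suggestion that small pieces "contribute negligibly" is not justified (the target bound for such $j$ is itself tiny, so there is nothing negligible to absorb them into).

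The paper avoids both problems by applying Cauchy--Schwarz \emph{pathwise over $j$ first}:
\[
\sum_{j}\mu_N(B(x_j,R))\,|\mu_N(S_j)-\mu(S_j)|\leq\Bigl(\sum_j\mu_N(B(x_j,R))^2\Bigr)^{\frac12}\Bigl(\sum_j|\mu_N(S_j)-\mu(S_j)|^2\Bigr)^{\frac12},
\]
and only then using $\E[X^{1/2}Y^{1/2}]\leq\E[X]^{1/2}\E[Y]^{1/2}$. This decouples the two random factors before any moment computation: the first factor needs only the \emph{uncentered} bound \eqref{eqBasicMuNBis} with $A=B=B(x_j,R)$ (whose admissibility follows from Ahlfors regularity alone, with no condition on $\mu(S_j)$), and the second factor is controlled by the exact variance identity $\sum_j\xvar(\mu_N(S_j))\leq\mu(T)/N$. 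I recommend you restructure your argument along these lines; your version could in principle be repaired by carrying out the full coincidence-pattern expansion of the centered moment, but that is considerably longer and is not what your sketch describes.
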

\begin{proof}
We focus on proving $(ii)$. First note that using \eqref{eqBasicMuN} we have
\begin{align}
\mathbb{E}\left[\sum_{j=1}^m \left|\mu_N(S_j) -\mu(S_j) \right|^2 \right] & =\sum_{j=1}^m \xvar(\mu_N(S_j))
=\sum_{j=1}^m \frac{\mu(S_j)-\mu (S_j)^2 }{N} =\frac{1}{N} \left(\mu(T)-\sum_{j=1}^m \mu(S_j)^2\right) \nonumber \\
&\leq \frac{\mu(T)}{N} \: .
\label{eqMballs_1}
\end{align}
Then using the $d$--Ahlfors regularity of $\mu$ with $C_0 \geq 1$, we have 
\begin{equation*}
 \frac{1}{N} \leq R^d \leq C_0 (C_0^{-1} R^d) \leq C_0 \mu \left( B(x_j,R) \right) \quad \text{for any } j = 1 \ldots m \: ,
\end{equation*}
which allows to apply \eqref{eqBasicMuNBis} and obtain
\begin{align}
\E \left[ \sum_{j=1}^{m}\mu_N(B(x_j,R))^2\right]  =   \sum_{j=1}^{m} \E \left[ \mu_N(B(x_j,R))^2\right] 
\leq 2 C_0 \sum_{j=1}^{m} \mu (B(x_j,R))^2 \leq 2 C_0^3 m R^{2d} \: , \label{eqMballs_2}
\end{align}
and similarly
\begin{align}
\E \left[ \sum_{j=1}^{m}\mu_N(B(x_j,R))^2 \mu_N(B(x_j,\widetilde{R}))^2\right] 
\leq 15 C_0^3 \sum_{j=1}^{m} \mu (B(x_j,R))^2 \mu (B(x_j, \widetilde{R}))^2 \leq 15 C_0^7 m R^{2d} \widetilde{R}^{2d} \: . \label{eqMballs_2Bis}
\end{align}
Furthermore, by Cauchy-Schwarz inequality,
\begin{equation*}
\sum_{j=1}^m \mu_N (B(x_j, R)) \: |\mu_N (S_j) -\mu (S_j)| \leq \left(\sum_{j=1}^{m} \mu_N(B(x_j,R))^2\right)^\frac{1}{2}\times \left(\sum_{j=1}^m \left|\mu_N(S_j) -\mu(S_j) \right|^2 \right)^\frac{1}{2} \: 
\end{equation*}
and hence, using Cauchy-Schwartz inequality $\E [ X^\frac{1}{2} \: Y^\frac{1}{2} ] \leq \E [X]^\frac{1}{2} \: \E [Y]^\frac{1}{2}$ for non-negative random variables $X,Y$, we get
\begin{equation} \label{eqMballs_3}
\E \left[ \sum_{j=1}^m \mu_N (B(x_j, R)) \: |\mu_N (S_j) -\mu (S_j)| \right]
\leq \E \left[ \sum_{j=1}^{m}\mu_N(B(x_j,R))^2\right]^\frac{1}{2}\times \E \left[ \sum^{m}_{j=1}(\mu_N(S_j)-\mu(S_j))^2\right]^\frac{1}{2} \: .
\end{equation}
From \eqref{eqMballs_1}, \eqref{eqMballs_2} and \eqref{eqMballs_3} we infer \eqref{eqMballs}. Then, \eqref{eqMballsBis} similarly follows from \eqref{eqMballs_1}, \eqref{eqMballs_2Bis} and Cauchy-Schwarz inequality.
\end{proof}

Let us notice that as the partition $(S_j)_{j=1 \dots m}$ is refined with smaller pieces, then $m$ gets larger and the control in the right hand side of \eqref{eqMr} increases. Nevertheless, it will be important to work with thin enough partitions in the proof of Proposition~\ref{propBetaLoc}:
we will use that inside a piece of the partition, Lipschitz functions vary at most like the diameter of the piece. The key idea introduced in \cite{dud} to deal with these opposite requirements is to work with a sequence of thinner and thinner nested partitions rather than working with only one thin partition. We refer to \cite{dud} for the construction of such partitions:

\begin{lemma}[Nested partitions, see \cite{dud}] \label{lemNestedPartitions}
Let $0 < \epsilon \leq 1$, let $t \in \N$ be such that $3^{-(t+1)} < \epsilon \leq 3^{-t}$ and let $s \in \N$, $s \leq t$. Let $T \subset \R^n$ be a bounded Borel set. We assume that for each integer $s \leq u \leq t$, $T$ is the disjoint union of $m_u$ Borel sets of diameter at most $\varepsilon_u := 3^{-u}$. Then, there exists a family of Borel sets 
\[
 \left\lbrace A_j^u \: : \: u = s, \ldots, t \text{ and } j= 1, \ldots, m_u \right\rbrace 
\]
such that for all $u$:
\begin{align}
& \left( A_j^u \right)_{j = 1 \ldots m_u} \text{ is a partition of } T \text{ and for all } j, \, \xdiam A_j^u \leq 3 \varepsilon_u  \text{ and } A_j^u \neq \emptyset \: ,\label{eqPartitionProperty} \\
& \text{if } u > s, \text{ for all }  q \in \{1, \ldots , m_{u-1} \} \text{ there exists } I_{q,u} \subset \{1, \ldots, m_u \} \text{ such that } A_q^{u-1} = \bigsqcup_{j \in I_{q,u}} A_j^u \: . \label{eqNestedProperty} 
\end{align}
\end{lemma}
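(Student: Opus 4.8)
The plan is to construct the family $\{A_j^u\}$ by a greedy top-down refinement, starting from the coarsest scale $u = s$ and successively subdividing each piece using the partitions provided by hypothesis at the finer scales. First I would fix, for each integer $s \leq u \leq t$, the given partition $T = \bigsqcup_{k=1}^{m_u} T_k^u$ into Borel sets of diameter at most $\varepsilon_u = 3^{-u}$. At the coarsest level $u = s$, I simply set $A_j^s = T_j^s$, so that \eqref{eqPartitionProperty} holds with $\xdiam A_j^s \leq \varepsilon_s \leq 3\varepsilon_s$ (and empty pieces can be discarded, relabelling so that $m_s$ counts only the nonempty ones — or one keeps the given count, noting the hypothesis already guarantees a genuine partition).

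Then I would proceed by induction on $u$. Suppose $\left(A_q^{u-1}\right)_{q=1\ldots m_{u-1}}$ has been built satisfying \eqref{eqPartitionProperty}. To refine it, intersect each $A_q^{u-1}$ with the given scale-$u$ partition: the sets $A_q^{u-1} \cap T_k^u$, as $q$ ranges over $\{1,\ldots,m_{u-1}\}$ and $k$ over $\{1,\ldots,m_u\}$, form a partition of $T$ into Borel sets; discard the empty ones and relabel the nonempty ones as $\left(A_j^u\right)_{j=1\ldots m_u'}$. By construction each $A_j^u$ is contained in some scale-$u$ piece $T_k^u$, hence $\xdiam A_j^u \leq \varepsilon_u \leq 3\varepsilon_u$, giving \eqref{eqPartitionProperty} at level $u$. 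Moreover each $A_j^u$ is contained in exactly one $A_q^{u-1}$ (namely the one it was intersected with), so grouping the indices $j$ according to which $A_q^{u-1}$ contains $A_j^u$ defines the sets $I_{q,u}$, and $A_q^{u-1} = \bigsqcup_{j \in I_{q,u}} A_j^u$ because the $A_j^u$ partition $T$ and each is either inside $A_q^{u-1}$ or disjoint from it; this is exactly \eqref{eqNestedProperty}. One checks $A_q^{u-1} \neq \emptyset$ forces $I_{q,u} \neq \emptyset$, so the nesting is nontrivial. (If one prefers to keep the count $m_u$ fixed as in the statement rather than the reduced count $m_u'$, it suffices to observe $m_u' \leq m_{u-1} m_u$ is still finite and to rename; alternatively the diameter bound $3\varepsilon_u$ in \eqref{eqPartitionProperty} is deliberately loose precisely to absorb such bookkeeping, and following \cite{dud} one may instead define $A_j^u$ directly as suitable unions of the $A_q^{u-1}\cap T_k^u$ to land back on exactly $m_u$ pieces.)

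Iterating this refinement from $u = s$ up to $u = t$ produces the full family, and all the required properties have been verified at each step. There is no real analytic difficulty here: the only mild subtlety — and the point I would expect to need the most care in writing up — is the bookkeeping of indices and cardinalities, i.e. reconciling the ``relabel the nonempty intersections'' procedure with the statement's insistence on having exactly $m_u$ pieces at level $u$ matching the hypothesis count, which is why the slack factor $3$ in $\xdiam A_j^u \leq 3\varepsilon_u$ and the freedom to merge small pieces are built into the statement. Since this is verbatim the construction of \cite{dud}, I would simply cite it for the details and record here the properties \eqref{eqPartitionProperty}--\eqref{eqNestedProperty} that will actually be used in the proof of Proposition~\ref{propBetaLoc}.
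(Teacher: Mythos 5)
The paper itself does not prove this lemma (it defers entirely to \cite{dud}), so the real question is whether your construction is correct — and it is not. The gap is the cardinality control, which is the entire point of the lemma. Your top-down refinement intersects the level-$(u-1)$ nested partition with the given level-$u$ partition, which produces up to $m_{u-1}'\, m_u$ nonempty pieces at level $u$, hence up to $\prod_{v=s}^{u} m_v$ after iterating from level $s$. You cannot ``rename'' $m_{u-1}' m_u$ distinct nonempty sets down to $m_u$ sets without merging them, and arbitrary merging destroys the diameter bound. The count matters crucially downstream: in the proof of Proposition~\ref{propBetaLoc}, the bound $\E[M_u]\le (m_u\mu(T)/N)^{1/2}$ from Lemma~\ref{L1} is combined with $m_u\le 4^d C_0\varepsilon_u^{-d}\mu(B^{\varepsilon_u/4})$ from \eqref{eqmr}; replacing $m_u$ by $\prod_{v\le u}m_v\sim 3^{d(s+\cdots+u)}$ makes the sum $\sum_u \varepsilon_u\sqrt{m_u}$ blow up and the whole argument collapses. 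Your claim that the factor $3$ in $\xdiam A_j^u\le 3\varepsilon_u$ is ``deliberately loose precisely to absorb such bookkeeping'' is therefore wrong.

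The correct (Dudley) construction runs in the opposite direction, from fine to coarse, and the factor $3$ is exactly its price. Set $A_j^t=T_j^t$ at the finest level. Given $(A_j^{u+1})_j$ with $\xdiam A_j^{u+1}\le 3\varepsilon_{u+1}=\varepsilon_u$, assign each nonempty $A_j^{u+1}$ to an index $k(j)$ with $A_j^{u+1}\cap T_{k(j)}^u\neq\emptyset$, and define $A_k^u=\bigsqcup_{j:\,k(j)=k}A_j^{u+1}$. For $x\in A_j^{u+1}$, $y\in A_{j'}^{u+1}$ with $k(j)=k(j')=k$, picking $p\in A_j^{u+1}\cap T_k^u$ and $p'\in A_{j'}^{u+1}\cap T_k^u$ gives $|x-y|\le \varepsilon_u+\varepsilon_u+\varepsilon_u=3\varepsilon_u$, the number of nonempty pieces is at most $m_u$ because there are only $m_u$ possible values of $k$, and the nesting \eqref{eqNestedProperty} holds by construction. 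Merging can only decrease the count, whereas intersecting can only increase it — that is the asymmetry your proposal misses.
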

\noindent We could rephrase \eqref{eqNestedProperty} as follows: pieces in the partition at scale $\varepsilon_{u-1}$ are formed by unions of pieces from the partition at scale $\varepsilon_u$.

\subsection{A technical mean convergence result for a Bounded Lipschitz type term}
\label{secTechnicalDud}
In order to obtain the convergence of the estimator $\nu_{\delta,N}$ (hereafter defined in \eqref{eqNuN}) towards $\cH^d_{| S} = \frac{1}{\theta} \mu$ in terms of localized Bounded Lipschitz distance, more precisely a uniform control on $\E \left[ \beta_B (\nu_{\delta,N}, \nu_\delta) \right]$ in the regularity class $\cQ$ \eqref{eqClassQ}, we adapt the proof of convergence of $\E \left[ \beta (\mu_N, \mu) \right]$ from \cite{dud}.
We note that Corollary~\ref{corNuDeltaN} is sufficient to obtain the convergence of $\nu_{\delta,N}$ (Theorem~\ref{thmBetaLocNuN}),
however, we will need the more general (and probably less comprehensive) Proposition~\ref{propBetaLoc} later on to prove the convergence of the varifold estimator in Proposition~\ref{prop:CV_WrdeltaN}. We hence directly state and prove hereafter Proposition~\ref{propBetaLoc}.
We more precisely adapt the proof of Theorem 3.2 in \cite{dud} to compute a localized Bounded Lipschitz distance as defined in Definition~\ref{dfnBetaLoc}, and also to handle random test functions that are not $1$--Lipschitz but satisfy a Lipschitz--type property of the form \eqref{eqHypg}, where we use the notation
\begin{equation}
\Delta_{t,N} (x,y) = \frac{1}{t^d} \mu_N \left( B(x,t) \cup B(y,t) \right) , \quad \text{for } t>0 \text{ and } x, y \in \R^n \: .
\end{equation}
Note that since $\mu_N$ is a probability measure $\Delta_{t,N} (x,y) \leq t^{-d}$ and \eqref{eqHypg} in particular gives a Lipschitz estimate for $g$ and $x$, $y \in \R^n$:
\begin{equation} \label{eqLipDetermg}
\left| g(x) - g(y) \right| \leq \left(\frac{ \kappa_1}{\delta^{d+1}} + \frac{\kappa_2}{r^{d+1}} + \frac{\kappa_3}{r^d \delta^{d+1}} + \kappa_0 \right) |x-y| \: .
\end{equation}
For the sake of clarity, let us temporarily assume that $0 < \delta = r \leq 1$. 
Hence directly applying Theorem 3.2 in \cite{dud} with the ``deterministic'' estimate \eqref{eqLipDetermg} above is possible and we would obtain
\[
\E \left[ \beta (\nu_{\delta,N}, \nu_\delta ) \right] \lesssim \frac{N^{-\frac{1}{d}}}{\delta^{2d + 1}} \: .
\]
However, applying the following Proposition~\ref{propBetaLoc}--Corollary~\ref{corNuDeltaN} instead, we obtain in Theorem~\ref{thmBetaLocNuN}$(i)$ the significantly improved rate (by a factor $\delta^{2d}$):
\[
\E \left[ \beta (\nu_{\delta,N}, \nu_\delta ) \right] \lesssim \frac{N^{-\frac{1}{d}}}{\delta} \: .
\]
The key observation is that we have the following much better Lipschitz estimate for $g$ (for small $\delta, r$) after taking the mean value in \eqref{eqHypg} and restricted to $x, y \in \supp \mu$, indeed, using the $d$--Ahlfors regularity:
\begin{equation} \label{eqLipRandg}
\E \left[ \left| g(x) - g(y) \right| \right] \leq \left(\frac{C_0 \kappa_1 + C_0^2 \kappa_3}{\delta} + \frac{C_0 \kappa_2}{r} + \kappa_0 \right) |x-y| \: .
\end{equation}
Unfortunately, such mean Lipschitz estimate \eqref{eqLipRandg} does not allow to apply straightforwardly \cite{dud}, though adapting the strategy, we were able to obtain the following result:

\begin{proposition} \label{propBetaLoc}
Let $\mu$ be a probability measure in $\R^n$ and assume that $\mu$ is $d$--Ahlfors regular for some real number $0 < d \leq n$, with regularity constant $C_0 \geq 1$ (see Definition~\ref{dfnAhlfors}).
For $\delta, r,\kappa, \kappa_0, \kappa_1, \kappa_2 \in ]0,+\infty[$, where $\kappa_0$ is allowed to depend on $\delta, r$,\footnote{For instance $\kappa_0 = \frac{M}{\delta} + \frac{M}{r}$ is used in the proof of Theorem~\ref{thmCvHolderV}.} we denote by $\mathfrak{X}$ the set of random Lipschitz functions satisfying $\| g \|_\infty \leq \kappa \kappa_0$ and for all $x$, $y \in \supp \mu$, 
\begin{equation} \label{eqHypg}
\left| g(x) - g(y) \right| \leq \left( \frac{\kappa_1}{\delta} \Delta_{\delta,N} (x,y)  + \frac{\kappa_2}{r} \Delta_{r,N} (x,y) + \frac{\kappa_3}{\delta} \Delta_{\delta,N} (x,y)\Delta_{r,N} (x,y) + \kappa_0 \right) |x-y| \: .
\end{equation}
Then,
\begin{enumerate}[$(i)$]
\item case $d>2$ and $B \subset \R^n$ arbitrary bounded open set: there exists a constant $M = M(d,C_0) > 0$ such that for any $B \subset \R^n$ bounded open set, for all $\delta > 0$, $r>0$ and for all $N \in \N^\ast$ large enough so that $N^{-\frac{1}{d}} \leq \min ( \delta, r)$ we have
\begin{multline*}
\E \left[ \sup \left\lbrace \left| \int_B g \: d \mu_N - \int_B g  \: d \mu \right| \: : \: g \in \mathfrak{X}\right\rbrace \right] \leq M \left( \kappa_0 + \frac{\kappa_1 + \kappa_3}{\delta} + \frac{\kappa_2}{r} \right) N^{-\frac{1}{d}} \mu (B^{\gamma_N}) \\ \quad \text{with } \gamma_N = N^{-\frac{d-2}{d^2}} \xrightarrow[N \to +\infty]{} 0 \: .
\end{multline*}
\item case $B$ open ball of radius $R_B > 0$ and $\kappa \leq R_B$: there exists a constant $M = M(d,C_0) > 0$ such that for any open ball $B \subset \R^n$ satisfying $R_B < 1$, for all $\delta >0$, $r > 0$ and for all $N \in \N^\ast$ large enough so that $N^{-\frac{1}{d}} \leq \min( R_B, r ,\delta)$, we have
\[
\E \left[ \sup \left\lbrace \left| \int_B g \: d \mu_N - \int_B g  \: d \mu \right|  :  g \in \mathfrak{X}\right\rbrace \right]  \leq  M \left( \kappa_0 + \frac{\kappa_1+ \kappa_3}{\delta} + \frac{\kappa_2}{r} \right) \mu (B) \times
\left\lbrace
\begin{array}{lcl}
N^{-\frac{1}{d}}         & \text{if} & d > 2\\
N^{-\frac{1}{2}} \ln N   & \text{if} & d = 2\\
N^{-\frac{1}{2}}         & \text{if} & d < 2
\end{array}
\right. \: .
\]
\end{enumerate}
\end{proposition}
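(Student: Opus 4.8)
The plan is to adapt the dyadic/nested-partition argument of Dudley \cite{dud} (Theorem~3.2), but carefully tracking the additional random Lipschitz factors $\Delta_{\delta,N}$ and $\Delta_{r,N}$ that appear in \eqref{eqHypg}. First I would fix $g \in \mathfrak{X}$ and a bounded set $B$, and choose a scale $\epsilon = \epsilon_N$ to be optimized at the end (morally $\epsilon_N \sim N^{-1/d}$). Pick $t \in \N$ with $3^{-(t+1)} < \epsilon \le 3^{-t}$ and let $s \le t$ be a coarse scale also to be chosen. Using Proposition~\ref{propPackingPartition} (applied through the $d$-Ahlfors lower bound on $\mu$) together with Lemma~\ref{lemNestedPartitions}, I would build nested partitions $\{A_j^u\}$ of $B \cap S$ (intersected with a suitable thickening) for $s \le u \le t$, with $\xdiam A_j^u \le 3\varepsilon_u$ and $m_u \le 4^d C_0 \varepsilon_u^{-d}\,\mu(B^{\varepsilon_u/4})$.

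The core decomposition is the telescoping identity: picking a base point $x_j^u$ in each piece $A_j^u$, write
\[
\int_B g\, d(\mu_N - \mu) = \sum_{j} g(x_j^s)\bigl(\mu_N - \mu\bigr)(A_j^s) + \sum_{u=s+1}^{t} \sum_{j}\bigl(g(x_j^u) - g(x_{p(j)}^{u-1})\bigr)\bigl(\mu_N-\mu\bigr)(A_j^u) + R_t,
\]
where $p(j)$ is the parent index from \eqref{eqNestedProperty} and $R_t$ is a remainder at the finest scale $\varepsilon_t \le 3\epsilon$ controlled by $\|g\|_\infty$ or the local Lipschitz bound over pieces of diameter $\le 3\epsilon$. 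For the first (coarsest) sum I would use $\|g\|_\infty \le \kappa\kappa_0$ and \eqref{eqMr}, getting a term $\lesssim \kappa\kappa_0 (m_s \mu(B^{\cdots})/N)^{1/2}$. For each telescoping level $u$, the increment $g(x_j^u) - g(x_{p(j)}^{u-1})$ is bounded using \eqref{eqHypg}: the displacement is $\le 3\varepsilon_{u-1}$ and the prefactor involves $\Delta_{\delta,N}$ and $\Delta_{r,N}$ evaluated at those base points. This is precisely where Lemma~\ref{L1}$(ii)$ enters: summing $\mu_N(B(x_j^u,\delta))\,|\mu_N(A_j^u)-\mu(A_j^u)|$ over $j$ is controlled by \eqref{eqMballs}, and the product term $\Delta_{\delta,N}\Delta_{r,N}$ by \eqref{eqMballsBis}. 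Taking expectations level by level gives, for level $u$, a bound of the shape $\varepsilon_{u-1}\bigl(\kappa_0 + \tfrac{\kappa_1+\kappa_3}{\delta} + \tfrac{\kappa_2}{r}\bigr)\,C_0^{7/2}\,(m_u\,\mu(B^{\cdots})/N)^{1/2}$, with the $\delta^{-d}$ and $r^{-d}$ factors from $\Delta$ exactly cancelled against the $R^d$, $\widetilde R^d$ factors produced by Lemma~\ref{L1}$(ii)$ — this cancellation is the source of the claimed improvement by $\delta^{2d}$ over the naive estimate \eqref{eqLipDetermg}.

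Summing over $u$ from $s$ to $t$ with $m_u \lesssim C_0 \varepsilon_u^{-d}\mu(B^{\varepsilon_u/4})$ yields a geometric-type series $\sum_u \varepsilon_u \cdot \varepsilon_u^{-d/2} = \sum_u 3^{-u(1 - d/2)}$; this is where the case split on $d$ appears. For $d > 2$ the series is dominated by its smallest-$u$ term, i.e.\ the coarse scale, and one optimizes $\epsilon_N$ (hence $t$, with $s$ taken small/constant for $(ii)$ or $s$ such that the coarse thickening shrinks for $(i)$) to get $N^{-1/d}$; for $d = 2$ the series has $\sim t \sim \log N$ equal terms giving the $N^{-1/2}\ln N$; for $d < 2$ it is dominated by the finest scale giving $N^{-1/2}$. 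Since the final bound no longer depends on $g$, taking the supremum over $g \in \mathfrak{X}$ is immediate (the partition and the estimates were uniform in $g$); for part $(i)$ the thickening $B^{\gamma_N}$ with $\gamma_N = N^{-(d-2)/d^2} = \varepsilon_s$ emerges from the coarsest scale $s$ chosen so that $\varepsilon_s^{-d/2}(m_s/N)^{1/2} \sim N^{-1/d}$, whereas for part $(ii)$ one uses the Ahlfors upper bound $\mu(B^{\varepsilon_u/4}) \le \mu(2B) \le 2^d C_0 R_B^d \le$ const$\cdot\mu(B)$ (valid for balls with $R_B < 1$ and $\varepsilon_u$ small) to replace all thickenings by $\mu(B)$ itself.

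The main obstacle I anticipate is the bookkeeping in the telescoping sum when the Lipschitz prefactor in \eqref{eqHypg} is itself random and correlated with $|\mu_N(A_j^u)-\mu(A_j^u)|$: one cannot pull the prefactor out of the expectation. The resolution is exactly the Cauchy–Schwarz-in-probability trick used in the proof of Lemma~\ref{L1}$(ii)$ (applying $\E[X^{1/2}Y^{1/2}] \le \E[X]^{1/2}\E[Y]^{1/2}$ with $X = \sum_j \mu_N(B(x_j,\delta))^2$ and $Y = \sum_j |\mu_N(A_j)-\mu(A_j)|^2$, and the analogous four-factor version for the product term), so the heavy lifting is genuinely done by \eqref{eqMballs}–\eqref{eqMballsBis}. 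A secondary delicate point is ensuring the hypotheses $R \ge N^{-1/d}$ in Lemma~\ref{L1}$(ii)$ and $N^{-1/d} \le \min(\delta,r)$ (and $\le R_B$ in $(ii)$) are respected at every scale $u$, and that the finest-scale remainder $R_t$ is absorbed — this is why one needs $\varepsilon_t \le 3\epsilon$ comparable to $N^{-1/d}$ rather than arbitrarily small.
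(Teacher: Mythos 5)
Your proposal follows essentially the same route as the paper's proof: the same Dudley-type nested-partition telescoping over scales $\varepsilon_s,\dots,\varepsilon_t$, the same use of Lemma~\ref{L1}$(ii)$ (with its built-in Cauchy--Schwarz) to absorb the random Lipschitz prefactors $\Delta_{\delta,N},\Delta_{r,N}$, the same choice $\gamma_N = \varepsilon_s$ for the thickening in $(i)$, and the same geometric-series case split on $d$ with $\kappa\leq R_B$ and Ahlfors regularity replacing the thickening by $\mu(B)$ in $(ii)$. The only slip is that you have the dominance of $\sum_u \varepsilon_u^{1-d/2}$ reversed (for $d>2$ the terms grow in $u$, so the finest scale $\varepsilon_t\sim N^{-1/d}$ dominates, while for $d<2$ the sum is bounded by a constant coming from the coarsest terms), but this does not affect the stated rates or the validity of the argument.
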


\begin{remark}
 Note that assumption \eqref{eqHypg} naturally arises from the fact that we consider kernel--based estimator, for instance in Section~\ref{secEstimNudeltaN} $\eta$ is a compactly supported kernel with dilations $\eta_\delta = \eta \left( \frac{\cdot}{\delta} \right)$, and thus convolution of the form $\delta^{-d} \mu_N \ast \eta_\delta$ involves (a rough) Lipschitz estimates of the form $\delta^{-1} \Delta_{\delta,N} (x,y)$. Nonetheless, we emphasize that in the piecewise Hölder regularity class $\cP$, a finer Lipschitz estimate can be used (see  \eqref{eqMeanLipThetaN} in Lemma~\ref{lemMeanLipEstimators}) hence leading to a finer convergence rate in Theorem~\ref{thmCvHolderSplit} and Proposition~\ref{propFinalCV}.
\end{remark}

Before proving Proposition~\ref{propBetaLoc}, we draw an easy consequence (Corollary~\ref{coroBetaLoc} below) that we will directly use in the proof of Theorem~\ref{thmBetaLocNuN} while in Sections~\ref{secVarifoldLikeEstimator} and~\ref{secUniformPWHolder} we will need Proposition~\ref{propBetaLoc} itself.

\begin{corollary} \label{coroBetaLoc}
Let $\mu$ be a probability measure in $\R^n$ and assume that $\mu$ is $d$--Ahlfors regular for some real number $0 < d \leq n$, with regularity constant $C_0 \geq 1$ (see Definition~\ref{dfnAhlfors}).
Assume that $h : \R^n \rightarrow \R$ is a random Lipschitz function satisfying: there exist $\kappa_0, \kappa_1 \in [0, +\infty[$ such that $\| h \|_\infty \leq \kappa_0$ and for all $x$, $y \in \supp \mu$ and $\delta$, $r \in ]0, +\infty[$,
\begin{equation} \label{eqHyph}
\left| h(x) - h(y) \right| \leq \frac{\kappa_1}{\delta} \Delta_{\delta,N} (x,y) |x-y| \: .
\end{equation}
Then,
\begin{enumerate}[$(i)$]
\item case $d>2$ and $B \subset \R^n$ arbitrary bounded open set: there exists a constant $M = M(d,C_0) > 0$ such that for any $B \subset \R^n$ bounded open set and for all $N \in \N^\ast$, $\delta > 0$ satisfying $N^{-\frac{1}{d}} \leq \delta$ we have
\[
\E \left[ \beta_B (h \: \mu_N, h \: \mu) \right] \leq M \left( \kappa_0 + \frac{\kappa_1}{\delta}
\right) N^{-\frac{1}{d}} \mu (B^{\gamma_N}) \quad \text{with } \gamma_N = N^{-\frac{d-2}{d^2}} \xrightarrow[N \to +\infty]{} 0 \: .
\]
and in particular for all $N \in \N^\ast$,
\begin{equation} \label{eqpropBetaLoc1}
\E \left[ \beta_B (\mu_N, \mu) \right] \leq M  N^{-\frac{1}{d}} \mu (B^{\gamma_N}) \quad \text{with } \gamma_N = N^{-\frac{d-2}{d^2}} \xrightarrow[N \to +\infty]{} 0 \: .
\end{equation}
\item case $B$ open ball: there exists a constant $M = M(d,C_0) > 0$ such that for any open ball $B \subset \R^n$ of radius $0 < R_B < 1$ and for all $N \in \N^\ast$, $\delta > 0$ satisfying $N^{-\frac{1}{d}} \leq \min( R_B, \delta)$, we have
\[
\E \left[ \beta_B (h \: \mu_N, h \: \mu)  \right] \leq  M \left( \kappa_0 + \frac{\kappa_1 R_B}{\delta} 
\right) \mu (B) \times
\left\lbrace
\begin{array}{lcl}
N^{-\frac{1}{d}}         & \text{if} & d > 2 \\
N^{-\frac{1}{2}} \ln N   & \text{if} & d = 2\\
N^{-\frac{1}{2}}         & \text{if} & d < 2
\end{array}
\right. .
\]
and in particular for all $N \in \N^\ast$ satisfying $N^{-\frac{1}{d}} \leq R_B$,
\begin{equation} \label{eqpropBetaLoc2}
\E \left[ \beta_B (\mu_N, \mu) \right] \leq  M  \mu (B) \times
\left\lbrace
\begin{array}{lcl}
N^{-\frac{1}{d}}         & \text{if} & d > 2 \\
N^{-\frac{1}{2}} \ln N   & \text{if} & d = 2\\
N^{-\frac{1}{2}}         & \text{if} & d < 2
\end{array}
\right. .
\end{equation}
\end{enumerate}
\end{corollary}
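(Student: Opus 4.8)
The plan is to derive the corollary directly from Proposition~\ref{propBetaLoc} by recognizing $\beta_B(h\mu_N, h\mu)$ as a supremum of the type handled there. First I would unfold Definition~\ref{dfnBetaLoc} and use that $\int \varphi \: d(h\lambda) = \int \varphi h \: d\lambda$ for any Radon measure $\lambda$, so that for each realization of the sample,
\[
\beta_B(h\mu_N, h\mu) = \sup\left\{ \left| \int_B fh \: d\mu_N - \int_B fh \: d\mu \right| : f \in \xC_c(\R^n),\ \|f\|_\infty \leq 1,\ \xLip(f) \leq 1,\ \supp f \subset B \right\} .
\]
The point is then to check that every competitor $g := fh$ — which is Lipschitz, being a bounded product of the Lipschitz functions $f$ and $h$, and compactly supported in $B$ — belongs to the class $\mathfrak{X}$ of Proposition~\ref{propBetaLoc} for a suitable choice of the parameters, so that the supremum above is dominated (pointwise in the sample, hence in expectation) by the one estimated in that proposition.

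For the uniform bound I would write $\|g\|_\infty \leq \|f\|_\infty \|h\|_\infty \leq \kappa_0$; in the ball case~$(ii)$ I would additionally use the elementary fact that a $1$--Lipschitz function $f$ with $\supp f \subset B$ vanishes on $\R^n \setminus B$, whence $|f(x)| \leq d(x, \R^n \setminus B) \leq R_B$ for every $x$, so that $\|g\|_\infty \leq R_B \kappa_0$. For the modulus of continuity, the product estimate $|g(x)-g(y)| \leq \|f\|_\infty |h(x)-h(y)| + \|h\|_\infty |f(x)-f(y)|$ together with \eqref{eqHyph} and $\xLip(f) \leq 1$ gives, for all $x, y \in \supp \mu$,
\[
|g(x)-g(y)| \leq \left( \|f\|_\infty \frac{\kappa_1}{\delta} \Delta_{\delta,N}(x,y) + \kappa_0 \right) |x-y| ,
\]
which is an estimate of the exact form \eqref{eqHypg} with $\kappa_2 = \kappa_3 = 0$, with $r := \delta$ (legitimate since the $r$--terms have disappeared and any value $\geq N^{-1/d}$ is admissible), with $\kappa = 1$ and first Lipschitz constant $\kappa_1$ in case~$(i)$ (bounding $\|f\|_\infty \leq 1$), and with $\kappa = R_B$ and first Lipschitz constant $R_B \kappa_1$ in case~$(ii)$ (bounding $\|f\|_\infty \leq R_B$). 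Note that the constraint $\kappa \leq R_B$ required by Proposition~\ref{propBetaLoc}$(ii)$ is met precisely thanks to the sup-norm observation above — this is the only step that is not purely mechanical.

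Granting this, $\{ fh : f \text{ admissible}\} \subset \mathfrak{X}$, so $\beta_B(h\mu_N, h\mu) \leq \sup_{g \in \mathfrak{X}} \left| \int_B g \: d\mu_N - \int_B g \: d\mu \right|$ pointwise; taking expectations and applying parts~$(i)$ and~$(ii)$ of Proposition~\ref{propBetaLoc} respectively yields the two displayed bounds, the factors $\kappa_0 + \kappa_1/\delta$ and $\kappa_0 + \kappa_1 R_B/\delta$ coming directly from the parameter substitutions above. For the ``in particular'' statements \eqref{eqpropBetaLoc1}--\eqref{eqpropBetaLoc2} I would take $h \equiv 1$, i.e. $\kappa_0 = 1$, $\kappa_1 = 0$, so that $\delta$ disappears from the right-hand side: choosing $\delta = 1$ makes the hypothesis $N^{-1/d} \leq \delta$ automatic for every $N \in \N^\ast$ in case~$(i)$, while in case~$(ii)$ one keeps the stated hypothesis $N^{-1/d} \leq R_B$. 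Since all the analytic work is contained in Proposition~\ref{propBetaLoc}, I expect the only slightly delicate points to be the $\|f\|_\infty \leq R_B$ observation and the resulting bookkeeping of constants, rather than any fresh estimate.
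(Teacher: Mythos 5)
Your proposal is correct and follows essentially the same route as the paper: the paper also sets $g=fh$, bounds $\|g\|_\infty\leq\kappa\kappa_0$ with $\kappa\leq R_B$ in the ball case (via $\|f\|_\infty\leq\min(1,\xdiam(B)/2)$, equivalent to your distance-to-the-complement observation), verifies \eqref{eqHypg} with $\kappa_2=\kappa_3=0$ and first constant $\kappa\kappa_1$, and applies Proposition~\ref{propBetaLoc}, obtaining \eqref{eqpropBetaLoc1}--\eqref{eqpropBetaLoc2} by taking $h\equiv 1$. No gaps.
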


We recall that $B^{\gamma_N}$ is the $\gamma_N$--enlargement of $B$ (see \eqref{eqThick}).
If we compare the result of Corollary~\ref{coroBetaLoc}: \eqref{eqpropBetaLoc1} and \eqref{eqpropBetaLoc2} with \eqref{eqDudleyBeta} obtained in \cite{dud}, we observe that the control is renormalized by the $\mu$--mass of the set $B$ relatively to which the distance between $\mu$ and $\mu_N$ is estimated. In \eqref{eqDudleyBeta}, $B$ contains $\supp \mu$ and thus has maximal mass $1$.

\begin{proof}[Proof of Corollary~\ref{coroBetaLoc}]
First of all, \eqref{eqpropBetaLoc1} and \eqref{eqpropBetaLoc2} are direct consequences of the general statement with $h = 1$ that is, $\kappa_0 = 1$ and $\kappa_1 = 0$ (see \eqref{eqHyph}).
Then, as already mentioned, Corollary~\ref{coroBetaLoc} is a straightforward consequence of Proposition~\ref{propBetaLoc}. Indeed, let $B \subset \R^n$ be an open bounded set and $f \in \xC_c (\R^n, \R)$ be a $1$--Lipschitz function such that $\| f \|_\infty \leq 1$ and $\supp f \subset B$. Actually note that $\| f \|_\infty \leq \kappa$ with $\kappa = \min (1, \xdiam(B)/2)$ and in particular $\kappa \leq R_B$ in the case $(ii)$ where $B$ is a ball of radius $R_B> 0$. Let $h : \R^n \rightarrow \R$ be a random Lipschitz function satisfying \eqref{eqHyph} then, $g = fh$ satisfies $\| g \|_\infty \leq \kappa \kappa_0$ and the following Lipschitz estimate type: for all $x$, $y \in \supp \mu$,
\begin{align*} 
\left| g(x) - g(y) \right| & \leq  |f(x)| |h(x) - h(y)| + |h(y)| |f(x) -f(y)| \nonumber \\
& \leq \left( \frac{\kappa \kappa_1}{\delta} \Delta_{\delta,N} (x,y) 
+ \kappa_0 \right) |x-y| \: ,
\end{align*}
that is exactly \eqref{eqHypg} with $\widetilde{\kappa_1} = \kappa \kappa_1$, $\kappa_2 = \kappa_3 = 0$,
$\kappa_0$ and $\kappa$. Consequently, 
\[
\beta_B ( h \: \mu_N , h \: \mu ) \leq \sup \left\lbrace \left| \int_B g \: d \mu_N - \int_B g  \: d \mu \right| \: : \: g \in \mathfrak{X}\right\rbrace
\]
and we can apply Proposition~\ref{propBetaLoc} to conclude the proof of Corollary~\ref{coroBetaLoc}.
\end{proof}

We are left with the proof of Proposition~\ref{propBetaLoc}:

\begin{proof}[Proof of Proposition~\ref{propBetaLoc}]
Let $B \subset \R^n$ be an open bounded set. We fix $N \in \N^\ast$, $ \delta > 0$ and $r > 0$ satisfying $N^{-\frac{1}{d}} \leq \min (r,\delta)$. We use the notations $\epsilon = N^{-\frac{1}{d}} \in (0,1]$, $\varepsilon_u := 3^{-u}$ for $u \in \N$, and $T := B \cap \supp \mu$.

\noindent We define integers $0 \leq s \leq t$ such that
\[
 3^{-(t+1)}<  \epsilon \leq 3^{-t} = \varepsilon_t \quad \text{and}\quad 
3^{-(s+1)}< \epsilon^{\alpha} \leq 3^{-s} = \varepsilon_s \: ,
\]
where $0 < \alpha \leq 1$ is defined hereafter in Step $4$ depending on the case $(i)$ or $(ii)$.
Thanks to Proposition~\ref{propPackingPartition}, we know that $T$ can be partitioned with $m_u$ pieces of diameter $\leq \varepsilon_u$ and
\begin{equation} \label{eqmr}
m_u \leq 4^d C_0 \varepsilon_u^{-d} \mu \left(B^\frac{\varepsilon_u}{4} \right) \: .
\end{equation}
We can apply Lemma~\ref{lemNestedPartitions} to such partitions and define nested partitions $\left\lbrace A_j^u \: : \: u = s, \ldots, t, \, j= 1, \ldots, m_u \right\rbrace$ satisfying \eqref{eqPartitionProperty} and \eqref{eqNestedProperty}.
For each $u=s,\dots , t$ and $j=1,\dots,m_u$ we choose $x^u_j\in A_j^u$ and 
we introduce 
\[
M_u := \sum_{j=1}^{m_u} | \mu_N(A^u_j)-\mu(A^u_j) | 
\quad \text{and} \quad
I_u:=\left|\sum_{j=1}^{m_u}g(x_j^u)(\mu_N(A^u_j)-\mu(A^u_j))\right| \quad \text{for } g \in \mathfrak{X}
 \: .
\]

\smallskip
\noindent {\bf Step $1$:} we can prove the following control:
\begin{align} \label{eqBetag}
\left| \int_B g \: \left( d \mu_{N} -  d \mu \right) \right|  \leq I_t + 9 \kappa_0 \epsilon \left( \mu_N(B) + \mu(B) \right) & + 9 \epsilon  \frac{\kappa_1}{\delta^{d+1}} \int_T \mu_N \left( B(x, 10 \delta) \right) \: d (\mu_N + \mu)(x) \nonumber\\
& + 9 \epsilon  \frac{\kappa_2}{r^{d+1}} \int_T \mu_N \left( B(x, 10 r) \right) \: d (\mu_N + \mu)(x) \nonumber \\
& + 9 \epsilon  \frac{\kappa_3}{\delta^{d+1} r^d} \int_T \mu_N \left( B(x, 10 \delta) \right) \mu_N \left( B(x, 10 r) \right) \: d (\mu_N + \mu)(x). 
\end{align}
Indeed, we remind that $(A_j^t)_{j=1}^{m_t}$ is a partition of $T = B \cap \supp \mu$ and for all $j$, $\xdiam A_j^t \leq 3 \varepsilon_t \leq 9 \epsilon = 9 N^{-\frac{1}{d}} \leq 9 \delta$, therefore $B(x,\delta) \cup B(x_j^t,\delta) \subset B(x,10\delta)$ so that $\Delta_{\delta, N}(x,x_j^t) \leq \delta^{-d} \mu_N \left( B(x, 10 \delta) \right)$ and similarly 
$\Delta_{r, N}(x,x_j^t) \leq r^{-d} \mu_N \left( B(x, 10 r) \right)$.
Using in addition \eqref{eqPartitionProperty} and \eqref{eqHypg} we obtain 
\begin{align*}
& \Bigg| \int_B  g  \: \left( d \mu_{N} -  d \mu \right) \Bigg|  = \left| \sum^{m_t}_{j=1} \int_{A_j^t} g(x) \: d(\mu_N-\mu)(x)\right| 
\leq I_t + \left| \sum^{m_t}_{j=1} \int_{A_j^t} \left(g(x) - g(x_j^t) \right) \: d(\mu_N-\mu)(x)\right| \\
& 
\leq  I_t+ \sum^{m_t}_{j=1}\int \limits_{A_j^t}\left( \kappa_0 + \frac{\kappa_1}{\delta^{d+1}} \mu_N \left( B(x,10 \delta) \right)  + \frac{\kappa_2}{r^{d+1}} \mu_N \left( B(x,10 r)\right) \right. \nonumber \\
& \left. \qquad \qquad \qquad \quad \; \; \, + \frac{\kappa_3}{\delta^{d+1} r^d}  \mu_N \left( B(x,10 \delta) \right)\mu_N \left( B(x,10 r) \right) \right) (3 \varepsilon_t) d(\mu_N+\mu)(x)\\
& \leq I_t +  9 \kappa_0 \epsilon \left( \mu_N(B) + \mu(B) \right) \\
& + 9 \epsilon  \int_T \left[ \frac{ \kappa_1}{\delta^{d+1}} \mu_N \left( B(x, 10 \delta) \right) + \frac{\kappa_2}{r^{d+1}} \mu_N \left( B(x, 10 r) \right) + \frac{\kappa_3}{\delta^{d+1} r^d}  \mu_N \left( B(x,10 \delta) \right)\mu_N \left( B(x,10 r) \right) \right] \: d (\mu_N + \mu)(x) \: .
\end{align*}

\smallskip
\noindent {\bf Step $2$:} noting that the dependence on $g$ only lies in $I_t$ in the r.h.s. of \eqref{eqBetag}, let us check that
\begin{equation} \label{eqpropBetaLoc1Step2}
 \E \left[ \sup \left\lbrace \left| \int_B g \: d \mu_N - \int_B g  \: d \mu \right| \: : \: g \in \mathfrak{X}\right\rbrace \right] \leq  \E \left[ \sup \left\lbrace I_t \: : \: g \in \mathfrak{X} \right\rbrace \right] + M \epsilon \left( \kappa_0 + \frac{\kappa_1 + \kappa_3}{\delta} + \frac{\kappa_2}{r} \right) \mu(B) \: .
\end{equation}
Indeed, taking the mean value in the other terms (different from $I_t$) of \eqref{eqBetag}, we first have $\E \left[ \mu_N(B) + \mu(B) \right] = 2 \mu(B)$.
Then, from \eqref{eqBasicMuNTer} and \eqref{eqBasicMuN4}, we have
\[
 \E \left[ \frac{1}{\delta^d} \int_T \mu_N \left( B(x,10 \delta) \right) \: d (\mu_N + \mu)(x) \right] \leq M \mu(T) \quad \text{and similarly with $r$ instead of } \delta \: ,
\]
and furthermore
\[
 \E \left[ \frac{1}{\delta^d r^d} \int_T \mu_N \left( B(x,10 \delta )\right) \mu_N \left( B(x,10 r )\right)  \: d (\mu_N + \mu)(x) \right] \leq M \mu(T) \: ,
\]
which yields \eqref{eqpropBetaLoc1Step2}.

\smallskip
\noindent {\bf Step $3$:} we now estimate $I_t$:
\begin{equation} \label{eqItg}
\E \left[ \sup \left\lbrace I_t \: : \: g \in \mathfrak{X} \right\rbrace \right]
\leq \frac{M}{N^\frac{1}{2}} \mu \left( B^\frac{\varepsilon_s}{4} \right) \left[ \kappa \kappa_0 \varepsilon_s^{-\frac{d}{2}} + 9 \left( \kappa_0 + \frac{\kappa_1 + \kappa_3}{\delta} + \frac{ \kappa_2}{r} \right)  \sum_{u=s+1}^t \varepsilon_u^{-\frac{d}{2} +1} \right] \: .
\end{equation}
First recall that $\| g \|_\infty \leq \kappa \kappa_0$ so that we roughly have $I_s \leq \kappa \kappa_0 M_s$.
Let now $s -1 \leq u \leq t$, using \eqref{eqNestedProperty} and the Lipschitz estimate \eqref{eqHypg} for $g$ we obtain the following recurrence relation:
\begin{align}
I_u & \leq\Bigg| \sum^{m_{u-1}}_{q=1} \sum_{j \in I_{q,u}}  \left( g(x_j^u) -g(x_q^{u-1}) \right)  \: \left(\mu_N (A_j^u) -\mu(A_j^u) \right) \Bigg| + \Bigg|\sum^{m_{u-1}}_{q=1} g(x_q^{u-1}) \: \underbrace{ \sum_{j \in I_{q,u}}  \left(\mu_N (A_j^u) -\mu(A_j^u) \right) }_{= \mu_N(A_q^{u-1}) - \mu (A_q^{u-1}) } \Bigg| \nonumber \\
& \leq  \sum^{m_{u-1}}_{q=1} \sum_{j \in I_{q,u}} \left( \kappa_0 + W_{j,q}^u(\delta,r) \right)(3 \varepsilon_{u-1})   \left| \mu_N (A_j^u) -\mu(A_j^u) \right|  + I_{u-1} \nonumber \\
& \quad \text{with} \quad W_{j,q}^u(\delta,r) = \frac{ \kappa_1}{\delta} \Delta_{\delta,N} \left( x_j^{u},x_q^{u-1} \right)  + \frac{\kappa_2}{r} \Delta_{r,N} \left( x_j^{u},x_q^{u-1} \right) + \frac{ \kappa_3}{\delta} \Delta_{\delta,N} \left( x_j^{u},x_q^{u-1} \right) \Delta_{r,N} \left( x_j^{u},x_q^{u-1} \right)  \nonumber \\
& \leq 9 \varepsilon_u \:   \sum^{m_{u}}_{j=1}  W_{j,q(j)}^u(\delta,r) \left| \mu_N (A_j^u) -\mu(A_j^u) \right|  + 9 \kappa_0 \varepsilon_{u} M_u + I_{u-1}  \: ,
\label{eqItRec} 
\end{align}
where $q(j) \in \{1, \ldots, m_{u-1} \}$ is such that $j \in I_{q,u}$ (that is $q(j)$ is the index of the set $A_{q(j)}^{u-1}$ containing $A_j^u$).
By induction from $u=t$ down to $s$ on \eqref{eqItRec} and recalling that $I_s \leq \kappa \kappa_0 M_s$, we have the following control:
\begin{equation} \label{eqItBis}
I_t \leq \kappa \kappa_0 M_s + 9 \sum_{u=s+1}^t \varepsilon_u \left[ \kappa_0 M_u +  \sum^{m_{u}}_{j=1} W_{j,q(j)}^u(\delta,r)  \left| \mu_N (A_j^u) -\mu(A_j^u) \right| \right]  \: .
\end{equation}
Note that the right hand side of \eqref{eqItBis} is now independent of $g$ and we can then proceed with taking the mean value.
We recall that applying Lemma~\ref{L1} $(i)$ and using the bound \eqref{eqmr} on $m_u$, $\varepsilon_u \leq \varepsilon_s$ and $\displaystyle \mu(T) \leq \mu \left( B^\frac{\varepsilon_s}{4} \right)$ we have
\begin{equation} \label{eqmrMuN}
\E\left[ M_u \right] \leq \left( \frac{m_u \mu(T)}{N} \right)^\frac{1}{2} \leq \frac{2^d C_0^\frac{1}{2} }{N^\frac{1}{2}} \varepsilon_u^{-\frac{d}{2}} \mu \left( B^\frac{\varepsilon_s}{4} \right) \: .
\end{equation}
Moreover,\footnote{Note that here, we only know that $ |x_j^u - x_q^{u-1}| \leq \xdiam (A_q^{u-1}) \leq 3 \epsilon_{u-1}$ which is not less than $\delta$ or $r$ in general. Thus we cannot say that the union of the two balls lie in a larger one of radius proportional to $\delta$ as we did when proving Step $1$.} $\Delta_{\delta,N} \left( x_j^{u},x_q^{u-1} \right) \leq \frac{1}{\delta^d} \left( \mu_N \left( B (x_j^{u} , \delta) \right) + \mu_N \left( B (x_q^{u-1} , \delta) \right) \right)$ and similarly with $r$ instead of $\delta$, then by assumption $\min(\delta,r) \geq N^{-\frac{1}{d}}$ so that 
applying Lemma~\ref{L1} $(ii)$ with $R, \widetilde{R} = \delta, r$, $x_j = x_{q(j)}^{u-1}$, $x_j^u$ we have
\begin{align}
\E & \Bigg[ \sum^{m_{u}}_{j=1} W_{j,q(j)}^u(\delta,r)  \left| \mu_N (A_j^u) -\mu(A_j^u) \right|\Bigg] \nonumber \\
& \leq   \left( \frac{m_u \mu(T)}{N} \right)^\frac{1}{2} \left(  \frac{\kappa_1}{\delta^{d+1}} 4 C_0^\frac{3}{2} \delta^d  +   \frac{\kappa_2}{r^{d+1}} 4 C_0^\frac{3}{2} r^d + \frac{\kappa_3}{\delta^{d+1} r^d} 60 C_0^{\frac{7}{2}} \delta^d r^d \right) \nonumber \\
& \leq \frac{M}{N^\frac{1}{2}}  \left( \frac{\kappa_1 + \kappa_3}{\delta} + \frac{ \kappa_2}{r} \right) \varepsilon_u^{-\frac{d}{2}} \mu \left( B^\frac{\varepsilon_s}{4} \right) \: .  \label{eqItBis_0}
\end{align}
Coming back to \eqref{eqItBis} using \eqref{eqItBis_0} and applying \eqref{eqmrMuN} once more to control $\kappa_0 M_u$ and $\kappa \kappa_0 M_s$, we conclude the proof of \eqref{eqItg} (i.e. Step $2$).

\smallskip
We can now draw an intermediate conclusion in the proof of Proposition~\ref{propBetaLoc}, before considering more specifically the different cases at hand in $(i)$ and $(ii)$. Indeed, thanks to \eqref{eqpropBetaLoc1Step2} and \eqref{eqItg} we can infer that

\noindent {\bf Intermediate conclusion:} 
\begin{multline} \label{eqIntermediateConcl}
\E \left[ \sup \left\lbrace \left| \int_B g \: d \mu_N - \int_B g \:  d\mu \right| \: : \: g \in \mathfrak{X} \right\rbrace \right]  \\ \leq M \left(  \kappa_0 + \frac{\kappa_1 + \kappa_3}{\delta} + \frac{\kappa_2}{r}  \right) \epsilon \mu(B) + \frac{ M}{N^\frac{1}{2}}\left[ \kappa_0 \kappa \varepsilon_s^{-\frac{d}{2}} + \left( \kappa_0 + \frac{\kappa_1 + \kappa_3}{\delta} + \frac{\kappa_2}{r} \right)  \sum_{u=s+1}^t \varepsilon_u^{-\frac{d}{2} +1} \right]  \mu \left( B^\frac{\varepsilon_s}{4} \right)  \: .
\end{multline}

\smallskip
\noindent {\bf Step $4$:} 
We are left with estimating $\displaystyle \sum_{u=s+1}^t \varepsilon_u^{-\frac{d}{2} +1}$ whose computation depends on the sign of $-\frac{d}{2} +1$.

{\bf -- case $d > 2$:}
Note that for $d > 2$, $\varepsilon_u^{-d/2+1}=(3^{-u})^{-d/2+1}=(3^{d/2-1})^{u}$ and $3^{d/2-1} > 1$ so that 
\begin{equation} \label{eqBetaT_2}
\sum_{u=s+1}^t\varepsilon_u^{-d/2+1} \leq \sum_{u=0}^t (3^{d/2-1})^{u} \leq \frac{(3^{d/2-1})^{t+1}}{3^{d/2-1} - 1} \leq \frac{3^{d/2-1}}{3^{d/2-1} - 1} \varepsilon_t^{-d/2+1} \leq M \epsilon^{-d/2+1} \leq M N^{1/2} \epsilon \: .
\end{equation}
In addition, in the case where $B$ is an arbitrary bounded open set, $\kappa \leq 1$ and by choosing $\alpha = \frac{d-2}{d}$ we have 
\begin{equation} \label{eqAlphad}
\kappa \varepsilon_s^{-d/2} \leq \epsilon^{- \alpha d /2} = \epsilon^{-d/2 +1} = N^{1/2} \: \epsilon \text{ and } \varepsilon_s \leq 3 \epsilon^\alpha = 3 \gamma_N \: , 
\end{equation}
then together with \eqref{eqBetaT_2}:
\begin{align}
\frac{ 1 }{N^\frac{1}{2}} & \left[ \kappa_0 \kappa \varepsilon_s^{-\frac{d}{2}} + \left( \kappa_0 + \frac{\kappa_1 + \kappa_3}{\delta} + \frac{\kappa_2}{r} \right) \sum_{u=s+1}^t \varepsilon_u^{-\frac{d}{2} +1} \right]  \mu \left( B^\frac{\varepsilon_s}{4} \right) \nonumber \\
& \leq \frac{ 1 }{N^\frac{1}{2}}\left( \kappa_0 N^\frac{1}{2} \epsilon + \left( \kappa_0 + \frac{\kappa_1 + \kappa_3}{\delta} + \frac{\kappa_2}{r} \right) M N^{\frac{1}{2}} \epsilon \right)  \mu \left( B^{\gamma_N} \right)  \leq M \left( \kappa_0 + \frac{\kappa_1 + \kappa_3}{\delta} + \frac{\kappa_2}{r} \right) \epsilon  \mu \left( B^{\gamma_N} \right) \label{eqBetaT_3} \: .
\end{align}
We can then conclude the proof of $(i)$ ($d> 2$ thanks to \eqref{eqIntermediateConcl} and \eqref{eqBetaT_3}. 

\smallskip
\noindent If now $B$ is an open ball of radius $R_B < 1$ and $\kappa \leq R_B$. Let $N \in \N^\ast$ be large enough so that $\epsilon = N^{-1/d} \leq R_B < 1$ and let $0 < \gamma \leq 1$ such that $R_B = \epsilon^\gamma$. By choosing $\alpha = 1 + \frac{2}{d}(\gamma -1)$, we have $\alpha - \gamma =  \frac{d-2}{d} (1 - \gamma)$ hence $0 < \gamma \leq  \alpha \leq 1$ and
\begin{equation} \label{eqBetaT_4}
\kappa \varepsilon_s^{-d/2} \leq R_B \epsilon^{- \alpha d /2} = \epsilon^{-d/2 + 1} = N^{1/2} \epsilon \quad \text{and} \quad \varepsilon_s \leq 3 \epsilon^\alpha \leq 3 \epsilon^\gamma = 3 R_B \: .
\end{equation}
From \eqref{eqIntermediateConcl}, \eqref{eqBetaT_2} and \eqref{eqBetaT_4} we infer as before that
\[
\E \left[ \sup \left\lbrace \left| \int_B g \: d \mu_N - \int_B g \:  d\mu \: \right| \: : \: g \in \mathfrak{X} \right\rbrace \right]  \leq M(d,C_0) \left( \kappa_0 + \frac{\kappa_1 + \kappa_3}{\delta} + \frac{\kappa_2}{r} \right) N^{-\frac{1}{d}} \mu \left(B^{(R_B)} \right) \: .
\]
Note that $\mu \left(B^{(R_B)} \right) = \mu (2B) \leq 2^d C_0^2 \mu(B)$ by Ahlfors regularity which concludes the proof of $(ii)$ for $d > 2$.

\smallskip
{\bf case $d = 2$:} in this case, $B$ is an open ball of radius $R_B \geq \kappa$ satisfying $\epsilon = N^{-1/d} < R_B < 1$, as $d =2$, we have
$N^{-1/2} = \epsilon$, $\varepsilon_u^{-d/2+1}= 1$, and by choosing $0 < \alpha < 1$ such that $\epsilon^\alpha = R_B$ then $\kappa \varepsilon_s^{-d/2} \leq R_B \epsilon^{-\alpha} \leq 1$ and we obtain
\begin{align}
\frac{ 1 }{N^\frac{1}{2}} & \left[ \kappa_0 \kappa \varepsilon_s^{-\frac{d}{2}} + \left( \kappa_0 + \frac{\kappa_1 + \kappa_3}{\delta} + \frac{\kappa_2}{r} \right) \sum_{u=s+1}^t \varepsilon_u^{-\frac{d}{2} +1} \right]  \mu \left( B^\frac{\varepsilon_s}{4} \right) \nonumber \\
\leq & N^{-\frac{1}{2}} \left( \kappa_0  + \left( \kappa_0 + \frac{\kappa_1 + \kappa_3}{\delta} + \frac{ \kappa_2}{r} \right) |t-s| \right)  \mu \left( B^{\frac{\epsilon_s}{4}} \right) 
\leq \left( \kappa_0 + \left( \kappa_0 + \frac{\kappa_1 + \kappa_3}{\delta} + \frac{\kappa_2}{r} \right) \ln N \right) \epsilon  \mu \left( 2B \right) \label{eqBetaT_5}
\end{align}
since $\epsilon_s \leq 3 \epsilon^\alpha \leq 3 R_B$ and $\displaystyle |t - s| \leq t \leq \frac{|\ln(\epsilon)|}{\ln 3} \leq \frac{\ln N}{2 \ln 3}$.
Using the $2$--Ahlfors regularity of $\mu$ together with \eqref{eqIntermediateConcl} and \eqref{eqBetaT_5} concludes the proof of $(ii)$ for $d = 2$ and eventually the proof of Proposition~\ref{propBetaLoc}.

\smallskip
{\bf-- case $d < 2$:} with the same computations as for $d = 2$ and choosing $\epsilon^\alpha = R_B$, we still have $\kappa \epsilon_s^{-d/2} \leq R_B^{1 - \alpha d/2} \leq 1$. Moreover, $\epsilon_u^{-\frac{d}{2} +1} = (3^{d/2 - 1})^{u}$ with $3^{d/2 - 1} < 1$ so that
\begin{align}
\frac{ 1 }{N^\frac{1}{2}} & \left[ \kappa_0 \kappa \varepsilon_s^{-\frac{d}{2}} + \left( \kappa_0 + \frac{\kappa_1 + \kappa_3}{\delta} + \frac{\kappa_2}{r} \right) \sum_{u=s+1}^t \varepsilon_u^{-\frac{d}{2} +1} \right]  \mu \left( B^\frac{\varepsilon_s}{4} \right) \nonumber \\
\leq & N^{-\frac{1}{2}} \left( \kappa_0  + \left( \kappa_0 + \frac{\kappa_1 + \kappa_3}{\delta} + \frac{ \kappa_2}{r} \right) \frac{1}{1 - 3^{d/2 - 1}} \right)  \mu \left( B^{\frac{\epsilon_s}{4}} \right) 
\leq M N^{-\frac{1}{2}} \left(  \kappa_0 + \frac{\kappa_1 + \kappa_3}{\delta} + \frac{\kappa_2}{r}  \right)  \mu \left( 2B \right) \label{eqBetaT_6} \: .
\end{align}
\end{proof}

\subsection{Estimation of the measure carried by S}
\label{secEstimNudeltaN}

In Section~\ref{secPointwiseDensity}, we have established the pointwise convergence of the estimator $\theta_{\delta_N, N}$ towards $\theta$ when $N \rightarrow +\infty$ and $\delta_N$ suitably chosen. In this section, our purpose is to build upon this result to design an estimator of $\nu = \cH^d_{| S}$. More precisely, given $N \in \N^\ast$ and $\delta > 0$, we introduce the measure $\nu_\delta$ as well as the random measure $\nu_{\delta,N}$ obtained by attributing weights to the points of the sample $(X_1, \ldots, X_N)$ as follows:
\begin{equation} \label{eqNuN}
 \nu_{\delta, N} = \frac{1}{N} \sum_{i=1}^N \Phi(\theta_{\delta, N}(X_i)) \delta_{X_i} = \left( \Phi \circ \theta_{\delta, N} \right) \mu_N \quad \text{and} \quad \nu_\delta = \left( \Phi \circ \theta_{\delta} \right) \mu \: ,
\end{equation}
where $\Phi$ is a truncation of the inverse function, for $0 <\tau \leq 1$ and $t > 0$,
\begin{equation} \label{eqPhiChi}
\Phi (t) = \frac{\chi_\tau(t)}{t} \quad \text{and} \quad
 \chi_{\tau}(t) =\left\{ \begin{array}{ll}
     0& \text{ if }  0<t<\frac{\tau}{2}\\
     \frac{2}{\tau} t - 1 &\text{ if } \frac{\tau}{2} \leq t \leq \tau \\
     1 & \text{ if } t>\tau
\end{array} \right. \: .
\end{equation}
Let us comment on these definitions. Observe that $\displaystyle \cH^d_{| S} = \frac{1}{\theta} \: \mu$ and it would be natural to similarly consider the random measure $\displaystyle \frac{1}{\theta_{\delta, N}} \: \mu_N$, however, while we assume that $\theta$ is uniformly lower bounded, it is more delicate concerning $\theta_{\delta, N}$: it will be possible to infer a lower bound for $\E \left[ \theta_{\delta, N} \right]$ and not directly for $\theta_{\delta, N}$. To circumvent this point, we multiply the inverse function by the cutoff $\chi_\tau$.

\begin{remark}
 Note that \eqref{eqNuN} defines finite measures though not probability measures in general.
\end{remark}

We start with some elementary properties of $\Phi$ and $\theta_{\delta , N}$ that will be subsequently useful. 

\begin{lemma} \label{lemPropertiesPhiThetaN}
Let $\Phi : \R_+ \rightarrow \R_+$ and $\theta_{\delta, N} : \R^n \rightarrow \R_+$ be defined as in \eqref{eqPhiChi} and \eqref{eqThetaN}, then
\begin{enumerate}[$(i)$]
 \item $\Phi$ is bounded and Lipschitz: $\displaystyle \| \Phi \|_\infty \leq \frac{1}{\tau}$ and $\displaystyle \xLip (\Phi) \leq \frac{4}{\tau^2}$,
\item $\theta_{\delta, N}$ is Lipschitz and more precisely, for all $x$, $y \in \R^n$,
\begin{equation*}
 \left| \theta_{\delta, N}(x) - \theta_{\delta, N}(y) \right| \leq \frac{\xLip(\eta)}{C_\eta} \underbrace{\frac{\mu_N \left( B(x,\delta) \cup B(y,\delta) \right)}{ \delta^d}}_{\Delta_{\delta,N}(x,y)} \frac{|x-y|}{\delta} \: . 
\end{equation*}
\end{enumerate}
\end{lemma}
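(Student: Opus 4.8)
The plan is to treat the two items separately, both by elementary computation on the explicit formulas \eqref{eqPhiChi} and \eqref{eqThetaN}; there is no genuine obstacle, only a small amount of care needed in $(ii)$ to exploit the compact support of $\eta$ rather than bounding crudely.

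For $(i)$, I would first rewrite $\Phi$ piecewise: $\Phi(t) = 0$ for $0 < t < \tfrac{\tau}{2}$, $\Phi(t) = \tfrac{2}{\tau} - \tfrac{1}{t}$ for $\tfrac{\tau}{2} \leq t \leq \tau$, and $\Phi(t) = \tfrac{1}{t}$ for $t > \tau$. For the sup bound: on $(\tfrac{\tau}{2},\tau)$ one has $\tfrac{1}{\tau} \leq \tfrac{1}{t} \leq \tfrac{2}{\tau}$, so $0 \leq \Phi(t) = \tfrac{2}{\tau} - \tfrac{1}{t} \leq \tfrac{1}{\tau}$, while for $t > \tau$, $0 < \Phi(t) = \tfrac{1}{t} < \tfrac{1}{\tau}$; hence $\|\Phi\|_\infty \leq \tfrac{1}{\tau}$. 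For the Lipschitz bound, I would check that $\Phi$ is continuous (the two junctions at $t = \tfrac{\tau}{2}$ and $t = \tau$ match, both giving $0$ and $\tfrac{1}{\tau}$ respectively), differentiable except at these two points, with $\Phi' = 0$ on $(0,\tfrac{\tau}{2})$, $\Phi'(t) = \tfrac{1}{t^2} \leq \tfrac{4}{\tau^2}$ on $(\tfrac{\tau}{2},\tau)$, and $|\Phi'(t)| = \tfrac{1}{t^2} \leq \tfrac{1}{\tau^2} \leq \tfrac{4}{\tau^2}$ on $(\tau,\infty)$; a continuous, piecewise $C^1$ function with derivative bounded in absolute value by $\tfrac{4}{\tau^2}$ is $\tfrac{4}{\tau^2}$--Lipschitz, so $\xLip(\Phi) \leq \tfrac{4}{\tau^2}$.

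For $(ii)$, I would start from $\theta_{\delta,N}(x) = \dfrac{1}{N C_\eta \delta^d} \sum_{j=1}^N \eta\!\left(\dfrac{|x - X_j|}{\delta}\right)$ and estimate the difference termwise. For each $j$, since $\eta$ is $\xLip(\eta)$--Lipschitz and $\bigl| |x - X_j| - |y - X_j| \bigr| \leq |x - y|$ by the triangle inequality,
\[
\left| \eta\!\left(\frac{|x - X_j|}{\delta}\right) - \eta\!\left(\frac{|y - X_j|}{\delta}\right) \right| \leq \xLip(\eta) \, \frac{|x-y|}{\delta} \, .
\]
The key point is that, because $\supp \eta \subset (-1,1)$, the left-hand side vanishes unless $X_j \in B(x,\delta) \cup B(y,\delta)$, so only the indices $j$ with $X_j$ in that union contribute. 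Summing and using $\mu_N(A) = \tfrac{1}{N} \#\{ j : X_j \in A\}$ gives
\[
\left| \theta_{\delta,N}(x) - \theta_{\delta,N}(y) \right| \leq \frac{\xLip(\eta)}{N C_\eta \delta^d} \, \frac{|x-y|}{\delta} \, \#\{ j : X_j \in B(x,\delta) \cup B(y,\delta) \} = \frac{\xLip(\eta)}{C_\eta} \, \frac{\mu_N\!\left( B(x,\delta) \cup B(y,\delta) \right)}{\delta^d} \, \frac{|x-y|}{\delta} \, ,
\]
which is exactly the asserted inequality with $\Delta_{\delta,N}(x,y) = \delta^{-d}\mu_N( B(x,\delta) \cup B(y,\delta))$. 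The only thing to be careful about is to perform the support restriction \emph{before} counting, otherwise one only gets the weaker deterministic bound $\tfrac{\xLip(\eta)}{C_\eta \delta^{d+1}}|x-y|$; in particular this is where the Lipschitz assumption on $\eta$ (not needed in Proposition~\ref{propDensityPointwiseCv}) is used.
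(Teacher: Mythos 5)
Your proof is correct and follows essentially the same route as the paper's: item $(i)$ via the piecewise derivative of $\Phi$, and item $(ii)$ by combining the Lipschitz bound on $\eta$, the triangle inequality $\bigl||x-z|-|y-z|\bigr|\leq|x-y|$, and the support restriction to $B(x,\delta)\cup B(y,\delta)$ (the paper writes the estimate as an integral against $\mu_N$ rather than a sum over the $X_j$, which is the same computation). Your closing remark correctly identifies why the support restriction must precede the counting and where the Lipschitz hypothesis on $\eta$ enters.
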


\begin{proof}
We first check $(i)$, $\Phi_\tau$ is continuous and piecewise smooth and
\begin{equation*}
\Phi_{\tau}'(t) =\left\{ \begin{array}{ll}
     0& \text{ if }  0<t<\frac{\tau}{2}\\
      + \frac{1}{t^2} &\text{ if } \frac{\tau}{2} < t < \tau \\
     -\frac{1}{t^2} & \text{ if } t>\tau
\end{array} \right. \: .  
\end{equation*}
We deduce that $\xLip(\Phi)\leq \frac{4}{\tau^2}$ and $||\Phi(t)||_{\infty}\leq\frac{1}{\tau}$.
Concerning $(ii)$, for all $x$, $y \in \mathbb{R}^n$,
\begin{align*}
    |\theta_{\delta, N}(x)-\theta_{\delta,N}(y)|\leq &\frac{1}{C_\eta \delta^d}|\mu_N*\eta_{\delta}(x)-\mu_N*\eta_{\delta}(y)| \\
     \leq& \frac{1}{C_\eta \delta^d}\int_{z \in \R^n} \left |\eta \left(\frac{|x-z|}{\delta}\right)-\eta\left(\frac{|y-z|}{\delta}\right)\right| d\mu_N(z)\\
     \leq&\frac{1}{C_\eta \delta^d} \xLip(\eta)\frac{|x-y|}{\delta}\mu_N(B(x,\delta)\cup B(y,\delta))  \: .
\end{align*}
\end{proof}

By triangular inequality, we can write
\begin{align} \label{eqTrianIneqBeta}
\E \left[ \beta_B (\nu_{\delta, N}, \: \cH^d_{| S}) \right]\leq \beta_B (\nu_{\delta}, \: \cH^d_{| S}) +\E \left[ \beta_B (\nu_{\delta,N}, \: \nu_{\delta}) \right]
\end{align}
and we study the convergence of $\beta_B (\nu_{\delta}, \: \cH^d_{| S})$ in Proposition~\ref{nudel}, then $\E \left[ \beta_B (\nu_{\delta,N}, \: \nu_{\delta}) \right]$ in Theorem~\ref{thmBetaLocNuN} before concluding in Corollary~\ref{corNuDeltaN}. Note that in the three aforementioned statements, we assume that $\mu = \theta \cH^{| S}$ with $S$, $\theta$ satisfying \hyperref[hypH1]{$(H_1)$}, \hyperref[hypH2]{$(H_2)$} and \hyperref[hypH3]{$(H_3)$} and we recall that it is by definition equivalent to $\mu \in \cQ$.

\begin{proposition}
\label{nudel}
Let $d \in \N^\ast$ and assume that $S$, $\theta$ satisfy \hyperref[hypH1]{$(H_1)$}, \hyperref[hypH2]{$(H_2)$} and \hyperref[hypH3]{$(H_3)$} and let $C_0 \geq 1$ be a regularity constant of $\mu = \theta \cH^d_{| S}$ (see Remark~\ref{rkHyp}$(i)$).
We recall that $\theta_\delta : \R \rightarrow \R_+$ is defined as in \eqref{eqThetaN}. Then, 
there exists $m = m(d, C_0,\eta) > 0$ such that
 \begin{equation} \label{eqmtau}
  \forall \delta > 0 , \, \forall x \in S , \, \theta_{\delta} (x) \geq m \quad \text{and} \quad \theta(x) \geq m \: .
 \end{equation}
Assume that $0< \tau \leq m$ and let $\chi_\tau : (0,+\infty) \rightarrow \R_+$ be as in \eqref{eqPhiChi}, then,
\begin{equation*} 
  \forall \delta > 0 , \, \forall x \in S , \quad \Phi\left( \theta_\delta(x) \right) = \frac{1}{\theta_\delta(x)} \quad \text{and} \quad \Phi\left( \theta(x) \right) = \frac{1}{\theta(x)} \: ,
\end{equation*}
and for any bounded open set $B \subset \R^n$,
\begin{equation*}
 \beta_B (\nu_\delta, \cH^d_{| S}) \leq |\nu_\delta - \cH^d_{| S}|(B)  \leq \int_B \left| \frac{1}{\theta_\delta} - \frac{1}{\theta}  \right| \: d\mu \xrightarrow[\delta \to 0_+]{} 0 \: .
\end{equation*}
\end{proposition}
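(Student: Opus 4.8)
The plan is to handle the three assertions of Proposition~\ref{nudel} in order: first the uniform lower bound \eqref{eqmtau}, then the resulting identity $\Phi\circ\theta_\delta = 1/\theta_\delta$, and finally the Bounded Lipschitz comparison together with its convergence as $\delta\to 0_+$.

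For \eqref{eqmtau} I would use that $\eta$, being Lipschitz, is continuous, hence $c_\eta:=\min_{[-1/2,1/2]}\eta>0$ by the positivity assumption on $\eta$. Fixing $x\in S=\supp\mu$ and $\delta$ in the range that matters in the sequel (i.e.\ $\delta\le\xdiam(S)$, since the proposition is applied with $\delta=\delta_N\to 0$), I would bound $\theta_\delta(x)$ from below by discarding all of the convolution integral outside $B(x,\delta/2)$, where $\eta(|y-x|/\delta)\ge c_\eta$, and then invoke the $d$--Ahlfors lower bound $\mu(B(x,\delta/2))\ge C_0^{-1}(\delta/2)^d$ (Remark~\ref{rkHyp}$(i)$); this gives $\theta_\delta(x)\ge c_\eta/(C_\eta 2^d C_0)$, a constant depending only on $d$, $C_0$, $\eta$. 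For $\theta$ itself I would instead use that for $\cH^d$--a.e.\ $x\in S$, $\theta(x)=\lim_{r\to 0_+}\mu(B(x,r))/(\omega_d r^d)\ge 1/(C_0\omega_d)$ by Proposition~\ref{propTgtSpaceToRectifMeasure} and Ahlfors regularity (or, equivalently, by letting $\delta\to 0_+$ in the preceding estimate and using Proposition~\ref{lemDensityPointwiseCv}). Taking $m$ to be the minimum of these two constants settles \eqref{eqmtau}.

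Given $0<\tau\le m$, the second assertion is immediate from \eqref{eqPhiChi}: the cutoff satisfies $\chi_\tau\equiv 1$ on $[\tau,+\infty)$ (note $\chi_\tau(\tau)=1$ as well), so $\Phi(t)=\chi_\tau(t)/t=1/t$ whenever $t\ge\tau$; applying this with $t=\theta_\delta(x)\ge m$ (for all $x\in S$) and $t=\theta(x)\ge m$ ($\mu$--a.e.) gives the claimed identities. For the last assertion, the first inequality is the general fact that any competitor $f$ in the definition of $\beta_B$ satisfies $|\int f\,d(\nu_\delta-\cH^d_{| S})|\le|\nu_\delta-\cH^d_{| S}|(B)$ because $\|f\|_\infty\le 1$ and $\supp f\subset B$; one takes the supremum over such $f$ (both measures are finite, so their difference is a well-defined finite signed measure). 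For the second inequality I would rewrite both measures as densities against $\mu$: by the previous step $\nu_\delta=(\Phi\circ\theta_\delta)\mu=(1/\theta_\delta)\mu$, while $\cH^d_{| S}=(1/\theta)\mu$ since $\mu=\theta\cH^d_{| S}$ with $\theta>0$ a.e.; hence $\nu_\delta-\cH^d_{| S}=(1/\theta_\delta-1/\theta)\mu$ and its total variation on $B$ is exactly $\int_B|1/\theta_\delta-1/\theta|\,d\mu$. Convergence of this quantity to $0$ then follows by dominated convergence: Proposition~\ref{lemDensityPointwiseCv} gives $\theta_\delta\to\theta$ $\mu$--a.e., hence $1/\theta_\delta\to 1/\theta$ $\mu$--a.e., while by \eqref{eqmtau} both $1/\theta_\delta$ and $1/\theta$ are bounded by $1/m$, so the integrand is dominated by the constant $2/m$, which is $\mu$--integrable on $B$ because $\mu$ is a probability measure.

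I do not expect any serious difficulty here; the one point requiring a little care --- the ``main obstacle'', such as it is --- is that the lower bound $m$ must depend only on $d$, $C_0$, $\eta$ and not on $\theta_{\min}$, which is why one argues through Ahlfors regularity of $\mu$ rather than simply quoting $\theta\ge\theta_{\min}$; this is what makes the threshold $\tau$ admissible uniformly over the class $\cQ$. (A minor caveat: the estimate $\theta_\delta(x)\ge m$ as obtained above uses $\delta\lesssim\xdiam(S)$, which is harmless since the proposition is only ever used with $\delta\to 0$; for arbitrarily large $\delta$ the convolution mass saturates and the $\delta^{-d}$ normalization dominates.)
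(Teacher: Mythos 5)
Your proof is correct and follows essentially the same route as the paper: the lower bound on $\theta_\delta$ via $\eta\ge m_\eta$ on $[0,\tfrac12]$ combined with the Ahlfors lower bound on $\mu(B(x,\delta/2))$, the observation that $\chi_\tau\equiv 1$ above $\tau\le m$, and dominated convergence with the constant bound $2/m$ for the last limit. Your justification of $\theta\ge m$ through the density limit $\theta(x)=\lim_{r\to 0}\mu(B(x,r))/(\omega_d r^d)\ge (C_0\omega_d)^{-1}$ is in fact a cleaner way than the paper's one-line appeal to $\theta\ge\theta_{\min}$ to see that $m$ depends only on $d$, $C_0$, $\eta$, and your caveat about large $\delta$ is a legitimate (harmless) observation.
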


\begin{proof}
Let $\displaystyle m_\eta = \min \left\lbrace \eta(t) \: : \: t \in \left[0 , \tfrac{1}{2} \right] \right\rbrace > 0$ since $\eta$ is supposed to be positive and continuous in $\left[ -\frac{1}{2}, \frac{1}{2} \right]$.
Let $x \in S$ and $\delta > 0$,
\[
\theta_\delta (x) = \frac{1}{C_\eta \delta^d} \int_{B(x,\delta) \cap S} \eta \left( \frac{|x-y|}{\delta} \right) \theta(y) \: d \cH^d(y) \geq m_\eta \frac{\mu \left( B \left(x, \frac{\delta}{2}\right) \right) }{C_\eta \delta^d} \geq  2^{-d} m_\eta C_\eta^{-1} C_0^{-1}  \: ,
\]
and since $\theta \geq \theta_{min} > 0$, we can infer \eqref{eqmtau}.

\noindent Let now $0< \tau \leq m$, then for all $x \in S$ and $\delta > 0$, $\theta_\delta(x) \geq \tau$ and $\theta(x) \geq \tau$ hence $\displaystyle \chi_\tau(\theta_\delta(x)) = \chi_\tau(\theta(x)) = 1$. Furthermore, we already know that $\theta_\delta \xrightarrow[\delta \to 0_+]{} \theta$ a.e. in $S$ thanks to Proposition~\ref{lemDensityPointwiseCv} and by \eqref{eqmtau}
$\displaystyle
 \left| \frac{1}{\theta_\delta} - \frac{1}{\theta}  \right| \leq \frac{2}{m} 
$ in $S$.
As $\mu( B ) < +\infty$, we conclude by dominated convergence that
\begin{equation} \label{eqLemmaNudelConcl}
 \beta_B (\nu_\delta, \cH^d_{| S}) \leq |\nu_\delta - \cH^d_{| S}|(B) \leq \int_B \left| \frac{1}{\theta_\delta} \chi_\tau (\theta_\delta) - \frac{1}{\theta}  \right| \: d\mu = \int_{B \cap S} \left| \frac{1}{\theta_\delta} - \frac{1}{\theta}  \right| \: d\mu \xrightarrow[\delta \to 0_+]{} 0 \: .
\end{equation}
\end{proof}
\noindent Note that we similarly have the uniform upper bound, for $x \in S$:
\begin{equation}
\label{eqThetaDeltaUpperBound}
 \theta_\delta (x) = \frac{1}{C_\eta \delta^d} \int_{B(x,\delta) \cap S} \eta \left( \frac{|x-y|}{\delta} \right) \: d \mu(y) \leq \frac{\| \eta \|_\infty}{C_\eta} \frac{\mu \left( B \left(x, \delta\right) \right) }{\delta^d} \leq  \frac{\| \eta \|_\infty C_0}{C_\eta} \leq M(C_0, \eta) \: .
\end{equation}
\begin{theorem} \label{thmBetaLocNuN}
Let $0 < d \leq n$ and assume that $S$, $\theta$ satisfy $(H_1)$ and $(H_2)$ and let $C_0 \geq 1$ be the regularity constant of $\mu = \theta \cH^d_{| S}$ (see Remark~\ref{rkHyp}$(i)$).
Let $\nu_{\delta}$ and $\nu_{\delta,N}$ be defined as in \eqref{eqNuN}. Then,
\begin{enumerate}[$(i)$]
\item case $d>2$ and $B$ arbitrary bounded open set: there exists a constant $M = M(d,C_0,\xLip(\eta)) > 0$ such that for any $B \subset \R^n$ bounded open set and for all $N \in \N^\ast$, $0 < \delta < 1$ satisfying $N^{-\frac{1}{d}} \leq \delta$,
\[ 
\E \left[ \beta_B (\nu_{\delta, N}, \: \nu_\delta) \right] \leq  \frac{M}{\tau^2} \frac{N^{-\frac{1}{d}} }{\delta} \mu (B^{\gamma_N})  \quad \text{with } \gamma_N = N^{-\frac{d-2}{d^2}} \xrightarrow[N \to +\infty]{} 0 \: .
\]
\item case $B$ open ball: there exists a constant $M = M(d,C_0,\xLip(\eta)) > 0$ such that for any open ball $B \subset \R^n$ of radius $0 < R_B < 1$ and for all  $N \in \N^\ast$, $0 < \delta < 1$ satisfying $N^{-\frac{1}{d}} \leq \min( R_B, \delta)$, 
\[
\E \left[ \beta_B (\nu_{\delta, N}, \: \nu_\delta) \right] \leq \frac{M}{\tau^2} \frac{R_B}{\delta} \mu (B) \times
\left\lbrace
\begin{array}{lcl}
N^{-\frac{1}{d}}         & \text{if} & d > 2\\
N^{-\frac{1}{2}} \ln N   & \text{if} & d = 2\\
N^{-\frac{1}{2}}         & \text{if} & d < 2
\end{array}
\right.  \: .
\]
\end{enumerate}
\end{theorem}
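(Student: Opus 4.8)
The plan is to reduce the estimate to Proposition~\ref{propBetaLoc} applied to a well-chosen family of \emph{random} Lipschitz test functions, together with the pointwise fluctuation bound of Proposition~\ref{propDensityPointwiseCv}. Fix a competitor $f\in\xC_c(\R^n,\R)$ for $\beta_B$, i.e. $\|f\|_\infty\le 1$, $\xLip(f)\le 1$, $\supp f\subset B$, and set $\kappa=\min(1,\xdiam(B)/2)$, so that $\|f\|_\infty\le\kappa$ (and $\kappa\le R_B$ when $B$ is a ball of radius $R_B$). Since $\nu_{\delta,N}=(\Phi\circ\theta_{\delta,N})\mu_N$ and $\nu_\delta=(\Phi\circ\theta_\delta)\mu$, I would split
\[
 \int_B f\,d\nu_{\delta,N}-\int_B f\,d\nu_\delta
 \;=\; \underbrace{\int_B g_f\,d(\mu_N-\mu)}_{A_f}
 \;+\; \underbrace{\int_B f\,\bigl(\Phi\circ\theta_{\delta,N}-\Phi\circ\theta_\delta\bigr)\,d\mu}_{B_f},
 \qquad g_f:=f\,(\Phi\circ\theta_{\delta,N}).
\]

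The first step is to verify that $g_f$ belongs to the class $\mathfrak{X}$ of Proposition~\ref{propBetaLoc}. By Lemma~\ref{lemPropertiesPhiThetaN}$(i)$, $\|g_f\|_\infty\le\kappa\|\Phi\|_\infty\le\kappa/\tau$, so one takes $\kappa_0=1/\tau$; and combining $\xLip(\Phi)\le 4/\tau^2$ with Lemma~\ref{lemPropertiesPhiThetaN}$(ii)$ and $\xLip(f)\le1$,
\[
 |g_f(x)-g_f(y)|\le\Bigl(\frac{4\kappa\,\xLip(\eta)}{\tau^2 C_\eta}\,\frac{\Delta_{\delta,N}(x,y)}{\delta}+\frac{1}{\tau}\Bigr)|x-y|\qquad(x,y\in\R^n),
\]
which is exactly \eqref{eqHypg} with $\kappa_1=4\kappa\,\xLip(\eta)/(\tau^2C_\eta)$, $\kappa_2=\kappa_3=0$ and $r=\delta$ (so the hypothesis $N^{-1/d}\le\min(\delta,r)$ of Proposition~\ref{propBetaLoc} reduces to the assumed $N^{-1/d}\le\delta$). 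Since $\supp g_f\subset B$, the quantity $\sup\{|\int_B g\,d\mu_N-\int_B g\,d\mu|:g\in\mathfrak{X}\}$ dominates $|A_f|$ and no longer depends on $f$; taking expectations and invoking Proposition~\ref{propBetaLoc}$(i)$, resp. $(ii)$ in the ball case where $\kappa\le R_B$, bounds $\E[\sup_f|A_f|]$ by $M(\kappa_0+\kappa_1/\delta)\,\mu(B^{\gamma_N})\cdot(\mathrm{rate})$, resp. $M(\kappa_0+\kappa_1/\delta)\,\mu(B)\cdot(\mathrm{rate})$. As $0<\tau\le1$ and $0<\delta<1$ give $\kappa_0=1/\tau\le1/(\tau^2\delta)$ and $\kappa_1/\delta=\tfrac{4\xLip(\eta)}{C_\eta}\cdot\tfrac{\kappa}{\tau^2\delta}$, this is $\le\tfrac{M}{\tau^2}\tfrac{N^{-1/d}}{\delta}\mu(B^{\gamma_N})$ in case $(i)$ and $\le\tfrac{M}{\tau^2}\tfrac{R_B}{\delta}\mu(B)\cdot(\mathrm{rate})$ in case $(ii)$, with $M$ depending only on $d$, $C_0$ and $\eta$.

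For the remainder, $|B_f|\le\kappa\,\xLip(\Phi)\int_B|\theta_{\delta,N}-\theta_\delta|\,d\mu\le\tfrac{4\kappa}{\tau^2}\int_B|\theta_{\delta,N}-\theta_\delta|\,d\mu$ is independent of $f$, so by Fubini and Proposition~\ref{propDensityPointwiseCv},
\[
 \E\bigl[\sup_f|B_f|\bigr]\le\frac{4\kappa}{\tau^2}\int_B\E\bigl[|\theta_{\delta,N}(x)-\theta_\delta(x)|\bigr]\,d\mu(x)\le\frac{4\kappa\,M}{\tau^2\sqrt{N\delta^d}}\,\mu(B).
\]
It then remains only to check, under $N^{-1/d}\le\delta$ and $0<\delta<1$, that $1/\sqrt{N\delta^d}$ is controlled by the announced rates: for $d>2$, $\tfrac{N^{-1/d}/\delta}{N^{-1/2}\delta^{-d/2}}=N^{1/2-1/d}\delta^{d/2-1}\ge1$, so $1/\sqrt{N\delta^d}\le N^{-1/d}/\delta$; for $d=2$, $1/\sqrt{N\delta^d}=N^{-1/2}/\delta\le N^{-1/2}(\ln N)/\delta$ once $N\ge3$; for $d<2$, $1/\sqrt{N\delta^d}=\delta^{1-d/2}N^{-1/2}/\delta\le N^{-1/2}/\delta$ since $\delta^{1-d/2}<1$ (the finitely many small $N$ being absorbed into $M$). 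Combining, $\beta_B(\nu_{\delta,N},\nu_\delta)=\sup_f|A_f+B_f|\le\sup_f|A_f|+\sup_f|B_f|$, and taking expectations and using $\mu(B)\le\mu(B^{\gamma_N})$, $\kappa\le1$ (resp. $\kappa\le R_B$) yields exactly $(i)$ and $(ii)$. The substance of the proof is entirely carried by Proposition~\ref{propBetaLoc} — which is why it was phrased for random Lipschitz functions satisfying \eqref{eqHypg} rather than for genuine $1$-Lipschitz functions — so the only real care needed here is the bookkeeping of the constants, distinguishing the two geometric regimes $\kappa\le1$ (general open set) and $\kappa\le R_B$ (ball), and the elementary rate comparisons above; I anticipate no genuine obstacle.
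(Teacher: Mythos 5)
Your proposal is correct and follows essentially the same route as the paper: the same splitting into $\int_B f\,(\Phi\circ\theta_{\delta,N})\,d(\mu_N-\mu)$ plus a remainder controlled by Proposition~\ref{propDensityPointwiseCv}, with the first term handled by verifying hypothesis \eqref{eqHypg} via Lemma~\ref{lemPropertiesPhiThetaN} and invoking Proposition~\ref{propBetaLoc} (the paper merely routes this through its Corollary~\ref{coroBetaLoc}, which is exactly the specialization you re-derive), and the same elementary comparison of $(N\delta^d)^{-1/2}$ with the announced rates. No gap.
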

\begin{proof}
Let $N \in \N^\ast$ and $\delta > 0$ satisfying $\delta \geq N^{-\frac{1}{d}}$.
Let $B \subset \R^n$ be an open bounded set and $T = B \cap S$.
Let $f \in {\xC}_c (\R^n, \R)$ be a $1$--Lipschitz function such that $\| f \|_\infty \leq 1$ and $\supp f \subset B$.
\begin{align}
\left| \int_B f \: d \nu_{\delta, N} - \int_B f \: d \nu_\delta \right| & = \left| \int_B f \Phi(\theta_{\delta,N}) \: d \mu_{N} - \int_B f \Phi (\theta_\delta) \: d \mu \right| \nonumber \\
& \leq \left| \int_B f \Phi(\theta_{\delta,N}) \: \left( d \mu_{N} -  d \mu \right) \right|  + \| f \|_\infty \int_B \left| \Phi(\theta_{\delta,N}) - \Phi(\theta_\delta) \right|  \: d \mu \: . \label{eqThmBetaCv_0}
\end{align}

\noindent We first deal with the second term in the right hand side of \eqref{eqThmBetaCv_0}. Thanks to Lemma~\ref{lemPropertiesPhiThetaN} and Proposition~\ref{propDensityPointwiseCv}, there exists $C = C(d,C_0,\eta) > 0$ such that
\begin{align} \label{eqThmBetaCv_1}
\E \left[ \int_B \left| \Phi(\theta_{\delta,N}) - \Phi(\theta_\delta) \right|  \: d \mu \right] & \leq \frac{4}{\tau^2} \int_B \E \left[ \left| \theta_{\delta,N} - \theta_\delta \right| \right] \: d\mu \leq \frac{M}{\tau^2} \frac{1}{\sqrt{N \delta^d}}  \mu(B) = \frac{M}{\tau^2} \left( \frac{N^{-\frac{1}{d}}}{\delta} \right)^\frac{d}{2} \mu(B) \nonumber \\
& \leq \frac{M}{\tau^2} \frac{1}{\delta} \mu (B) \times
\left\lbrace
\begin{array}{lcl}
N^{-\frac{1}{d}}         & \text{if} & d > 2\\
N^{-\frac{1}{2}}         & \text{if} & d = 2\\
N^{-\frac{1}{2}}         & \text{if} & d < 2
\end{array}
\right.
\: ,
\end{align}
where the last inequality follows from the assumption $\displaystyle \frac{N^{-\frac{1}{d}}}{\delta} \leq 1$ implying 
$ \displaystyle \left( \frac{N^{-\frac{1}{d}}}{\delta} \right)^\frac{d}{2} \leq \frac{N^{-\frac{1}{d}}}{\delta}$ for $d \geq 2$, while for $d < 2$, we have $\displaystyle \frac{1}{\sqrt{N \delta^d}} = N^{-\frac{1}{2}} \delta^{-\frac{d}{2}} \leq N^{-\frac{1}{2}} \delta^{-1}$ since $0 < \delta < 1$.

\noindent As for the first term in the right hand side of \eqref{eqThmBetaCv_0}, we can apply Corollary~\ref{coroBetaLoc} with $h:= \Phi (\theta_{\delta, N})$. Indeed, thanks to Lemma~\ref{lemPropertiesPhiThetaN}, $\| h \|_\infty \leq \frac{1}{\tau}$ and for all $x$, $y \in S$,
\begin{align*}
|h(x)-h(y)| & \leq  \left| \Phi \left( \theta_{\delta,N} (x) \right) -  \Phi \left( \theta_{\delta,N} (y) \right) \right| \leq  \frac{4}{\tau^2} \left| \theta_{\delta,N} (x) -  \theta_{\delta,N} (y) \right| 
 \leq \frac{4}{\tau^2} \frac{\xLip(\eta)}{C_\eta} \Delta_{\delta,N}(x,y) \frac{|x-y|}{\delta} \: ,
\end{align*}
which concludes the proof of Theorem~\ref{thmBetaLocNuN}
applying Corollary~\ref{coroBetaLoc} with $\kappa_0 = \displaystyle \frac{1}{\tau}$ and $\displaystyle \kappa_1 = \frac{4 \xLip(\eta)}{\tau^2 C_\eta}$, and additionally noting that in the case where $B$ is a ball of radius $0 < R_B < 1$ one has $\| f \|_\infty \leq R_B$ in \eqref{eqThmBetaCv_0}.
\end{proof}

\noindent Eventually combining Proposition~\ref{nudel},  Theorem~\ref{thmBetaLocNuN} and triangular inequality \eqref{eqTrianIneqBeta}
leads to the Corollary~\ref{corNuDeltaN}. Note that unlike Theorem~\ref{thmBetaLocNuN}, the convergence obtained in Corollary~\ref{corNuDeltaN} is no longer uniform in the regularity class $\cQ$ for it is not uniform either in Proposition~\ref{nudel}.

\begin{corollary} \label{corNuDeltaN}
Let $d \in \N^\ast$ and assume that $S$, $\theta$ satisfy \hyperref[hypH1]{$(H_1)$}, \hyperref[hypH2]{$(H_2)$} and \hyperref[hypH3]{$(H_3)$} and let $C_0 \geq 1$ be a regularity constant of $\mu = \theta \cH^d_{| S}$.
Let $\nu_{\delta}$ and $\nu_{\delta,N}$ be defined as in  \eqref{eqNuN}.
Let $B \subset \R^n$ be an open ball of radius $0 < R_B < 1$ and
let $\nu_{\delta,N}$ be defined as in \eqref{eqNuN}. Let $(\delta_N)_N \subset (0,1)^{\N^\ast}$ be a sequence satisfying
\[
\delta_N \xrightarrow[N \to +\infty]{} 0 \quad \text{and} \quad \frac{1}{\delta_N}  \left\lbrace 
\begin{array}{lcl}
N^{-\frac{1}{d}}  \xrightarrow[N \to +\infty]{} 0 & \text{if} & d > 2 \\
N^{-\frac{1}{2}} \ln N  \xrightarrow[N \to +\infty]{} 0 & \text{if} & d = 2 \\
N^{-\frac{1}{2}}   \xrightarrow[N \to +\infty]{} 0 & \text{if} & d < 2
\end{array}
\right. \: .
\]
Then, there exists a constant $m = m(d,C_0,\xLip(\eta)) \in (0,1)$ only depending on $d$, $C_0$ and $\eta$ such that for any $0 < \tau \leq m$,
\[
\E \left[ \beta_B (\nu_{\delta_N, N}, \: \cH^d_{| S}) \right] \xrightarrow[N \to +\infty]{} 0 \: .
\]
\end{corollary}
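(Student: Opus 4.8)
The plan is to combine the two convergence results already at hand through the triangular inequality \eqref{eqTrianIneqBeta}: for every open set $B$, every $\delta > 0$ and every $N \in \N^\ast$,
\[
\E \left[ \beta_B (\nu_{\delta, N}, \cH^d_{| S}) \right] \leq \beta_B (\nu_{\delta}, \cH^d_{| S}) + \E \left[ \beta_B (\nu_{\delta,N}, \nu_{\delta}) \right] .
\]
It therefore suffices to prove that, along the prescribed sequence $(\delta_N)_N$, each of the two terms on the right-hand side tends to $0$.

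For the first, deterministic term, I would take $m$ to be the constant furnished by Proposition~\ref{nudel} (depending only on $d$, $C_0$ and the fixed kernel $\eta$), shrinking it if necessary so that $m \in (0,1)$. Then, for any $0 < \tau \leq m$, Proposition~\ref{nudel} guarantees that the truncation $\chi_\tau$ is inactive on $S$, i.e.\ $\Phi(\theta_{\delta_N}) = 1/\theta_{\delta_N}$ and $\Phi(\theta) = 1/\theta$ there, together with
\[
\beta_B (\nu_{\delta_N}, \cH^d_{| S}) \leq \int_{B \cap S} \left| \frac{1}{\theta_{\delta_N}} - \frac{1}{\theta} \right| \: d\mu \xrightarrow[N \to +\infty]{} 0 ,
\]
the limit following from dominated convergence: the integrand tends to $0$ $\cH^d$--a.e.\ on $S$ because $\theta_{\delta_N} \to \theta$ a.e.\ (Proposition~\ref{lemDensityPointwiseCv}), it is bounded by the constant $2/m$ thanks to \eqref{eqmtau}, and $\mu(B) < +\infty$.

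For the second term I would invoke Theorem~\ref{thmBetaLocNuN}$(ii)$, applicable since $B$ is an open ball of radius $0 < R_B < 1$. Its hypothesis $N^{-\frac{1}{d}} \leq \min(R_B, \delta_N)$ holds for all $N$ large enough: $R_B$ is fixed, and the assumption on $(\delta_N)_N$ forces $N^{-\frac{1}{d}}/\delta_N \to 0$ (resp.\ $N^{-\frac{1}{2}}\ln N/\delta_N \to 0$, $N^{-\frac{1}{2}}/\delta_N \to 0$), hence in particular $N^{-\frac{1}{d}} \leq \delta_N$ eventually. Theorem~\ref{thmBetaLocNuN}$(ii)$ then yields, for some $M = M(d,C_0,\xLip(\eta))$ and all $N$ large,
\[
\E \left[ \beta_B (\nu_{\delta_N, N}, \nu_{\delta_N}) \right] \leq \frac{M}{\tau^2}\, \frac{R_B}{\delta_N}\, \mu(B)
\begin{cases}
N^{-\frac{1}{d}} & \text{if } d > 2 ,\\
N^{-\frac{1}{2}} \ln N & \text{if } d = 2 ,\\
N^{-\frac{1}{2}} & \text{if } d < 2 ,
\end{cases}
\]
and the right-hand side tends to $0$ precisely by the prescribed decay of $\delta_N$. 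Adding this to the previous estimate completes the argument.

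There is no genuine obstacle here, as all the analytic work is contained in Proposition~\ref{nudel} and Theorem~\ref{thmBetaLocNuN}; the one point requiring care is to fix $\tau$ once and for all below the threshold $m$, so that $\nu_{\delta_N}$ and $\cH^d_{|S}$ are genuinely comparable (no truncation) while the only price paid in Theorem~\ref{thmBetaLocNuN} is the benign factor $\tau^{-2}$. As noted after the statement, this argument does not give a bound uniform over $\cQ$, because the speed at which $\int_{B\cap S}|1/\theta_{\delta_N} - 1/\theta|\,d\mu \to 0$ is not controlled uniformly in the class; obtaining such control is exactly the role of the stronger piecewise Hölder hypotheses introduced in Section~\ref{secUniformPWHolder}.
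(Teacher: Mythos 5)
Your proof is correct and follows exactly the route the paper takes: the triangular inequality \eqref{eqTrianIneqBeta}, Proposition~\ref{nudel} (with the threshold $m$ and dominated convergence) for the deterministic term, and Theorem~\ref{thmBetaLocNuN}$(ii)$ for the stochastic term, with the hypothesis $N^{-\frac{1}{d}} \leq \min(R_B,\delta_N)$ verified for $N$ large. The paper's proof is a one-line citation of precisely these ingredients, so your write-up simply spells out the same argument in detail.
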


\begin{remark}[Choice of the parameter $\tau$]  \label{eqtauchoice}
 It is possible to assume that $\tau$ is fixed and chosen so that $\tau \in \left[ \frac{m}{2} , m \right]$ (with the notation of Proposition~\ref{nudel} above) and therefore $\tau$ only depends on $d$, $C_0$, $\eta$ and could be absorbed in the generic rate convergence constant $M$ in the above statements (Theorem~\ref{thmBetaLocNuN} --  Corollary~\ref{corNuDeltaN}). However, we note that $\tau$ does not only appear in such a constant in front of the convergence rate: choosing $\tau$ is already required to define the estimator $\nu_{\delta_N, N}$. If we drop the $\tau$ dependency, we implicitly assume that we are able to fulfill the requirement $\frac{m}{2} \leq \tau \leq m$, that relies on $C_0$ that may not be explicit in general, and we would need to estimate it first. Another possibility is to keep track of the $\tau$ dependency and choose $(\tau_N)_N \subset (0,1)^{\N^\ast}$ tending to $0$ to define $\nu_{\delta_N, N}$ and under the assumptions of Corollary~\ref{corNuDeltaN}, we first retrieve
 \[
\E \left[ \beta_B (\nu_{\delta_N, N}, \: \nu_{\delta_N}) \right] \xrightarrow[N \to +\infty]{} 0 \quad \text{as soon as} \frac{N^{-\frac{1}{d}} }{\tau_N^2 \delta_N} \xrightarrow[N \to +\infty]{} 0  \: .
\]
Then, for $N \geq N_0$ large enough, we have $\tau_N \leq m$ and the conclusion of Proposition~\ref{nudel} still holds for $\delta_N \to 0$:
\[
\beta_B (\nu_{\delta_N}, \: \nu ) \xrightarrow[N \to +\infty]{} 0 \: ,
\]
so that the convergence result stated in Corollary~\ref{corNuDeltaN} is true provided that $\delta_N$ is adapted to $\tau_N$. In the next sections, we do not keep track of constant $\tau$ and rather absorb the $\tau$ dependency in a generic constant denoted by $M$.
Nonetheless, estimating such a parameter $\tau$ remains an important issue as already mentioned at the very end of Section~\ref{SecCommentsPerspectives}.
\end{remark}

\section{Tangent space estimation and varifold-type estimation}
\label{secVarifoldLikeEstimator}

The next step towards the varifold inference from $\mu_N$ is to define a convergent estimator for the tangent space. We analyse a classical way of estimating the tangent space at $x$ relying on a weighted covariance matrix centered at $x$. We investigate both the case where the covariance matrix is computed directly from $\mu_N$ (i.e. considering $\sigma_{r,\delta,N}$ as defined in \eqref{eqDfnsigmardeltaN}) or from $\nu_{\delta,N}$, that is after correcting the density (i.e. considering $\Sigma_r (\cdot, \nu_{\delta,N})$ according to Definition~\ref{dfn:covMatrix}).

In Section~\ref{secTgtSpaceEstimator}, we introduce the two aforementioned tangent space estimators $\sigma_{r,\delta,N}$ and $\Sigma_r (\cdot, \nu_{\delta,N})$ whose definitions rely on the notion of covariance matrix associated with a given measure Definition~\ref{dfn:covMatrix}. On one hand, we emphasize that such a covariance matrix $\Sigma_r (\cdot, \lambda) = (C_\phi r^d)^{-1} \lambda \ast \psi_r$ is kernel based, similarly to the way $\Theta_\delta (\cdot , \lambda) = (C_\eta \delta^d)^{-1} \eta_\delta \ast \lambda$ is defined, and consequently some arguments are similar in the proof of Proposition~\ref{propCVSigmaN} for $\Sigma_r$ (see also the proof of Proposition~\ref{propDensityPointwiseCv} for $\Theta_\delta$). On the other hand, the a.e. convergence of the deterministic part $\Sigma_{r} (x, \nu_\delta)$ and $\sigma_{r,\delta}(x)$ towards $\Pi_{T_x S}$ is established in Proposition~\ref{propCVSigmardelta}. We then combine both Proposition~\ref{propCVSigmardelta} and \eqref{eqConcentrationSigmaN0} to state the mean convergence of the tangent space estimators $\sigma_{r_N,\delta_N,N}$ and $\Sigma_{r_N} (\cdot, \nu_{\delta_N,N})$ in the second part of Proposition~\ref{propCVSigmaN}, though without uniform rates nor uniform choice of $(\delta_N)_N$, $(r_N)_N \to 0$ in the regularity class $\cQ$. Handling such a lack of uniformity issue is then the purpose of Sections~\ref{secUniformPWHolder} and~\ref{secSplit}. 
In Section~\ref{secVarifoldTypeEstimator}, we build upon such tangent space estimators to introduce two varifold--type estimators $W_{r,\delta,N}$ and $\widetilde{W}_{r,\delta,N}$ (see \eqref{eq:defWrdeltaN}) of $W_S = \cH^d_{| S} \otimes \delta_{\Pi_{T_x S}}$.
Thanks to the a.e. pointwise convergence obtained for $\Sigma_{r} (x, \nu_\delta)$ and $\sigma_{r,\delta}(x)$ towards $\Pi_{T_x S}$ in Proposition~\ref{propCVSigmaN}, it is possible to obtain their $\xL^1(\mu)$--convergence that leads to the convergence of the deterministic part $\beta (W_{r,\delta}, W_S)$, $\beta (\widetilde{W}_{r,\delta}, W_S)$ in Proposition~\ref{propBoundWrdeltaW}. The mean convergence of $W_{r,\delta,N}$ and $\widetilde{W}_{r,\delta,N}$ towards $W_{r,\delta}$ and $\widetilde{W}_{r,\delta}$ (see \eqref{eq:defWrdelta}) is obtained in Proposition~\ref{prop:CV_WrdeltaN} by application of Proposition~\ref{propBetaLoc}. The resulting convergence $\E \left[ \beta_B (W_{r_N,\delta_N, N}, W_S) \right] \xrightarrow[N \to +\infty]{} 0$ is finally stated in Corollary~\ref{coroCvWrdeltaNW}, with the same lack of uniform rate in the regularity class $\cQ$ as mentioned for the pointwise tangent space and density estimators.

\begin{remark}[Constant $M$] \label{remkCstMSecTgt}
 All along the current section, $M$ stands for a generic constant that may vary from one statement to another one, and only depends on $n, d, C_0, \eta, \phi$.
\end{remark}

\subsection{Pointwise tangent space estimator}
\label{secTgtSpaceEstimator}
We now fix another family of radial kernels: given a Lipschitz even function $\phi : \mathbb{R}\mapsto \mathbb{R}_+$, compactly supported in $(-1,1)$, we define
\begin{equation} \label{eqKernelPhi}
C_\phi = \omega_d \int_{t=0}^1 \phi(t)t^{d+1} \: dt
\quad \text{and for } r > 0, \: x \in \R^n, \, 
\phi_r(x) =  \phi \left(\frac{|x|}{r} \right) \: .
\end{equation}
We also associate with $\phi$ the Lipschitz and compactly supported function $\psi = \psi_\phi \in  \xC_c(\R^n)$ defined as follows, for $z \in \R^n$, we recall that $z \otimes z$ is the matrix of $(i,j)$--coefficient $z_i z_j$ and we set
\begin{equation} \label{eqDfnPsiToPhiKernel}
  \psi(z) = \phi (|z|) z \otimes z \in \xSym_+(n) \quad
  \text{and for } r > 0, \, \psi_r (z) = \psi \left( \frac{z}{r} \right) \: .
\end{equation}
Let us check that $\psi$ is bounded and Lipschitz, and more precisely
\begin{equation} \label{eq:lipPsiij}
\supp \psi \subset B(0,1), \quad  \|\psi\|_\infty \leq \|\varphi\|_\infty  \quad \text{and}  \quad  \xLip (\psi ) \leq  \| \varphi \|_\infty + \xLip(\varphi) \: .
\end{equation}
We recall that the matrix $z \otimes z$ is the orthogonal projector onto the line spanned by $z$ if $|z| = 1$ and consequently, for $z \in \R^n$, $\| z \otimes z \| = | z |^2$.
The first two assertions are then consequences of the definitions of $\phi$ and $\psi$, and we are left with the Lipschitz property.
Let $z, w, x \in \R^n$, $(z \otimes z) x = (x \cdot z) z$ and thus by triangular and Cauchy-Schwarz inequalities, $|\left((z \otimes z) - (w \otimes w)\right)x| = | (x \cdot z) z - (x \cdot w) w | \leq \left( |z| + |w| \right) |z-w| |x|$ so that
\[
 \| z \otimes z - w \otimes w \| \leq \left( |z| + |w| \right) |z-w| \: .
\]
If $z, w \in B(0,1)$, we then have 
\begin{align*}
\| \psi(z) - \psi(w) \| & \leq \| \phi \|_\infty \| z \otimes z - w \otimes w \| + \| w \otimes w \| | \phi(|z|) - \phi(|w|) | \\
& \leq \| \phi \|_\infty (|z|+|w|) |z-w| + | w |^2 \xLip(\phi) | z-w | \\
& \leq \left(2 \| \phi \|_\infty + \xLip(\phi) \right) |z-w| \: .
\end{align*}
If now $z \in B(0,1)$ and $w \notin B(0,1)$, then $\psi(w) = \phi(w) = 0$ and
\[
\| \psi(z) - \psi(w) \| = \| \phi(z) z \otimes z - \phi(w) z \otimes z \| \leq | z |^2 \xLip(\phi) |z-w| \: ,
\]
and we can conclude the proof of \eqref{eq:lipPsiij}.

\begin{definition}[Covariance matrix associated with a measure]
\label{dfn:covMatrix}
Let $0 < d \leq n$, $r > 0$ and let $\lambda$ be a Radon measure in $\R^n$. We define for $x \in \R^n$, the $n \times n$ matrix
\[
\Sigma_r (x, \lambda) =  \frac{1}{C_\phi r^d} \int_{\R^n} \phi \left( \frac{|y-x|}{r} \right) \frac{y-x}{r} \otimes \frac{y-x}{r} \: d \lambda(y) 
= \frac{1}{C_\phi r^d} \int_{\R^n} \psi_r (y-x) \: d \lambda(y) = \frac{\lambda \ast \psi_r (x)}{C_\phi r^d} \: .
\]
Note that $\Sigma_r (x, \lambda) \in \xSym_+(n)$.
\end{definition}
\noindent As it will be useful hereafter, we point out that if $\lambda$ is $d$--Ahlfors regular with regularity constant $C_0$, then for all $x \in \supp \lambda$,
\begin{equation} \label{eqInfBoundSigma}
 \| \Sigma_r (x, \lambda) \| \leq \frac{1}{C_\phi r^d} \| \phi \|_\infty \lambda (B(x,r)) \leq \frac{C_0}{C_\phi} \| \phi \|_\infty \leq M(C_0, \phi) \: .
\end{equation}

In geometric measure theory, the minimal assumption allowing to define a notion of tangent space is rectifiability. We show in Proposition~\ref{propCVSigmar} that as soon as we consider a $d$--rectifiable measure, $\Sigma_r$ converges almost everywhere towards the approximate tangent space.

\begin{proposition} \label{propCVSigmar}
Let $\mu = \theta \cH^d_{| S}$ be a $d$--rectifiable measure, then for $\cH^d$--a. e. $x \in S$,
\[
\forall r > 0, \, \Sigma_r \left(x, \cH^d_{| x+T_x S} \right) = \Pi_{T_x S} \quad \text{and} \quad \Sigma_r (x, \mu) \xrightarrow[r \to 0_+]{} \theta(x) \Pi_{T_x S} \: ,
\]
where we recall that $\Pi_{T_x S}$ is the matrix of orthogonal projection on the approximate tangent space $T_x S$.
In particular, for $\nu = \cH^d_{| S}$,  $\Sigma_r (x, \nu) \xrightarrow[r \to 0_+]{} \Pi_{T_x S}$.
\end{proposition}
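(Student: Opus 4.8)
The plan is to establish the two displayed identities separately; both hold on the $\cH^d$--full--measure subset of $S$ where $\mu$ admits an approximate tangent space with multiplicity $\theta(x)$, which exists by Proposition~\ref{propTgtSpaceToRectifMeasure}. Fix such a point $x$ and write $T = T_x S$. The first identity will be a pure scaling computation on the $d$--plane $x+T$, and the second will follow by applying the definition of the approximate tangent space (Definition~\ref{ATP}) entrywise, using the first identity to evaluate the limiting integral.

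First I would compute $\Sigma_r\left(x, \cH^d_{| x+T_x S}\right)$ exactly. Translating $x$ to the origin and substituting successively $w = y-x \in T$ and then $w = rz$, the $d$--homogeneity of $\cH^d$ restricted to the $d$--plane $T$ gives
\begin{equation*}
\Sigma_r\left(x, \cH^d_{| x+T_x S}\right) = \frac{1}{C_\phi r^d}\int_{T} \phi\!\left(\frac{|w|}{r}\right) \frac{w}{r}\otimes\frac{w}{r} \: d\cH^d(w) = \frac{1}{C_\phi}\int_{T} \phi(|z|)\, z\otimes z \: d\cH^d(z) \: ,
\end{equation*}
which is already independent of $r$. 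It then remains to check $\int_{T} \phi(|z|)\, z\otimes z \: d\cH^d(z) = C_\phi\,\Pi_{T}$. Choosing an orthonormal basis $(e_1,\dots,e_n)$ of $\R^n$ with $(e_1,\dots,e_d)$ spanning $T$, the $(i,j)$--entry of this matrix is $\int_T \phi(|z|)\, z_i z_j \: d\cH^d(z)$; it vanishes when $i\neq j$ (the coordinate flip $z_i\mapsto -z_i$ preserves $T$, $|z|$ and $\cH^d$ while reversing the sign of the integrand) and when $\max(i,j)>d$ (then $z_iz_j\equiv 0$ on $T$), while for $i=j\leq d$ all diagonal entries coincide by permutation of the coordinates $z_1,\dots,z_d$. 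Their common value is $\tfrac1d\int_T \phi(|z|)|z|^2 \: d\cH^d(z)$, which equals $C_\phi$ by the same coarea computation on concentric spheres as in the proof of Proposition~\ref{lemDensityPointwiseCv} (see \eqref{eqCetaRn}): $\int_T \phi(|z|)|z|^2 \: d\cH^d(z) = d\omega_d\int_0^1 \phi(t)t^{d+1}\: dt = d\,C_\phi$. Hence the matrix equals $C_\phi\,\Pi_T$, and the first identity follows.

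For the convergence, recall from \eqref{eqDfnPsiToPhiKernel} that $\Sigma_r(x,\mu) = (C_\phi r^d)^{-1}\int_{\R^n}\psi\!\left(\frac{y-x}{r}\right) d\mu(y)$ with $\psi(z)=\phi(|z|)\, z\otimes z$. By \eqref{eq:lipPsiij}, $\psi$ is Lipschitz with support in $B(0,1)$, so each scalar component $\psi_{ij}$ belongs to $\xC_c(\R^n)$, and Definition~\ref{ATP} applied entrywise gives
\begin{equation*}
\frac{1}{r^d}\int_{\R^n}\psi_{ij}\!\left(\frac{y-x}{r}\right) d\mu(y) \xrightarrow[r\to 0_+]{} \theta(x)\int_T \psi_{ij}(z) \: d\cH^d(z) = \theta(x)\left(C_\phi\,\Pi_{T_x S}\right)_{ij} \: ,
\end{equation*}
the last equality being Step~1. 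Dividing by $C_\phi$ and reassembling the entries yields $\Sigma_r(x,\mu)\to\theta(x)\,\Pi_{T_x S}$; taking $\theta\equiv 1$, i.e.\ $\mu=\nu=\cH^d_{| S}$, gives the final assertion. I do not expect a genuine obstacle here: the argument is essentially a scaling computation plus one application of the definition of the approximate tangent space. The only points needing a little care are the reduction of the matrix--valued kernel $\psi$ to its scalar entries $\psi_{ij}\in\xC_c(\R^n)$ so that Definition~\ref{ATP} applies, and the symmetry argument identifying $\int_T \phi(|z|)\, z\otimes z \: d\cH^d(z)$ with a multiple of $\Pi_T$.
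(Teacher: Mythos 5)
Your proof is correct and follows essentially the same route as the paper: a translation--dilation argument reduces the first identity to showing $\int_T \phi(|z|)\, z\otimes z\, d\cH^d(z) = C_\phi \Pi_T$ (which you verify by symmetry in a basis adapted to $T$, where the paper instead changes variables via an orthogonal map sending $\R^d\times\{0\}$ to $T$ and works in the canonical basis --- the same computation), and the convergence then follows by applying Definition~\ref{ATP} to the continuous compactly supported kernel $\psi$. No gaps.
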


\begin{proof}
For $r > 0$ and $x \in S$ (such that $T_x S$ exists), we have by translation and dilation:
\[
 \Sigma_r \left(x, \cH^d_{| x+T_x S} \right) = \frac{1}{C_\phi r^d} \int_{\R^n \cap T_x S} \psi \left(\frac{y}{r} \right) \: d \cH^d(y) = \frac{1}{C_\phi} \underbrace{ \int_{\R^n \cap T_x S} \psi(z) \: d \cH^d(z) }_{ =: \Sigma(x)} \: .
\]
Moreover, as $\psi$ is continuous and compactly supported, by definition of approximate tangent plane (see Definition~\ref{ATP} and Proposition~\ref{propTgtSpaceToRectifMeasure}), for $\cH^d$--a. e. $x \in S$,
\[
\frac{1}{r^d}\int_{\R^n} \psi \left( \frac{y-x}{r} \right) \: d \mu (y) \xrightarrow[r \to 0_+]{} \theta(x)  \int_{\R^n \cap T_x S} \psi(z) \: d \cH^d(z)  = \theta(x) \Sigma(x) \: ,
\]
and it remains to check that $\Sigma(x) = \Pi_{T_x S}$.
Let $(e_1, \ldots, e_n)$ be the canonical basis of $\R^n$.
Let $A \in \mathrm{M}_n(\R)$ be an orthogonal matrix  such that $T_xS=A \: \mathrm{span} \: (e_1,\dots,e_d)$ and we define for all $i = 1, \ldots, n$, $\tau_i = A e_i$ so that $(\tau_1, \ldots, \tau_d)$ is an orthonormal basis of $T_x S$. Let $y \in \R^d \times \{0\}$, then $|Ay| = |y|$ and $(Ay \cdot e_i) = \sum_{k=1}^d a_{ik} y_k = \sum_{k=1}^d (\tau_k \cdot e_i) y_k$, and
thanks to the change of variables $z = Ay$, we obtain noting that $A$ is an isometry:
\begin{align} 
(\Sigma(x))_{ij} & =\int_{\mathbb{R}^d\times\{0\}} \varphi(|Ay|) (Ay \cdot e_i ) (Ay \cdot e_j) \: d\mathcal{H}^d(y) = \sum_{k,l = 1}^d (\tau_k \cdot e_i) (\tau_l \cdot e_j) \int_{\mathbb{R}^d\times\{0\}} \varphi(|Ay|) y_k y_l \: d\mathcal{H}^d(y) \nonumber \\
& = \sum_{k,l = 1}^d (\tau_k \cdot e_i) (\tau_l \cdot e_j) \int_{r=0}^\infty \varphi(r) \int_{ y \in \R^d \times \{0\} \cap S(0,r)} y_k y_l \: d\mathcal{H}^{d-1}(y) \: dr \: . \label{eqSigmaCoarea}
\end{align}
Note that if $k\neq l$, by symmetry,
$\displaystyle \int_{\R^d \times \{0\} \cap S(0,r)}y_ky_ld\mathcal{H}^{d-1}(y)= 0$, and if $k = l$:
$$\int_{S(0,r)}y_k^2\mathcal{H}^{d-1}(y)=\frac{1}{d}\sum_{m=1}^d\int_{S(0,r)}y_m^2\mathcal{H}^{d-1}(y)=\frac{1}{d}r^2\mathcal{H}^{d-1}(S(0,r))=r^{d+1} \omega_d \: .$$
Using the above observations in \eqref{eqSigmaCoarea} we can conclude the proof of Proposition~\ref{propCVSigmar}:
\begin{equation*}
(\Sigma(x))_{ij} = \sum_{k = 1}^d (\tau_k \cdot e_i) (\tau_k \cdot e_j) \omega_d \int_{r=0}^\infty \varphi(r) r^{d+1} \: dr = C_\phi \left( \Pi_{T_x S} \right)_{ij} \: .
\end{equation*}
\end{proof}
Proposition~\ref{propCVSigmar} suggests at least two strategies in order to estimate the approximate tangent space at $x \in S$:
\begin{enumerate}[$(i)$]
 \item as $\Sigma_r (x, \nu) \xrightarrow[r \to 0_+]{} \Pi_{T_x S}$, one can consider $\Sigma_r(x,\nu)$, $\Sigma_{r} (x, \nu_{\delta})$ and $\Sigma_{r} (x, \nu_{\delta,N})$,
 \item or similarly, as $\Sigma_r (x, \mu) \xrightarrow[r \to 0_+]{} \theta(x) \Pi_{T_x S}$, one can alternatively consider 
\begin{equation} \label{eqDfnsigmardeltaN}
\sigma_r(x) = \Phi (\theta(x)) \Sigma_r(x,\mu) , \quad
\sigma_{r,\delta}(x) = \Phi \left(\theta_{\delta}(x) \right) \Sigma_{r} (x,\mu)
\quad \text{and} \quad
\sigma_{r,\delta, N}(x) = \Phi \left(\theta_{\delta,N}(x) \right) \Sigma_{r} (x,\mu_N) \: ,
\end{equation}
\end{enumerate}
and we recall that for $x \in S$, $ \Phi(\theta(x)) = \frac{1}{\theta(x)}$ and $ \Phi(\theta_\delta(x)) = \frac{1}{\theta_\delta(x)}$ (see Proposition~\ref{nudel}).
The difference between both choices is the following: on one hand, with $\Sigma_r ( x , \nu_{\delta,N})$ we compute the covariance matrix at $x \in S$ with respect to the measure $\nu_{\delta,N}$, that is taking into account the density after correction~; on the other hand, with $\sigma_{r,\delta,N}$ we compute the covariance matrix $\Sigma_r (\cdot, \mu_N)$ directly with respect to the empirical measure, not taking into account any density correction, and then we globally multiply by $\Phi \left( \theta_{ \delta, N} \right)$ in order to obtain the correct density in the limit. Both choices are reasonable, nevertheless
decoupling density and covariance estimation is significantly more adapted to apply concentration inequality as evidenced below in Proposition~\ref{propCVSigmaN}.
We start with investigating the convergence of the deterministic part $\Sigma_r(x, \nu_\delta)$, $\sigma_{r,\delta}(x)$ towards $\Pi_{T_x S}$ in Proposition~\ref{propCVSigmardelta}.
\begin{proposition}
\label{propCVSigmardelta}
 Assume that $S$ and $\theta$ satisfy $(H_1)$, $(H_2)$ and $(H_3)$ and let $\mu = \theta \cH^d_{| S}$. Then, for $x \in S$ such that $T_x S$ exists (in particular for $\cH^d$--a.e. $x \in S$),
 \begin{equation} \label{eqsigmardeltaCV}
  \| \sigma_{r,\delta}(x) - \Pi_{T_x S} \| \leq M \left( \left| \theta_\delta(x) - \theta(x) \right| + \left\| \Sigma_r (x,\mu) - \theta(x) \Pi_{T_x S} \right\| \right) \xrightarrow[\delta,r \to 0_+]{} 0 \: ,
 \end{equation}
and
\begin{equation} \label{eqSigmardeltaCV}
  \| \Sigma_{r}(x, \nu_\delta) - \Pi_{T_x S} \| \leq M \left( \frac{1}{r^d} \int_{B(x,r)} \left| \theta_\delta - \theta \right| \: d \mu + \left\| \Sigma_r (x,\nu) - \Pi_{T_x S} \right\| \right) \: .  
 \end{equation}
\end{proposition}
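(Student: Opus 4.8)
The plan is to prove the two bounds separately; each follows from an elementary rearrangement combined with the pointwise convergence results of Propositions~\ref{propCVSigmar} and~\ref{lemDensityPointwiseCv} and the uniform lower bound of Proposition~\ref{nudel}. Throughout, we fix $0 < \tau \leq m$ with $m = m(d,C_0,\eta)$ as in~\eqref{eqmtau}, so that $\theta(y) \geq m$ and $\theta_\delta(y) \geq m$ for every $y \in S$ and $\Phi(\theta(y)) = 1/\theta(y)$, $\Phi(\theta_\delta(y)) = 1/\theta_\delta(y)$ there (Proposition~\ref{nudel}).

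For~\eqref{eqsigmardeltaCV}, since $x \in S$ we have $\sigma_{r,\delta}(x) = \theta_\delta(x)^{-1}\Sigma_r(x,\mu)$ by~\eqref{eqDfnsigmardeltaN}; writing $\Pi_{T_x S} = \theta(x)^{-1}\left(\theta(x)\Pi_{T_x S}\right)$ and inserting $\pm\, \theta_\delta(x)^{-1}\theta(x)\Pi_{T_x S}$ yields
\[
\sigma_{r,\delta}(x) - \Pi_{T_x S} = \frac{1}{\theta_\delta(x)}\left(\Sigma_r(x,\mu) - \theta(x)\Pi_{T_x S}\right) + \frac{\theta(x) - \theta_\delta(x)}{\theta_\delta(x)}\,\Pi_{T_x S} \: .
\]
Taking operator norms, using $\|\Pi_{T_x S}\| = 1$ and $\theta_\delta(x) \geq m$, gives the claimed inequality with $M = 1/m$. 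The convergence to $0$ as $\delta, r \to 0_+$ then follows because at every $x \in S$ where the approximate tangent plane exists with multiplicity $\theta(x)$ --- hence for $\cH^d$--a.e. $x$ by Proposition~\ref{propTgtSpaceToRectifMeasure} --- one has $\theta_\delta(x) \to \theta(x)$ (Proposition~\ref{lemDensityPointwiseCv}) and $\Sigma_r(x,\mu) \to \theta(x)\Pi_{T_x S}$ (Proposition~\ref{propCVSigmar}).

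For~\eqref{eqSigmardeltaCV}, recall $\nu = \cH^d_{| S}$ so that $d\mu = \theta\, d\nu$ on $S$, and since $\Phi(\theta_\delta) = 1/\theta_\delta$ on $S$ we get $d\nu_\delta = \Phi(\theta_\delta)\, d\mu = (\theta/\theta_\delta)\, d\nu$ on $S$; hence
\[
\Sigma_r(x,\nu_\delta) - \Sigma_r(x,\nu) = \frac{1}{C_\phi r^d}\int_{B(x,r)\cap S} \psi_r(y-x)\,\frac{\theta(y) - \theta_\delta(y)}{\theta_\delta(y)}\,d\nu(y) \: .
\]
Using $\|\psi_r(y-x)\| \leq \|\phi\|_\infty\, \mathds{1}_{B(x,r)}(y)$ from~\eqref{eq:lipPsiij}, the bound $\theta_\delta \geq m$, and $d\nu = \theta^{-1}\, d\mu \leq m^{-1}\, d\mu$ on $S$, we obtain $\|\Sigma_r(x,\nu_\delta) - \Sigma_r(x,\nu)\| \leq \frac{\|\phi\|_\infty}{C_\phi m^2}\,\frac{1}{r^d}\int_{B(x,r)}|\theta_\delta - \theta|\, d\mu$; the triangle inequality against $\Sigma_r(x,\nu)$ then gives~\eqref{eqSigmardeltaCV} with $M = \max\!\left(1,\ \|\phi\|_\infty/(C_\phi m^2)\right)$, which depends only on $d, C_0, \eta, \phi$.

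There is no genuine obstacle here: the argument is purely algebraic once Propositions~\ref{propCVSigmar}, \ref{lemDensityPointwiseCv} and~\ref{nudel} are available. The only points requiring care are the use of $\tau \leq m$ to turn $\Phi(\theta_\delta)$ and $\Phi(\theta)$ into reciprocals on $S$, the passage from integration against $\nu$ to integration against $\mu$ via $\theta \geq m$ (which keeps the constant within the allowed dependencies $d,C_0,\eta,\phi$), and the observation that the two pointwise limits needed in~\eqref{eqsigmardeltaCV} hold on a common $\cH^d$--a.e. set of points.
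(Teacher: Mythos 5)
Your proof is correct and follows essentially the same route as the paper's: an add-and-subtract decomposition for $\sigma_{r,\delta}(x)-\Pi_{T_x S}$ combined with Propositions~\ref{lemDensityPointwiseCv} and~\ref{propCVSigmar}, and for the second bound the identity $d\nu_\delta = \Phi(\theta_\delta)\,d\mu$ followed by the triangle inequality against $\Sigma_r(x,\nu)$. The only cosmetic difference is that the paper controls the density factors through $\|\Phi\|_\infty$ and $\xLip(\Phi)$ from Lemma~\ref{lemPropertiesPhiThetaN}, whereas you manipulate the reciprocals directly via the lower bound $m$ of Proposition~\ref{nudel}; the two are equivalent.
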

\begin{remark} \label{remkNonUnifCVdeltar}
Note that the convergence of $\Sigma_{r}(x, \nu_\delta)$ can be inferred provided that $\delta$, $r$ tending to $0$ additionally satisfy $\frac{1}{r^d} \int_{B(x,r)} \left| \theta_\delta - \theta \right| \: d \mu \rightarrow 0$, as done in the proof of Proposition~\ref{propCVSigmaN}.
\end{remark}
\begin{proof}
 We recall that $\Phi (\theta(x)) \theta(x) = 1$ (see Proposition~\ref{nudel}) and $\Sigma_r( \cdot, \mu)$ is uniformly bounded by $\frac{C_0 \| \phi \|_\infty}{C_\phi}$ (see \eqref{eqInfBoundSigma}).
 Hence, if there exists an approximate tangent space $T_x S$ at $x \in S$, we recall that $\| \Phi \|_\infty + \xLip(\Phi) \leq M$ thanks to Lemma~\ref{lemPropertiesPhiThetaN}$(i)$ and thus,
\begin{align*}
\left\| \sigma_{r,\delta}(x) - \Pi_{T_x S} \right\| & = \left\| \Phi \left( \theta_\delta(x) \right) \Sigma_r(x,\mu)  - \Phi (\theta(x)) \theta(x) \Pi_{T_x S} \right\| \\
& \leq \left| \Phi(\theta_\delta(x)) - \Phi(\theta(x))  \right| \| \Sigma_r (x,\mu) \| + |\Phi(\theta(x))| \| \Sigma_r(x,\mu) - \theta(x) \Pi_{T_x S} \| \\
& \leq M \left( | \theta_\delta(x) - \theta(x) | + \| \Sigma_r(x,\mu) - \theta(x) \Pi_{T_x S} \| \right) \xrightarrow[\delta,r \to 0_+]{} 0 \: ,
\end{align*}
where the convergence to $0$ holds thanks to Proposition~\ref{lemDensityPointwiseCv} and Proposition~\ref{propCVSigmar}. 
Then, for $x \in S$, recalling that $\| \psi_r \|_\infty \leq \| \phi \|_\infty$,
\begin{align*}
 \left\| \Sigma_r(x,\nu_{\delta}) - \Sigma_r(x,\nu) \right\| & = \frac{1}{C_\phi r^d} \left\| \int_{B(x,r)} \psi_r (y-x) \Phi (\theta_\delta(y)) \: d \mu(y)- \int_{B(x,r)} \psi_r (y-x) \Phi (\theta(y)) \: d \mu(y) \right\| \\
 & \leq \frac{\| \phi \|_\infty}{C_\phi r^d} \xLip(\Phi) \int_{B(x,r)} |\theta_\delta - \theta | \: d \mu
\end{align*}
and we can conclude the proof of Proposition~\ref{propCVSigmardelta} by triangular inequality.
\end{proof}
We can now infer in Proposition~\ref{propCVSigmaN} the mean convergence of both tangent space estimators $\sigma_{r, \delta, N}$ and $\Sigma_r (\cdot , \nu_{\delta,N})$.
\begin{proposition}[Pointwise tangent space estimator] \label{propCVSigmaN}
Assume that $S$ and $\theta$ satisfy $(H_1)$, $(H_2)$ and $(H_3)$ and let $\mu = \theta \cH^d_{| S}$.
Let $\mu_N$ be the empirical measure associated with $\mu$. Let $\nu_{\delta}$ and $\nu_{\delta,N}$ be defined as in \eqref{eqNuN}.
Then there exists a constant $M \geq 0$
such that for all $x \in S$ and for all $\delta,r >0$, $N \in \N^\ast$ 
\begin{equation} \label{eqConcentrationSigmaN0}
\E  \left[ \left\| \Sigma_{r}(x, \mu_N) - \Sigma_{r}(x, \mu) \right\| \right] \leq \frac{M}{\sqrt{N r^d}}
\quad \text{and} \quad 
\E \left[ \left\| \sigma_{r,\delta,N}(x) - \sigma_{r,\delta}(x) \right\| \right] 
 \leq M \left(\frac{1}{\sqrt{N r^d}} + \frac{1}{\sqrt{N \delta^d}} \right) \: ,
\end{equation}
and furthermore assuming $N^{-\frac{1}{d}} \leq \delta, r < 1$,
\begin{equation}
\label{Sigmaestim}
\mathbb{E}\left[\left\| \Sigma_r (x, \nu_{\delta,N}) - \Sigma_r (x, \nu_\delta )  \right\|\right]\leq \frac{M}{\delta} \left\lbrace
\begin{array}{lcl}
N^{-1/d}    & \text{if} & d > 2\\
N^{-1/2} \ln N  & \text{if} & d = 2\\
N^{-1/2}    & \text{if} & d < 2
\end{array}
\right. \: .  
\end{equation}
Moreover,  
there exist $(\delta_N)_N$, $(r_N)_N \subset (0,1)^{\N^\ast}$ both tending to $0$ such that for $\cH^d$--a.e. $x \in S$, 
\begin{equation} \label{eqPointwiseCVSigma2}
\E \left[\left\| \sigma_{r_N,\delta_N,N}(x) - \Pi_{T_x S} \right\|\right] \xrightarrow[N \to +\infty]{} 0 
\quad \text{and} \quad
\E \left[\left\| \Sigma_{r_N} (x, \nu_{\delta_N,N}) - \Pi_{T_x S} \right\|\right] \xrightarrow[N \to +\infty]{} 0 \: .
\end{equation}
\end{proposition}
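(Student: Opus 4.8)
The plan is to treat the two deviation estimates of \eqref{eqConcentrationSigmaN0}, the estimate \eqref{Sigmaestim}, and the almost-everywhere convergence \eqref{eqPointwiseCVSigma2} in turn, in each case isolating a stochastic fluctuation term from a deterministic bias term already handled in Propositions~\ref{propCVSigmar}--\ref{propCVSigmardelta}. For the first bound of \eqref{eqConcentrationSigmaN0} I would mimic the proof of Proposition~\ref{propDensityPointwiseCv} \emph{entry by entry}: fixing $x \in S$ and $(i,j)$, I apply Efron--Stein (Theorem~\ref{thmEfron}) to $g^x_{ij}(x_1,\dots,x_N) = (N C_\phi r^d)^{-1}\sum_l (\psi_r(x_l-x))_{ij}$, whose value at the sample is $(\Sigma_r(x,\mu_N))_{ij}$ and whose mean is $(\Sigma_r(x,\mu))_{ij}$; since $\psi_r$ is supported in $B(0,r)$ with $\|\psi_r\|_\infty \leq \|\phi\|_\infty$ by \eqref{eq:lipPsiij}, replacing one sample point perturbs this by at most $\tfrac{2\|\phi\|_\infty}{N C_\phi r^d}\mathds{1}_{\{X_k\in B(x,r)\}\cup\{X_k'\in B(x,r)\}}$, whose second moment is $\lesssim (N^2 C_\phi^2 r^{2d})^{-1}\mu(B(x,r)) \lesssim C_0 (N^2 C_\phi^2 r^d)^{-1}$ by $d$--Ahlfors regularity; Efron--Stein then gives $\xvar((\Sigma_r(x,\mu_N))_{ij}) \leq M(N r^d)^{-1}$, and Jensen plus summation of the $n^2$ entries (bounding the operator norm by the entries up to a dimensional constant) yields $\E[\|\Sigma_r(x,\mu_N)-\Sigma_r(x,\mu)\|] \leq M(Nr^d)^{-1/2}$. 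The second bound of \eqref{eqConcentrationSigmaN0} follows from $\sigma_{r,\delta,N}(x) - \sigma_{r,\delta}(x) = \Phi(\theta_{\delta,N}(x))(\Sigma_r(x,\mu_N)-\Sigma_r(x,\mu)) + (\Phi(\theta_{\delta,N}(x))-\Phi(\theta_\delta(x)))\Sigma_r(x,\mu)$, bounding the first summand via $\|\Phi\|_\infty \leq M$ (Lemma~\ref{lemPropertiesPhiThetaN}$(i)$, $\tau$ being absorbed into $M$ as in Remark~\ref{eqtauchoice}) and the first bound, and the second via $\xLip(\Phi) \leq M$, $\|\Sigma_r(x,\mu)\| \leq M$ (from \eqref{eqInfBoundSigma}) and Proposition~\ref{propDensityPointwiseCv}.

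For \eqref{Sigmaestim} the key observation is that, for fixed $x$, $y \mapsto (\psi_r(y-x))_{ij}$ is continuous, supported in $B(x,r)$, with $\|\cdot\|_\infty \leq \|\phi\|_\infty$ and Lipschitz constant $\leq \xLip(\psi)/r$ by \eqref{eq:lipPsiij}, so $\tfrac{r}{M}(\psi_r(\cdot-x))_{ij}$ is an admissible test function for $\beta_{B(x,r)}$ when $r<1$. Hence
\[
\bigl\| \Sigma_r(x,\nu_{\delta,N}) - \Sigma_r(x,\nu_\delta)\bigr\| = \frac{1}{C_\phi r^d}\Bigl\| \int \psi_r(y-x)\,d(\nu_{\delta,N}-\nu_\delta)(y)\Bigr\| \leq \frac{M}{r^{d+1}}\,\beta_{B(x,r)}(\nu_{\delta,N},\nu_\delta)\,,
\]
and taking expectations and applying Theorem~\ref{thmBetaLocNuN}$(ii)$ with $B = B(x,r)$ (radius $R_B = r < 1$, and $N^{-1/d} \leq \min(r,\delta)$ as assumed) together with $\mu(B(x,r)) \leq C_0 r^d$, the spurious factors $r^{-(d+1)}\cdot r \cdot r^d = 1$ cancel and one is left with $M\delta^{-1}$ times the stated $N$--rate. (Equivalently one may first peel off $\Phi(\theta_{\delta,N})-\Phi(\theta_\delta)$, controlled by Proposition~\ref{propDensityPointwiseCv}, and apply Proposition~\ref{propBetaLoc}$(ii)$ to $y \mapsto \psi_r(y-x)\Phi(\theta_{\delta,N}(y))$, which satisfies \eqref{eqHypg} with $\kappa_1 = M$, $\kappa_2 = \kappa_3 = 0$, $\kappa_0 = M/r$, the $\kappa_0$--contribution being absorbed by $\mu(B(x,r)) \leq C_0 r^d$.)

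For \eqref{eqPointwiseCVSigma2} I use $\E[\|\sigma_{r,\delta,N}(x)-\Pi_{T_x S}\|] \leq \E[\|\sigma_{r,\delta,N}(x)-\sigma_{r,\delta}(x)\|] + \|\sigma_{r,\delta}(x)-\Pi_{T_x S}\|$ and its analogue for $\Sigma_r(\cdot,\nu_{\delta,N})$. The bias terms are controlled by Proposition~\ref{propCVSigmardelta}: $\|\sigma_{r,\delta}(x)-\Pi_{T_x S}\| \to 0$ whenever $\delta,r\to 0$ for $\cH^d$--a.e. $x$, while $\|\Sigma_r(x,\nu_\delta)-\Pi_{T_x S}\| \leq M\bigl(r^{-d}\int_{B(x,r)}|\theta_\delta-\theta|\,d\mu + \|\Sigma_r(x,\nu)-\Pi_{T_x S}\|\bigr)$, the last term vanishing as $r\to 0$ by Proposition~\ref{propCVSigmar}. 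For the averaged term I split $r^{-d}\int_{B(x,r)}|\theta_\delta-\theta|\,d\mu$ into (i) $r^{-d}\int_{B(x,r)}|\theta_\delta(y)-\theta_\delta(x)|\,d\mu(y) \leq M r/\delta$ (the deterministic counterpart of Lemma~\ref{lemPropertiesPhiThetaN}$(ii)$, using $r \leq \delta$ and Ahlfors regularity), (ii) $r^{-d}\mu(B(x,r))\,|\theta_\delta(x)-\theta(x)| \leq C_0|\theta_\delta(x)-\theta(x)| \to 0$ (Proposition~\ref{lemDensityPointwiseCv}), and (iii) $r^{-d}\int_{B(x,r)}|\theta(x)-\theta(y)|\,d\mu(y) \leq C_0\,\mu(B(x,r))^{-1}\int_{B(x,r)}|\theta(x)-\theta(y)|\,d\mu(y)$, which tends to $0$ at every $\mu$--Lebesgue point of the bounded function $\theta$, i.e. for $\mu$--a.e. $x$ by Lebesgue differentiation of Radon measures. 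It then suffices to pick $\delta_N = N^{-\epsilon}$, $r_N = N^{-2\epsilon}$ with $\epsilon>0$ small enough in terms of $d$, so that $\delta_N,r_N\to 0$, $N^{-1/d} \leq r_N \leq \delta_N$, $Nr_N^d,N\delta_N^d\to+\infty$, $r_N/\delta_N\to 0$, and $\delta_N^{-1}$ times the $N$--rates in \eqref{eqConcentrationSigmaN0}--\eqref{Sigmaestim} tend to $0$; all the listed convergences then hold simultaneously for $\cH^d$--a.e. $x$.

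The routine ingredients are the Efron--Stein steps of the first paragraph. The genuinely delicate point is obtaining \eqref{Sigmaestim} with dependence $\delta^{-1}$ only (rather than $\min(r,\delta)^{-1}$): this relies on the exact cancellation between the $\sim r^{-1}$ Lipschitz constant of the kernel $\psi_r$, the factor $R_B = r$ supplied by Theorem~\ref{thmBetaLocNuN}$(ii)$, and the normalisation $r^{-d}\mu(B(x,r)) \leq C_0$. The second subtlety is the coupled, non-uniform choice of $(\delta_N,r_N)$ in \eqref{eqPointwiseCVSigma2}: the bias of $\Sigma_r(\cdot,\nu_\delta)$ carries the $r^d$--rescaled local discrepancy $r^{-d}\int_{B(x,r)}|\theta_\delta-\theta|\,d\mu$, an averaged rather than pointwise quantity, so the two scales must be tied through $r_N \ll \delta_N$ and a Lebesgue-point argument must be invoked — which is precisely where uniformity over the class $\cQ$ fails and is only recovered in the stronger piecewise Hölder class $\cP$.
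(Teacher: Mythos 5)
Your proof is correct and, for \eqref{eqConcentrationSigmaN0}, \eqref{Sigmaestim} and the first limit in \eqref{eqPointwiseCVSigma2}, it coincides with the paper's argument: entrywise Efron--Stein exactly as in Proposition~\ref{propDensityPointwiseCv} for $\Sigma_r(x,\mu_N)$; the algebraic splitting of $\sigma_{r,\delta,N}-\sigma_{r,\delta}$ combined with $\|\Phi\|_\infty+\xLip(\Phi)\leq M$, \eqref{eqInfBoundSigma} and Proposition~\ref{propDensityPointwiseCv}; and the observation that $(C_\phi r^d)^{-1}\psi_r(\cdot-x)$ becomes, after rescaling by $r/M$, an admissible test function for $\beta_{B(x,r)}$, so that Theorem~\ref{thmBetaLocNuN}$(ii)$ with $R_B=r$ and the normalisation $\mu(B(x,r))\leq C_0 r^d$ cancel all powers of $r$ and leave only $\delta^{-1}$. (Your parenthetical alternative via Proposition~\ref{propBetaLoc}$(ii)$ applied to $\psi_r(\cdot-x)\Phi(\theta_{\delta,N})$ would, as written, retain a residual $\kappa_0\mu(B)\sim M/r$ contribution and only give $\min(r,\delta)^{-1}$; it is the passage through Theorem~\ref{thmBetaLocNuN}$(ii)$, where the test function carries the extra factor $\|f\|_\infty\leq R_B=r$, that produces the clean $\delta^{-1}$, so you are right to make that the main route.) The only genuine divergence is the treatment of $r^{-d}\int_{B(x,r)}|\theta_\delta-\theta|\,d\mu$ in the second limit of \eqref{eqPointwiseCVSigma2}: the paper bounds it crudely by $r_N^{-d}\|\theta_{\delta_N}-\theta\|_{\xL^1(\mu)}$, gets the global $\xL^1$ convergence by dominated convergence, and then lets $r_N\to 0$ slowly enough relative to that ($\mu$--dependent) rate; you instead split into the oscillation of $\theta_\delta$ on $B(x,r)$ (via its $M/\delta$--Lipschitz bound, giving $Mr/\delta$), the pointwise error $|\theta_\delta(x)-\theta(x)|$, and the local oscillation of $\theta$ handled at $\mu$--Lebesgue points via Besicovitch differentiation. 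Your variant buys explicit, $\mu$--independent sequences $\delta_N=N^{-\epsilon}$, $r_N=N^{-2\epsilon}$ at the price of coupling the scales through $r_N\ll\delta_N$; both arguments are valid and, as you correctly note, neither is uniform over $\cQ$.
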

\begin{remark} \label{remkFasterRatesigmaVsSigma}
For the sake of simplicity, let us consider the case $r = \delta$ and let us recall that we are interested in the convergence regime that is $N \delta^d \to + \infty$ so that the rate $(N \delta^d)^{-\frac{1}{2}}$ obtained in \eqref{eqConcentrationSigmaN0} is faster than the rate $(N \delta^d)^{-\frac{1}{d}}$ obtained in \eqref{Sigmaestim} as soon as $d > 2$.
\noindent Note also that the a.e. convergence of $\Sigma_r (x, \nu_\delta)$ to $\Pi_{T_x S}$ is proven assuming some relation between $\delta$ and $r$ tending to $0$ while in the case of $\sigma_{r,\delta}$ it holds as soon as $\delta$ and $r$ tend to $0$ without further requirement. 
\end{remark}
\begin{proof}
Let us start with \eqref{eqConcentrationSigmaN0}, similarly to the proof of Proposition~\ref{propDensityPointwiseCv}. By linearity, $\displaystyle \E \left[  \Sigma_{r} (x , \mu_N) \right] =  \Sigma_{r} (x , \mu) $ and note that since for $A = (a_{ij}) \in \xM_n(\R)$, $\| A \|^2 \leq \| A \|_F^2 = \sum_{ij} a_{ij}^2$,
\begin{align*}
 \E  \left[ \left\| \Sigma_{r}(x, \mu_N) - \Sigma_{r}(x, \mu) \right\|^2 \right] & \leq \sum_{i,j = 1}^n \xvar \left( \Sigma_{r}(x, \mu_N)_{ij} \right)  = \frac{1}{C_\phi^2 r^{2d}} \sum_{i,j = 1}^n \frac{1}{N} \xvar \left( (Z_1)_{ij} \right) \leq \frac{n^2 \|\phi\|_\infty}{C_\phi^2 N r^{2d}} \E \left[ \one_{ \{ |X_1 - x| < r \} } \right] \\
 & \leq \frac{n^2 C_0 \|\phi\|_\infty^2}{C_\phi^2 N r^{d}} \: ,
\end{align*}
where $Z_1, \ldots, Z_N$ are independent and satisfy
\[
 \Sigma_{r} (x , \mu_N) = \frac{1}{C_\phi r^{d}} \frac{1}{N} \sum_{k=1}^N Z_k \quad \text{with} \quad Z_k = \psi \left( \frac{X_k - x}{r} \right) \quad \text{and in particular,} \quad \|Z_k \| \leq \| \phi \|_\infty  \: .
\]
We obtain
\begin{equation} \label{eqEfronSteinSigma}
\E\left[\left\| \Sigma_{r} (x , \mu_N)  -\Sigma_{r} (x , \mu ) \right\| \right]\leq \frac{M}{\sqrt{N r^d}}  \: .
\end{equation}
Moreover, thanks to the uniform bounds $\| \Sigma_r (\cdot , \mu) \| \leq M$ and $\| \Phi \|_\infty + \xLip(\Phi) \leq M$ (see \eqref{eqInfBoundSigma} and Lemma~\ref{lemPropertiesPhiThetaN}$(i)$), we have
\begin{align} \label{eqsigmardeltaNrdelta}
 \left\| \sigma_{r,\delta,N}(x) - \sigma_{r,\delta}(x) \right\|  & =  \left\| \Phi \left( \theta_{\delta,N}(x) \right) \Sigma_r (x, \mu_N) - \Phi \left( \theta_{\delta}(x) \right) \Sigma_r (x, \mu) \right\|  \nonumber \\
& \leq M \left\| \Sigma_r (x, \mu_N) -  \Sigma_r (x, \mu) \right\| + \| \Sigma_r (x, \mu) \| \left| \Phi \left( \theta_{\delta,N}(x) \right)  - \Phi \left( \theta_{\delta}(x) \right)  \right| \nonumber \\
& \leq M \left( \left\| \Sigma_r (x, \mu_N) -  \Sigma_r (x, \mu) \right\| +  \left|  \theta_{\delta,N}(x)  - \theta_{\delta}(x)   \right| \right) \: .
\end{align}
We then infer \eqref{eqConcentrationSigmaN0} thanks to Proposition~\ref{propDensityPointwiseCv} and \eqref{eqEfronSteinSigma} taking the mean value in \eqref{eqsigmardeltaNrdelta} above.\\
Combining the a.e. convergence \eqref{eqsigmardeltaCV} with \eqref{eqConcentrationSigmaN0}, we infer that the convergence $\E \left[\left\| \sigma_{r_N,\delta_N,N}(x) - \Pi_{T_x S} \right\|\right] \xrightarrow[N \to +\infty]{} 0 $ in \eqref{eqPointwiseCVSigma2} holds for any sequence $(r_N)_N$, $(\delta_N)_N$ tending to $0$ and satisfying both $N \delta_N^d \to +\infty$ and $N r_N^d \to +\infty$.\\
We now prove \eqref{Sigmaestim}. First note that in general $\displaystyle \E \left[ \Sigma_r (x , \nu_{\delta,N} ) \right]$ is not equal\footnote{Actually, we have $$\E \left[ \Sigma_r (x , \nu_{\delta,N}) \right] = \frac{1}{C_\phi r^d} \int_{(x_1, \ldots, x_N) \in (\R^n)^N} \psi_r (x_1 -x) \Phi \left( \frac{1}{N} \sum_{k=1}^N \frac{1}{C_\eta \delta^d} \eta \left( \frac{|x_1 - x_k|}{\delta} \right) \right)\: d \mu(x_1) \ldots d \mu(x_N) \: .$$} to $\displaystyle  \Sigma_r (x , \nu_{\delta})$, and we did not manage to directly obtain concentration as we did previously with $\sigma_{r, \delta, N}$. However,
thanks to the property of $\psi$ given in \eqref{eq:lipPsiij}, we have that for $r > 0$ and $x \in \R^n$, the function $\displaystyle f : y \mapsto \frac{1}{C_\phi r^d} \psi \left( \frac{y-x}{r} \right)$ satisfies 
\begin{equation*}
 \xLip(f) \leq \frac{1}{C_\phi r^d} \frac{1}{r} \left( \| \phi \|_\infty + \xLip(\varphi) \right) \quad \text{and} \quad \| f \|_\infty \leq \frac{1}{C_\phi r^d} \| \phi \|_\infty  \quad \text{and} \quad \supp(f) \subset B(x,r) \: ,
\end{equation*}
and we can directly apply Theorem~\ref{thmBetaLocNuN}$(ii)$ in the ball $B = B(x,r)$ of radius $R_B = r < 1$, together with the $d$--Ahlfors regularity of $\mu$, to obtain \eqref{Sigmaestim} as follows:
\begin{align*}
\mathbb{E}\left[\left| \Sigma_r (x, \nu_{\delta,N}) - \Sigma_r (x, \nu_\delta )  \right|\right]  & = \E \left[\left| \int_{\R^n} f \: d \nu_{\delta,N} - \int_{\R^n} f \: d \nu_\delta \right|\right] \\
& \leq \mathbb{E}\left[ \frac{1}{C_\phi r^{d+1}} \left( \| \phi \|_\infty + \xLip(\varphi) \right) \beta_{B(x,r)} (\nu_{\delta,N}, \nu_\delta) \right] \\
& \leq \left\lbrace
\begin{array}{lclcl}
\displaystyle \frac{M}{r^{d+1}} \frac{ r}{\delta} \mu(B(x,r))  N^{-\frac{1}{d}} & \leq &  \displaystyle  M  \frac{N^{-\frac{1}{d}}}{\delta}   & \text{if} & d > 2 \\
&&&& \\
\displaystyle \frac{M}{r^{d+1}} \frac{ r}{\delta} \mu(B(x,r)) N^{-\frac{1}{2}} \ln N & \leq &   \displaystyle M\frac{N^{-\frac{1}{2}}  \ln N }{\delta}  & \text{if} & d = 2\\
&&&& \\
\displaystyle \frac{M}{r^{d+1}} \frac{ r}{\delta} \mu(B(x,r)) N^{-\frac{1}{2}} & \leq &  \displaystyle M \frac{N^{-\frac{1}{2}} }{\delta}   & \text{if} & d < 2
\end{array}
\right. \: .
\end{align*}
We are left with proving the last past of statement \eqref{eqPointwiseCVSigma2}. 
Assuming for instance $d> 2$, the case $d \leq2$ does not differ hereafter, and  
$(\delta_N)_N$, $(r_N)_N$ both tend to $0$ and $\frac{N^\frac{-1}{d}}{\delta_N}$ tends to $0$ as well. We then infer from \eqref{eqSigmardeltaCV} in Proposition~\ref{propCVSigmardelta} and \eqref{Sigmaestim} that 
\begin{align*}
 \mathbb{E}\left[\left\| \Sigma_{r_N} (x, \nu_{\delta_N,N}) - \Sigma_{r_N} (x, \nu)  \right\|\right] \leq M \underbrace{ \frac{N^\frac{-1}{d}}{\delta_N}}_{\longrightarrow 0} + \frac{M}{r_N^{d}} \int_{B(x,r_N)} \left| \theta_{\delta_N} - \theta \right| \: d \mu +  M \underbrace{ \left\| \Sigma_{r_N}(x,\nu) - \Pi_{T_x S} \right\|}_{\longrightarrow 0 \text{ by Proposition~\ref{propCVSigmar}}} \: .
\end{align*}
Finally noting that
$\displaystyle
\frac{1}{r_N^{d}}  \int_{B(x,r_N)} \left| \theta_{\delta_N} - \theta \right| \: d \mu \leq  \frac{1}{r_N^{d}}  \underbrace{\int_{\R^n} \left| \theta_{\delta_N} - \theta \right| \: d \mu }_{\xrightarrow[\delta_N \to 0]{} 0}
$,
one can adapt $(r_N)_N \to 0$ to ensure that this last term converges to $0$ as well.
\end{proof}

%
\subsection{A varifold type estimator}
\label{secVarifoldTypeEstimator}
%

Let $0 < r,\delta < 1$. Building upon the definitions of $\nu_{\delta,N}$ \eqref{eqNuN}, $\Sigma_{r}(x, \nu_{\delta,N})$ (Definition~\ref{dfn:covMatrix}) and $\sigma_{r,\delta,N}$ \eqref{eqDfnsigmardeltaN}, we can now introduce two varifold-type estimators $W_{r,\delta,N}$ and $\widetilde{W}_{r,\delta,N}$ of $V_S$: we define the Radon measures in $\R^n \times {\xSym}_+(n)$
\begin{equation} \label{eq:defWrdeltaN}
 W_{r,\delta,N} = \nu_{\delta, N} \otimes \delta_{\sigma_{r,\delta,N}} \quad \text{and} \quad \widetilde{W}_{r,\delta,N} = \nu_{\delta, N} \otimes \delta_{\Sigma_r(x, \nu_{\delta,N})}
\end{equation}
together with their deterministic counterpart:
\begin{equation} \label{eq:defWrdelta} 
 W_{r,\delta} = \nu_{\delta} \otimes \delta_{\sigma_{r,\delta}} \quad \text{and} \quad \widetilde{W}_{r,\delta} = \nu_{\delta} \otimes \delta_{\Sigma_r(x, \nu_{\delta})} \: .
\end{equation}
We recall, for instance considering $W_{r,\delta,N}$, that with such a definition (see Section~\ref{secVarifolds}) we equivalently have that for all $f \in \xC_c (\R^n \times \xSym_+(n))$,
\begin{equation*}
\int_{\R^n \times {\xSym}_+(n)} f\: d W_{r,\delta,N} = \int_{\R^n} f \left( x , \sigma_{r,\delta,N} (x)\right) \: d \nu_{\delta,N} (x) = \int_{\R^n} f \left( x , \sigma_{r,\delta,N}(x)\right) \Phi (\theta_{\delta,N}(x)) \: d \mu_N(x) \: . 
\end{equation*}
Note that $W_{r,\delta,N}$ (respectively $\widetilde{W}_{r,\delta,N}$, $W_{r,\delta}$, $\widetilde{W}_{r,\delta}$) are Radon measures in $\R^n \times \xSym_+(n)$ but are not $d$--varifolds, indeed the covariance matrices $\sigma_{r,\delta,N}(x) = \Phi(\theta_{\delta,N}(x)) \Sigma_r(x,\mu_N)$ (resp.  $\Sigma_r(x, \nu_{\delta,N})$, $\sigma_{r,\delta}(x)$, $\Sigma_r(x, \nu_{\delta})$) are generally not orthogonal projectors and can not be identified with elements of $\Gr$.
Nevertheless, considering the Radon measure $W_S = \cH^d_{| S} \otimes \delta_{\Pi_{T_x S}}$ in $\R^n \times \xSym_+(n)$ rather than the $d$--varifold $V_S = \cH^d_{| S} \otimes \delta_{T_x S}$, it is possible to show that $W_{r,\delta,N}$ is a convergent estimator of $W_S$ as stated in Proposition~\ref{prop:CV_WrdeltaN} below. We refer to the last part of Section~\ref{secVarifolds} for more details concerning the identification between $W_S$ and $V_S$. We also recall that the localized Bounded Lipschitz distance is introduced in Definition~\ref{dfnBetaLoc}, we recall that $\beta_D$ stands for $\beta_{D \times \xSym_+(n)}$ hereafter.

\begin{proposition} \label{prop:CV_WrdeltaN}
Assume that $S$ and $\theta$ satisfy \hyperref[hypH1]{$(H_1)$} and \hyperref[hypH2]{$(H_2)$}. 
Then,
\begin{enumerate}[$(i)$]
\item case $d>2$ and $D$ arbitrary bounded open set: there exists a constant $M = M(d,C_0,\eta,\phi) \geq 0$ such that for any $D \subset \R^n$ bounded open set and 
for all $N \in \N^\ast$, $0 < \delta,r < 1$ satisfying $N^{-\frac{1}{d}} \leq \min(\delta, r)$,
\[ 
\left.
\begin{array}{r}
\E \left[ \beta_D (W_{r, \delta, N}, \: W_{r,\delta}) \right] \\
\E [ \beta_D (\widetilde{W}_{r, \delta, N}, \: \widetilde{W}_{r,\delta}) ] 
\end{array} \right\rbrace
\leq  M \frac{N^{-\frac{1}{d}} }{  \min(\delta, r)} \mu (D^{\gamma_N})  \quad \text{with } \gamma_N = N^{-\frac{d-2}{d^2}} \xrightarrow[N \to +\infty]{} 0 \: .
\]
\item case $B$ open ball: there exists a constant $M = M(d,C_0,\eta,\phi) \geq 0$ such that for any open ball $B \subset \R^n$ of radius $R_B < 1$, for all $N \in \N^\ast$, $0< r, \delta < 1$ satisfying $N^{-\frac{1}{d}} \leq \min(\delta, r, R_B)$, 
\begin{equation} \label{eqBetaWrdeltaN}
\left.
\begin{array}{r}
\E \left[ \beta_B (W_{r, \delta, N}, \: W_{r,\delta}) \right] \\
\E [ \beta_B (\widetilde{W}_{r, \delta, N}, \: \widetilde{W}_{r,\delta}) ] 
\end{array} \right\rbrace \leq \frac{M}{ \min(\delta, r)} \mu(B) \times
\left\lbrace
\begin{array}{lcl}
N^{-\frac{1}{d}}  & \text{if} & d > 2 \\
N^{-\frac{1}{2}} \ln N   & \text{if} & d = 2\\
N^{-\frac{1}{2}}       & \text{if} & d < 2
\end{array}
\right. .
\end{equation}
\end{enumerate}
\end{proposition}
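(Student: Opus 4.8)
The plan is to reduce both inequalities to Proposition~\ref{propBetaLoc} for the fluctuation part, and to the pointwise mean estimates of Proposition~\ref{propCVSigmaN} and Proposition~\ref{propDensityPointwiseCv} for the bias part. Fix a bounded open set $D$ (arbitrary in case~$(i)$, an open ball $B$ of radius $R_B<1$ in case~$(ii)$) and a test function $f\in\xC_c(\R^n\times\xSym_+(n))$ with $\|f\|_\infty\leq 1$, $\xLip(f)\leq 1$ and $\supp f\subset D\times\xSym_+(n)$. Using $\nu_{\delta,N}=\Phi(\theta_{\delta,N})\,\mu_N$ and $\nu_\delta=\Phi(\theta_\delta)\,\mu$, I split
\[
\int f\,dW_{r,\delta,N}-\int f\,dW_{r,\delta}=\int_D g_N\,d(\mu_N-\mu)+\int_D(g_N-g)\,d\mu,
\]
with $g_N(x)=f(x,\sigma_{r,\delta,N}(x))\,\Phi(\theta_{\delta,N}(x))$ and $g(x)=f(x,\sigma_{r,\delta}(x))\,\Phi(\theta_\delta(x))$, both supported in $D$; for $\widetilde W_{r,\delta,N}$ one uses the same split with $\sigma_{r,\delta,N},\sigma_{r,\delta}$ replaced by $\Sigma_r(\cdot,\nu_{\delta,N}),\Sigma_r(\cdot,\nu_\delta)$, producing functions $\widetilde g_N,\widetilde g$.

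\emph{Bias part.} Since $f$ is $1$--Lipschitz with $\|f\|_\infty\leq1$ and $\|\Phi\|_\infty+\xLip(\Phi)\leq M$ (Lemma~\ref{lemPropertiesPhiThetaN}$(i)$), one has $|g_N(x)-g(x)|\leq M\big(\|\sigma_{r,\delta,N}(x)-\sigma_{r,\delta}(x)\|+|\theta_{\delta,N}(x)-\theta_\delta(x)|\big)$, a bound not depending on $f$, and likewise $|\widetilde g_N(x)-\widetilde g(x)|\leq M\big(\|\Sigma_r(x,\nu_{\delta,N})-\Sigma_r(x,\nu_\delta)\|+|\theta_{\delta,N}(x)-\theta_\delta(x)|\big)$. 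Taking expectations, integrating over $D$ and invoking \eqref{eqConcentrationSigmaN0} (resp.\ \eqref{Sigmaestim}) together with Proposition~\ref{propDensityPointwiseCv}, then simplifying via $\tfrac1{\sqrt{Nr^d}}=(N^{-1/d}/r)^{d/2}\leq N^{-1/d}/r$ for $d\geq2$ (as $N^{-1/d}\leq r$) and $\leq N^{-1/2}/r$ for $d<2$ (as $r<1$), one gets in both cases
\[
\E\Big[\sup_f\int_D|g_N-g|\,d\mu\Big]\leq \frac{M}{\min(\delta,r)}\,\mu(D)\times
\begin{cases}N^{-1/d}&d>2\\ N^{-1/2}\ln N&d=2\\ N^{-1/2}&d<2.\end{cases}
\]

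\emph{Fluctuation part.} Here I verify that every $g_N$ (resp.\ $\widetilde g_N$) lies in the class $\mathfrak X$ of Proposition~\ref{propBetaLoc} for parameters independent of $f$. Indeed $\|g_N\|_\infty\leq\|f\|_\infty/\tau\leq\kappa\kappa_0$ with $\kappa_0=1/\tau$ and $\kappa$ an upper bound for $\|f\|_\infty$ (so $\kappa\leq R_B$ in case~$(ii)$, cf.\ the proof of Corollary~\ref{coroBetaLoc}), and for $x,y\in S$, combining Lemma~\ref{lemPropertiesPhiThetaN}$(ii)$, the estimate $\|\Sigma_r(x,\mu_N)-\Sigma_r(y,\mu_N)\|\leq\frac{\xLip(\psi)}{C_\phi}\Delta_{r,N}(x,y)\frac{|x-y|}{r}$ (same computation as Lemma~\ref{lemPropertiesPhiThetaN}$(ii)$ using \eqref{eq:lipPsiij}), the bound $\|\Sigma_r(y,\mu_N)\|\leq\frac{\|\phi\|_\infty}{C_\phi}\Delta_{r,N}(x,y)$ and $|\Phi(\theta_{\delta,N}(x))-\Phi(\theta_{\delta,N}(y))|\leq M\,\Delta_{\delta,N}(x,y)\frac{|x-y|}{\delta}$, one obtains exactly \eqref{eqHypg} with $\kappa_0=M$, $\kappa_1=M\|f\|_\infty$, $\kappa_2=M$, $\kappa_3=M$. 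For $\widetilde g_N$ the computation is identical but simpler: bounding $\Phi(\theta_{\delta,N})\leq1/\tau$ inside $\Sigma_r(x,\nu_{\delta,N})=(C_\phi r^d)^{-1}\int\psi_r(y-x)\Phi(\theta_{\delta,N}(y))\,d\mu_N(y)$ gives $\|\Sigma_r(x,\nu_{\delta,N})-\Sigma_r(y,\nu_{\delta,N})\|\leq\frac{M}{r}\Delta_{r,N}(x,y)|x-y|$, so $\widetilde g_N\in\mathfrak X$ with $\kappa_3=0$. Consequently $\sup_f\big|\int_D g_N\,d(\mu_N-\mu)\big|\leq\sup_{h\in\mathfrak X}\big|\int_D h\,d\mu_N-\int_D h\,d\mu\big|$, and Proposition~\ref{propBetaLoc}$(i)$ (resp.\ $(ii)$), together with $\kappa_1\leq M$ (since $\|f\|_\infty\leq1$ in case~$(i)$ and $\|f\|_\infty\leq R_B<1$ in case~$(ii)$) and $\delta,r<1$, bounds its expectation by $\frac{M}{\min(\delta,r)}N^{-1/d}\mu(D^{\gamma_N})$ in case~$(i)$ and by $\frac{M}{\min(\delta,r)}\mu(B)\times\{\cdots\}$ in case~$(ii)$.

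\emph{Assembly and main difficulty.} Adding the two parts, using $\sup_f(A_f+B_f)\leq\sup_f A_f+\sup_f B_f$ and $\mu(D)\leq\mu(D^{\gamma_N})$, then taking expectations, yields the stated bounds for $\E[\beta_D(W_{r,\delta,N},W_{r,\delta})]$ and, by the same argument with $W$ replaced by $\widetilde W$ throughout, for $\E[\beta_D(\widetilde W_{r,\delta,N},\widetilde W_{r,\delta})]$. No step is deep: the proposition is an assembly of Proposition~\ref{propBetaLoc}, Proposition~\ref{propCVSigmaN}, Proposition~\ref{propDensityPointwiseCv} and Lemma~\ref{lemPropertiesPhiThetaN}. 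The point demanding the most care is checking that $g_N$ and, above all, $\widetilde g_N$ obey a Lipschitz estimate of the precise shape \eqref{eqHypg} with constants uniform in $f$: this relies on packaging all sample dependence into $\Delta_{\delta,N}$ and $\Delta_{r,N}$, and for $\widetilde W$ on the observation that it suffices to bound $\Phi(\theta_{\delta,N})$ by the deterministic constant $1/\tau$ rather than track it finely, which prevents a triple product of the $\Delta$'s falling outside the range of Proposition~\ref{propBetaLoc}. The remainder is bookkeeping: the interchange of $\sup_f$ with $\E$, the localization $\supp g_N\subset D$, and the comparison of $\tfrac1{\sqrt{Nr^d}}$ and $\tfrac1{\sqrt{N\delta^d}}$ with $\tfrac{N^{-1/d}}{\min(\delta,r)}$ (and its $d\leq2$ analogues) under $N^{-1/d}\leq\min(\delta,r)<1$.
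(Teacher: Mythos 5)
Your proposal is correct and follows essentially the same route as the paper's proof: the same split into a fluctuation term handled by verifying hypothesis \eqref{eqHypg} for $g_N$ (resp.\ $\widetilde g_N$, with $\kappa_3=0$ thanks to bounding $\Phi(\theta_{\delta,N})$ by a constant) and applying Proposition~\ref{propBetaLoc}, plus a bias term controlled by Propositions~\ref{propDensityPointwiseCv} and~\ref{propCVSigmaN}; your merging of the paper's $A_2$ and $A_3$ into a single term $\int_D(g_N-g)\,d\mu$ is only a cosmetic regrouping. The Lipschitz constants you extract and the conversions of $(N\delta^d)^{-1/2}$, $(Nr^d)^{-1/2}$ to the stated rates match the paper's.
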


\begin{proof}
We recall that $M \geq 0$ stands for a generic constant that may vary from one line to another (see Remark~\ref{remkCstMSecTgt}).
As a first step, we prove the result for $W_{r,\delta,N}$ and then we point out the main differences when considering $\widetilde{W}_{r,\delta,N}$ in order to complete the proof. Let $D$ be a bounded open set in $\R^n$. Let $f \in \xC_c \left( \R^n \times \xSym_+(n) \right)$ be such that $\xLip (f) \leq 1$ and $\| f \|_\infty \leq 1$ and assume that $\supp f \subset D \times \xSym_+(n)$. 

\smallskip
\noindent {\bf Step $1$:} we first prove the statement for $W_{r,\delta,N}$.
By triangular inequality and definition \eqref{eq:defWrdeltaN}, \eqref{eq:defWrdelta} of $W_{r,\delta,N}$ and $W_{r,\delta}$,
\begin{align} \label{eq:flagfoldA123}
\left| \int f \: d W_{r,\delta,N} \right. & \left. - \int f \: d W_{r,\delta} \right| \leq A_1 + A_2 + A_3 \quad \text{with} \quad \\
A_1  & = \left| \int_{\R^n} f \left( x , \sigma_{r,\delta,N}(x) \right) \Phi (\theta_{\delta,N}(x)) \: (d \mu_N(x) - d \mu(x) ) \: \right| \nonumber \\
A_2 & = \left| \int_{\R^n} f \left( x , \sigma_{r,\delta,N}(x) \right) \left( \Phi (\theta_{\delta,N}(x)) - \Phi (\theta_{\delta}(x)) \right)  \:  d \mu(x)  \: \right| \nonumber\\
A_3 & = \left| \int_{\R^n} \left( f \left( x , \sigma_{r,\delta,N}(x) \right) - f \left( x , \sigma_{r,\delta}(x) \right) \right) \Phi (\theta_{\delta}(x))   \:  d \mu(x)  \: \right| \: . \nonumber
\end{align}
We will use throughout the proof that $\displaystyle \| \Phi \|_\infty + \xLip(\Phi) \leq M$ (see Lemma~\ref{lemPropertiesPhiThetaN}(i)). Applying Proposition~\ref{propDensityPointwiseCv}, we first obtain
\begin{equation} \label{eq:flagfoldA2}
\E \left[ A_2 \right]  \leq \| f \|_\infty \xLip( \Phi) \int_D  \E [ |\theta_{\delta,N} -\theta_{\delta}| ] \: d\mu \leq  \frac{M}{\sqrt{N \delta^d}} \mu(D) \: .
\end{equation}
We can then apply Proposition~\ref{propCVSigmaN} to infer
\begin{equation} \label{eq:flagfoldA3}
\E \left[ A_3 \right]  \leq  \xLip(f)\: \| \Phi \|_\infty \int_D \E [ \left\| \sigma_{r,\delta,N} - \sigma_{r,\delta} \right\| ] \: d\mu \leq M \left( \frac{1}{\sqrt{N \delta^d}} + \frac{1}{\sqrt{N r^d}} \right) \mu(D)  \: .
\end{equation}
We are left with $A_1$, we introduce $g \in \xC_c(\R^n)$ such that for $x \in \R^n$, $g(x):=f \left(x,\sigma_{r,\delta,N}(x) \right) \Phi(\theta_{\delta,N}(x))$ and
it remains to check that $g$ satisfies assumption \eqref{eqHypg} of Proposition~\ref{propBetaLoc}.
Let $x$, $y \in \R^n$.
Recalling Definition~\ref{dfn:covMatrix} and \eqref{eqDfnPsiToPhiKernel}, \eqref{eq:lipPsiij}, and using $\xLip(\psi_r) \leq \frac{1}{r}\xLip(\psi)$, we infer,
\begin{align}
\left\|  \Sigma_r(x,\mu_N) -  \Sigma_r(y,\mu_N) \right\| & \leq \frac{1}{C_\phi r^d} \int_{\R^n} \left\| \psi_r(z-x)  - \psi_r (z-y) \right\|  \: d \mu_N(z) \leq M \frac{\xLip(\psi) }{r^{d+1}} \mu_{N} \left(B(x,r) \cup B(y,r) \right) |x-y| \nonumber \\
& \leq \frac{ M }{r} \Delta_{r,N}(x,y) |x-y| \: . \label{eq:lipSigma}
\end{align}
We then recall that by Lemma~\ref{lemPropertiesPhiThetaN} $(ii)$:
\begin{equation} 
\label{eqLipThetaDeltaNTemp}
 \left| \Phi \left( \theta_{\delta,N} (x) \right) - \Phi \left( \theta_{\delta,N}(y) \right) \right| \leq \xLip(\Phi) \frac{M}{\delta} \Delta_{\delta,N}(x,y)|x-y|
\end{equation}
and consequently, from \eqref{eq:lipSigma} and \eqref{eqLipThetaDeltaNTemp} we infer:
\begin{align}
\left\| \sigma_{r,\delta,N}(x) \right. & \left. - \sigma_{r,\delta,N}(y) \right\|  = \left\| \Phi(\theta_{\delta,N}(x)) \Sigma_r(x,\mu_N) - \Phi(\theta_{\delta,N}(y)) \Sigma_r(y,\mu_N) \right\| \nonumber \\
& \leq \| \Phi \|_\infty \frac{ M }{r} \Delta_{r,N}(x,y) |x-y| + \| \Sigma_r(y,\mu_N) \| \frac{M}{\delta} \Delta_{\delta,N}(x,y) |x-y| \nonumber  \\
& \leq M \left( \frac{1}{r} \Delta_{r,N}(x,y) + \frac{1}{\delta} \Delta_{\delta,N}(x,y) \Delta_{r,N}(x,y) \right) |x-y| \: , \label{eq:lipSigmaBis}
\end{align}
where we used $\displaystyle \| \Sigma_r(y,\mu_N) \| \leq \frac{1}{C_\phi r^d} \int_{B(y,r)} \| \psi_r (z-y) \| \: d\mu_N(z) \leq M \frac{\mu_N(B(y,r))}{r^d} \leq M \Delta_{r,N}(x,y)$. Therefore coming back to $g$, we have thanks \eqref{eqLipThetaDeltaNTemp} and \eqref{eq:lipSigmaBis},
\begin{align}
\label{eqBetaLocWrdeltaN_hypg}
|g(x) & - g(y)| \nonumber \\
\leq & | f(x,\sigma_{r,\delta,N}(x))| \left| \Phi \left( \theta_{\delta,N} (x) \right) -  \Phi \left( \theta_{\delta,N} (y) \right) \right|+ \left|  \Phi \left( \theta_{\delta,N} (y) \right) \right| | f(x,\sigma_{r,\delta,N}(x))- f(y,\sigma_{r,\delta,N}(y)) |\nonumber\\
\leq & \| f \|_\infty \xLip(\Phi) \frac{M}{\delta} \Delta_{\delta,N}(x,y)|x-y| + \| \Phi \|_{\infty} \xLip(f) \big(1+\xLip(\sigma_{r,\delta,N}) \big)  |x-y|\nonumber \\
 \leq &  M \left( \frac{1}{\delta} \Delta_{\delta,N}(x,y)  + \frac{1}{r} \Delta_{r,N}(x,y) + \frac{1}{\delta} \Delta_{r,N}(x,y) \Delta_{\delta,N}(x,y)+ 1\right)  |x-y| 
\end{align}
and $g$ therefore satisfies assumption \eqref{eqHypg} of Proposition~\ref{propBetaLoc} with $\displaystyle \kappa_1 = \kappa_2 = \kappa_3 = \kappa_0 (= M)$ and $\kappa = 1$. We can thus apply Proposition~\ref{propBetaLoc}$(i)$ in order to control $\E [|A_1|] = \E \left[ \left| \int g \: d \mu_N - \int g \: d \mu \right| \right]$ and we can then conclude the proof of $(i)$ thanks to \eqref{eq:flagfoldA123}, \eqref{eq:flagfoldA2} and \eqref{eq:flagfoldA3}, noting that by assumption $N \delta^d \geq 1$ and thus $\frac{1}{\sqrt{N \delta^d}}= (N \delta^d)^{-1/2} \leq (N \delta^d)^{-1/d} = \frac{N^{-1/d}}{\delta}$ if $d > 2$ and therefore
$\E [ A_2 ]$ are $\E [A_3]$ are controlled by $\displaystyle M \frac{N^{-\frac{1}{d}} }{  \min(\delta, r)} \mu (D^{\gamma_N})$ as well.

\smallskip
\noindent {\bf Step $2$:} Starting from the similar decomposition \eqref{eq:flagfoldA123} for $\widetilde{W}_{r,\delta,N}$, i.e. replacing $\sigma_{r,\delta, N}$, $\sigma_{r,\delta}$ with
$\Sigma_{r} ( \cdot, \nu_{\delta,N})$, $\Sigma_{r} ( \cdot, \nu_{\delta})$, we follow the same strategy as in Step $1$.
Applying Proposition~\ref{propDensityPointwiseCv} and Proposition~\ref{propCVSigmaN}, we similarly obtain
\begin{equation} \label{eq:flagfoldA23tilde}
\E \left[ A_2 \right] 
\leq \frac{M}{\sqrt{N \delta^d}} \mu(D) 
\quad \text{and} \quad \E \left[ A_3 \right]  \leq \frac{M}{\delta} \mu(D) \times
\left\lbrace \begin{array}{lcl}
N^{-\frac{1}{d}}         & \text{if} & d > 2\\
N^{-\frac{1}{2}} \ln N   & \text{if} & d = 2\\
N^{-\frac{1}{2}}         & \text{if} & d < 2
\end{array} \right.  
\: .
\end{equation}
We are left with $A_1$, we similarly introduce $g \in \xC_c(\R^n)$ s.t. for $x \in \R^n$, $g(x):=f(x,\Sigma_r(x,\nu_{\delta,N}))\Phi (\theta_{\delta,N}(x))$ and
it remains to check that $g$ satisfies assumption \eqref{eqHypg} of Proposition~\ref{propBetaLoc}.
Let $x$, $y \in \R^n$. Similarly to \eqref{eq:lipSigma} and additionally using $| \Phi (\theta_{\delta,N}(z))| \leq M$, we have
\begin{align} 
    \left\| \Sigma_r(x, \nu_{\delta,N})-\Sigma_r(y,\nu_{\delta,N}) \right\| & \leq \frac{1}{C_\phi r^d}
\int_{z \in \R^n} \left\| \psi_r(z-x)  - \psi_r (z-y) \right\| \left|\Phi\left( \theta_{\delta,N}(z)\right) \right| \: d \mu_N(z) \nonumber \\
& \leq \frac{ M }{r} \Delta_{r,N}(x,y) |y-x| \: . \label{eq:lipSigmaTilde}
\end{align}
Coming back to $g$, we have thanks to \eqref{eqLipThetaDeltaNTemp} and \eqref{eq:lipSigmaTilde},
\begin{align} \label{eqBetaLocWrdeltaN_hypgBis}
|g(x) & - g(y)| \nonumber \\
\leq & |f(x,\Sigma_r(x,\nu_{\delta,N}))| \left| \Phi \left( \theta_{\delta,N} (x) \right) -  \Phi \left( \theta_{\delta,N} (y) \right) \right|+ \left|  \Phi \left( \theta_{\delta,N} (y) \right) \right| |f(x,\Sigma_r(x,\nu_{\delta,N}))-f(y,\Sigma_r(y,\nu_{\delta,N})) | \nonumber \\
\leq & \| f \|_\infty \frac{M}{\delta} \Delta_{\delta,N}(x,y) |y-x| + \| \Phi \|_{\infty} \xLip(f) \big(1+\xLip(\Sigma_r(\cdot,\nu_{\delta,N})) \big)  |x-y| \nonumber \\
 \leq & M \left( \frac{1}{\delta} \Delta_{\delta,N}(x,y)  + \frac{1}{r} \Delta_{r,N}(x,y) + 1\right)  |x-y| 
\end{align}
and $g$ again satisfies assumption \eqref{eqHypg} of Proposition~\ref{propBetaLoc} with $\displaystyle \kappa_1 =  \kappa_2 =  \kappa_0$, $\kappa_3 = 0$ and $\kappa = 1$ and applying Proposition~\ref{propBetaLoc} allows to control $\E [|A_1|] = \E \left[ \left| \int g \: d \mu_N - \int g \: d \mu \right| \right]$ and conclude the proof of $(i)$ thanks to \eqref{eq:flagfoldA23tilde} and $\frac{1}{\sqrt{N \delta^d}} \leq \frac{N^{-\frac{1}{d}}}{\delta}$ for $d > 2$.

\smallskip
\noindent {\bf Step $3$:} Assuming moreover that $D = B$ is an open ball of radius $0 < R_B < 1$, we have $\| f \|_\infty \leq R_B$ since $\supp f \subset B \times \xSym_+(n)$ and $\xLip(f) \leq 1$ and thus $\| g \|_\infty  \leq M R_B = \kappa \kappa_0$ now with $\kappa = R_B$ while $\kappa_0 = M$ is unchanged. Therefore $(ii)$ follows similarly from \eqref{eqBetaLocWrdeltaN_hypg}, \eqref{eqBetaLocWrdeltaN_hypgBis} and Proposition~\ref{propBetaLoc}$(ii)$, also noting that for $d \leq 2$, $\frac{1}{\sqrt{N \delta^d}}= \frac{N^{-1/2}}{\delta^{1/2}} \leq \frac{N^{-1/2}}{\delta}$ and therefore
$\E [ A_2 ]$ and $\E [A_3]$ are controlled by the right hand side of \eqref{eqBetaWrdeltaN} in both cases, hence concluding the proof of Proposition~\ref{prop:CV_WrdeltaN}.
\end{proof}

In order to infer the convergence of the varifold-type estimators $W_{r,\delta,N}$ and $\widetilde{W}_{r,\delta,N}$ towards $W$, we are left with investigating the convergence of the deterministic part, which is exactly the purpose of Proposition~\ref{propBoundWrdeltaW}.

\begin{proposition} \label{propBoundWrdeltaW}
Assume that $S$ and $\theta$ satisfy \hyperref[hypH1]{$(H_1)$}, \hyperref[hypH2]{$(H_2)$} and \hyperref[hypH3]{$(H_3)$} and let $D \subset \R^n$ be an open set. Then
\begin{equation} \label{eqCvWrdeltaW}
 \beta_D \left( W_{r,\delta},W_S \right) \xrightarrow[\delta,r\to 0]{} 0 \quad \text{and} \quad \beta_D \left( \widetilde{W}_{r,\delta},W_S \right) \xrightarrow[\delta,r\to 0]{} 0 \: .
\end{equation}
\end{proposition}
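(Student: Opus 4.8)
The plan is to test both measures against an admissible $f\in\xC_c(\R^n\times\xSym_+(n))$ with $\|f\|_\infty\le1$, $\xLip(f)\le1$ and $\supp f\subset D\times\xSym_+(n)$, and to produce bounds on $\left|\int f\,dW_{r,\delta}-\int f\,dW_S\right|$ (and its tilde analogue) that do \emph{not} depend on $f$, so that taking the supremum is harmless. The starting point is that by Proposition~\ref{nudel} one has $\theta(x)\ge m\ge\tau$ and $\theta_\delta(x)\ge m\ge\tau$ for every $x\in S$, hence $\Phi(\theta)=\frac1\theta$ and $\Phi(\theta_\delta)=\frac1{\theta_\delta}$ on $S$, so that $W_S=(\Phi\circ\theta)\,\mu\otimes\delta_{\Pi_{T_xS}}$ and
\[
\int f\,dW_{r,\delta}=\int_{D} f\big(x,\sigma_{r,\delta}(x)\big)\,\Phi(\theta_\delta(x))\,d\mu(x),\qquad \int f\,dW_S=\int_{D} f\big(x,\Pi_{T_xS}\big)\,\Phi(\theta(x))\,d\mu(x),
\]
with $T_xS$ defined $\mu$--a.e.\ thanks to \hyperref[hypH3]{$(H_3)$} and Proposition~\ref{propTgtSpaceToRectifMeasure}. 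Splitting by the triangle inequality and using that $f$ is $1$--Lipschitz in its second variable together with $\|\Phi\|_\infty+\xLip(\Phi)\le M$ (Lemma~\ref{lemPropertiesPhiThetaN}$(i)$), I would bound
\[
\Big|\textstyle\int f\,dW_{r,\delta}-\int f\,dW_S\Big|\ \le\ M\int_{D}\big\|\sigma_{r,\delta}(x)-\Pi_{T_xS}\big\|\,d\mu(x)\ +\ M\int_{D}\big|\theta_\delta(x)-\theta(x)\big|\,d\mu(x),
\]
and similarly for $\widetilde W_{r,\delta}$ with $\sigma_{r,\delta}$ replaced by $\Sigma_r(\cdot,\nu_\delta)$. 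Both right-hand sides are independent of $f$, so it suffices to show each integral tends to $0$ as $\delta,r\to0$.

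\textbf{The density term and the case of $W_{r,\delta}$.} The density term $\int_D|\theta_\delta-\theta|\,d\mu$ converges to $0$ by dominated convergence: $\theta_\delta\to\theta$ a.e.\ on $S$ by Proposition~\ref{lemDensityPointwiseCv}, and $0\le\theta_\delta\le M(C_0,\eta)$ on $S$ by \eqref{eqThetaDeltaUpperBound} while $\theta\le\theta_{max}$, with $\mu$ finite. For the first integral, Proposition~\ref{propCVSigmardelta} gives via \eqref{eqsigmardeltaCV} that $\|\sigma_{r,\delta}(x)-\Pi_{T_xS}\|\to0$ for $\mu$--a.e.\ $x$ as $\delta,r\to0$, while $\|\sigma_{r,\delta}(x)-\Pi_{T_xS}\|\le\|\Phi\|_\infty\|\Sigma_r(x,\mu)\|+1\le M$ uniformly by \eqref{eqInfBoundSigma}; dominated convergence again yields $\int_D\|\sigma_{r,\delta}-\Pi_{T_xS}\|\,d\mu\to0$. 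Combining the two, $\beta_D(W_{r,\delta},W_S)\to0$.

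\textbf{The case of $\widetilde W_{r,\delta}$ and the main obstacle.} The density term is handled exactly as above, so the issue is $\int_D\|\Sigma_r(x,\nu_\delta)-\Pi_{T_xS}\|\,d\mu(x)$. Here one cannot simply invoke pointwise convergence of $\Sigma_r(x,\nu_\delta)$, since by Remark~\ref{remkNonUnifCVdeltar} that requires a relation between $\delta$ and $r$; this is the main subtlety. The workaround is to stay at the level of the integral: apply the bound \eqref{eqSigmardeltaCV} of Proposition~\ref{propCVSigmardelta},
\[
\int_D\big\|\Sigma_r(x,\nu_\delta)-\Pi_{T_xS}\big\|\,d\mu(x)\le M\int_D\frac1{r^d}\int_{B(x,r)}|\theta_\delta-\theta|\,d\mu\,d\mu(x)+M\int_D\big\|\Sigma_r(x,\nu)-\Pi_{T_xS}\big\|\,d\mu(x).
\]
The last integral tends to $0$ by dominated convergence, since $\Sigma_r(x,\nu)\to\Pi_{T_xS}$ a.e.\ by Proposition~\ref{propCVSigmar} and $\|\Sigma_r(x,\nu)\|\le M$ by \eqref{eqInfBoundSigma} (using that $\nu=\cH^d_{|S}$ is $d$--Ahlfors regular, Remark~\ref{rkHyp}$(i)$). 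For the first integral, Tonelli's theorem gives
\[
\int_D\frac1{r^d}\int_{B(x,r)}|\theta_\delta-\theta|\,d\mu(y)\,d\mu(x)=\frac1{r^d}\int_{\R^n}|\theta_\delta(y)-\theta(y)|\,\mu\big(B(y,r)\cap D\big)\,d\mu(y)\le C_0\int_{\R^n}|\theta_\delta-\theta|\,d\mu,
\]
where the inequality uses the $d$--Ahlfors regularity $\mu(B(y,r))\le C_0r^d$ for $y\in S$ and $r$ small; the right-hand side tends to $0$ as $\delta\to0$ by the same dominated-convergence argument as for the density term. Hence $\int_D\|\Sigma_r(\cdot,\nu_\delta)-\Pi_{T_xS}\|\,d\mu\to0$, and combining with the density term gives $\beta_D(\widetilde W_{r,\delta},W_S)\to0$, which completes the proof of \eqref{eqCvWrdeltaW}.
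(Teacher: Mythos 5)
Your proof is correct and follows essentially the same route as the paper's: the same triangle-inequality split into a density term and a tangent term, dominated convergence for $W_{r,\delta}$ via \eqref{eqsigmardeltaCV}, and for $\widetilde W_{r,\delta}$ the same use of \eqref{eqSigmardeltaCV} followed by a Fubini/Tonelli swap to reduce the double integral to an $\xL^1(\mu)$ bound on $|\theta_\delta-\theta|$. The only differences are cosmetic (you integrate the tangent term against $\mu$ rather than $\nu$ and bound the swapped integral over $\R^n$ rather than $D^r$), which is harmless since $\mu$ and $\nu$ are comparable and finite.
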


\begin{remark}
Note that in the proof of Proposition~\ref{propBoundWrdeltaW}, we establish the following bounds that will be of use in the sequel:
\begin{equation} \label{eqBoundWrdeltaW}
 \beta_D \left( W_{r,\delta} , W_S \right) \leq M \int_{D} |\theta_\delta - \theta | \: d \mu  + M \int_{D}\| \sigma_{r,\delta} (x) - \Pi_{T_x S} \| \: d \mu(x) 
 \: ,
\end{equation}
and 
\begin{equation} \label{eqBoundWrdeltaWTilde}
\left.
\begin{array}{r}
 \beta_D \left(\widetilde{W}_{r,\delta}, W_S \right) \\
 \displaystyle \int_D \left\| \Sigma_r(x,\nu_{\delta}) - \Pi_{T_x S} \right\| \: d \nu(x) 
\end{array}
\right\rbrace
 \leq M \int_{D^r} | \theta_\delta - \theta | \: d \mu + M \int_D  \| \Sigma_r(x, \nu) - \Pi_{T_x S} \| \: d \mu(x)
 \: .
\end{equation}
\end{remark}

\begin{proof}
As in the proof of Proposition~\ref{prop:CV_WrdeltaN}, we first deal with the case of $W_{r,\delta}$ (Step $1$) and then we check that a similar strategy is valid when handling $\widetilde{W}_{r,\delta}$ (Step $2$).
Let $D$ be an open set in $\R^n$. Let $f \in \xC_c \left( \R^n \times \xSym_+(n) \right)$ satisfying $\xLip (f) \leq 1$ and $\| f \|_\infty \leq 1$ and assume that $\supp f \subset D \times \xSym_+(n)$. We recall (and we use hereafter) that $\cH^d_{| S}$ and $\mu$ are equivalent: $\mu = \theta \cH^d_{| S} \leq \theta_{max} \cH^d_{| S}$ and $\cH^d_{| S} = \theta^{-1} \mu \leq \theta_{min}^{-1} \cH^d_{| S}$.

\smallskip
\noindent {\bf Step $1$:} let us write
 \begin{multline} \label{eq:rectifold}
\left| \int f \: d W_{r,\delta}  - \int f \: d W_S \right| \leq A_1 + A_2  \quad \text{with} \quad \\
A_1   = \left| \int_{D} f \left( x , \sigma_{r,\delta}(x) \right) \left(  d \nu_\delta(x)- d \nu(x) \right)  \right| 
\quad \text{and} \quad
A_2  = \left| \int_{D} \left(f \left( x , \sigma_{r,\delta}(x) \right) - f \left( x , \Pi_{T_x S} \right) \right) \: d \nu (x) \right| \: .
\end{multline}
Starting with $A_1$, we have
\begin{equation} \label{eqBetaWrdelta_A1}
 A_1  = \left| \int_D f(x, \sigma_{r,\delta}(x))  \left( \Phi(\theta_\delta(x)) - \Phi(\theta(x)) \right)  d\mu(x) \right| \leq \| f \|_\infty \xLip(\Phi) \int_D \left| \theta_\delta - \theta \right| \: d\mu \: 
\end{equation}
while 
\begin{equation} \label{eqBetaWrdelta_A2}
 A_2 \leq \xLip(f) \int_D \| \sigma_{r,\delta} (x) - \Pi_{T_x S} \| \: d \nu(x) \: .
\end{equation}
Therefore, combining \eqref{eq:rectifold}, \eqref{eqBetaWrdelta_A1} and \eqref{eqBetaWrdelta_A2} we obtain \eqref{eqBoundWrdeltaW}, and then recalling \eqref{eqsigmardeltaCV} (in Proposition~\ref{propCVSigmardelta}) and Proposition~\ref{lemDensityPointwiseCv}, we conclude by dominated convergence that
\begin{equation*}
 \beta_D \left( W_{r,\delta} , W_S \right) \leq M \int_{D \cap S} |\theta_\delta(x) - \theta(x) | + \| \sigma_{r,\delta} (x) - \Pi_{T_x S} \| \: d \cH^d(x) \xrightarrow[\delta, r \to 0_+]{} 0 \: ,
\end{equation*}
where the domination holds with a constant: for $x \in S$, $|\theta_\delta(x)| \leq M$ thanks to \eqref{eqThetaDeltaUpperBound} and $\| \sigma_{r,\delta}(x) \|  \leq \| \Phi \|_\infty M$ thanks to \eqref{eqInfBoundSigma}.

\smallskip
\noindent {\bf Step $2$:} we similarly write
\begin{multline} \label{eq:rectifoldBis}
\left| \int f \: d \widetilde{W}_{r,\delta} - \int f \: d W_S \right| \leq A_1 + A_2 \quad \text{with} \quad \\
A_1  = \left| \int_{D} f \left( x , \Sigma_r(x, \nu_{\delta}) \right) \left(  d \nu_\delta(x)- d \nu(x) \right)  \right| \quad \text{and} \quad
A_2  = \left| \int_{D} \left(f \left( x , \Sigma_r(x, \nu_{\delta}) \right) - f \left( x , \Pi_{T_xS} \right) \right) \: d \nu (x) \right| \: .
\end{multline}
We obtain the same control as in \eqref{eqBetaWrdelta_A1} for $A_1$:
\begin{equation} \label{eqBetaWrdelta_A1Bis}
 A_1  = \left| \int_D f(x,\Sigma_r(x,\nu_{\delta}))  \left( \Phi(\theta_\delta(x)) - \Phi(\theta(x)) \right)  d\mu(x) \right| \leq M \int_D \left| \theta_\delta - \theta \right| \: d\mu \: .
\end{equation}
On the other hand, recalling that $\nu = \cH^d_{| S}$ is $d$--Ahlfors regular and since $B(x,r) \subset D^r$ for $x \in D$, Fubini theorem yields
\begin{align*}
 \frac{1}{r^d} \int_{x \in D} \left( \int_{B(x,r)} |\theta_\delta - \theta| \: d \mu \right) \: d \nu(x) & = \frac{1}{r^d} \int_{y \in D^r} |\theta_\delta(y) - \theta(y) | \int_{x \in B(y,r) \cap D} \: d \nu(x) \: d \mu(y) \\
 & \leq \frac{1}{r^d}\int_{y \in D^r} |\theta_\delta(y) - \theta(y) | \nu (B(y,r)) \: d \mu(y) \\
 & \leq M \int_{D^r} | \theta_\delta - \theta | \: d\mu \: ,
\end{align*}
and then recalling \eqref{eqSigmardeltaCV}, we have
\begin{align}
 A_2 & \leq \xLip(f) \int_D \left\| \Sigma_r(x,\nu_{\delta}) - \Pi_{T_x S} \right\| \: d \nu(x)  \leq M \int_{D^r} | \theta_\delta - \theta | \: d\mu + \int_D \| \Sigma_r (x, \nu) - \Pi_{T_x S} \| \: d \nu
 \: .  \label{eqBetaWrdelta_A2Bis}
\end{align}
We obtain from \eqref{eq:rectifoldBis}, \eqref{eqBetaWrdelta_A1Bis} and \eqref{eqBetaWrdelta_A2Bis}:
\begin{equation*}
 \beta_D \left(\widetilde{W}_{r,\delta}, W_S \right) \leq M \int_{D^r} | \theta_\delta - \theta | \: d \mu + M \int_D  \| \Sigma_r(x,\nu) - \Pi_{T_x S} \| \: d \mu(x) \: ,
\end{equation*}
whence together with \eqref{eqBetaWrdelta_A2Bis} we obtain \eqref{eqBoundWrdeltaWTilde} as well.
Moreover, for any $r \leq 1$, $D^r \subset D^1$
and we similarly conclude by dominated convergence applied together with Proposition~\ref{lemDensityPointwiseCv} and Proposition~\ref{propCVSigmar} that the convergence holds for $\delta, r \rightarrow 0$.
\end{proof}
Note that the convergence \eqref{eqCvWrdeltaW} holds as soon as $\delta$ and $r$ tend to $0$ for both $W_{r,\delta}$ and $\widetilde{W}_{r,\delta}$, while the pointwise convergence of $\Sigma_{r} (\cdot , \nu_\delta)$ additionally requires some relation between $\delta$ and $r$ (see Remark~\ref{remkNonUnifCVdeltar}). We combine Proposition~\ref{prop:CV_WrdeltaN} and Proposition~\ref{propBoundWrdeltaW} to state the following convergence result for $W_{r,\delta, N}$ and $\widetilde{W}_{r,\delta,N}$. Due to the lack of explicit rate in the regularity class $\cQ$ concerning the convergence of the deterministic part stated in Proposition~\ref{propBoundWrdeltaW}, the mean convergence stated in Corollary~\ref{coroCvWrdeltaNW} suffers the same lack of explicit rate. Such an issue is then addressed in Section~\ref{secUniformPWHolder} and~\ref{secSplit} assuming a stronger regularity model $\cP$ defined in \eqref{eqClassP}.

\begin{corollary}
 \label{coroCvWrdeltaNW}
 Assume that $S$ and $\theta$ satisfy \hyperref[hypH1]{$(H_1)$}, \hyperref[hypH2]{$(H_2)$} and \hyperref[hypH3]{$(H_3)$}.
 Let $B$ be an open ball of radius $0 < R_B < 1$ and let $(\delta_N)_N, \: (r_N)_N \subset (0,1)^{\N^\ast}$ be sequences tending to $0$ and satisfying
\[
 \frac{1}{\min(\delta_N, r_N)}  \left\lbrace 
\begin{array}{lcl}
N^{-\frac{1}{d}}  \xrightarrow[N \to +\infty]{} 0 & \text{if} & d > 2 \\
N^{-\frac{1}{2}} \ln N  \xrightarrow[N \to +\infty]{} 0 & \text{if} & d = 2 \\
N^{-\frac{1}{2}}   \xrightarrow[N \to +\infty]{} 0 & \text{if} & d < 2
\end{array}
\right. \: .
\]
Then,
$\displaystyle
\E \left[ \beta_B (W_{r_N,\delta_N, N}, W_S) \right] \xrightarrow[N \to +\infty]{} 0
$ and $\displaystyle
\E \left[ \beta_B (\widetilde{W}_{r_N,\delta_N, N}, W_S) \right] \xrightarrow[N \to +\infty]{} 0
$.
\end{corollary}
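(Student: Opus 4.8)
The strategy is to split $\beta_B(W_{r_N,\delta_N,N},W_S)$ into a statistical fluctuation term and a deterministic bias term, using that the localized bounded Lipschitz distance $\beta_B$ is symmetric and satisfies the triangle inequality (see the comment after Definition~\ref{dfnBetaLoc}):
\[
 \E\left[\beta_B(W_{r_N,\delta_N,N},W_S)\right] \le \E\left[\beta_B(W_{r_N,\delta_N,N},W_{r_N,\delta_N})\right] + \beta_B(W_{r_N,\delta_N},W_S),
\]
and identically with $\widetilde{W}$ in place of $W$. It then suffices to prove that each of the two terms on the right-hand side tends to $0$; since $S$ and $\theta$ satisfy \hyperref[hypH1]{$(H_1)$}, \hyperref[hypH2]{$(H_2)$} and \hyperref[hypH3]{$(H_3)$}, both Proposition~\ref{prop:CV_WrdeltaN} and Proposition~\ref{propBoundWrdeltaW} are available.

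For the fluctuation term, I would invoke Proposition~\ref{prop:CV_WrdeltaN}$(ii)$ applied to the fixed ball $B$ of radius $R_B<1$. The hypotheses on $(\delta_N)_N$ and $(r_N)_N$ guarantee that $\min(\delta_N,r_N)\to 0$ while $\min(\delta_N,r_N)^{-1}$ times $N^{-1/d}$ (resp. $N^{-1/2}\ln N$, $N^{-1/2}$, according to whether $d>2$, $d=2$, $d<2$) tends to $0$; in particular $N^{-1/d}\le\min(\delta_N,r_N)$ for $N$ large, and $N^{-1/d}\le R_B$ for $N$ large since $R_B$ is a fixed positive number. Hence Proposition~\ref{prop:CV_WrdeltaN}$(ii)$ applies for such $N$ and yields, for both $W_{r_N,\delta_N,N}$ and $\widetilde{W}_{r_N,\delta_N,N}$,
\[
 \E\left[\beta_B(W_{r_N,\delta_N,N},W_{r_N,\delta_N})\right] \le \frac{M}{\min(\delta_N,r_N)}\,\mu(B)\times
 \left\{
 \begin{array}{lcl}
 N^{-\frac{1}{d}} & \text{if} & d>2\\
 N^{-\frac{1}{2}}\ln N & \text{if} & d=2\\
 N^{-\frac{1}{2}} & \text{if} & d<2
 \end{array}\right.
 \xrightarrow[N\to+\infty]{} 0
\]
by the very assumption made on the sequences.

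For the bias term, since $\delta_N\to 0$ and $r_N\to 0$, Proposition~\ref{propBoundWrdeltaW} with $D=B$ directly gives $\beta_B(W_{r_N,\delta_N},W_S)\to 0$ and $\beta_B(\widetilde{W}_{r_N,\delta_N},W_S)\to 0$; this deterministic convergence needs only $\delta,r\to 0$ with no further relation between the two parameters, so nothing extra has to be checked. Adding the two bounds and letting $N\to+\infty$ concludes the proof, for both estimators. Honestly, there is no real obstacle here: all the substantial work has been carried out upstream, and the only mild care needed is the observation that the condition $N^{-1/d}\le\min(\delta_N,r_N,R_B)$ of Proposition~\ref{prop:CV_WrdeltaN}$(ii)$ holds for all but finitely many $N$, which does not affect the limit. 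I would also note that this same two-term splitting would automatically upgrade to an explicit convergence rate once the deterministic part $\beta_B(W_{r,\delta},W_S)$ is controlled with an explicit rate — which is exactly the role played by the stronger regularity class $\cP$ of Sections~\ref{secUniformPWHolder}--\ref{secSplit}.
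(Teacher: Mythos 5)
Your proof is correct and follows exactly the route the paper intends: the corollary is stated as the combination of Proposition~\ref{prop:CV_WrdeltaN} (fluctuation term, whose hypotheses $N^{-1/d}\le\min(\delta_N,r_N,R_B)$ you correctly verify for large $N$) and Proposition~\ref{propBoundWrdeltaW} (deterministic bias term) via the triangle inequality for $\beta_B$. Nothing further is needed.
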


\section{Quantitative estimates in a piecewise Hölder regularity class}
\label{secUniformPWHolder}

In Proposition~\ref{lemDensityPointwiseCv} and Proposition~\ref{nudel}, as well as in Proposition~\ref{propCVSigmar} and \ref{propBoundWrdeltaW} the convergence of $\theta_\delta$ to $\theta$, $\Sigma_r$/$\sigma_{r,\delta}$ to $\Pi_{T_x S}$, $\nu_\delta$ to $\nu = \cH^d_{| S}$ and $W_{r,\delta}$/$\widetilde{W}_{r,\delta}$ to $W_S$ do not come with any convergence rate, which also impacts the convergence results of the associated estimators stated in Corollary~\ref{coroDensityPointwiseCv}, in Corollary~\ref{corNuDeltaN}, in Proposition~\ref{propCVSigmaN} and in Corollary~\ref{coroCvWrdeltaNW}. 
The current section is then dedicated to quantifying the aforementioned deterministic convergences (of $\theta_\delta$, $\nu_\delta$, $\Sigma_r$/$\sigma_{r,\delta}$, $W_{r,\delta}$/$\widetilde{W}_{r,\delta}$) under stronger regularity assumptions both for the set $S$ and the density $\theta$. 

In Section~\ref{secPwHolderDef}, we introduce the regularity framework that we implement in the sequel and is referred to as assumptions \hyperref[hypH1]{$(H_1)$} to \eqref{hypH7} in statements and called {\it piecewise Hölder regularity class} $\cP$, see \eqref{eqClassP}. 
Loosely speaking, we assume that the set $S$ is at least uniformly piecewise $\xC^{1,a}$ (see Definition~\ref{dfnPwHolderS}) and the density $\theta$ is similarly uniformly piecewise $\xC^{0,b}$ (see Definition~\ref{dfnPwHolderTheta}): such regularity assumptions allow for a singular set $\fS = S_{sg} \cup \Theta_{sg}$ that is a union of Ahlfors regular sets of lower dimensions.
With such regularity notions at hand, we first quantify in Section~\ref{secUniformPointwise} the pointwise convergence of $\theta_\delta$ to $\theta$ and $\Sigma_r (x, \nu)$/$\sigma_{r,\delta}(x)$ to $\Pi_{T_x S}$, see Proposition~\ref{coroUniformCVHolder}. In Section~\ref{secUniformBLdist}, we subsequently provide similar rates for the Bounded Lipschitz distances between the measures $\nu_\delta$, $\nu$ (in Proposition~\ref{cvnudel}) and $W_{r,\delta}$,$W_S$/ $\widetilde{W}_{r,\delta}$,$W_S$ (in Proposition~\ref{propCvHolderW}). In Section~\ref{secCVvarifold}, we propose to replace $W_{r,\delta,N}$, $\widetilde{W}_{r,\delta,N}$, that are measures in $\R^n \times \xSym_+(n)$ with varifolds $V_{r,\delta,N}$, $\widetilde{V}_{r,\delta,N}$: to this end, we substitute the tangent plane estimators $\sigma_{r,\delta,N}$ and $\Sigma_r( \cdot , \nu_{\delta,N})$ with an orthogonal projector of rank $d$ thanks to a classical (at least from a numerical perspective) truncation process in the eigen decomposition. We conclude Section~\ref{secUniformPWHolder} with Theorem~\ref{thmCvHolderV} that states the mean convergence of $W_{r,\delta,N}$, $\widetilde{W}_{r,\delta,N}$ and $V_{r,\delta,N}$, $\widetilde{V}_{r,\delta,N}$ to $W_S$ consistently with the rates established for the deterministic part in Proposition~\ref{propCvHolderW}.

\subsection{Piecewise Hölder regularity class}
\label{secPwHolderDef}

Let us recall that up to this point, we only assumed $\xL^1$--regularity (with respect to $\nu$) for the density $\theta$. It is well-known that such an hypothesis is sufficient to obtain the pointwise convergence of averages $\oint_{B(x,\delta)}\theta \: d \nu$ at $\nu$--a.e. point $x$, which is somehow connected to the pointwise convergence of $\theta_\delta$ to $\theta$. However, it is not possible to quantify such a convergence in terms of $\delta$ only assuming $\xL^1$--regularity of $\theta$, let us recall the following example:
%
\begin{example} \label{exQuantifyL1}
Let us illustrate that assuming $f \in \xL^1$ is not enough to provide a uniform bound on the pointwise convergence of the averages on small balls.
For $k\in \N$, $k \neq 0$, we consider the $\xL^1$--function $f_k : (0,1) \rightarrow \R$ such that 
\[
 f_k(x) = \left\lbrace \begin{array}{cl}
                      1 & \text{if } \lfloor kx \rfloor \text{ is even}\\
                      0 & \text{otherwise}
                     \end{array}
\right. \: .
\]
The function $f_k$ is $\frac{2}{k}$--periodic and ``equally'' oscillates between $0$ and $1$. Then, the average $\frac{1}{2 \delta} \int_{B(x,\delta)} f_k(y) \: dy$
is close to $f_k(x) \in \{0,1\}$ if $\delta \ll \frac{1}{k}$ is small with respect to the period, whereas it remains close to $\frac{1}{2}$ (whence far from $f_k(x)$) as long as $\delta \gg \frac{1}{k}$. It illustrates that quantifying (in terms of $\delta$ and uniformly with respect to $f_k$ and thus $k$) the almost everywhere pointwise convergence of $\frac{1}{2 \delta} \int_{B(\cdot ,\delta)} f_k(y) \: dy$ to $f_k$ requires to restrict to a class of functions with stronger regularity: the $\xL^1$--assumption does not provide enough control over the oscillation of the function.
\end{example}

We then consider a stronger regularity framework both for $S$ (see Definition~\ref{dfnPwHolderS}) and $\theta$ (see Definition~\ref{dfnPwHolderTheta}), concretely adding assumptions \eqref{hypH4} to \eqref{hypH7} to the current regularity framework consisting of assumptions \hyperref[hypH1]{$(H_1)$} to \hyperref[hypH1]{$(H_3)$}. As already introduced in Section~\ref{secIntroRecVarifold}, we then obtain the regularity class 
\begin{align} \label{eqClassP}
\cP & = \{ \mu = \theta \cH^d_{| S} \: : \: S, \: \theta \text{ satisfy  \hyperref[hypH1]{$(H_1)$} \text{ to } \eqref{hypH7}} \} \nonumber \\
& = \{ \mu = \theta \cH^d_{| S} \: : \: S, \: \theta \text{ are respectively uniformly piecewise } \xC^{1,a}, \; \xC^{0,b} \text{ according to Definitions~\ref{dfnPwHolderS},~\ref{dfnPwHolderTheta}} \}
\end{align}
that depends on $d, \widetilde{C_0}, \eta, \phi, \theta_{min/max}, C = \max(C_{\theta, sg}, C_{S,sg}), R = \min(R_{\theta, sg}, R_{S,sg})$. 
We mention that the following Definitions~\ref{dfnPwHolderS} and~\ref{dfnPwHolderTheta} are close though different (see in particular \eqref{hypH5}) from Definition~7.2 in \cite{BuetLeonardiMasnou}.
\begin{definition}
\label{dfnPwHolderS}
Let $0 < a \leq 1$ and let $S$ be a closed set satisfying $\cH^d(S) < \infty$. We say that $S$ is {\em uniformly piecewise $\xC^{1,a}$} if there exist 
a closed set $S_{sg} \subset S$, $C = C_{S,sg} \geq 1$ and $R = R_{S,sg} \in (0,1)$
such that the following properties hold: 
   \begin{enumerate}
     \item $S$ is $d$--Ahlfors regular with constant $\widetilde{C_0}$ and $S$ is $d$--rectifiable (see \hyperref[hypH1]{$(H_1)$} and \hyperref[hypH3]{$(H_3)$}) ;
    \item $\displaystyle S_{sg} = \bigcup_{l = 0}^{d-1} S_{sg,l}$ and for each $0 \leq l \leq d-1$, $S_{sg,l}$ is closed and $\cH^l_{| S_{sg,l}}$ is $l$--Ahlfors regular: 
       \begin{equation} \tag{$H_4$} \label{hypH4}
       \forall x \in S_{sg,l} \text{ and } 0< r  \leq R, \quad C^{-1} r^l \leq \cH^l(S_{sg,l}\cap B(x,r))\leq C r^{l} \:  ;
       \end{equation}
    \item for all $0<r \leq R$ and for all $x \in S \setminus (S_{sg})^{Cr}$, $S \cap B(x,r)$ is a $\xC^{1,a}$ graph: there exist an open neighborhood $\cU$ of $0$ in $\R^d$ and $u : \cU \rightarrow \R^{n-d}$ of class $\xC^{1,a}$ such that $u(0) = 0$ and
    \begin{equation} \tag{$H_5$} \label{hypH5}
     (id, u) : \cU \rightarrow q(S \cap B(x,r))\text{ is a diffeomorphism and,} \quad \forall y,z \in \cU, \, \| Du(z) - Du(y)\| \leq C |z-y|^a \: ,
    \end{equation}
    where $q$ is an affine isometry sending the affine tangent plane $x + T_x M$ to $\R^d \times \{0\}^{n-d}$.
   \end{enumerate}  
\end{definition}
\begin{remark}[Uniformly piecewise $\xC^{m,a}$ sets]
\label{remkUnifPwSmoothSet}
 Let $m \in \N$, $m \geq 1$ and $0 < a \leq 1$. Though we are not addressing higher regularity issue, note that it is also possible to define 
 \begin{enumerate}[$\bullet$]
  \item uniformly piecewise $\xC^{m}$ sets: replacing $\xC^{1,a}$ with $\xC^{m}$ in \eqref{hypH5} and dropping the Hölder condition on $Du$,
  \item uniformly piecewise $\xC^{m,a}$ sets: replacing $\xC^{1,a}$ with $\xC^{m,a}$ in \eqref{hypH5} and writing the Hölder condition on $D^m u$ instead of $Du$.
\end{enumerate}
\end{remark}
\begin{example}[Submanifold with boundary]
Note that with the above definition of uniformly piecewise $\xC^{1,a}$ set $S$, if $S \subset \R^n$ is a $d$--submanifold with boundary then such a boundary necessarily lies in the singular set $S_{sg}$ (this is more precisely due to \eqref{hypH5}) and the decomposition of the singular set $S_{sg}$ amounts to $S_{sg} = S_{sg,d-1}$, that is exactly the boundary of $S$, all the other $S_{sg,l}$ being empty. As we just mention, the sets $S_{sg,l}$ can be empty and are not required to be disjoint.
\end{example}
\begin{definition}
\label{dfnPwHolderTheta}
Let $0 < b \leq 1$, let $S$ be a closed set satisfying $\cH^d(S) < \infty$ and let $\theta : \R^n \rightarrow \R_+$ be a Borel function. We say that $\theta$ is {\em uniformly piecewise $\xC^{0,b}$} if there exist
a closed set $\Theta_{sg} \subset S$, $C = C_{\theta,sg} \geq 1$ and $R = R_{\theta,sg} \in (0,1)$
such that the following properties hold: 
   \begin{enumerate}
     \item $\theta \in \xL^1(\cH^d_{| S})$, $\int_{\R^n} \theta \: d \cH^d = 1$ and there exist $0 < \theta_{min} \leq \theta_{max} < +\infty$ such that for $\cH^d$--a. e. $x \in S$, $$\theta_{min} \leq \theta(x) \leq \theta_{max} \: .$$ (see \hyperref[hypH2]{$(H_2)$}) ;
     \item $\displaystyle \Theta_{sg} = \bigcup_{l = 0}^{d-1} \Theta_{sg,l}$ and for each $0 \leq l \leq d-1$, $\Theta_{sg,l}$ is closed and $\cH^l_{| \Theta_{sg,l}}$ is $l$--Ahlfors regular: 
       \begin{equation} \tag{$H_6$} \label{hypH6}
       \forall x \in \Theta_{sg,l} \text{ and } 0< r  \leq R, \quad C^{-1} r^l \leq \cH^l(\Theta_{sg,l}\cap B(x,r))\leq C r^{l} \:  ;
       \end{equation}

    \item for all $0<r \leq R$ and for all $x \in S \setminus (\Theta_{sg})^{Cr}$, $\theta$ is $\xC^{0,b}$ in $S \cap B(x,r)$: 
    \begin{equation} \tag{$H_7$} \label{hypH7}
    \forall y,z \in S \cap B(x,r), \, | \theta(z) - \theta(y)| \leq C |z-y|^b \: .
    \end{equation}
    \end{enumerate}
\end{definition}

\noindent Note that though we shortened the terminology, Definition~\ref{dfnPwHolderTheta} above does not tell anything about $\theta$ outside $S$ and it would be more accurate to say that $\theta$ is uniformly piecewise $\xC^{0,b}$ with respect to $S$.

\begin{remark}[Constant $M$] \label{remkCstMSecHolder}
 All along the current section (Section~\ref{secUniformPWHolder}), $M$ stands for a generic constant that may vary from one statement to another one, and only depends on $d, n, \widetilde{C_0}, \eta, \phi, \theta_{min/max}, C = \max(C_{\theta, sg}, C_{S,sg}) \geq 1, \: R = \min(R_{\theta, sg}, R_{S,sg})< 1$ and is consequently uniform in the regularity class $\cP$. If relevant, a more restrictive dependency can be indicated in some statements.
\end{remark}
We will use the following notations for the unions of singular sets:
\begin{equation} \label{notSigmaSing}
\fS = S_{sg} \cup \Theta_{sg} \quad \text{and} \quad \fS_l = S_{sg,l} \cup \Theta_{sg,l} \: .
\end{equation}
The following Lemma~\ref{lemSingSet} draws consequences of \eqref{hypH4} and \eqref{hypH6} concerning the structure of the singular set: each set $\fS_l$ is $l$--Ahlfors regular and the $\cH^d$ measure of a $\rho$--neighbourhood $\fS_l^\rho$ in $S$ of $\fS_l$ behaves like $\cH^l ( \fS_l )$ times the thickness $\rho^{d-l}$ where the exponent $d-l$ corresponds to the co-dimension of $\fS_l$ relatively to $S$.
\begin{lemma} \label{lemSingSet}
Let $0 < a,b \leq 1$.
We assume that $S \subset \R^n$ is uniformly piecewise $\xC^{1,a}$ in the sense of Definition~\ref{dfnPwHolderS} and $\theta : \R^n \rightarrow \R_+$ is uniformly piecewise $\xC^{0,b}$ in the sense of Definition~\ref{dfnPwHolderTheta}. In other words, $S$ and $\theta$ satisfy assumptions \hyperref[hypH1]{$(H_1)$} to \eqref{hypH7}. Then
\begin{enumerate}[$(i)$]
 \item Let $0 < \rho \leq R$ and $D \subset \R^n$ be a Borel set, then for $0 \leq l \leq d-1$,
 \begin{equation} \label{eqThickD}
  \cH^d (\fS_l^\rho \cap D \cap S ) \leq M \rho^{d-l} \cH^l ( \fS_l \cap D^{2\rho} ) \quad \text{and} \quad  \cH^d (\fS^\rho \cap D \cap S ) \leq M  \sum_{l=0}^{d-1} \rho^{d-l} \: \cH^l (\fS_l \cap D^{2\rho}) \: ,
 \end{equation}
 and similarly $\displaystyle \cH^d (S_{sg,l}^\rho \cap D \cap S ) \leq M \rho^{d-l} \cH^l ( S_{sg,l} \cap D^{2\rho} )$ and $\displaystyle \cH^d (\Theta_{sg,l}^\rho \cap D \cap S ) \leq M \rho^{d-l} \cH^l ( \Theta_{sg,l} \cap D^{2\rho} )$.
 \item Up to renaming the constants, assumption \eqref{hypH4}, respectively \eqref{hypH6}, could be equivalently replaced with:\\
 $\displaystyle S_{sg} = \bigcup_{l = 0}^{d-1} S_{sg,l}$ and for each $0 \leq l \leq d-1$, $S_{sg,l}$ is closed and satisfy: for all $x\in S_{sg,l}$ and $0< r \leq R_B \leq R$,
 \begin{equation} \tag{$H_4^\prime$} \label{hypH4prime}
        \cH^l(S_{sg,l}\cap B(x,R_B))\leq C R_B^{l} \quad \text{and} \quad \cH^{d}(S_{sg,l}^r \cap B(x, R_B)\cap S)\leq C r^{d-l}\cH^{l}(S_{sg,l}\cap B(x, R_B)) \: ,
       \end{equation}
respectively,\\
$\displaystyle \Theta_{sg} = \bigcup_{l = 0}^{d-1} \Theta_{sg,l}$ and for each $0 \leq l \leq d-1$, $\Theta_{sg,l}$ is closed and satisfy: for all $x\in \Theta_{sg,l}$ and $0< r \leq R_B \leq R$,
 \begin{equation} \tag{$H_6^\prime$} \label{hypH6prime}
        \cH^l(\Theta_{sg,l}\cap B(x,R_B))\leq C R_B^{l} \quad \text{and} \quad \cH^{d}(\Theta_{sg,l}^r \cap B(x, R_B)\cap S)\leq C r^{d-l}\cH^{l}(\Theta_{sg,l}\cap B(x, R_B)) 
\end{equation}
 \item Let $B \subset \R^n$ be an open ball of radius $R_B \leq \frac{R}{3C}$, and for $0 \leq l \leq d-1$, define $\xi_l = \xi_l(Cr, B)$ such that $\xi_l = 1$ if $B \cap (\fS_l)^{Cr} \neq \emptyset$ and $\xi_l = 0$ otherwise. Then for all $0 < r \leq R_B$,
 \begin{equation} \label{remkBallCenterSsing}
  \cH^d ((\fS_l)^{Cr} \cap B \cap S ) 
  \leq M \mu(B) \xi_l \left( \frac{r}{R_B} \right)^{d-l} \quad \text{and} \quad \cH^d ((\fS)^{Cr} \cap B \cap S ) \leq M \mu(B) \sum_{l=0}^{d-1} \xi_l \left( \frac{r}{R_B} \right)^{d-l}  \: .
 \end{equation}
\end{enumerate}
\end{lemma}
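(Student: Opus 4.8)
The plan is to reduce all three parts to a single covering estimate and then feed in the Ahlfors regularity available in the class $\cP$. The core fact I would establish first is: \emph{if $E\subset S$ is a closed set which is $l$--Ahlfors regular with constant $C_E$ at scales $\le R$, then for every Borel set $D\subset\R^n$ and every $0<\rho\le R$,}
\[
\cH^d\big(E^\rho\cap D\cap S\big)\le M\,\rho^{d-l}\,\cH^l\big(E\cap D^{2\rho}\big).
\]
To prove this, note that any $z\in E^\rho\cap D$ lies in $B(y,\rho)$ for some $y\in E\cap D^\rho$, so for a maximal $\rho$--separated family $\{x_i\}\subset E\cap D^\rho$ one has $E^\rho\cap D\cap S\subset\bigcup_i\big(B(x_i,2\rho)\cap S\big)$. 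The balls $B(x_i,\rho/2)$ are pairwise disjoint and contained in $D^{2\rho}$, and each carries $\cH^l$--mass at least $C_E^{-1}(\rho/2)^l$ of $E$ by the lower Ahlfors bound, whence their number is $\lesssim\rho^{-l}\cH^l(E\cap D^{2\rho})$; combined with the upper $d$--Ahlfors bound $\cH^d(B(x_i,2\rho)\cap S)\le\widetilde{C_0}(2\rho)^d$ of $S$ (part of \hyperref[hypH1]{$(H_1)$}) this gives the estimate.

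\textbf{Part $(i)$.} Applying the core estimate with $E=S_{sg,l}$ and with $E=\Theta_{sg,l}$, which are $l$--Ahlfors regular by \eqref{hypH4}--\eqref{hypH6}, gives the two ``similarly'' statements; writing $\fS_l^\rho=S_{sg,l}^\rho\cup\Theta_{sg,l}^\rho$ and $\fS^\rho=\bigcup_{l=0}^{d-1}\fS_l^\rho$ and summing (using $S_{sg,l},\Theta_{sg,l}\subset\fS_l$) yields the two displayed inequalities of $(i)$.

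\textbf{Part $(ii)$.} The implication $\eqref{hypH4}\Rightarrow\eqref{hypH4prime}$ is immediate: the first inequality of \eqref{hypH4prime} is the upper bound of \eqref{hypH4} with $r=R_B$, and the second follows from $(i)$ with $D=B(x,R_B)$, $\rho=r$, after bounding $\cH^l(S_{sg,l}\cap B(x,R_B)^{2r})\le\cH^l(S_{sg,l}\cap B(x,3R_B))\le 3^lC^2\,\cH^l(S_{sg,l}\cap B(x,R_B))$ by two-sided Ahlfors regularity — this uses balls of radius $3R_B$, which is the source of the ``up to renaming the constants''. Conversely, from \eqref{hypH4prime} the upper bound of \eqref{hypH4} is the first inequality with $R_B=r$; for the lower bound, observe that for $x\in S_{sg,l}$ and $0<r'\le r\le R$ one has $B(x,r')\cap S\subset S_{sg,l}^{r'}\cap B(x,r)\cap S$, so the lower $d$--Ahlfors bound $\widetilde{C_0}^{-1}(r')^d\le\cH^d(B(x,r')\cap S)$ of $S$ together with the second inequality of \eqref{hypH4prime} forces $\cH^l(S_{sg,l}\cap B(x,r))\ge(C\widetilde{C_0})^{-1}r^l$ upon letting $r'\to r$. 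The same argument with $\Theta_{sg,l}$ gives $\eqref{hypH6}\Leftrightarrow\eqref{hypH6prime}$.

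\textbf{Part $(iii)$.} Since $\fS_l^{Cr}=S_{sg,l}^{Cr}\cup\Theta_{sg,l}^{Cr}$, it suffices to treat $S_{sg,l}$ and $\Theta_{sg,l}$ separately and add, then sum over $l$. Fix $B=B(c_B,R_B)$ with $R_B\le R/(3C)$ and $0<r\le R_B$. If $B\cap S_{sg,l}^{Cr}=\emptyset$ the contribution vanishes; otherwise pick $z_0\in B\cap S_{sg,l}^{Cr}$ and $y_0\in S_{sg,l}$ with $|y_0-z_0|<Cr$, so that $B\subset B(y_0,2R_B+Cr)$ with $2R_B+Cr\le(2+C)R_B\le R$ (using $r\le R_B\le R/(3C)$ and $C\ge 1$). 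Applying \eqref{hypH4prime} with center $y_0$, inner radius $Cr$, outer radius $2R_B+Cr$, and bounding $\cH^l(S_{sg,l}\cap B(y_0,2R_B+Cr))\le C(2+C)^lR_B^l$, gives
\[
\cH^d\big(S_{sg,l}^{Cr}\cap B\cap S\big)\le M\,(Cr)^{d-l}R_B^l = M\,C^{d-l}(r/R_B)^{d-l}R_B^d\le M'(r/R_B)^{d-l}\mu(B),
\]
where the last step uses the lower $d$--Ahlfors bound $R_B^d\le C_0\mu(B)$ for $\mu$ (Remark~\ref{rkHyp}$(i)$). Adding the $S_{sg,l}$ and $\Theta_{sg,l}$ contributions produces the first inequality of \eqref{remkBallCenterSsing} with the indicator $\xi_l$ (equal to $1$ whenever either piece meets $B$), and summing over $l=0,\dots,d-1$ gives the second. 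The only genuine difficulty throughout is bookkeeping the geometric inclusions so that every auxiliary ball remains within the scale $R$ where Ahlfors regularity of $S_{sg,l}$, $\Theta_{sg,l}$ is assumed; this is precisely what dictates the constant $3C$ in the hypothesis of $(iii)$ and the ``renaming'' proviso in $(ii)$.
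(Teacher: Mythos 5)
Your proof is correct and follows essentially the same route as the paper's: a Vitali-type packing argument combining the lower $l$--Ahlfors bound on the singular strata with the upper $d$--Ahlfors bound on $S$ for $(i)$, then $(ii)$ and $(iii)$ by specializing to balls and tracking the scales so that all auxiliary radii stay below $R$. The only (cosmetic) differences are that you use a maximal $\rho$--separated family where the paper builds a greedy disjoint family and recenters it, and that you treat $S_{sg,l}$ and $\Theta_{sg,l}$ separately before taking unions rather than working with $\fS_l$ directly.
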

\begin{proof}
\begin{enumerate}[$(i)$]
\item Let $0 \leq l \leq d-1$, $0 < \rho \leq R$ and $D \subset \R^n$ be a bounded Borel set (we will consider the unbounded case afterwards). Take a family $\cF$ of two by two disjoint balls with same radius $2 \rho$ that are centered at $\fS_l^\rho \cap D$ and satisfy $\displaystyle \fS_l^\rho \cap D \subset \cup_{B \in \cF} 2B$ (recall that $D$ is bounded and one can add disjoint balls as long as possible to obtain such a family $\cF$, similarly to the proof of Proposition~\ref{propPackingPartition}). From the family $\cF$ we can construct a family $\cG$ of two by two disjoint balls of radius $\rho$, centered at $\fS_l \cap D^\rho$ and such that $\displaystyle \fS_l^\rho \cap D \subset \cup_{B \in \cG} 5B$. Indeed, let us write
\[
 \cF = \left\lbrace B(x_j, 2\rho) \right\rbrace_{j \in J} \quad \text{and} \quad \cG = \left\lbrace B(z_j, \rho) \right\rbrace_{j \in J} \: ,
\]
where $z_j$ is chosen so that $z_j \in \fS_l$ and $|z_j - x_j| < \rho$ (possible since $x_j \in \fS_l^\rho \cap D$). The ball in $\cG$ are two by two disjoint: indeed, as the balls in $\cF$ are disjoint, then for $i \neq  j \in J$, $|x_i - x_j| \geq 4 \rho$ so that $|z_i - z_j| > 2 \rho$. Furthermore, $B(x_j, 4 \rho) \subset B(z_j, 5 \rho)$ and $z_j \in \fS_l \subset S$ so that by $d$--Ahlfors regularity of $\nu = \cH^d_{| S}$, $\cH^d (B(z_j, 5 \rho) \cap S) \leq \widetilde{C}_0 5^d \rho^d$ and then
\begin{align}
\label{eqPackingNeighbourood1}
 \cH^d (\fS_l^\rho \cap D \cap S) \leq \cH^d \left( \cup_{j \in J} B(x_j ,4 \rho) \cap S \right) \leq \cH^d \left( \cup_{j \in J} B(z_j , 5 \rho) \cap S \right) \leq 5^d \widetilde{C}_0 (\# J) \rho^d \: .
\end{align}
On the other hand, $x_j \in D$ so that $z_j \in D^\rho$ and thus $B(z_j, \rho) \subset D^{2\rho}$. By $l$--Ahlfors regularity of $\cH^l_{| \fS_l}$,
\begin{equation}
\label{eqPackingNeighbourood2}
 (\# J) C^{-1} \rho^l \leq \sum_{j \in J} \cH^l ( \fS_l \cap B(z_j, \rho)) = \cH^l \left( \fS_l \cap \cup_{j \in J} B(z_j, \rho) \right) \leq  \cH^l \left( \fS_l \cap D^{2\rho} \right)
\end{equation}
and combining both estimates \eqref{eqPackingNeighbourood1} and \eqref{eqPackingNeighbourood2} we infer $\displaystyle \cH^d (\fS_l^\rho \cap D \cap S) \leq 5^d \widetilde{C}_0 C \rho^{d-l} \cH^l \left( \fS_l \cap D^{2\rho} \right)$. Summing the previous estimates for $l$ from $0$ to $d-1$, we obtain
\begin{equation*}
 \nu \left( \fS^\rho \cap D \right) = \nu \left( \cup_{l} \fS_l^\rho \cap D \right) \leq \sum_{l=0}^{d-1} \nu \left( \fS_l^\rho \cap D \right)  \leq M  \sum_{l=0}^{d-1} \rho^{d-l} \: \cH^l (\fS_l \cap D^{2\rho}) \: .
\end{equation*}
Finally, in the case where $D$ is not assumed to be bounded, usual non-decreasing measure property leads: 
\[
 \nu \left( \fS_l^\rho \cap D \right) = \lim_{\substack{k \to + \infty\\ k \in \N}} \nu \left( \fS_l^\rho \cap (D \cap B(0,k)) \right) \leq M \rho^{d-l} \lim_{\substack{k \to + \infty\\ k \in \N}} \cH^l \left(\fS_l \cap (D \cap B(0,k))^{2\rho} \right) \leq M \rho^{d-l} \cH^l (\fS_l \cap D^{2\rho}) \: .
\]
The same proof leads to the similar estimates stated for $S_{sg,l}$ and $\Theta_{sg,l}$ using the $l$--Ahlfors regularity of $\cH^l_{| S_{sg,l}}$ and $\cH^l_{| \Theta_{sg,l}}$ instead of $\cH^l_{| \fS_l}$.
\item Let us prove that \eqref{hypH4} could be equivalently replaced with \eqref{hypH4prime}. First assume that $S$ and $\theta$ satisfy assumptions \hyperref[hypH1]{$(H_1)$} to \eqref{hypH7}, let $0 \leq l \leq d-1$, $x\in S_{sg,l}$ and $0< r \leq R_B \leq R$ with $R \leq \xdiam{S_{sg,l}}$. As $\cH^l_{| S_{sg,l}}$ is $l$--Ahlfors regular, we directly have
\[
 \cH^l ( S_{sg,l} \cap B(x,R_B)) \leq C R_B^l \: .
\]
Applying $(i)$ with $D = B(x, R_B)$ and $\rho = r$, we obtain
\begin{equation}
 \cH^{d}(S_{sg,l}^r \cap B(x, R_B)\cap S)\leq M r^{d-l}\cH^{l}(S_{sg,l}\cap B(x, R_B + 2r)) \leq M r^{d-l}\cH^{l}(S_{sg,l}\cap B(x, R_B)) \: ,
\end{equation}
where the last inequality follows from $R_B + 2r \leq 3R_B$ and the $l$--Ahlfors regularity of $\cH^{l}_{| S_{sg,l}}$.\\
Conversely assume that $S$ and $\theta$ satisfy assumptions \hyperref[hypH1]{$(H_1)$} to \hyperref[hypH1]{$(H_3)$}, \eqref{hypH4prime}, \eqref{hypH5} to \eqref{hypH7}. Let $0 \leq l \leq d-1$, $x \in S_{sg,l}$ and $0 < R_B \leq R$, note that $S_{sg,l}$ being closed, $\supp \cH^l_{| S_{sg,l}} = S_{sg,l}$. By \eqref{hypH4prime} we already have $\displaystyle \cH^l \left(S_{sg,l} \cap B(x,R_B) \right) \leq C R_B^l$.
Then, taking $r = R_B$ in \eqref{hypH4}, we have $S_{sg,l}^r \cap B(x,R_B) = B(x,R_B)$ so that
\begin{multline*}
 R_B^d \leq \widetilde{C_0}  \cH^d \left( B(x,R_B) \cap S \right) \leq \widetilde{C_0}  C R_B^{d-l} \cH^l \left( S_{sg,l} \cap B(x,R_B) \right) \\
 \quad \Rightarrow \quad ( C \widetilde{C_0} )^{-1} R_B^l \leq \cH^l \left( S_{sg,l} \cap B(x,R_B) \right) \: .
\end{multline*}
We infer that for all $x \in S_{sg,l}$ and $0 < R_B \leq R$,
$\displaystyle
M^{-1} R_B^l \leq \cH^l \left( S_{sg,l} \cap B(x,R_B) \right) \leq M R_B^l \: .
$
We then recall that $S$ and thus $S_{sg,l} \subset S$ are bounded (see Remark~\ref{rkHyp}) so that it is possible to cover $S_{sg,l}$ with a finite number of balls of radius less than $R$ centered at $S_{sg,l}$ leading to $\cH^l(S_{sg,l}) < \infty$. We can then conclude that $\cH^l_{| S_{sg,l}}$ is $l$--Ahlfors regular thanks to Remark~\ref{remkAhlfors}.\\
The proof for \eqref{hypH6}--\eqref{hypH6prime} is identical.
\item Let $B \subset \R^n$ be an open ball of radius $R_B \leq \frac{R}{3C}$ and let $0 < r \leq R_B$. If $(\fS_l)^{Cr} \cap B = \emptyset$, there is nothing to check, and otherwise, let $x \in \fS_l$ such that $B \subset B(x, 2R_B + C r) \subset B(x, 3CR_B)$ (since $C \geq 1$ by Definitions~\ref{dfnPwHolderS} and \ref{dfnPwHolderTheta}). Then by $(ii)$\eqref{hypH4prime} and \eqref{hypH6prime}, since $C r \leq C R_B \leq 3 C R_B \leq R$,
\begin{align*}
 \cH^d ((\fS_l)^{Cr} \cap B \cap S ) & \leq \cH^d ((\fS_l)^{Cr} \cap B(x, 3CR_B) \cap S )\\
 &\leq \cH^d ((S_{sg,l})^{Cr} \cap B(x, 3CR_B) \cap S ) + \cH^d ((\Theta_{sg,l})^{Cr} \cap B(x, 3CR_B) \cap S )  \\
 & \leq C (Cr)^{d-l} \left( \cH^l (S_{sg,l} \cap B(x, 3CR_B) )  +  \cH^l ( \Theta_{sg,l} \cap B(x, 3CR_B) ) \right) \\
 & \leq 2 C^{2+d} 3^l r^{d-l} R_B^l \\ 
 & \leq 2 C^{2+d} 3^l C_0 \mu(B) \left( \frac{r}{R_B} \right)^{d-l} \: .
\end{align*}
\end{enumerate}
\end{proof}

For the sake of clarity, let us give examples that illustrate Definitions~\ref{dfnPwHolderS} and~\ref{dfnPwHolderTheta}, focusing on assumptions \eqref{hypH4} to \eqref{hypH7}.
\begin{example}
Besides smooth or $\xC^{1,a}$ compact $d$--submanifolds, natural examples of uniformly piecewise $\xC^{1,a}$ sets are polytopes like the cube for which $\fS_0 = \{ \rm corners \}$ and $\fS_1 = \{ \rm edges \}$. Another interesting example is the so--called stadium obtained by gluing two half--circles with two segments. Because of the four gluing points, the stadium is not $\xC^2$ but only $\xC^{1,1}$, and more important, it is actually uniformly piecewise smooth (in the sense of Definition~\ref{dfnPwHolderS} where $\xC^{1,a}$ is replaced with smooth in \eqref{hypH5}, see Remark~\ref{remkUnifPwSmoothSet}) with $\fS = \fS_0 = \{\text{four gluing points} \}$.
The stadium example raises the question of examples of $\xC^{1,a}$--sets that are not uniformly piecewise smooth (loosely speaking, examples of sets that are not smooth or $\xC^k$ up to a gluing set that can be included in the singular set $\fS$). One can for instance consider primitives of the following family of Weierstrass functions: given $0 < s < 1$ and $t > 1$ such that $st \geq 1$, define for $x \in [0,1]$,
\[
 f_{s,t} (x) = \sum_{j=0}^\infty s^j \cos (t^{j} x) \: .
\]
Let $a = - \frac{\ln s}{\ln t} \in (0,1)$. The function $f_{s,t}$ is $a$--Hölder though nowhere differentiable (see \cite{Hardy}), whence not Lipschitz according to Rademacher theorem. Then considering a primitive $F_{s,t}$ of $f_{s,t}$, the graph of $F_{s,t}$ is a $\xC^{1,a}$ curve that is not uniformly piecewise $\xC^2$, note that it is $1$--Ahlfors since $F_{s,t}$ is Lipschitz in $[0,1]$ (see Example~\ref{AhlParam}).
\setcounter{subfigure}{0}
\begin{figure}[!htp]
\subcaptionbox{A stadium.}{\includegraphics[width=0.31\textwidth]{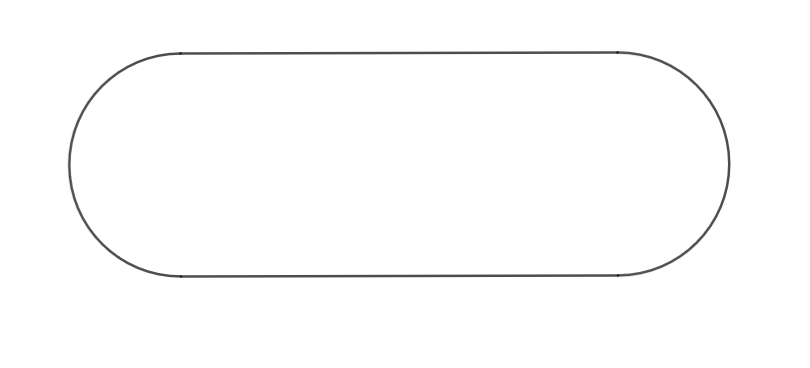}}
\subcaptionbox{$f_{s,t}$ for $s =0.3$ and $t = 4$, $a\approx 1.095 
$.}{\includegraphics[width=0.31\textwidth]{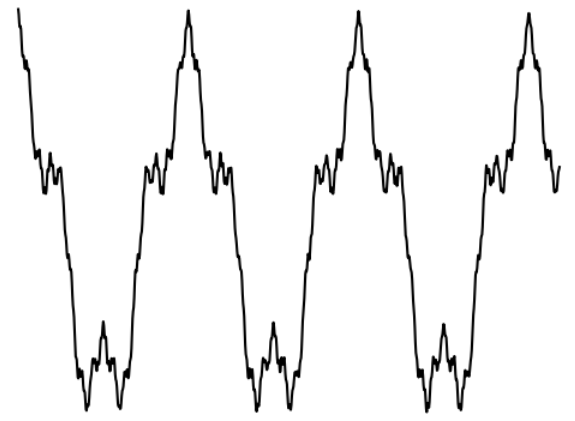}}
\subcaptionbox{$F_{s,t}$ for $s = 0.3$ and $t = 4$, $a\approx 1.095 $.}{\includegraphics[width=0.31\textwidth]{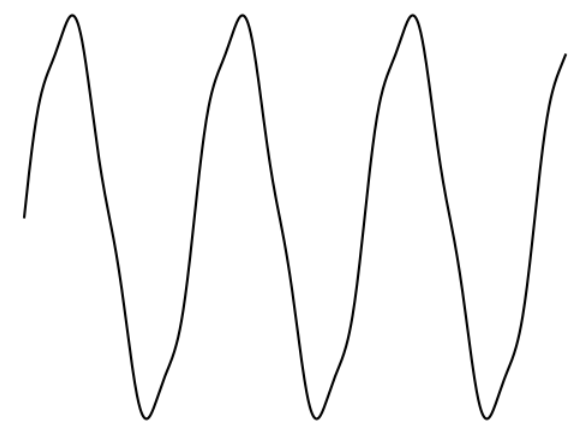}}
\end{figure}

Regarding the singular set $S_{sg}$, it can be constituted of several dimensional pieces as for simplices and polytopes. However, note that cusps or tangential contacts are not allowed by the local graph property \eqref{hypH5}. We believe that there is some room for improvement there since the tangent space does not vary badly near such singularities, such graph requirement might be relaxed. Note also that transverse crossings are allowed and, loosely speaking at least, the maximal angle at crossings is related to the constant $C \geq 1$ appearing in $S \setminus (S_{sg})^{Cr}$ in \eqref{hypH5}, we refer to Remark~\ref{remkContact} and Figure~\ref{figContact} below for additional comments regarding $S_{sg}$ and \eqref{hypH5}.
However, $S_{sg}$ itself can be fairly irregular: 
for instance considering a bounded open domain $D \subset \R^2$ such that $\partial D$ is $1$--Ahlfors regular and $S = \overline{D} \times \{ 0 \} \subset \R^3$, then $S$ is piecewise $\xC^{1,a}$/smooth regular with $S_{sg} = \partial D \times \{0\}$. A similar example regarding the singular set $\Theta_{sg}$ is given by 
$\theta = 1 + \one_{D}$ in $\R^2$ then $\theta$ is piecewise constant outside $\Theta_{sg} = \partial D$ and thus uniformly piecewise $\xC^{0,b}$/smooth with $\Theta_{sg} = \partial D$ (with $\partial D$ $1$--Ahlfors regular), in this example one can consider any compact set $S$ containing $D$.
\end{example}

\setcounter{subfigure}{0}
\begin{figure}[!htp]
\centering
\subcaptionbox{A transverse contact.}{\includegraphics[width=0.33\textwidth]{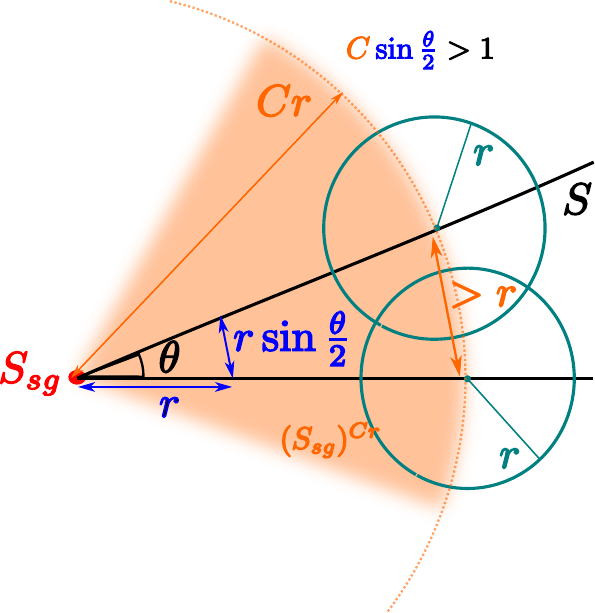}}
\qquad \qquad
\subcaptionbox{A tangential contact}{\includegraphics[width=0.33\textwidth]{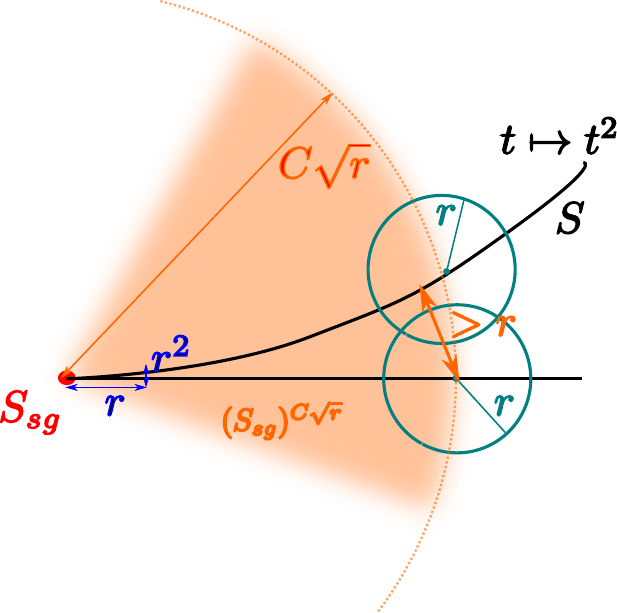}}
\caption{\label{figContact}}
\end{figure}

\begin{remark}[Possible extensions of such a framework] \label{remkContact}
It would be possible to further relax assumptions on the singular set, such changes would then affect the convergence rate of the $d$--dimensional measure of the "bad" set in which the pointwise estimates do not hold (i.e. the enlargements of $\fS$).
\begin{enumerate}[$\bullet$]
    \item First, we chose to allow a singular set $\fS$ constituted of a finite union of Ahlfors regular sets $\fS_l$ of integer dimension $l < d$, however, it would not be difficult to adapt Section~\ref{secUniformPWHolder} and~\ref{secSplit} to the case where $\fS$ is constituted of a finite union of Ahlfors regular sets $\fS_l$ of {\it real} dimension $l \leq d-1$, $l \in \{l_1, \ldots , l_J \} \subset [ 0, d-1]$. For instance, in Theorem~\ref{thmCvHolderSplit}, the statement would be unchanged up to summing over real numbers $l_j$ ($j = 1, \ldots, J$) instead of summing over integers $l = 0, \ldots, d-1$.
    However, the integrality of $l$ is not crucial but most natural examples already fit such a framework and we decided to keep the singular strata of integer dimensions.
    \item As already mentioned, \eqref{hypH5} in particular controls the maximal angle that is allowed around $S_{sg}$. In the particular case illustrated in Figure~\ref{figContact}$(a)$, we observe that the constant $C$ appearing in $S \setminus (S_{sg})^{Cr}$ \eqref{hypH5} and the contact angle $\theta$ at $S_{sg}$ then satisfy $C \sin \frac{\theta}{2} > 1$ thus preventing $\theta \in [0, \pi)$ to be smaller then some threshold. Similarly, \eqref{hypH5} excludes tangential contact for $S$. Indeed, considering the following simple example $S$ made of a tangential contact between an horizontal line and a parabola as illustrated in Figure~\ref{figContact}$(b)$, we observe that the issue stems from the small distance between both branches of $S$, which is no longer linear but quadratic with respect to the distance to the contact point $r$. It is then hopeless to require that $S$ is a graph in a ball of radius $r$ when only at distance $Cr$ from the contact point $S_{sg}$, nevertheless it can be required for point at distance larger than $C \sqrt{r}$ from $S_{sg}$ as illustrated in Figure~\ref{figContact} in the specific case at hand. More generally, asking the graph requirement to hold for $x \in S \setminus (S_{sg})^{Cr^{\gamma}}$ instead of $S \setminus (S_{sg})^{Cr}$ (for some $\gamma \in (0, 1]$) would allow tangential contact at various orders. Under such a modified assumption, Lemma~\ref{lemSingSet}$(iii)$ has then to be adapted to check that in a ball $B$ of radius $R_B \geq r$ the bad set $(S_{sg})^{Cr^{\gamma}} \cap S \cap B$ has $d$--dimensional measure controlled by 
    \[
    M \mu(B) \sum_{l=0}^{d-1} \xi_l (C r^\gamma, B) \left( \frac{ r^\gamma}{R_B} \right)^{d-l} \: .
    \]
    and the analogous change can be straightforwardly transferred to the conclusion of Theorem~\ref{thmCvHolderSplit}. We note that for small $\gamma$, the convergence rate can be degraded: one can compare $\delta_N^{\min(a,b)}$ with $\delta_N^{\gamma(d-l)}$ for $l$ such that $\fS_l \neq \emptyset$ in \eqref{eqThmVFfoldCV1SplitNuD}. Though such an adaptation of the framework is possible, we believe that the convergence rate of Theorem~\ref{thmCvHolderSplit} should not be generally degraded in presence of tangential contact. Indeed, on one hand, the tangent space estimation should not be degraded around such points since it has small variation. On the other hand, the pointwise estimation of density should give a multiple of the correct density, which is why the pointwise density estimation would fail. However, in our previous example, we emphasize that we would estimate a half--line with density $2$, which is indeed close to the set $S$ (near the contact point) when considering the bounded Lipschitz distance, despite the failure of the pointwise estimation of the density. It is not clear to us how to rigorously implement such ideas and we leave it for future thoughts.
\end{enumerate}
\end{remark}

\medskip
We henceforth assume that $S$ and $\theta$ satisfy \hyperref[hypH1]{$(H_1)$} to \eqref{hypH7} up to the end of the paper or equivalently that $\mu = \theta \cH^d_{| S} \in \cP$.

\subsection{Uniform convergence rates for the regularized density $\theta_\delta$ and covariance matrix $\Sigma_r$}
\label{secUniformPointwise}

We now prove that in the regularity class $\cP$ (see \eqref{eqClassP}) defined in the previous Section~\ref{secPwHolderDef} for $S$ and $\theta$, it is possible to obtain uniform convergence rates for the pointwise convergences concerning the density $\theta_\delta(x) \xrightarrow[\delta \to 0]{} \theta(x)$ and concerning the tangent space $\Sigma_r (x, \nu), \: \sigma_{r,\delta}(x) \xrightarrow[\delta,r \to 0]{} \Pi_{T_x S}$, provided that $x \in S$ is away from the singular set $S_{sg} \cup \Theta_{sg}$, as stated in Proposition~\ref{coroUniformCVHolder}. We first prove Lemma~\ref{lemmaConvspeedtgt} which allows one to deal with both the density and the tangent space approximations as they are both kernel (convolution) based. 

\begin{lemma}
\label{lemmaConvspeedtgt}
Let $0 < a \leq 1$. Let $S \subset \R^n$ be a closed set satisfying $\cH^d(S) < \infty$ and assume that $S$ is uniformly piecewise $\xC^{1,a}$. Using the notations of Definition~\ref{dfnPwHolderS}, let $0 < r \leq R = R_{S,sg}$ and $x \in S \setminus (S_{sg})^{Cr}$. Then,
for all $f \in \xC_c (\R^n,\R)$ bounded and Lipschitz with support in $B(0,1)$, 
\[
 \left| \int_S f\left(\frac{z-x}{r}\right) \: d\cH^d(z) - \int_{x + T_x S} f \left(\frac{z-x}{r}\right) \: d\cH^d(z) \right|\leq M (\| f \|_\infty + \xLip(f)) r^{d+a} \: ,
\]
where $M$ only depends on $d$ and $C_{S,sg}$.
\end{lemma}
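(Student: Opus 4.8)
The plan is to exploit the local graph structure guaranteed by \eqref{hypH5}: since $x \in S \setminus (S_{sg})^{Cr}$, there exist an open neighbourhood $\cU$ of $0$ in $\R^d$ and $u : \cU \rightarrow \R^{n-d}$ of class $\xC^{1,a}$ with $u(0) = x$, such that $(\mathrm{id}, u)$ parametrizes $S \cap B(x,r)$ and $\| Du(z) - Du(y) \| \leq C|z-y|^a$ on $\cU$. Because $f$ is supported in $B(0,1)$, both integrals only see $S \cap B(x,r)$ and $(x + T_x S) \cap B(x,r)$, so I may work entirely in this chart. After composing with a rotation sending $T_x S$ to $\R^d \times \{0\}$ and translating so that $x = 0$, I may assume $x = 0$, $T_x S = \R^d \times \{0\}$, $u(0) = 0$ and $Du(0) = 0$; then the Hölder bound on $Du$ gives $\| Du(z) \| \leq C|z|^a$ and hence $|u(z)| \leq \frac{C}{1+a}|z|^{1+a} \leq C|z|^{1+a}$ for $z \in \cU$, and the graph map $\Psi(z) = (z, u(z))$ has Jacobian $J\Psi(z) = \sqrt{\det(I + Du(z)^\top Du(z))}$ satisfying $|J\Psi(z) - 1| \leq M\|Du(z)\|^2 \leq M|z|^{2a}$.

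The key steps, in order: (1) write both integrals using the area formula, the first as $\int_{\cU} f\big(\Psi(z)/r\big) J\Psi(z)\, dz$ and the second as $\int_{\R^d} f\big((z,0)/r\big)\, dz$ (after extending; note $(x + T_x S)\cap B(x,r)$ corresponds to $|z| < r$); (2) split the difference into $\big[\int_{\cU} f(\Psi(z)/r) J\Psi(z)\, dz - \int_{\cU} f(\Psi(z)/r)\, dz\big] + \big[\int_{\cU} f(\Psi(z)/r)\, dz - \int_{|z|<r} f((z,0)/r)\, dz\big]$; (3) bound the first bracket by $\|f\|_\infty \int_{|z| < r} |J\Psi(z) - 1|\, dz \leq \|f\|_\infty M \int_{|z|<r} |z|^{2a}\, dz \leq M\|f\|_\infty r^{d+2a} \leq M\|f\|_\infty r^{d+a}$ (using $r \leq R < 1$ and $|z| < r$ on the support); (4) for the second bracket, use $|f(\Psi(z)/r) - f((z,0)/r)| \leq \xLip(f) |u(z)|/r \leq \xLip(f) C |z|^{1+a}/r$ on $\{|z| < r\}$, and control the mismatch between the domains $\cU \cap \{|z|<r\}$ and $\{|z| < r\}$: on the support of $z \mapsto f(\Psi(z)/r)$ one has $|\Psi(z)| < r$ hence $|z| < r$, and conversely if $|z| < r$ with $z \in \cU$ then $|\Psi(z)| \leq |z| + C|z|^{1+a} \leq (1 + C)r$, so both integrands are supported in a fixed ball of radius $\lesssim r$; the resulting bound is $\xLip(f) C r^{-1} \int_{|z| < Mr} |z|^{1+a}\, dz \leq M\xLip(f) r^{d+a}$, plus a term $\|f\|_\infty$ times the Lebesgue measure of the symmetric difference of the two support sets, which is again $\leq M\|f\|_\infty r^{d+2a}$.

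One subtlety to address carefully is whether $\cU$ contains the full disk $\{|z| < r'\}$ for a radius $r'$ comparable to $r$: the statement of \eqref{hypH5} says $(\mathrm{id},u): \cU \to S \cap B(x,r)$ is a diffeomorphism, so $\Psi(\cU) = S \cap B(x,r)$, and since $\Psi$ is a small $\xC^1$ perturbation of the identity (with $|u(z)| \leq C|z|^{1+a} \leq C|z|$), the preimage $\Psi^{-1}(S \cap B(x,r))$ contains $\{|z| < r/(1+C)\} \cap \cU$; I would simply restrict attention to $\{|z| < r/(1+C)\}$ for the graph side and note that $f((z,0)/r)$ for $|z| \geq r/(1+C)$ but $|z| < r$ contributes at most $\|f\|_\infty$ times a ball of radius $r$ — but this is an $O(r^d)$ term, not $O(r^{d+a})$, so I must instead observe that $f(\Psi(z)/r) = f((z,0)/r) = 0$ whenever $|(z,0)/r| \geq 1$ AND $|\Psi(z)/r| \geq 1$, and argue that the symmetric difference of $\{z : |\Psi(z)| < r\}$ and $\{z : |z| < r\}$ has measure $O(r^{d-1} \cdot r^{1+a}) = O(r^{d+a})$ because the two boundaries are within Hausdorff distance $C|z|^{1+a} \leq Cr^{1+a}$ of each other near $|z| = r$.

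The main obstacle I anticipate is precisely this domain-mismatch / boundary-layer estimate in step (4): it requires care to show that the region where exactly one of $f(\Psi(z)/r)$, $f((z,0)/r)$ is nonzero has Lebesgue measure $O(r^{d+a})$ rather than merely $O(r^d)$. The clean way is to bound, for each such $z$, the quantity $\big| |\Psi(z)| - |z| \big| \leq |u(z)| \leq Cr^{1+a}$, so this region is contained in the annulus $\{ r - Cr^{1+a} \leq |z| \leq r + Cr^{1+a}\}$, whose volume is $\leq M r^{d-1} \cdot r^{1+a} = Mr^{d+a}$; combined with $\|f\|_\infty$ as the pointwise bound there, this yields the desired $M\|f\|_\infty r^{d+a}$ contribution. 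Collecting the bounds from steps (3) and (4) gives the claimed inequality with a constant $M$ depending only on $d$ and $C_{S,sg}$.
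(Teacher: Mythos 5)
Your overall route is the same as the paper's: normalize so that $x=0$ and $T_xS=\R^d\times\{0\}$, use the graph chart $v=(\mathrm{id},u):\cU\to S\cap B(x,r)$ from \eqref{hypH5} together with the area formula, and split the error into a Jacobian term, a Lipschitz term, and a domain-mismatch term. Your bounds $\|Du(z)\|\le C|z|^a$, $|u(z)|\le C|z|^{1+a}$, $|J\Psi-1|\le M|z|^{2a}$ and the resulting contributions $M\|f\|_\infty r^{d+2a}$ and $M\,\xLip(f)\,r^{d+a}$ are exactly the paper's estimates and are correct. You have also correctly identified the one delicate point, namely that $\cU$ is only known to be \emph{some} open neighbourhood of $0$ with $\Psi(\cU)=S\cap B(x,r)$, so the two integration domains are $\cU$ and $B(0,r)$ and one must show $\cL^d(B(0,r)\setminus\cU)=O(r^{d+a})$.

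However, your resolution of that point has a genuine gap. Your argument is that the symmetric difference of $\{z:|\Psi(z)|<r\}$ and $\{z:|z|<r\}$ lies in the annulus $\{r-Cr^{1+a}\le|z|\le r+Cr^{1+a}\}$ because $\bigl||\Psi(z)|-|z|\bigr|\le|u(z)|\le Cr^{1+a}$. But $\{z:|\Psi(z)|<r\}$ is by definition a subset of the domain of $\Psi$, i.e.\ it equals $\cU$ itself (every point of $\Psi(\cU)=S\cap B(0,r)$ already has norm $<r$), so the symmetric difference is $B(0,r)\setminus\cU$ — and for $z\in B(0,r)\setminus\cU$ the quantity $\Psi(z)=(z,u(z))$ is simply not defined, so the displacement estimate cannot be applied there. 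The earlier remark that "$\Psi^{-1}(S\cap B(x,r))$ contains $\{|z|<r/(1+C)\}\cap\cU$" is likewise vacuous, since that preimage \emph{is} $\cU$. What is actually needed is the inclusion $B(0,\epsilon_0)\subset\cU$ for some $\epsilon_0$ with $r-\epsilon_0=O(r^{1+2a})$, and this does not follow from the displacement bound alone. The paper proves it (see \eqref{eqInnerBallRadius}) by taking $y_0\in\partial\cU$ of minimal norm $\epsilon_0$, extracting a limit of the graph points $(t_ky_0,u(t_ky_0))$ as $t_k\uparrow 1$ inside the compact set $\overline{B(x,r)}\cap S$, observing that the limit must lie on $\partial B(x,r)$ (otherwise it would belong to $S\cap B(x,r)=\Psi(\cU)$ and force $y_0\in\cU$), and then using $r^2=|y_0|^2+|v|^2\le\epsilon_0^2(1+C^2r^{2a})$ to conclude $\epsilon_0\ge r(1+C^2r^{2a})^{-1/2}$, whence $\cL^d(B(0,r)\setminus\cU)\le M r^{d+2a}$. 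You need an argument of this kind (or an equivalent one) to close your step (4); with it, the rest of your proof goes through and gives the stated bound.
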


\begin{proof}
Let $0 < r \leq R = R_{S,sg}$ and $x \in S \setminus (S_{sg})^{Cr}$. Let $q$ be an affine isometry sending $x + T_x M$ to $\R^d \times \{0\}$ and let $u : \cU \subset \R^d \rightarrow q(S \cap B(x,r))$ be $\xC^{1,a}$ as in \hyperref[hypH5]{$(H_5)$}.
Note that $\cU \subset B(0,r) \subset \R^d$ is necessarily bounded (for $y \in \cU$ and $z = (y, u(y))$, we have $|y|^2 \leq |z|^2 < r^2$).
Up to the affine isometry $q$, we suppose that $x=0$ and $T_x S$ is $\R^d\times\{0\}^{n-d}$ so that $u(0) = 0$ and $Du(0) = 0$. 
Let $\epsilon > 0$ such that $B(0,\epsilon) \subset \cU$, then for any $y \in B(0,\epsilon)$,
\begin{equation} \label{eqDuHolder}
 \| Du(y) \| = \| Du(y) - Du(0) \| \leq C |y|^a \: ,
\end{equation}
where $C = C_{S, sg}$, and
\begin{equation} \label{equHolder}
 | u(y) | = \left| u(0) + \int_0^1 Du(ty) y \: dt \right| = \left| \int_0^1 (Du(ty) - Du(0)) y \: dt \right| \leq C |y| \int_0^1 |ty|^a \: dt \leq C |y|^{a+1} \: .
\end{equation}
In particular, for $z = (y, u(y))$, $|z|^2 \leq |y|^2 \left( 1 + C^2 |y|^{2a} \right)$. 
Let $\epsilon_0 = \min \{ |y| \: : \: y \in \partial \cU \}$ and let $y_0 \in \partial \cU$ such that $|y_0| = \epsilon_0$. Let us check that 
\begin{equation} \label{eqInnerBallRadius}
 \epsilon_0 \geq r \frac{1}{\sqrt{1 + C^2 r^{2a}}} \: .
\end{equation}
Let $(t_k)_{k \in \N}$ satisfy $0 < t_k < 1$ and $t_k \rightarrow 1$, and consider $z_k = (t_k y_0, u(t_k y_0)) \in B(x,r) \cap S$: by compactness of $\overline{B(x,r) \cap S} \subset \overline{B(x,r)} \cap S$, we can assume that $z_k \rightarrow z = (y_0, v) \in \overline{B(x,r)} \cap S$. Note that $z \in \partial B(x,r)$ otherwise $(y_0, v) \in B(x,r) \cap S$ can be written as $z = (y_1, u(y_1))$ with $y_1 \in \cU$ incompatible with $y_1 = y_0 \in \partial \cU$.
By definition of $\epsilon_0$, we have
$B(0,\epsilon_0) \subset \cU$ and thus for all $k$,
\[
\underbrace{|t_k y_0|^2 + |u(t_k y_0)|^2}_{\rightarrow |z|^2 = r^2} \leq |t_k y_0|^2 (1 + C^2 |t_k y_0|^{2a}) \leq \epsilon_0^2 (1 + C^2 r^{2a}) \quad \Rightarrow \quad \eqref{eqInnerBallRadius} \: .
\]
Let $f \in \xC_c (\R^n,\R)$ be a Bounded Lipschitz function with support in $B(0,1)$. We recall that up to assuming that the affine isometry $q$ is the identity map, we have $x = 0$, $T_x S = \R^d \times \{ 0 \}^{n-d}$, $\cU \subset B(0,r)$ and we apply the area formula to the map $w = ({\rm id}, u) : \cU \rightarrow B(x,r) \cap S$ in the first integral below:
\begin{align}
 \left| \int_{z \in S} \right. & \left. f \left(\frac{z-x}{r}\right) \: d \cH^d(z) - \int_{z \in x+T_x S} f \left(\frac{z-x}{r}\right) \: d \cH^d(z) \right| \nonumber \\
 & = \left| \int_{y \in \cU} f\left(\frac{(y, u(y))}{r}\right) J_d w(y) \: dy - \int_{y \in B(0,r)} f \left(\frac{(y, 0)}{r}\right)  \: dy \right| \nonumber \\
 & \leq \underbrace{\int_{\cU} }_{ \int_{B(0,r)}} \left| f \left(\frac{(y, u(y))}{r}\right) J_d w (y) - f \left(\frac{(y, 0)}{r}\right) \right| \: dy + \int_{B(0,r) \setminus \cU} \left| f \left(\frac{(y, 0)}{r}\right) \right| \: dy \: , \label{eqAreaFormula1}
\end{align}
where $J_d w$ is the tangential Jacobian of $w = ({\rm id},u)$ and satisfies (see for instance Remark 2.72 in \cite{ambrosio2000fbv})
\[
 J_d w = \sqrt{1 + W_u} \quad \text{where} \quad W_u(y) =\sum_{\substack{B \text{ square minor} \\ \text{of } Du(y)}} \det(B)^2 \: .
\]
Note that for $1 \leq k \leq d$, $y \in B(0,r) \subset \R^d$ and $B$ a $k \times k$ minor of $Du(y)$ and reminding \eqref{eqDuHolder}, we have up to multiplicative constants only depending on $d$:
$
| \det B | \lesssim \| B \|^k \lesssim \| Du(y) \|^k$ and $\| Du(y) \|^k \leq C^k |y|^{ak} \leq C^d r^a
$ since $r^k \leq r$ and $C^k \leq C^d$ for $r \leq R < 1 \leq 1$, $C \geq 1$ and $1 \leq k \leq d$. Therefore, there exists a constant $c(d) \geq 1$ only depending of $d$ such that
\[
0 \leq W_u(y) \leq c(d) r^{2a} \: .
\]
Now, by concavity of $\sqrt{1 + \cdot}$, for any $t \geq 0$, $\sqrt{1 + t} \leq 1 + \frac{1}{2} t$ and consequently
\[
 0 \leq J_d w (y) - 1 \leq \frac{1}{2} W_u(y) \leq c(d) C^{2d} r^{2a}\quad \Rightarrow \quad | J_d w (y) - 1 | \leq M r^{2a} \: .
\]
Using $\left|(y, u(y)) - (y, 0) \right| = |u(y)| \leq C |y|^{a+1} \leq C r^{a+1}$ by \eqref{equHolder}, we can infer that
\begin{align}
 & \int_{B(0,r)}  \left| f \left(\frac{(y, u(y))}{r}\right) J_d w (y) - f \left(\frac{(y, 0)}{r}\right) \right| \: dy \nonumber \\
 \leq & \int_{B(0,r)} \left| f \left(\frac{(y, u(y))}{r}\right) \right| \left| J_d w(y) - 1 \right| \: dy + \int_{B(0,r)} \left| f \left(\frac{(y, u(y))}{r}\right) - f \left(\frac{(y, 0)}{r}\right) \right| \: dy \nonumber \\
 \leq & \| f \|_\infty M r^{2a} \omega_d r^d + \xLip(f) C r^a \omega_d r^d \nonumber \\
 \leq & (\| f \|_\infty + \xLip(f)) M  r^{d+a} \: . \label{eqAreaFormula2}
\end{align}
Eventually, we recall that $B(0,\epsilon_0) \subset \cU$ with $\epsilon_0$ satisfying \eqref{eqInnerBallRadius} and we use that for all $t \geq 0$, $(1 + t)^\frac{-d}{2} \geq 1 - \frac{d}{2} t$ (follows from convexity argument for instance) so that
\begin{align}
\int_{B(0,r) \setminus \cU} \left| f \left(\frac{(y, 0)}{r}\right) \right| \: dy & \leq \| f \|_\infty \left| B(0,r) \setminus B(0, \epsilon_0) \right| \leq \| f \|_\infty \omega_d r^d \left( 1 - (1 + C^2 r^{2a})^\frac{-d}{2} \right)  \nonumber \\
& \leq \| f \|_\infty \omega_d r^d \frac{d}{2} C^2 r^{2a} \: . \label{eqAreaFormula3}
\end{align}
We conclude the proof of Lemma~\ref{lemmaConvspeedtgt} thanks to \eqref{eqAreaFormula1}, \eqref{eqAreaFormula2} and \eqref{eqAreaFormula3}.
\end{proof}
\begin{proposition}
\label{coroUniformCVHolder}
Let $0 < a,b \leq 1$.
We assume that 
$S$ and $\theta$ satisfy assumptions \hyperref[hypH1]{$(H_1)$} to \eqref{hypH7} i.e $\mu = \theta \cH^d_{| S} \in \cP$. Using the notations of Definitions~\ref{dfnPwHolderS} and \ref{dfnPwHolderTheta}, let $0 < \delta , r \leq R = \min( R_{S,sg}, R_{\theta,sg}) < 1$, $C = \max (C_{S,sg}, C_{\theta, sg}) \geq 1$,  
then, with the notation $c = \min(a,b)$,
\begin{equation} \label{eqUniformCVThetaHolder}
\forall x \in S \setminus (\fS)^{C\delta} , \quad \left| \theta_\delta(x)-\theta(x) \right| \leq M (\delta^a + \delta^b) \leq M \delta^c
\end{equation}
and
\begin{equation} \label{eqUniformCVCovarianceHolder}
\left\lbrace
\begin{array}{rl}
 \forall x \in S \setminus (S_{sg})^{Cr} , & \quad \left\|\Sigma_r(x,\nu)-\Pi_{T_xS} \right\| \leq  M r^a
 \\
 \forall x \in S \setminus \fS^{Cr} , & \quad \left\| \Sigma_r(x, \mu) - \theta(x) \Pi_{T_x S} \right\| \leq  M ( r^a + r^b ) \leq M r^c
 \\
 \forall x \in S \setminus \fS^{C \max(\delta, r)} , & \quad \left\| \sigma_{r,\delta}(x) - \Pi_{T_x S} \right\| \leq  M (\delta^a + \delta^b + r^a + r^b ) \leq M \left(\delta^c + r^c \right) \: .
 \end{array}
 \right.
\end{equation}
\end{proposition}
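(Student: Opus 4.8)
\emph{Plan.} All three displays will be obtained by reducing everything to Lemma~\ref{lemmaConvspeedtgt} (the geometric comparison of a kernel integrated against $\cH^d_{|S}$ with the same kernel integrated against $\cH^d_{|x+T_x S}$), combined with the H\"older control \eqref{hypH7} on $\theta$ and the $d$--Ahlfors regularity of $\cH^d_{|S}$ (constant $\widetilde{C_0}$). Throughout I keep in mind that $\eta_\delta$ and $\psi_r$ are supported in balls of radius $\delta$ and $r$ respectively, and that both scales are $\leq R < 1$, so that the passage from bounds in $\delta^a+\delta^b$, $r^a+r^b$ to bounds in $\delta^c$, $r^c$ is immediate.

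For \eqref{eqUniformCVThetaHolder}, fix $x\in S\setminus\fS^{C\delta}$, so $x\notin(S_{sg})^{C\delta}$ and $x\notin(\Theta_{sg})^{C\delta}$. After rescaling, the coarea identity \eqref{eqCetaRn} yields $\frac{1}{C_\eta\delta^d}\int_{x+T_x S}\eta\!\left(\frac{|y-x|}{\delta}\right)d\cH^d(y)=1$, hence
\begin{equation*}
\theta_\delta(x)-\theta(x)=\frac{1}{C_\eta\delta^d}\int_S\eta\!\left(\tfrac{|y-x|}{\delta}\right)\big(\theta(y)-\theta(x)\big)\,d\cH^d(y)+\frac{\theta(x)}{C_\eta\delta^d}\left[\int_S-\int_{x+T_x S}\right]\eta\!\left(\tfrac{|y-x|}{\delta}\right)d\cH^d(y).
\end{equation*}
The first term is $\leq M\delta^b$: by \eqref{hypH7} at radius $\delta$ (legitimate since $x\notin(\Theta_{sg})^{C\delta}$ and $\delta\leq R$) one has $|\theta(y)-\theta(x)|\leq C\delta^b$ on $S\cap B(x,\delta)$, and $\cH^d(S\cap B(x,\delta))\leq\widetilde{C_0}\delta^d$. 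The second term is $\leq M\delta^a$: apply Lemma~\ref{lemmaConvspeedtgt} to $f=\eta(|\cdot|)$ (bounded, Lipschitz, supported in $B(0,1)$) at radius $\delta$ (legitimate since $x\notin(S_{sg})^{C\delta}$), then use $\theta(x)\leq\theta_{max}$. Since $\delta<1$ this gives \eqref{eqUniformCVThetaHolder}.

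For the first line of \eqref{eqUniformCVCovarianceHolder}, fix $x\in S\setminus(S_{sg})^{Cr}$; by Proposition~\ref{propCVSigmar}, $\Pi_{T_x S}=\frac{1}{C_\phi r^d}\int_{x+T_x S}\psi\!\left(\frac{y-x}{r}\right)d\cH^d(y)$, so that $\Sigma_r(x,\nu)-\Pi_{T_x S}=\frac{1}{C_\phi r^d}\left[\int_S-\int_{x+T_x S}\right]\psi\!\left(\frac{y-x}{r}\right)d\cH^d(y)$, and applying Lemma~\ref{lemmaConvspeedtgt} to each scalar entry $\psi_{ij}(z)=\phi(|z|)z_iz_j$ (bounded, Lipschitz, supported in $B(0,1)$ by \eqref{eq:lipPsiij}) gives $\|\Sigma_r(x,\nu)-\Pi_{T_x S}\|\leq Mr^a$. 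For the second line, fix $x\in S\setminus\fS^{Cr}$ and write $\Sigma_r(x,\mu)-\theta(x)\Pi_{T_x S}=\frac{1}{C_\phi r^d}\int_S\psi\!\left(\frac{y-x}{r}\right)\big(\theta(y)-\theta(x)\big)d\cH^d(y)+\theta(x)\big(\Sigma_r(x,\nu)-\Pi_{T_x S}\big)$; the first piece is $\leq Mr^b$ by \eqref{hypH7} ($x\notin(\Theta_{sg})^{Cr}$), Ahlfors regularity and $\|\psi\|_\infty\leq\|\phi\|_\infty$, while the second is $\leq Mr^a$ by the first line ($x\notin(S_{sg})^{Cr}$) and $\theta(x)\leq\theta_{max}$. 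For the third line, fix $x\in S\setminus\fS^{C\max(\delta,r)}$, so $x\notin\fS^{C\delta}$ and $x\notin\fS^{Cr}$; then \eqref{eqsigmardeltaCV} of Proposition~\ref{propCVSigmardelta}, fed with \eqref{eqUniformCVThetaHolder} at scale $\delta$ and the second line at scale $r$, yields $\|\sigma_{r,\delta}(x)-\Pi_{T_x S}\|\leq M(\delta^a+\delta^b+r^a+r^b)\leq M(\delta^c+r^c)$.

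This is essentially an assembly of results already established, so I do not expect a genuine obstacle. The one thing requiring care is the bookkeeping: for each invocation one must check whether it is the $S_{sg}$--neighbourhood that matters (needed for the graph property \eqref{hypH5}, hence for Lemma~\ref{lemmaConvspeedtgt}) or the $\Theta_{sg}$--neighbourhood (needed for the density H\"older estimate \eqref{hypH7}), and at which of the two scales $\delta$, $r$; taking $x$ outside $\fS^{C\max(\delta,r)}=S_{sg}^{C\max(\delta,r)}\cup\Theta_{sg}^{C\max(\delta,r)}$ is precisely what makes every such invocation legitimate simultaneously, while keeping each kernel supported in a ball of the correct radius so that Ahlfors regularity with constant $\widetilde{C_0}$ applies.
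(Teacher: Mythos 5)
Your proposal is correct and follows essentially the same route as the paper: the same decomposition of $\theta_\delta(x)-\theta(x)$ into a H\"older term handled by \eqref{hypH7} plus a geometric term handled by Lemma~\ref{lemmaConvspeedtgt} via \eqref{eqCetaRn}, the same treatment of $\Sigma_r(x,\nu)$ and $\Sigma_r(x,\mu)$, and the same two-term bound for $\sigma_{r,\delta}$ (the paper re-derives inline the estimate you cite as \eqref{eqsigmardeltaCV} from Proposition~\ref{propCVSigmardelta}, which is the identical decomposition). Your bookkeeping of which singular-set enlargement is needed at which scale matches the paper's.
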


\begin{proof}
We recall that $\fS = S_{sg} \cup \Theta_{sg}$ and let $0 < \delta, r \leq R$.
We start with the proof of \eqref{eqUniformCVThetaHolder}: from \eqref{eqCetaRn} and then dilation and translation, we have for $x \in S \setminus \fS^{C\delta}$,
\begin{equation*}
C_\eta = \int_{T_xS}\eta(|y|)d\mathcal{H}^d(y) = \frac{1}{\delta^d} \int_{x + T_x S} \eta \left(\frac{|y-x|}{\delta}\right) \:  d\cH^d(y)
\end{equation*}
so that 
\begin{align*}
    \theta_\delta(x)-\theta(x)=&
    \frac{1}{C_\eta \delta^d} \int_{B(x,\delta) \cap S} \eta \left(\frac{|y-x|}{\delta}\right) (\theta(y)-\theta(x)) \: d\cH^d(y)
     \\
     &+\theta(x) \frac{1}{C_\eta \delta^d} \left(\int_{S} \eta \left(\frac{|y-x|}{\delta}\right) \: d\cH^d(y)-\int_{x+T_x S} \eta \left(\frac{|y-x|}{\delta}\right) \:  d\cH^d(y)\right) \: .
\end{align*}
Applying Lemma~\ref{lemmaConvspeedtgt} with $f = \eta(|\cdot|)$ (that has same Lipschitz and infinity bounds as $\eta$) and thanks to \hyperref[hypH1]{$(H_1)$} and \eqref{hypH7}, we can infer
\begin{align*}
    \left| \theta_\delta(x)-\theta(x) \right| & \leq 
    \frac{C \delta^b}{C_\eta \delta^d} \| \eta \|_\infty \underbrace{\cH^d(B(x,\delta) \cap S)}_{\leq M \delta^d}
     +\theta_{max} \frac{1}{C_\eta \delta^d} M (\| \eta \|_\infty + \xLip(\eta)) \delta^{d+a} \\
     & \leq M (\delta^a + \delta^b) \: .
\end{align*}
We similarly prove \eqref{eqUniformCVCovarianceHolder}: 
by Definition~\ref{dfn:covMatrix} and Proposition~\ref{propCVSigmar}, for $x \in S \setminus (S_{sg})^{Cr}$,
\begin{equation*}
    \left\| \Sigma_r(x, \nu) - \Pi_{T_x S} \right\| = 
    \frac{1}{C_\phi r^d} \left\| \int_{S} \psi \left(\frac{y-x}{r}\right) \: d\cH^d(y)-\int_{x+T_x S} \psi \left(\frac{y-x}{r}\right) \:  d\cH^d(y)\right\| \: .
\end{equation*}
and we infer from Lemma~\ref{lemmaConvspeedtgt} with $f = \psi$ that
\begin{align}
\label{eqUniformCVCovarianceHolderNu}
    \left\| \Sigma_r(x, \nu) - \Pi_{T_x S} \right\|  
     & \leq \frac{M}{r^d} (\| \psi \|_\infty + \xLip(\psi)) r^{d+a} \leq M r^a \: .
\end{align}
Then, again thanks to \hyperref[hypH1]{$(H_1)$} and \eqref{hypH7} and \eqref{eqUniformCVCovarianceHolderNu}, we obtain if moreover $x \in S \setminus \fS^{Cr}$,
\begin{align}
\label{eqUniformCVCovarianceHolderMu}
    \left\| \Sigma_r(x, \mu) - \theta(x) \Pi_{T_x S} \right\| & \leq 
     \frac{1}{C_\phi r^d} \left\| \int_{S} \psi \left(\frac{y-x}{r}\right) (\theta(y) - \theta(x)) \: d\cH^d(y) \right\| + \theta(x) \left\| \Sigma_r(x, \nu) - \Pi_{T_x S} \right\| \nonumber \\
     & \leq \frac{C r^b}{C_\phi r^d} \| \phi \|_\infty \cH^d(B(x, r) \cap S) + \theta_{max} M r^a \nonumber \\
     & \leq M (r^b + r^a) \: ,
\end{align}
so that finally, from \eqref{eqUniformCVCovarianceHolder} and \eqref{eqUniformCVCovarianceHolderMu} we infer: for $x \in S \setminus \fS^{C\max(\delta,r)}$,
\begin{align*}
 \left\| \sigma_{r,\delta}(x) -  \Pi_{T_x S} \right\| & \leq \left\| \Phi \left( \theta_\delta(x) \right) \left( \Sigma_r (x, \mu) - \theta(x) \Pi_{T_x S} \right) \right\| 
 + \left\| \left( \Phi \left(\theta_\delta(x) \right) - \Phi(\theta(x)) \right) \theta(x) \Pi_{T_x S} \right\| \\
 & \leq \| \Phi \|_\infty M (r^b + r^a) + \theta_{max} \xLip(\Phi) M (\delta^a + \delta^b) \\
     & \leq M (\delta^a + \delta^b + r^a + r^b) \: .
\end{align*}
\end{proof}
\subsection{Uniform convergence rates of $\nu_\delta$ and $W_{r,\delta}$ in terms of Bounded Lipschitz distance}
\label{secUniformBLdist}
Still in the same regularity class $\cP$ (see \eqref{eqClassP}) for $\mu = \theta \cH^d_{| S}$ (i.e. $S$ and $\theta$ satisfy assumptions \hyperref[hypH1]{$(H_1)$} to \eqref{hypH7} as defined in Section~\ref{secPwHolderDef}), we can draw on the pointwise rates established in Proposition~\ref{coroUniformCVHolder} to obtain similar rates for $\nu_\delta$ (as stated in Proposition~\ref{cvnudel}) and $W_{r,\delta}$, $\widetilde{W}_{r,\delta}$ (as stated in Proposition~\ref{propCvHolderW}. The proof simply consists in controlling the measure of offsets of the singular set $S_{sg} \cup \Theta_{sg}$ on the one hand and uses the aforementioned pointwise rates on the other hand.

Let us start with uniform bounds for $\| \theta_\delta - \theta \|_{\xL^1_{loc}(\mu)}$ and $\beta_D (\nu_\delta, \nu)$. We use the notation $\| \cdot \|_{\xL^1(D, \mu)} = \int_D | \cdot | \: d \mu$ for the $\xL^1$--norm in the open set $D$ with respect to $\mu$.
\begin{proposition}
\label{cvnudel}
Let $0 < a,b \leq 1$.
We assume that $S$ and $\theta$ satisfy assumptions \hyperref[hypH1]{$(H_1)$} to \eqref{hypH7} i.e. $\mu = \theta \cH^d_{| S} \in \cP$. Let $0 < \delta \leq \frac{R}{C} < 1$ (with $R = \min( R_{S,sg}, R_{\theta,sg}) < 1$ and $C = \max (C_{S,sg}, C_{\theta, sg}) \geq 1$) and $D \subset \R^n$ be an open set, then
\begin{equation} \label{eqThetaDeltaThetaL1D}
\beta_D (\nu_\delta, \nu) \leq \| \theta_\delta - \theta \|_{\xL^1(D, \mu)}
\quad \text{and} \quad
 \| \theta_\delta - \theta \|_{\xL^1(D, \mu)} 
 \leq M (\delta^a + \delta^b) \mu (D) + M  \sum_{l=0}^{d-1} \delta^{d-l} \: \cH^l \left(\fS_l \cap D^{2C\delta}\right) \: .
\end{equation}
In the particular case where $D = B \subset \R^n$ is an open ball of radius $0< \delta \leq R_B \leq \frac{R}{3C}$.
Then, recalling that $\xi_l = \xi_l(C\delta, B)$ satisfies $\xi_l = 1$ if $B \cap (\fS_l)^{C\delta} \neq \emptyset$ and $\xi_l = 0$ otherwise,
\begin{equation} \label{eqThetaDeltaThetaL1}
 \beta_B(\nu_{\delta},\nu) \leq \int_B \left|\theta_\delta - \theta \right| \: d \mu \leq \left\lbrace \begin{array}{ll}
   M \left( \delta^a +\delta^b\right)\mu(B) & \text{if } B \cap \fS^{C\delta}  = \emptyset  \\
   M \left( \displaystyle  \delta^a +\delta^b +  \sum_{l=0}^{d-1} \xi_l \left( \frac{\delta}{R_B} \right)^{d-l} \right) \mu(B)  & \text{in general,}
   \end{array}
   \right.
\end{equation} 
and for $0 < r \leq R_B$ and $x \in S \setminus \fS^{C(\delta + r)}$, we have
$\displaystyle
 \left\| \Sigma_r (x, \nu_\delta) - \Sigma_r (x,\nu) \right\| 
 \leq M ( \delta^a +\delta^b)
$.
\end{proposition}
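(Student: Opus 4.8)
The plan is to reduce everything to the pointwise estimate~\eqref{eqUniformCVThetaHolder} of Proposition~\ref{coroUniformCVHolder} together with the singular-set offset measure bounds of Lemma~\ref{lemSingSet}. First I would dispatch the two comparison inequalities $\beta_D(\nu_\delta,\nu)\le\|\theta_\delta-\theta\|_{\xL^1(D,\mu)}$ and $\beta_B(\nu_\delta,\nu)\le\int_B|\theta_\delta-\theta|\,d\mu$: testing against $f$ with $\|f\|_\infty\le 1$ gives $\beta_D(\nu_\delta,\nu)\le|\nu_\delta-\nu|(D)$, and by Proposition~\ref{nudel} one has $\Phi(\theta_\delta)=1/\theta_\delta$, $\Phi(\theta)=1/\theta$ on $S$ so that $|\nu_\delta-\nu|(D)=\int_{D\cap S}|\theta_\delta-\theta|/(\theta_\delta\theta)\,d\mu$; since $\theta,\theta_\delta\ge m=m(d,C_0,\eta)$ on $S$, the factor $1/(\theta_\delta\theta)$ is bounded and absorbed into the generic constant $M$.

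For the $\xL^1$ bound~\eqref{eqThetaDeltaThetaL1D} I would split $D\cap S$ into the ``good'' part $G=D\cap S\setminus\fS^{C\delta}$ and the ``bad'' part $\cB=D\cap\fS^{C\delta}\cap S$ (using $C\delta\le R$, legitimate since $\delta\le R/C$). On $G$ every point lies in $S\setminus\fS^{C\delta}$, so~\eqref{eqUniformCVThetaHolder} gives $|\theta_\delta-\theta|\le M(\delta^a+\delta^b)$ and hence $\int_G|\theta_\delta-\theta|\,d\mu\le M(\delta^a+\delta^b)\mu(D)$. On $\cB$ I use only the crude uniform bounds $\theta_\delta\le M$ (from~\eqref{eqThetaDeltaUpperBound}) and $\theta\le\theta_{max}$, so $|\theta_\delta-\theta|\le M$ there and $\int_\cB|\theta_\delta-\theta|\,d\mu\le M\mu(\fS^{C\delta}\cap D\cap S)\le M\theta_{max}\cH^d(\fS^{C\delta}\cap D\cap S)$ since $\mu\le\theta_{max}\cH^d_{|S}$. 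Applying Lemma~\ref{lemSingSet}$(i)$, i.e.~\eqref{eqThickD}, with $\rho=C\delta$ bounds this by $M\sum_{l=0}^{d-1}(C\delta)^{d-l}\cH^l(\fS_l\cap D^{2C\delta})$, and $C^{d-l}\le C^d$ goes into $M$. Summing the two contributions yields~\eqref{eqThetaDeltaThetaL1D}.

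For the ball case~\eqref{eqThetaDeltaThetaL1} the good part is treated identically (bound $M(\delta^a+\delta^b)\mu(B)$), and when $B\cap\fS^{C\delta}=\emptyset$ the bad part is empty, giving the first alternative. In general, rather than~\eqref{eqThickD}, I would invoke Lemma~\ref{lemSingSet}$(iii)$, i.e.~\eqref{remkBallCenterSsing}, with $r=\delta$ (admissible since $\delta\le R_B\le R/(3C)$), which gives $\cH^d(\fS^{C\delta}\cap B\cap S)\le M\mu(B)\sum_{l=0}^{d-1}\xi_l(\delta/R_B)^{d-l}$ with $\xi_l=\xi_l(C\delta,B)$; combined with $\int_\cB|\theta_\delta-\theta|\,d\mu\le M\theta_{max}\cH^d(\fS^{C\delta}\cap B\cap S)$ this produces the second alternative.

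Finally, for the covariance estimate I would start from $\Sigma_r(x,\nu_\delta)-\Sigma_r(x,\nu)=\frac1{C_\phi r^d}\int\psi_r(y-x)\bigl(\Phi(\theta_\delta(y))-\Phi(\theta(y))\bigr)\,d\mu(y)$ (as in the proof of Proposition~\ref{propCVSigmardelta}), bound $\|\psi_r\|_\infty\le\|\phi\|_\infty$ and $|\Phi(\theta_\delta)-\Phi(\theta)|\le\xLip(\Phi)|\theta_\delta-\theta|\le M|\theta_\delta-\theta|$ to get $\|\Sigma_r(x,\nu_\delta)-\Sigma_r(x,\nu)\|\le\frac M{r^d}\int_{B(x,r)}|\theta_\delta-\theta|\,d\mu$. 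The key observation is that if $x\in S\setminus\fS^{C(\delta+r)}$ then for $y\in B(x,r)$ one has $d(y,\fS)>C(\delta+r)-r\ge C\delta$ (using $C\ge1$), hence $B(x,r)\cap S\subset S\setminus\fS^{C\delta}$ and~\eqref{eqUniformCVThetaHolder} applies pointwise on $B(x,r)$; with the $d$--Ahlfors bound $\mu(B(x,r))\le C_0 r^d$ this gives $\frac1{r^d}\int_{B(x,r)}|\theta_\delta-\theta|\,d\mu\le M(\delta^a+\delta^b)$, as desired. The only mildly delicate step is the bookkeeping of the singular-set offsets — selecting the correct radius ($\rho=C\delta$, $r=\delta$, $R_B$) in each application of Lemma~\ref{lemSingSet}, verifying the admissibility conditions $C\delta\le R$ and $\delta\le R_B\le R/(3C)$, and running the triangle-inequality argument that keeps $B(x,r)$ away from $\fS^{C\delta}$; everything else is an assembly of estimates already proved.
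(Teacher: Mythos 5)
Your proposal is correct and follows essentially the same route as the paper's proof: the good/bad splitting of $D\cap S$ along $\fS^{C\delta}$, the pointwise Hölder estimate \eqref{eqUniformCVThetaHolder} on the good part, the crude uniform bound plus Lemma~\ref{lemSingSet} (\eqref{eqThickD} for general $D$, \eqref{remkBallCenterSsing} for balls) on the bad part, and the kernel estimate combined with $B(x,r)\cap\fS^{C\delta}=\emptyset$ for the covariance matrices. The only (harmless) variations are cosmetic: you bound $|\Phi(\theta_\delta)-\Phi(\theta)|$ via the lower bounds $\theta,\theta_\delta\ge m$ rather than via $\xLip(\Phi)$, and for the covariance estimate you apply the pointwise bound directly on $B(x,r)\cap S$ instead of citing the first case of \eqref{eqThetaDeltaThetaL1}, which is in fact slightly cleaner since it avoids any constraint between $r$ and $\delta$.
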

\begin{proof}
Let $0 < \delta \leq R$ and $D \subset \R^n$ be an open set. We first note that by definition of $\beta_D$,
\begin{equation*}
 \beta_D (\nu_\delta, \nu) \leq \left|\nu_{\delta} - \nu \right|(D) = \int_{D} \left| \Phi (\theta_\delta(x)) - \Phi(\theta(x))  \right| \: d\mu (x) \leq M \int_{D} \left| \theta_\delta - \theta  \right|  \: d\mu \: .
\end{equation*}
Furthermore, we can consider separately $\fS^{C\delta}$ and $S \setminus \fS^{C\delta}$.
On one hand, applying \eqref{eqUniformCVThetaHolder} in Proposition~\ref{coroUniformCVHolder}, we obtain
\begin{equation*} 
 \int_{D \setminus \fS^{C\delta}} \left| \theta_\delta - \theta  \right| \:  d \mu \leq M (\delta^a + \delta^b) \mu (D) \: ,
\end{equation*}
On the other hand, recalling \eqref{eqThetaDeltaUpperBound}: for $x \in  S$, $| \theta_\delta(x)| \leq M$ and using \eqref{eqThickD} with $C \delta \leq R$
we obtain
\begin{align*} 
 \int_{D \cap \fS^{C\delta}} \left| \theta_\delta - \theta  \right| \: \underbrace{  \: d \mu }_{\theta d \cH^d_{| S}} & \leq M
 \cH^d \left(\fS^{C\delta} \cap D \cap S\right) \leq M  \sum_{l=0}C^{d-l} \delta^{d-l} \: \cH^l \left(\fS_l \cap D^{2C\delta}\right) \: ,
\end{align*}
hence concluding the proof of \eqref{eqThetaDeltaThetaL1D}. The proof of \eqref{eqThetaDeltaThetaL1} is exactly the same, but applying \eqref{remkBallCenterSsing} instead of \eqref{eqThickD} in the last estimate above.
Finally, for $x \in S \setminus \fS^{C(\delta + r)}$, we have $B(x,r) \cap \fS^{C\delta} = \emptyset$ and
since $\| \psi_r \|_\infty \leq \| \phi \|_\infty$,
\begin{align*}
 \left\| \Sigma_r (x, \nu_\delta) - \Sigma_r (x,\nu) \right\| & = \frac{1}{C_\phi r^d} \left\| \int_{B(x,r)} \psi_r \left( y-x \right) \: d \nu_\delta - \int_{B(x,r)} \psi_r \left(y-x \right) \: d \nu \right\| \leq \| \phi \|_\infty \frac{1}{C_\phi r^d} \left|\nu_{\delta} - \nu \right|(B(x,r)) \\
 & \leq  \frac{M}{r^d} \int_{B(x,r)} | \theta_\delta - \theta | \: d \mu \leq M (\delta^a + \delta^b) \frac{\mu(B(x,r))}{r^d} \quad \text{thanks to \eqref{eqThetaDeltaThetaL1}} \\
 & \leq M C_0 \left(\delta^a + \delta^b\right)  \: ,
\end{align*}
hence concluding the proof of Proposition~\ref{cvnudel}.
\end{proof}

We then similarly give uniform bounds for  $\|\Sigma_{r} (\cdot, \nu_\delta) - \Pi_{T_\cdot S} \|_{\xL^1_{loc}(\mu)}$ and $\| \sigma_{r,\delta} -  \Pi_{T_\cdot S} \|_{\xL^1_{loc}(\mu)}$ as well as for $\beta_B (W_{r, \delta},  W_S)$ and $\beta_B (\widetilde{W}_{r, \delta},  W_S)$.
\begin{proposition} \label{propCvHolderW}
Let $0 < a,b \leq 1$.
We assume that $S$ and $\theta$ satisfy assumptions \hyperref[hypH1]{$(H_1)$} to \eqref{hypH7} i.e. $\mu = \theta \cH^d_{| S} \in \cP$. Let $B \subset \R^n$ be an open ball of radius $0< R_B \leq \frac{R}{6C}$ with $R = \min (R_{S,sg}, R_{\theta, sg}) < 1$, $C = \max (C_{S,sg}, C_{\theta, sg}) \geq 1$ and let $0 < \delta,  r < 1$ such that $\delta + r \leq R_B$.
Then,
\begin{equation} \label{eqCvHolderWrdelta}
\begin{array}{rcl}
 \displaystyle \int_B \left\| \Sigma_r(x,\nu_{\delta}) - \Pi_{T_x S} \right\| \: d \mu(x) , \quad \beta_B (\widetilde{W}_{r, \delta},  W_S)  & \leq & M \mu(B)  \left(\displaystyle \delta^{\min(a,b)} + r^a + \sum_{l=0}^{d-1} \xi_l \left(\frac{\delta + r}{R_B} \right)^{d-l} \right)\\ 
 \\
 \displaystyle \int_B \left\| \sigma_{r,\delta}(x) - \Pi_{T_x S} \right\| \: d \mu(x), \quad \beta_B (W_{r, \delta},  W_S)  & \leq & M \mu(B) \left(\displaystyle \delta^{\min(a,b)} + r^{\min(a,b)} + \sum_{l=0}^{d-1} \xi_l \left(\frac{\delta + r}{R_B} \right)^{d-l} \right)
 \end{array}
 \: ,
 \end{equation}
where $\xi_l = \xi_l(C(\delta + r), B)$ satisfies $\xi_l = 1$ if $B \cap \fS^{C(\delta + r)} \neq \emptyset$ and $\xi_l = 0$ otherwise.
We similarly have the following global estimates: let $0 < \delta,r \leq R$ with $\delta + r \leq \frac{R}{C}$ and let $D \subset \R^n$ be an open set, then
\begin{equation} \label{eqCvHolderWrdeltaD}
\begin{array}{l}
 \displaystyle \int_D \left\| \Sigma_r(x,\nu_{\delta}) - \Pi_{T_x S} \right\| \: d \mu(x) , \: \beta_D (\widetilde{W}_{r, \delta},  W_S)   \leq  M \mu(D^r) \left(\displaystyle \delta^{\min(a,b)} + r^a \right) + M \displaystyle \sum_{l=0}^{d-1} (\delta + r)^{d-l} \cH^l \left(\fS_l \cap D^{2C(\delta+r)}\right) \\ 
 \\
 \displaystyle \int_D \left\| \sigma_{r,\delta} - \Pi_{T_x S} \right\| \: d \mu(x), \: \beta_D (W_{r, \delta},  W_S)  \leq M \mu(D) \left(\displaystyle \delta^{\min(a,b)} + r^{\min(a,b)} \right) + \displaystyle M \sum_{l=0}^{d-1}\left(\delta + r \right)^{d-l} \cH^l \left(\fS_l \cap D^{2C(\delta + r)}\right)
 \end{array}
 \end{equation}
\end{proposition}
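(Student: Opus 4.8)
The plan is to reduce everything to the two ``master inequalities'' already recorded in the remark following Proposition~\ref{propBoundWrdeltaW}, namely \eqref{eqBoundWrdeltaW} and \eqref{eqBoundWrdeltaWTilde}, and then to estimate the right-hand sides using the pointwise Hölder rates of Proposition~\ref{coroUniformCVHolder}, the $\xL^1$ bound of Proposition~\ref{cvnudel}, and the measure-of-neighbourhood estimates of Lemma~\ref{lemSingSet}. Indeed, \eqref{eqBoundWrdeltaW} reduces the control of $\beta_D(W_{r,\delta},W_S)$ (and of $\int_D\|\sigma_{r,\delta}-\Pi_{T_\cdot S}\|\,d\mu$) to controlling $\int_D|\theta_\delta-\theta|\,d\mu$ and $\int_D\|\sigma_{r,\delta}(x)-\Pi_{T_xS}\|\,d\mu(x)$, while \eqref{eqBoundWrdeltaWTilde} reduces $\beta_D(\widetilde W_{r,\delta},W_S)$ and $\int_D\|\Sigma_r(x,\nu_\delta)-\Pi_{T_xS}\|\,d\nu(x)$ to $\int_{D^r}|\theta_\delta-\theta|\,d\mu$ and $\int_D\|\Sigma_r(x,\nu)-\Pi_{T_xS}\|\,d\mu(x)$ (to get the statement with $d\mu$ instead of $d\nu$ one only pays the factor $\theta_{max}$, since $\mu=\theta\cH^d_{|S}\le\theta_{max}\nu$). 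The density integrals $\int_D|\theta_\delta-\theta|\,d\mu$ and $\int_{D^r}|\theta_\delta-\theta|\,d\mu$ are handled directly by Proposition~\ref{cvnudel} (applied to $D$, resp.\ to $D^r$), so the only real work left is to bound the covariance integrals.

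For each covariance integral I would split the domain of integration into a ``good'' region, lying away from the singular set, and a ``bad'' region which is a thickening of $\fS$ (resp.\ of $S_{sg}$). On the good part $D\setminus\fS^{C\max(\delta,r)}$ (resp.\ $D\setminus(S_{sg})^{Cr}$) I insert the pointwise bounds \eqref{eqUniformCVCovarianceHolder}, i.e.\ $\|\sigma_{r,\delta}(x)-\Pi_{T_xS}\|\le M(\delta^{\min(a,b)}+r^{\min(a,b)})$, resp.\ $\|\Sigma_r(x,\nu)-\Pi_{T_xS}\|\le Mr^a$, so that integration against $\mu$ produces $M\mu(D)(\delta^{\min(a,b)}+r^{\min(a,b)})$, resp.\ $Mr^a\mu(D)$. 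On the bad part the integrands $\|\sigma_{r,\delta}\|=\|\Phi(\theta_\delta)\Sigma_r(\cdot,\mu)\|$, $\|\Sigma_r(\cdot,\nu)\|$ and $\|\Pi_{T_\cdot S}\|$ are all uniformly bounded by $M$ (by \eqref{eqInfBoundSigma} together with $\|\Phi\|_\infty\le M$ from Lemma~\ref{lemPropertiesPhiThetaN}), hence the bad contribution is at most $M\,\mu(\fS^{C\max(\delta,r)}\cap D\cap S)$, resp.\ $M\,\mu((S_{sg})^{Cr}\cap D\cap S)$; these I estimate by \eqref{eqThickD} in the general (open set $D$) case and by \eqref{remkBallCenterSsing} when $D=B$ is a ball, in each case monotonically enlarging the thickening radii up to $\delta+r$ (using $\max(\delta,r)\le\delta+r$, $\delta\le\delta+r$, $S_{sg,l}\subset\fS_l$, and $D^{2C\max(\delta,r)}\subset D^{2C(\delta+r)}$) and replacing the indicators $\xi_l(Cr,B)$, $\xi_l(C\delta,B)$, $\xi_l(C\max(\delta,r),B)$ by $\xi_l(C(\delta+r),B)$. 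Adding the good and bad contributions together with the density term yields \eqref{eqCvHolderWrdelta} and \eqref{eqCvHolderWrdeltaD}.

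The step that requires the most care — and which I expect to be the only genuinely delicate point — is the bookkeeping of the enlargement parameters and of the $\xi_l$ arguments, especially for the $\widetilde W$ estimates, where the density term lives on the $r$-enlargement $D^r$ (this is an artefact of the Fubini step inside Proposition~\ref{propBoundWrdeltaW}). For the ball version one must observe that $B^r$ is again a ball of radius $R_B+r\le 2R_B\le R/(3C)$, so that \eqref{eqThetaDeltaThetaL1} is applicable to it, that $\mu(B^r)\le M\mu(B)$ by Ahlfors regularity of $\mu$, and that $\xi_l(C\delta,B^r)\le\xi_l(C(\delta+r),B)$ since a point of $B^r$ at distance $<C\delta$ from $\fS_l$ lies at distance $<C\delta+r\le C(\delta+r)$ from a point of $B$ which therefore belongs to $(\fS_l)^{C(\delta+r)}$. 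One must also check throughout that the hypotheses of Lemma~\ref{lemSingSet} and Proposition~\ref{cvnudel} hold: under $R_B\le R/(6C)$ and $\delta+r\le R_B$ one has $\delta+r\le R_B\le R/(3C)$ and $C(\delta+r)\le R/6\le R$, and in the global case $\delta+r\le R/C$ similarly gives $\delta,r\le R/C$ and $C\max(\delta,r)\le R$. Once these elementary inequalities are set up, each of the four estimates in \eqref{eqCvHolderWrdelta}--\eqref{eqCvHolderWrdeltaD} follows by simply collecting the three contributions above.
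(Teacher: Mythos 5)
Your proposal is correct and follows essentially the same route as the paper: it reduces to the master inequalities \eqref{eqBoundWrdeltaW}--\eqref{eqBoundWrdeltaWTilde}, bounds the density terms via Proposition~\ref{cvnudel} on $B^r$ (resp.\ $D^r$), and splits the covariance integrals into a good region handled by the pointwise rates of Proposition~\ref{coroUniformCVHolder} and a bad region (a thickening of the singular set) whose measure is controlled by Lemma~\ref{lemSingSet}, with the same bookkeeping $\mu(B^r)\le M\mu(B)$ and $\xi_l(C\delta,B^r)\le\xi_l(C(\delta+r),B)$ that the paper carries out. The only cosmetic difference is that you take the good set to be $D\setminus\fS^{C\max(\delta,r)}$ and enlarge afterwards, whereas the paper works directly with $B\setminus\fS^{C(\delta+r)}$; both are valid.
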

We recall that $W_{r,\delta}$ and $\widetilde{W}_{r,\delta}$ are defined in \eqref{eq:defWrdelta}. 

\begin{remark}[Choice of tangent space estimator]
We observe that when considering only the deterministic part, 
\begin{enumerate}[$(i)$]
 \item on one hand, first estimating the density to define $\nu_\delta = \frac{\theta}{\theta_\delta} \cH^d_{| S} \sim \cH^d_{| S}$ and then estimating the tangent plane thanks to $\Sigma_r (\cdot, \nu_\delta)$ (that is computing the covariance matrix with respect to the measure $\nu_\delta$ whose density has been corrected) allows to obtain $r^a$ in the r.h.s. of \eqref{eqCvHolderWrdelta} ;
 \item on the other hand, estimating the tangent space directly from $\mu = \theta \cH^d_{| S}$ (that is without correcting the density at this stage) through $\sigma_{r,\delta}$, we obtain instead $r^{\min(a,b)}$ which involves the regularity of the density through $b$. In the case where the density is less regular than the tangent plane (in the sense $b < a$), the control is thus less accurate. However, this only impacts the global bound \eqref{eqCvHolderWrdelta} if $r > \delta$.
\end{enumerate}
\end{remark}
\begin{proof}
The proof builds upon estimates \eqref{eqBoundWrdeltaW} and \eqref{eqBoundWrdeltaWTilde}. We recall that $R \in (0,1)$ and $C \geq 1$ as in Definitions~\ref{dfnPwHolderS} and \ref{dfnPwHolderTheta}.
First of all, as $r \leq R_B$, $\mu (B^r) \leq \mu(2B) \leq M \mu(B)$ by Ahlfors regularity of $\mu$.
Then, note that $\xi_l = \xi_l (C(\delta + r), B)$ here in \eqref{eqCvHolderWrdelta} and $\xi_l (C\delta, B^r) \leq \xi_l (C\delta + r, B) \leq \xi_l (C(\delta + r), B)$. Indeed, if $x \in B^r \cap (\fS_l)^{C\delta} \neq \emptyset$, then there exists $y \in B$ and $z \in \fS_l$ such that $|x - y| < r$ and $|x - z| <C\delta$ and then $|y - z| < r + C\delta$ so that $y \in B \cap \fS_l^{C\delta + r} \subset B \cap \fS_l^{C(\delta + r)} \neq \emptyset$.
We can therefore apply \eqref{eqThetaDeltaThetaL1} in Proposition~\ref{cvnudel} to the open ball $B^r$ of radius $R_B +r$ with $R_B \leq R_B +r \leq 2R_B \leq \frac{R}{3C}$ and we obtain
\begin{align} \label{eqUniformIntThetaDelta}
 \int_{B^r} |\theta_\delta - \theta | \: d \mu 
 & \leq M \left( \delta^{\min(a,b)} + \sum_{l=0}^{d-1} \xi_l(C\delta, B^r) \frac{\delta^{d-l}}{(R_B+r)^{d-l}} \right) \mu(B^r) \nonumber \\
 & \leq M \left( \delta^{\min(a,b)} + \sum_{l=0}^{d-1} \underbrace{\xi_l(C(\delta + r),B)}_{\xi_l} \frac{\delta^{d-l}}{R_B^{d-l}} \right) \mu(B) \: .
\end{align}
We then partition $B = B_1 \sqcup B_2$ into $B_1 = B \setminus \fS^{C(\delta + r)}$ and $B_2 = B \cap \fS^{C(\delta + r)}$, so that we can use uniform bounds in $B_1$ and control the measure of $B_2$ (similarly to what is done in the proof of \eqref{eqThetaDeltaThetaL1}). More precisely,
thanks to 
Proposition~\ref{coroUniformCVHolder}, using $\mu(B_1) \leq \mu(B)$, we first have
\begin{equation*}
 \int_{B_1} \| \Sigma_r (x, \nu) - \Pi_{T_x S} \| \: d\mu(x) \leq M r^a \mu (B) \: . 
\end{equation*}
Then, thanks to \eqref{eqInfBoundSigma}, for all $x \in S$, $\| \Sigma_r (x,\nu) - \Pi_{T_x S} \| \leq M + \| \Pi_{T_x S} \| \leq M+1$ so that using \eqref{remkBallCenterSsing} (with $\delta + r \leq R_B$), we obtain
\begin{align*}
 \int_{B_2} \| \Sigma_r (x, \nu) - \Pi_{T_x S} \| \: d\mu(x) & \leq M \mu (B_2) \leq M \theta_{max} \cH^d \left( B \cap S \cap \fS^{C(\delta + r)}  \right)
\leq M \sum_{l=0}^{d-1} \xi_l \frac{(\delta + r)^{d-l}}{R_B^{d-l}} \mu(B) \: .
\end{align*}
Consequently,
\begin{equation} \label{eqUniformIntSigmar}
 \int_{B} \| \Sigma_r (x, \nu) - \Pi_{T_x S} \| \: d\mu(x) \leq M \left( r^a + \sum_{l=0}^{d-1} \xi_l \frac{(\delta + r)^{d-l}}{R_B^{d-l}} \right) \mu(B) \: ,
\end{equation}
and in view of \eqref{eqBoundWrdeltaWTilde}, \eqref{eqUniformIntThetaDelta} and \eqref{eqUniformIntSigmar}, we can conclude that
\begin{align*}
\left.
\begin{array}{r}
 \beta_B \left(\widetilde{W}_{r,\delta}, W \right) \\
 \displaystyle \int_B \left\| \Sigma_r(x,\nu_{\delta}) - \Pi_{T_x S} \right\| \: d \mu(x) 
\end{array}
\right\rbrace
 & \leq M \int_{B^r} | \theta_\delta - \theta | \: d \mu + M \int_B  \| \Sigma_r(x,\nu) - \Pi_{T_x S} \| \: d \mu(x) \\
 & \leq M \left( \delta^{\min(a,b)} + r^a + \sum_{l=0}^{d-1} \xi_l \frac{(\delta + r)^{d-l}}{R_B^{d-l}} \right) \mu(B) \: .
\end{align*}
As $\max(\delta,r) \leq r + \delta$, we can use the same decomposition for $B$ and thanks to \eqref{eqUniformCVCovarianceHolder} in Proposition~\ref{coroUniformCVHolder}, we similarly have
\begin{equation*}
 \int_{B_1} \| \sigma_{r,\delta} (x) - \Pi_{T_x S} \| \: d\nu(x) \leq M (\delta^a + \delta^b + r^a + r^b) \mu (B) \: ,
\end{equation*}
while for $x \in  S$, $\| \sigma_{r,\delta}(x) \| \leq \| \Phi \|_\infty \| \Sigma_r (x, \mu) \| \leq M$ and 
\begin{align*}
 \int_{B_2} \| \sigma_{r,\delta} (x) - \Pi_{T_x S} \| \: d\mu(x) & \leq (M + 1) \mu (B_2) \leq M \sum_{l=0}^{d-1} \xi_l \frac{(\delta + r)^{d-l}}{R_B^{d-l}} \mu(B) \: ,
\end{align*}
hence leading, thanks to \eqref{eqBoundWrdeltaW} and \eqref{eqUniformIntThetaDelta}, to almost the same bound for $\displaystyle \int_B \left\| \sigma_{r,\delta}(x) - \Pi_{T_x S} \right\| \: d \mu(x) $ and $\beta_B \left(W_{r,\delta}, W_S \right) $ as for $\beta_B \left(\widetilde{W}_{r,\delta}, W_S \right)$ (up to replacing $r^a$ with $r^{\min(a,b)}$), which conclude the proof of \eqref{eqCvHolderWrdelta}.\\
The proof of \eqref{eqCvHolderWrdeltaD} is then a straightforward adaptation, replacing the use of the local estimates \eqref{eqThetaDeltaThetaL1} in Proposition~\ref{cvnudel} and \eqref{remkBallCenterSsing} with the global ones \eqref{eqThetaDeltaThetaL1D} in Proposition~\ref{cvnudel} and \eqref{eqThickD}.
\end{proof}
\subsection{Convergence rate of the varifold estimator in a piecewise Hölder regularity class}
\label{secCVvarifold}
In Section~\ref{secCVvarifold}, we can eventually address the inference of the varifold structure. Up to this point, we proposed convergent estimators $W_{r,\delta,N}$, $\widetilde{W}_{r,\delta,N}$ of $W_{r,\delta}$, $\widetilde{W}_{r,\delta}$ (see Proposition~\ref{prop:CV_WrdeltaN}) that can be coupled to the accuracy of the regularization established in Proposition~\ref{propCvHolderW} to give estimators of $W_S = \cH^d_{| S} \otimes \delta_{\Pi_{T_x S}}$ (see \eqref{eqThmVFfoldCV2}). However, such estimators $W_{r,\delta, N}$, $\widetilde{W}_{r,\delta,N}$ (with $\delta = \delta_N$, $r = r_N$) are Radon measures in $\R^n \times \xSym_+(n)$ that are not $d$--varifolds since $\sigma_{r,\delta,N}(x)$ or $\Sigma_r (\cdot, \nu_{\delta,N})$ are not orthogonal projectors in general. The last step to obtain an estimator $V_{r,\delta,N}$ (resp. $\widetilde{V}_{r,\delta,N}$) in the space of varifolds is to replace the symmetric matrix $\sigma_{r,\delta,N}(x)$ (resp. $\Sigma_r (\cdot, \nu_{\delta,N})$) with an orthogonal projection of rank $d$ that will be denoted by $\pi_{r,\delta,N}(x)$ (resp. $\Pi_{r} (\cdot, \nu_{\delta,N})$). 
We then consider $V_{r,\delta,N} = \nu_{\delta,N} \otimes \delta_{\pi_{r,\delta,N}}$ and $\widetilde{V}_{r,\delta,N} = \nu_{\delta,N} \otimes \delta_{\Pi_r(\cdot, \nu_{\delta,N})}$ that are Radon measures in $\R^n \times \xSym_+(n)$ with support in $\R^n \times {\rm P}_{d,n}$ and can be identified with $d$--varifolds through the bi-Lipschitz correspondence $\mathcal{I}$ of Proposition~\ref{propBiLipI}. We recall the notation ${\rm P}_{d,n} \subset \xSym_+(n)$ for the set of orthogonal projectors of rank $d$ in $\R^n$.

\noindent Beyond the formal difference between Radon measures in $\R^n \times \xSym_+(n)$ $W_{r,\delta,N}$, $\widetilde{W}_{r,\delta,N}$ and $d$--varifolds $V_{r,\delta,N}$, $\widetilde{V}_{r,\delta,N}$, it is important to have such a varifold estimator if we keep in mind the idea of estimating curvature quantities relying on the deterministic approximations proposed in \cite{BuetLeonardiMasnou} and \cite{BuetLeonardiMasnou2} in the future. We now explain how we pass from a symmetric matrix 
to a rank $d$ orthogonal projector via the eigen decomposition, note that this construction is consistent with what is often performed numerically, including in both aforementioned works \cite{BuetLeonardiMasnou} and \cite{BuetLeonardiMasnou2}.

Given $\Sigma \in \xSym_+(n)$, we associate a rank--$d$ orthogonal projector $\Pi \in {\rm P}_{d,n}$ as follows: let $\lambda_1 (\Sigma) \geq \lambda_2 (\Sigma) \geq \ldots \geq \lambda_n(\Sigma)$ be the ordered eigenvalues of $\Sigma$, associated with an orthonormal basis of eigenvectors $(u_1, \ldots, u_n)$, we then define $\Pi = \sum_{k = 1}^d u_k \otimes u_k$. 
Note that such a matrix $\Pi$ depends on the choice of the orthonormal basis $(u_1, \ldots, u_n)$ in the case where $\lambda_d (\Sigma) = \lambda_{d+1}(\Sigma)$. In what follows, we do not exclude such a possibility and we consider that $\Pi$ can be taken to be any of the admissible choice. Then, for any projector $\tilde{\Pi} \in {\rm P}_{d,n}$, we have
\begin{equation} \label{eqWeylStab}
 \|  \Sigma - \Pi  \| \leq \|  \Sigma - \tilde{\Pi}  \| \: .
\end{equation}
Indeed, as $\Sigma = \sum_{k=1}^n \lambda_k( \Sigma) u_k \otimes u_k$ and by definition of $\Pi$, the symmetric matrix $\Sigma - \Pi = \sum_{k=1}^n \left(\lambda_k( \Sigma) - \epsilon_k \right) u_k \otimes u_k$ has eigenvalues $\lambda_k(\Sigma) - \epsilon_k$ with the notation $\epsilon_k \in \{ 0, 1\}$, $\epsilon_k = 1$ if $k \leq d$ and $\epsilon_k = 0$ if $k \geq d+1$. We therefore obtain on one hand
\begin{equation} \label{eqFrobPiSigma}
 \| \Sigma - \Pi \| = \max_{k=1 \ldots n} \left|  \lambda_k (\Sigma) - \epsilon_k \right| \: .
\end{equation}
On the other hand, given $\tilde{\Pi} \in {\rm P}_{d,n}$, we observe that $\Pi$ and $\tilde{\Pi}$ have same eigenvalues $(\epsilon_k)_{k = 1 \ldots n}$ and therefore, applying Weyl inequality (more precisely, the fact that the ordered eigenvalues are $1$--Lipschitz among real symmetric or hermitian matrices endowed with the operator norm), we obtain for $k = 1, \ldots, n$,
\begin{align*}
 \left| \lambda_k \left( \Sigma \right) - \epsilon_k \right| & = | \lambda_k \left( \Sigma \right) - \lambda_k ( \tilde{\Pi} ) | \leq \|  \Sigma - \tilde{\Pi} \|
\end{align*}
and from \eqref{eqFrobPiSigma} we thus infer \eqref{eqWeylStab}.
In the particular case where $\Sigma$ is of the form $\Sigma_r (x, \lambda)$ (see Definition~\ref{dfn:covMatrix}) for $r > 0$, $x \in \R^n$ and $\lambda$ a finite Radon measure in $\R^n$, we use the notation $\Pi_r (x, \lambda)$ for the associated orthogonal projector of rank $d$ and we obtain with $\tilde{\Pi} = \Pi_{T_x S}$ in \eqref{eqWeylStab}:
\begin{equation} \label{eqFrobPiNSigmaN}
 \left\| \Sigma_r (x, \nu_{\delta,N}) - \Pi_r (x, \nu_{\delta,N}) \right\| \leq 
  \left\|  \Sigma_r (x, \nu_{\delta,N}) - \Pi_{T_x S} \right\|  \: ,
\end{equation}
while in the case where $\Sigma$ is of the form $\sigma_{r,\delta,N} (x)$, we use the notation $\pi_{r,\delta,N} (x)$ for the associated orthogonal projector of rank $d$ and we similarly obtain
\begin{equation} \label{eqFrobPiNsigmaN}
 \left\| \sigma_{r,\delta,N} (x) - \pi_{r,\delta,N} (x) \right\| \leq 
  \left\|  \sigma_{r,\delta,N} (x) - \Pi_{T_x S} \right\| \: ,
\end{equation}
Building upon such controls \eqref{eqFrobPiNSigmaN} and \eqref{eqFrobPiNsigmaN}, we introduce the $d$--varifolds 
\begin{equation} \label{eqVrdeltaN}
 V_{r,\delta,N} = \nu_{\delta,N} \otimes \delta_{\pi_{r,\delta,N}(x)}
 \quad \text{and} \quad
 \widetilde{V}_{r,\delta,N} = \nu_{\delta,N} \otimes \delta_{\Pi_r(x,\nu_{\delta,N})}
\end{equation}
and we investigate the convergence of $V_{r,\delta,N}$, (respectively $\widetilde{V}_{r,\delta,N}$) towards $W_S = \nu \otimes \delta_{\Pi_{T_x S}}$. Combining the mean convergence rate obtained in Proposition~\ref{prop:CV_WrdeltaN} for $\E [ \beta_B (W_{r, \delta, N}, \: W_{r,\delta}) ] $ (respectively  $\E [ \beta_B (\widetilde{W}_{r, \delta, N}, \: \widetilde{W}_{r,\delta}) ] $) with the uniform bound obtained for $ \beta_B (W_{r, \delta}, \: W_S)$ (respectively $ \beta_B (\widetilde{W}_{r, \delta}, \: W_S)$) in Proposition~\ref{propCvHolderW} we thus obtain a convergent varifold estimator of $W_S$ with explicit mean convergence rate in the regularity class $\cP$ at hand, as stated in Theorem~\ref{thmCvHolderV}. Note that we only state a local version in balls, for the sake of clarity, it is however possible to directly write a global estimate,  which will be done in Theorem~\ref{thmCvHolderSplit} with refined convergence rate. 

\begin{theorem} \label{thmCvHolderV}
Let $0 < a,b \leq 1$.
We assume that $S$ and $\theta$ satisfy assumptions \hyperref[hypH1]{$(H_1)$} to \eqref{hypH7} i.e. $\mu = \theta \cH^d_{| S} \in \cP$. Let $B \subset \R^n$ be an open ball of radius $0< R_B \leq \frac{R}{6C}$ with $R =\min (R_{S,sg}, R_{\theta, sg}) < 1$, $C =\max (C_{S,sg}, C_{\theta, sg}) \geq 1$ and let $0 < \delta,  r < 1$ with $\delta + r \leq R_B$.
Then, for $N \in \N^\ast$ large enough so that $N^{-\frac{1}{d}} \leq \min(\delta, r)$, we have
\begin{equation} \label{eqThmVFfoldCV1}
\left.
\begin{array}{r}
\E \left[ \beta_B (\widetilde{W}_{r,\delta,N}, \widetilde{V}_{r,\delta,N}) \right] \\
\E \left[ \beta_B (\widetilde{W}_{r,\delta,N}, W_S) \right] \\
\E \left[ \beta_B (\widetilde{V}_{r,\delta,N}, W_S) \right] 
\end{array} \right|
\leq  M \mu(B) \left[ \delta^{\min(a,b)} + r^a + \sum_{l=0}^{d-1} \xi_l \left(\frac{\delta+ r}{R_B} \right)^{d-l} \hspace{-3pt} + \frac{1}{\min(\delta, r)}
\left\lbrace
\begin{array}{lcl}
N^{-1/d}    & \text{if} & d > 2\\
N^{-1/2} \ln N  & \text{if} & d = 2\\
N^{-1/2}    & \text{if} & d = 1
\end{array}
\right. \right]
\end{equation}
and
\begin{equation} \label{eqThmVFfoldCV2}
\left.
\begin{array}{r}\E \left[ \beta_B (W_{r,\delta,N}, V_{r,\delta,N}) \right] \\
\E \left[ \beta_B (W_{r,\delta,N}, W_S) \right] \\
\E \left[ \beta_B (V_{r,\delta,N}, W_S) \right] 
\end{array} \right| \leq M \mu(B) \left[ \delta^{\min(a,b)} + r^{\min(a,b)} + \sum_{l=0}^{d-1} \xi_l \left(\frac{\delta+r}{R_B} \right)^{d-l} \hspace{-15pt} + \frac{1}{\min(\delta,r)}
\left\lbrace
\begin{array}{lcl}
N^{-1/d}    & \text{if} & d > 2\\
N^{-1/2} \ln N  & \text{if} & d = 2\\
N^{-1/2}    & \text{if} & d = 1
\end{array}
\right. \right]
\end{equation}
where $\xi_l = \xi_l(C(\delta + r), B)$ satisfies $\xi_l = 1$ if $B \cap \fS^{C(\delta + r)} \neq \emptyset$ and $\xi_l = 0$ otherwise.

\noindent If the ball $B$ is allowed to have larger radius $0 < R_B < 1$ (instead of $R_B \leq R$), the same statements hold for $\delta + r\leq \frac{R}{C}$ and replacing in both right hand sides in \eqref{eqThmVFfoldCV1} and \eqref{eqThmVFfoldCV2}:
\begin{equation} \label{eqThmVFfoldCV2BD}
 \mu(B) \sum_{l=0}^{d-1} \xi_l \left(\frac{\delta+r}{R_B} \right)^{d-l} \quad \text{with} \quad  \sum_{l=0}^{d-1} (\delta + r)^{d-l} \cH^l (\fS_l \cap B^{2C(\delta+ r)}) \: .
\end{equation}
\end{theorem}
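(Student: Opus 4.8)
The plan is to derive \eqref{eqThmVFfoldCV1} and \eqref{eqThmVFfoldCV2} by assembling, through repeated triangle inequalities for $\beta_B$, the ingredients already available: the mean rate for $\E[\beta_B(\widetilde{W}_{r,\delta,N},\widetilde{W}_{r,\delta})]$ and $\E[\beta_B(W_{r,\delta,N},W_{r,\delta})]$ from Proposition~\ref{prop:CV_WrdeltaN}, the deterministic rate for $\beta_B(\widetilde{W}_{r,\delta},W_S)$ and $\beta_B(W_{r,\delta},W_S)$ from Proposition~\ref{propCvHolderW}, and the eigendecomposition stability estimates \eqref{eqFrobPiNSigmaN}--\eqref{eqFrobPiNsigmaN} that pass from the symmetric-matrix measures $\widetilde{W}_{r,\delta,N}$, $W_{r,\delta,N}$ to the varifolds $\widetilde{V}_{r,\delta,N}$, $V_{r,\delta,N}$. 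Since the three left-hand sides of \eqref{eqThmVFfoldCV1} are the vertices of the triangle $\{\widetilde{W}_{r,\delta,N},\ \widetilde{V}_{r,\delta,N},\ W_S\}$ (and likewise for \eqref{eqThmVFfoldCV2}), it suffices to bound two of its edges; I bound $\E[\beta_B(\widetilde{W}_{r,\delta,N},W_S)]$ and $\E[\beta_B(\widetilde{W}_{r,\delta,N},\widetilde{V}_{r,\delta,N})]$.

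For the first edge, $\beta_B(\widetilde{W}_{r,\delta,N},W_S)\le\beta_B(\widetilde{W}_{r,\delta,N},\widetilde{W}_{r,\delta})+\beta_B(\widetilde{W}_{r,\delta},W_S)$ and Proposition~\ref{prop:CV_WrdeltaN}$(ii)$ together with Proposition~\ref{propCvHolderW} add up to exactly the stated right-hand side. For the second edge, I exploit that $\widetilde{W}_{r,\delta,N}$ and $\widetilde{V}_{r,\delta,N}$ have the same base measure $\nu_{\delta,N}$, so for any admissible test function $f$,
\[
\Bigl|\int f\,d\widetilde{W}_{r,\delta,N}-\int f\,d\widetilde{V}_{r,\delta,N}\Bigr|\le\int_B\bigl\|\Sigma_r(x,\nu_{\delta,N})-\Pi_r(x,\nu_{\delta,N})\bigr\|\,d\nu_{\delta,N}(x)\le\int_B\bigl\|\Sigma_r(x,\nu_{\delta,N})-\Pi_{T_xS}\bigr\|\,d\nu_{\delta,N}(x),
\]
the last step being \eqref{eqFrobPiNSigmaN}. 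It then remains to estimate the expectation of this quantity, splitting $\Sigma_r(x,\nu_{\delta,N})-\Pi_{T_xS}$ through the deterministic intermediate $\Sigma_r(x,\nu_\delta)$: the term $\int_B\|\Sigma_r(x,\nu_\delta)-\Pi_{T_xS}\|\,d\mu$ is precisely one of the quantities bounded in Proposition~\ref{propCvHolderW}, while $\E[\|\Sigma_r(x,\nu_{\delta,N})-\Sigma_r(x,\nu_\delta)\|]$ is controlled pointwise by \eqref{Sigmaestim} in Proposition~\ref{propCVSigmaN}.

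The delicate point — which I expect to be the main obstacle — is that this outer integration is against the \emph{random} measure $\nu_{\delta,N}=\Phi(\theta_{\delta,N})\mu_N$, whereas \eqref{Sigmaestim} is a pointwise bound best used against the fixed measure $\mu$. I therefore run, instead of the bare inequality above, the decomposition $\int f\,d\widetilde{V}_{r,\delta,N}-\int f\,dW_S=A_1+A_2+A_3$ already used in the proof of Proposition~\ref{prop:CV_WrdeltaN}. Here $A_1$ carries the empirical fluctuation $d\mu_N-d\mu$ and is handled by Proposition~\ref{propBetaLoc} after checking that $g(x)=f(x,\Pi_r(x,\nu_{\delta,N}))\Phi(\theta_{\delta,N}(x))$ satisfies \eqref{eqHypg}; this I do by peeling off $f(x,\Sigma_r(x,\nu_{\delta,N}))\Phi(\theta_{\delta,N}(x))$ (which satisfies \eqref{eqHypg} exactly as in the proof of Proposition~\ref{prop:CV_WrdeltaN}, thus \emph{avoiding any Lipschitz control of the eigenprojector} $\Pi_r$, which has no uniform spectral gap near the singular set) and bounding the remainder $f(x,\Pi_r(x,\nu_{\delta,N}))-f(x,\Sigma_r(x,\nu_{\delta,N}))$ by $\|\Sigma_r(x,\nu_{\delta,N})-\Pi_{T_xS}\|$ via \eqref{eqFrobPiNSigmaN}; $A_2$ is the density-estimation error $\int_B(\cdots)(\Phi(\theta_{\delta,N})-\Phi(\theta_\delta))\,d\mu$, controlled by Proposition~\ref{propDensityPointwiseCv} and Lemma~\ref{lemPropertiesPhiThetaN}; and $A_3$ now compares the tangent estimators against the \emph{fixed} measure $\nu_\delta$, so that \eqref{Sigmaestim} and Proposition~\ref{propCvHolderW} apply cleanly after taking expectations. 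The only residual stochastic contribution is an integral of $\|\Sigma_r(x,\nu_{\delta,N})-\Sigma_r(x,\nu_\delta)\|$ against $d\mu_N$, which I treat by exchangeability — reducing to $\E[\,\cdot\mid X_1=x\,]$, a one-point-conditioned analogue of \eqref{Sigmaestim} whose extra $O((Nr^d)^{-1})$ term is absorbed since $N^{-1/d}\le\min(\delta,r)$ — together with the elementary moment estimates \eqref{eqBasicMuN}--\eqref{eqBasicMuNBis}.

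Finally, the bound \eqref{eqThmVFfoldCV2} for $W_{r,\delta,N}$, $V_{r,\delta,N}$ is obtained \emph{mutatis mutandis}, replacing $\Sigma_r(\cdot,\nu_{\delta,N})$ by $\sigma_{r,\delta,N}$, \eqref{eqFrobPiNSigmaN} by \eqref{eqFrobPiNsigmaN}, and the $r^a$-rate for $\Sigma_r(\cdot,\nu_\delta)$ by the $r^{\min(a,b)}$-rate for $\sigma_{r,\delta}$ from Proposition~\ref{propCvHolderW} — this being the sole source of the discrepancy between the two displays. For a ball of radius $0<R_B<1$ not necessarily $\le R$, one substitutes the global estimates of Proposition~\ref{propBetaLoc}$(i)$, of the global version of Proposition~\ref{propCvHolderW}, and of Lemma~\ref{lemSingSet}$(i)$ for their localized counterparts, which yields exactly the replacement \eqref{eqThmVFfoldCV2BD}.
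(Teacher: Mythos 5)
Your overall architecture is the same as the paper's: triangle inequalities assembling Proposition~\ref{prop:CV_WrdeltaN}$(ii)$, Proposition~\ref{propCvHolderW} and the eigenprojector stability \eqref{eqFrobPiNSigmaN}--\eqref{eqFrobPiNsigmaN}, with the sole new work being the control of $\E[\beta_B(\widetilde{W}_{r,\delta,N},\widetilde{V}_{r,\delta,N})]$ (resp.\ the $W/V$ analogue), and the global version obtained by swapping the local deterministic estimates for the global ones. That part is fine. The place where you diverge — and where your write-up has a genuine flaw — is the treatment of the fluctuation term. You claim to check that $g(x)=f(x,\Pi_r(x,\nu_{\delta,N}))\Phi(\theta_{\delta,N}(x))$ satisfies \eqref{eqHypg} "by peeling off'' the $\Sigma_r$-version and bounding the remainder pointwise by $\|\Sigma_r(x,\nu_{\delta,N})-\Pi_{T_xS}\|$. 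This does not establish \eqref{eqHypg} for $g$: a pointwise bound on the remainder is not a Lipschitz bound, and $\Pi_r(\cdot,\nu_{\delta,N})$ genuinely fails to be Lipschitz where the spectral gap of $\Sigma_r$ closes. What your peeling actually produces is a splitting of $A_1$ into a Proposition~\ref{propBetaLoc}-amenable piece plus a remainder integrated against $d\mu_N+d\mu$; the $d\mu_N$ part of that remainder is exactly the "random integrand against random measure'' term you then propose to handle by exchangeability. So the claim "$g$ satisfies \eqref{eqHypg}'' must be deleted and replaced by that splitting, and the whole weight of the argument falls on the exchangeability step, which you assert rather than prove: it requires a one-point-conditioned version of \eqref{Sigmaestim} (hence of Theorem~\ref{thmBetaLocNuN}$(ii)$), together with a perturbation estimate of order $O((Nr^d)^{-1}+(N\delta^d)^{-1})$ for replacing $X_1$ by a fixed $x$ both in the atom and inside $\theta_{\delta,N}$. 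This is plausible but not carried out.

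The paper avoids all of this by choosing the split differently. Bounding $|f(x,\Pi_r(x,\nu_{\delta,N}))-f(x,\Sigma_r(x,\nu_{\delta,N}))|\le\|\Pi_{T_xS}-\Sigma_r(x,\nu_\delta)\|+\|\Sigma_r(x,\nu_\delta)-\Sigma_r(x,\nu_{\delta,N})\|$ and integrating against $d\mu_N$, the first summand is a \emph{deterministic} function of $x$, so $\E[\int_B\|\Pi_{T_xS}-\Sigma_r(x,\nu_\delta)\|\,d\mu_N]=\int_B\|\Pi_{T_xS}-\Sigma_r(x,\nu_\delta)\|\,d\mu$ exactly, and Proposition~\ref{propCvHolderW} applies; the second summand, truncated as $g(x)=\min\bigl(2R_B,\|\Sigma_r(x,\nu_\delta)-\Sigma_r(x,\nu_{\delta,N})\|\bigr)$, \emph{is} a legitimate random Lipschitz function (both covariance matrices are convolutions, so $g$ satisfies \eqref{eqHypg} with $\kappa_1=0$, $\kappa_2=M$, $\kappa_0=M/r$, $\kappa=R_B$), and one writes $\E[\int_B g\,d\mu_N]\le\E[|\int_B g\,(d\mu_N-d\mu)|]+\E[\int_B g\,d\mu]$, the first handled by Proposition~\ref{propBetaLoc}$(ii)$ and the second by \eqref{Sigmaestim}. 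No conditioning or exchangeability is needed. If you want to salvage your route you must actually prove the conditioned analogue of \eqref{Sigmaestim}; otherwise adopt the paper's split. One further small point: for the enlarged-ball variant, for $d\le2$ you cannot substitute Proposition~\ref{propBetaLoc}$(i)$ (it only covers $d>2$); the paper keeps the ball version $(ii)$ throughout and only replaces \eqref{eqCvHolderWrdelta} by \eqref{eqCvHolderWrdeltaD}.
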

\begin{proof}
We recall that $M$ is a generic constant (see Remark~\ref{remkCstMSecHolder}). We start with proving the theorem concerning $\widetilde{W}_{r,\delta,N}$ and $\widetilde{V}_{r,\delta,N}$.
Let $f \in \xC_c(\R^n \times \xSym_+(n))$ with $\supp f \subset B \times \xSym_+(n)$, $ \|f\|_\infty \leq 1$ and $\xLip(f)\leq 1$. Note that $\supp f \subset B \times \xSym_+(n)$ so that $\| f \|_\infty \leq R_B$. Then, thanks to \eqref{eqFrobPiNSigmaN}, we have for $x \in S$,
\begin{align*}
 \left| f \left(x,\Pi_r(x,\nu_{\delta,N}) \right) - f \left(x,\Sigma_r(x,\nu_{\delta,N}) \right) \right| & \leq \left\| \Pi_r(x,\nu_{\delta,N}) - \Sigma_r(x,\nu_{\delta,N}) \right\|  \leq  \left\| \Pi_{T_x S} - \Sigma_r(x,\nu_{\delta,N}) \right\| \\
 & \leq \left\| \Pi_{T_x S} - \Sigma_r(x,\nu_{\delta}) \right\| + \left\| \Sigma_r(x,\nu_{\delta}) - \Sigma_r(x,\nu_{\delta,N}) \right\|
\end{align*}
and consequently, as  $\| \Phi \|_\infty \leq M$, we obtain
\begin{align} \label{eqThmVFfoldCV3}
\left| \int f \: d \widetilde{V}_{r,\delta,N} - \int f \: d \widetilde{W}_{r,\delta,N} \right| & = \left| \int \left[ f \left(x,\Pi_r(x,\nu_{\delta,N}) \right) - f \left(x,\Sigma_r(x,\nu_{\delta,N}) \right) \right] \Phi \left( \theta_{\delta,N} (x) \right) \: d \mu_{N} \right| \nonumber \\
& \leq M \int_{B} g \: d \mu_N + M \int_B \|  \Pi_{T_x S} - \Sigma_r(x,\nu_{\delta}) \| \: d\mu_N
\end{align}
with $\displaystyle g(x) = \min \left(2 R_B,  \| \Sigma_r(x,\nu_{\delta}) - \Sigma_r(x,\nu_{\delta,N}) \| \right)$.
On one hand, we know from Proposition~\ref{propCvHolderW} that
\begin{align} \label{eqThmVFfoldCV4}
\mathbb{E} \left[ \int_B \| \Pi_{T_x S} - \Sigma_r(x,\nu_{\delta})  \| \: d\mu_N \right] & = \int_B \|  \Pi_{T_x S}  - \Sigma_r(x,\nu_{\delta}) \| \: d\mu  \leq M  \left( \delta^{\min(a,b)} + r^a + \sum_{l=0}^{d-1} \xi_l \left(\frac{r + \delta}{R_B} \right)^{d-l} \right)\mu(B) \: .
\end{align}
On the other hand, let us check that $g$ satisfies the assumption \eqref{eqHypg} of Proposition~\ref{propBetaLoc} with $\displaystyle \kappa_1 = 0$, $\displaystyle \kappa_2 = M$, $\displaystyle \kappa_0 = \kappa_0(r) = \frac{M}{r}$ and $\kappa = R_B$. 
Indeed, for $x, y \in \R^n$, using that $\| \Phi \|_\infty \leq M$, $\xLip(\psi_r) \leq \frac{M}{r}$ (see \eqref{eq:lipPsiij}) and the Ahlfors regularity of $\mu$, we have
\begin{align*} 
    \left\| \Sigma_r(x, \nu_{\delta})-\Sigma_r(y,\nu_{\delta}) \right\| & \leq \frac{1}{C_\phi r^d}
\int_{z \in \R^n} \left\| \psi_r(z-x)  - \psi_r (z-y) \right\| \left|\Phi\left( \theta_{\delta}(z)\right) \right| \: d \mu(z) \nonumber \\
& \leq \frac{ M }{r^{d+1}} \mu \left(B(x,r) \cup B(y,r) \right) |x-y|  \leq \frac{M}{r} |x-y| \: , 
\end{align*}
and similarly, see also \eqref{eq:lipSigmaTilde},
\begin{equation*}
  \left\| \Sigma_r(x, \nu_{\delta,N})-\Sigma_r(y,\nu_{\delta,N}) \right\| 
\leq \frac{M}{r^{d+1}} \mu_N \left(B(x,r) \cup B(y,r) \right) |x-y| = \frac{M}{r} \Delta_{r,N} (x,y)|x -y | \: ,
\end{equation*}
so that by triangular inequality, and noting that $t \in \R_+ \mapsto \min (2R_B , t)$ is $1$--Lipschitz, we obtain
\begin{align*}
 \left| g(x) - g(y) \right| & \leq \| \Sigma_r(x,\nu_{\delta,N}) - \Sigma_r(y,\nu_{\delta,N})  \| + \| \Sigma_r(x,\nu_{\delta}) - \Sigma_r(y,\nu_{\delta})  \| \\
 & \leq \frac{M}{r} \Delta_{r,N} (x,y) |x-y| + \frac{M}{r} |x-y| \:.
\end{align*}
Moreover $\| g \|_\infty  \leq 2 R_B$ so that $\| g \|_\infty \leq \kappa\kappa_0$ with $\kappa \leq R_B$.
We can thus apply Proposition~\ref{propBetaLoc}: 
\begin{equation*}
 \E \left[ \left| \int_B g \: d \mu_N - \int_B g \: d \mu \right| \right] \leq \frac{M}{r} \mu(B) \times \left\lbrace
\begin{array}{lcl}
N^{-1/d}    & \text{if} & d > 2\\
N^{-1/2} \ln N  & \text{if} & d = 2\\
N^{-1/2}    & \text{if} & d = 1
\end{array}
\right. 
\end{equation*}
and we recall that thanks to \eqref{Sigmaestim} in Proposition~\ref{propCVSigmaN},
\begin{equation*}
 \E \left[ \left| \int_B g \: d \mu \right| \right] \leq \int_B \E \left[ \| \Sigma_r(x,\nu_{\delta,N}) - \Sigma_r(x,\nu_{\delta}) \| \right] \: d \mu(x) \leq \frac{M}{\delta} \mu(B) \times \left\lbrace
\begin{array}{lcl}
N^{-1/d}    & \text{if} & d > 2\\
N^{-1/2} \ln N  & \text{if} & d = 2\\
N^{-1/2}    & \text{if} & d = 1
\end{array}
\right. 
\end{equation*}
so that we infer
\begin{align} \label{eqThmVFfoldCV5}
\E \left[ \left| \int_B g \: d \mu_N \right| \right] & \leq \E \left[ \left| \int_B g \: d \mu_N - \int_B g \: d \mu \right| \right] + \E \left[ \left| \int_B g \: d \mu \right| \right] \nonumber \\
& \leq M \left(  \frac{1}{\delta} + \frac{1}{r} \right) \mu(B) \times \left\lbrace
\begin{array}{lcl}
N^{-1/d}    & \text{if} & d > 2\\
N^{-1/2} \ln N  & \text{if} & d = 2\\
N^{-1/2}    & \text{if} & d = 1
\end{array}
\right.  .
\end{align}
We can conclude the proof of \eqref{eqThmVFfoldCV1} (first line) thanks to \eqref{eqThmVFfoldCV3}, \eqref{eqThmVFfoldCV4} and \eqref{eqThmVFfoldCV5}. We then obtain \eqref{eqThmVFfoldCV1} as a direct consequence of 
Proposition~\ref{prop:CV_WrdeltaN}$(ii)$ and Proposition~\ref{propCvHolderW}.

Let us check that the same strategy allows to prove Theorem~\ref{thmCvHolderV} for $W_{r,\delta,N}$ and $W_{r,\delta,N}$. Similarly to \eqref{eqThmVFfoldCV3}, we have
\begin{align} \label{eqThmVFfoldCV3Bis}
\left| \int f \: d V_{r,\delta,N} - \int f \: d W_{r,\delta,N} \right| 
\leq M \int_{B} g \: d \mu_N + M \int_B \| \sigma_{r,\delta}(x) - \Pi_{T_x S} \| \: d\mu_N
\end{align}
with $\displaystyle g(x) = \min \left( 2 R_B ,  \| \sigma_{r,\delta,N}(x) - \sigma_{r,\delta}(x)\| \right)$. 
Again from Proposition~\ref{propCvHolderW}, we know that
\begin{align} \label{eqThmVFfoldCV4Bis}
\mathbb{E} \left[ \int_B \| \sigma_{r,\delta}(x) - \Pi_{T_x S} \| \: d\mu_N \right] & = \int_B \| \sigma_{r,\delta}(x) - \Pi_{T_x S} \| \: d\mu  \leq M  \left[ \delta^{\min(a,b)} + r^{\min(a,b)} + \sum_{l=0}^{d-1} \xi_l \left(\frac{r + \delta}{R_B} \right)^{d-l} \right] \mu(B) \: .
\end{align}
We similarly check that $g$ satisfies the assumption \eqref{eqHypg} of Proposition~\ref{propBetaLoc} with $\displaystyle \kappa_1 = \kappa_2 = \kappa_3 = M$, $\displaystyle \kappa_0 = \kappa_0(\delta, r) = \frac{M}{\delta} + \frac{M}{r}$ and $\kappa = R_B$. Indeed, thanks to \eqref{eq:lipPsiij}, \eqref{eqInfBoundSigma}
for $x$, $y \in \R^n$,
we have
\begin{align*} 
\left\| \sigma_{r,\delta}(x) \right. & \left. -\sigma_{r,\delta}(y) \right\|  = \left\| \Phi(\theta_{\delta}(x)) \Sigma_r(x,\mu) - \Phi(\theta_{\delta}(y)) \Sigma_r(y,\mu) \right\| \nonumber \\
& \leq \| \Phi \|_\infty \| \Sigma_r (x , \mu) - \Sigma_r (y , \mu) \| + \| \Sigma_r (y , \mu ) \| \xLip(\Phi) | \theta_\delta(x) - \theta_\delta(y) | \\
& \leq \frac{M}{C_\phi r^d}
\int_{z \in \R^n} \left\| \psi_r(z-x)  - \psi_r (z-y) \right\|  \: d \mu(z) + \frac{M}{C_\eta \delta^d}
\int_{z \in \R^n} \left| \eta_\delta(z-x)  - \eta_\delta (z-y) \right|  \: d \mu(z) \\
& \leq \frac{M}{r^{d+1}}  \mu \left(B(x,r) \cup B(y,r) \right) |x-y| + \frac{M}{\delta^{d+1}}  \mu \left(B(x,\delta) \cup B(y,\delta) \right) |x-y| \nonumber  \\
& \leq M \left( \frac{1}{\delta} + \frac{1}{r}\right) |x-y| \text{ by Ahlfors regularity} \: .
\end{align*}
And similarly, see also \eqref{eq:lipSigmaBis},
\begin{equation*}
  \left\| \sigma_{r,\delta,N}(x)-\sigma_{r,\delta,N}(y) \right\| 
\leq M \left( \frac{1}{r} \Delta_{r,N}(x,y) + \frac{1}{\delta} \Delta_{\delta,N}(x,y) \Delta_{r,N}(x,y)  \right) |x-y| \: \: ,
\end{equation*}
so that by triangular inequality,
\begin{align*}
 \left| g(x) - g(y) \right| 
 \leq  M \left(\frac{1}{\delta} + \frac{1}{r} + \frac{1}{\delta}  \Delta_{\delta,N}(x,y) + \frac{1}{r} \Delta_{r,N} (x,y)  + \frac{1}{\delta} \Delta_{\delta,N}(x,y) \Delta_{r,N}(x,y) \right) |x-y|\:.
\end{align*}
We can thus apply Proposition~\ref{propBetaLoc} in order to control $\E \left[ \left| \int_B g \: d \mu_N - \int_B g \: d \mu \right| \right]$ and we recall that thanks to \eqref{eqConcentrationSigmaN0} in Proposition~\ref{propCVSigmaN},
\begin{equation*}
 \E \left[ \left| \int_B g \: d \mu \right| \right] \leq \int_B \E \left[ \| \Sigma_r(x,\nu_{\delta,N}) - \Sigma_r(x,\nu_{\delta}) \| \right] \: d \mu(x) \leq M \left( \frac{1}{\sqrt{N \delta^d}} + \frac{1}{\sqrt{N r^d}} \right) \mu(B) 
\end{equation*}
so that we infer
\begin{align} \label{eqThmVFfoldCV5Bis}
\E \left[ \left| \int_B g \: d \mu_N \right| \right] & \leq \E \left[ \left| \int_B g \: d \mu_N - \int_B g \: d \mu \right| \right] + \E \left[ \left| \int_B g \: d \mu \right| \right] \nonumber \\
& \leq M \left(  \frac{1}{\delta} + \frac{1}{r} \right) \mu(B) \times \left\lbrace
\begin{array}{lcl}
N^{-1/d}    & \text{if} & d > 2\\
N^{-1/2} \ln N  & \text{if} & d = 2\\
N^{-1/2}    & \text{if} & d = 1
\end{array}
\right. + M \left( \frac{1}{\sqrt{N \delta^d}} + \frac{1}{\sqrt{N r^d}} \right) \mu(B) \nonumber \\
& \leq   \frac{M}{\min(\delta,r)} \mu(B) \times \left\lbrace
\begin{array}{lcl}
N^{-1/d}    & \text{if} & d > 2\\
N^{-1/2} \ln N  & \text{if} & d = 2\\
N^{-1/2}    & \text{if} & d = 1
\end{array}
\right. \: .
\end{align}
We can conclude the proof of \eqref{eqThmVFfoldCV2} (first line) thanks to \eqref{eqThmVFfoldCV3Bis}, \eqref{eqThmVFfoldCV4Bis} and \eqref{eqThmVFfoldCV5Bis}. We then obtain \eqref{eqThmVFfoldCV2} as a direct consequence of 
Proposition~\ref{prop:CV_WrdeltaN} and Proposition~\ref{propCvHolderW}.

The proof of \eqref{eqThmVFfoldCV2BD} is exactly the same, replacing the local estimate \eqref{eqCvHolderWrdelta} with the global one \eqref{eqCvHolderWrdeltaD} in each application of Proposition~\ref{propCvHolderW} (also noting that $B^r \subset 2B$ and thus $\mu(B^r) \leq M \mu(B)$) when establishing \eqref{eqThmVFfoldCV4} and \eqref{eqThmVFfoldCV4Bis}.
\end{proof}
\section{Inferring the varifold structure in a piecewise Hölder regularity class}
\label{secSplit}
In Section~\ref{secCVvarifold} and more precisely in Theorem~\ref{thmCvHolderV}, we quantify the speed rate convergence of the deterministic term, that is the Bounded Lipschitz distance $\displaystyle \E \left[ \beta_B ( W_{r,\delta} , W_S )\right]$ thanks to the piecewise Hölder regularity assumptions \hyperref[hypH1]{$(H_1)$} to \eqref{hypH7} i.e. for $\mu = \theta \cH^d_{| S} \in \cP$.  
However, we did not take full advantage of such regularity: we directly apply Proposition~\ref{prop:CV_WrdeltaN} that gives the following bound, only relies on \hyperref[hypH1]{$(H_1)$} and \hyperref[hypH1]{$(H_2)$}
\[
 \E \left[ \beta_B ( W_{r,\delta,N} , W_{r,\delta}) \right] \leq \frac{M}{\min(\delta,r)} \mu(B) \times \left\lbrace
\begin{array}{lcl}
N^{-1/d}    & \text{if} & d > 2\\
N^{-1/2} \ln N  & \text{if} & d = 2\\
N^{-1/2}    & \text{if} & d = 1
\end{array}
\right.  \: .
\]
The purpose of the current section is precisely to improve the control of $\E \left[ \beta_B ( W_{r,\delta,N} , W_{r,\delta}) \right]$ in the piecewise Hölder regularity class $\cP$ defined in Section~\ref{secPwHolderDef}. Note that we will rely on the fact that when considering $\sigma_{r,\delta,N}$ the concentration result \eqref{eqConcentrationSigmaN0} gives a better rate than \eqref{Sigmaestim} obtained for $\Sigma_{r}(\cdot,\nu_\delta)$: see Remark~\ref{remkFasterRatesigmaVsSigma}. We therefore carry on our study considering only the tangent space estimator $\sigma_{r,\delta,N}$.

To this end, we need to estimate independently the measure $\mu$, the density $\theta$ and the tangent plane $T_x S$: we can divide our sample into $4$ independent parts, $(X_j)_{4 j + k}$ for $k = 0, 1, 2, 3$, for instance. For simplicity, we assume that the sample has size $4 N$ and we use the notations ${\bf X} = (X_1, \ldots, X_N)$, ${\bf Y} = (Y_1, \ldots, Y_N)$, 
${\tilde{\bf Y}} = (\tilde{Y}_1, \ldots, \tilde{Y}_N)$
and ${\bf Z} = (Z_1, \ldots, Z_N)$ for the four independent subsamples. We consequently have four associated empirical measures $\mu_N^{\bf X} = \frac{1}{N} \sum_{i=1}^N \delta_{X_i}$ and similarly $\mu_N^{\bf Y}$, $\mu_N^{\tilde{\bf Y}}$ and $\mu_N^{\bf Z}$. We can then independently estimate the measure $\mu$, the density $\theta$ and the tangent plane $T_x S$ by considering for $\delta, r > 0$:
\begin{equation*} 
\mu_N^{\bf X} \:, \quad \theta_{\delta, N}^{\bf Y} = \Theta_\delta  (\cdot, \mu_N^{\bf Y}) = \frac{\mu_N^{\bf Y} \ast \eta_\delta}{C_\eta \delta^d} \quad \text{and} \quad \sigma_{r,\delta,N} = \sigma_{r,\delta,N}^{\tilde{\bf Y}, {\bf Z}} = \Phi \left( \theta_{ \delta, N}^{\tilde{\bf Y}} \right) \Sigma_r (\cdot, \mu_N^{\bf Z})  \: .
\end{equation*}
In Lemma~\ref{lemMeanLipEstimators}, we evidence that in the piecewise Hölder regularity class $\cP$ at hand, $\theta_{\delta,N}$ and $\sigma_{r,\delta,N}$ admit mean Lipschitz estimates \eqref{eqMeanLipThetaN} and \eqref{eqMeanLipSigmaN} finer than the generic rough estimates previously used: for instance concerning $\theta_{\delta,N}$, we had in Lemma~\ref{lemPropertiesPhiThetaN} that for $x$, $y \in S$,
$\displaystyle
 \left| \theta_{\delta, N}(x) - \theta_{\delta, N}(y) \right| \leq  \frac{M}{\delta} \Delta_{\delta,N}(x,y) |x-y|$ and $\displaystyle \E \left[ \Delta_{\delta,N}(x,y) \right] \leq M$
so that we were using the mean Lipschitz estimate $\displaystyle \E \left[ \left| \theta_{\delta, N}(x) - \theta_{\delta, N}(y) \right| \right] \leq \frac{M}{\delta} |x-y|$ in the proof of Theorem~\ref{thmBetaLocNuN} and Proposition~\ref{prop:CV_WrdeltaN}. Such mean Lipschitz constants of order $\delta^{-1}$ for $\theta_{\delta,N}$ and $\min(\delta,r)^{-1}$ for $\sigma_{r,\delta,N}$ are responsible for the same factor appearing in the rates established in those results (Theorem~\ref{thmBetaLocNuN} and Proposition~\ref{prop:CV_WrdeltaN}). Therefore, a careful adaptation of the proof of Proposition~\ref{propBetaLoc} allows to leverage the aforementioned improved Lipschitz estimates \eqref{eqMeanLipThetaN} and \eqref{eqMeanLipSigmaN} in the piecewise Hölder regularity class $\cP$ to infer Theorem~\ref{thmCvHolderSplit}: we eventually obtain a varifold estimator $\widehat{V}_N = \widehat{V}_{\delta_N,N}$ (defined in \eqref{eqDfnVN}) with a mean convergence rate of order 
\[
 \E \left[ \beta ( \widehat{V}_N , W_S ) \right] \lesssim  N^{\displaystyle -\frac{c}{d + 2 c}} \quad \text{with} \quad c = \min(a,b) \quad \text{choosing  } \delta_N = N^{\displaystyle -\frac{1}{d + 2c}} \: .
\]

\begin{remark}
Note that $S$ being compact, it is sufficient to state Theorem~\ref{thmCvHolderSplit} in bounded open sets $D$. Indeed, we know that $S \subset B(0,k_0)$ for some $k_0 \in \N$ and then it is not difficult to check that $\beta (\cdot, \cdot) \leq \left( 1 + \frac{1}{k} \right) \beta_{B(0,k_0+ k)}$ for any $k \in \N^\ast$, thanks to a $\frac{1}{k}$--Lipschitz radial cutoff equal to $1$ in $B(0,k_0)$ and $0$ outside $B(0,k_0+k)$.
\end{remark}

\begin{remark}[Constant $M$] \label{remkCstMSecSplitt}
 All along the current section, $M$ stands for a generic constant that may vary from one statement to another one, and only depends on $d, n, \widetilde{C_0}, \eta, \phi, \theta_{min/max}, C = \max(C_{\theta, sg}, C_{S,sg}), R = \min(R_{\theta, sg}, R_{S,sg})$ and $\tau$.
\end{remark}
\begin{lemma} \label{lemMeanLipEstimators}
Let $d \in \N^\ast$
and $0 < a,b \leq 1$.
We assume that 
$S$ and $\theta$ satisfy assumptions \hyperref[hypH1]{$(H_1)$} to \eqref{hypH7} i.e. $\mu = \theta \cH^d_{| S} \in \cP$. Using the notations of Definitions~\ref{dfnPwHolderS} and \ref{dfnPwHolderTheta}, let $0 < \delta, r \leq R = \min( R_{S,sg}, R_{\theta,sg}) < 1$ and $\rho = \max(\delta,r)$,
then,
for all $x, y \in S \setminus \left( \fS^{C\rho} \right)$ with $|x - y | \leq \rho$, we have
\begin{equation} \label{eqConcentrationThetadeltaN}
 \E\left[ \left| \theta_{\delta, N}^{\bf Y}(x) -\theta(x) \right| \right] \leq M \left(  \delta^{\min(a,b)} + \frac{1}{\sqrt{N \delta^d}} \right)
\end{equation}
and
\begin{equation} \label{eqMeanLipThetaN}
 \E \left[ | \theta_{\delta, N}^{\bf Y}(x) - \theta_{\delta, N}^{\bf Y}(y)| \right] \leq M \left( |x-y|^b + \delta^{\min(a,b)} + \frac{1}{\sqrt{N \delta^d}}\right) \: ,
\end{equation}
as well as\begin{equation} \label{eqConcentrationSigmaN}
 \E  \left[ \left\| \sigma_{r,\delta,N}^{\tilde{\bf Y}, {\bf Z}}(x) - \Pi_{T_x S} \right\| \right] 
 \leq M \left( r^{\min(a,b)} + \delta^{\min(a,b)} + \frac{1}{\sqrt{N r^d}} + \frac{1}{\sqrt{N \delta^d}} \right)
\end{equation}
and
\begin{equation} \label{eqMeanLipSigmaN}
 \E  \left[ \left\| \sigma_{r,\delta,N}^{\tilde{\bf Y}, {\bf Z}}(x) - \sigma_{r,\delta,N}^{\tilde{\bf Y}, {\bf Z}}(y) \right\| \right] 
 \leq M \left( |x-y|^{a} + r^{\min(a,b)} + \delta^{\min(a,b)} + \frac{1}{\sqrt{N r^d}} + \frac{1}{\sqrt{N \delta^d}} \right) \: .
\end{equation}
\end{lemma}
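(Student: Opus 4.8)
The plan is to prove the four estimates \eqref{eqConcentrationThetadeltaN}--\eqref{eqMeanLipSigmaN} by combining the pointwise Hölder bounds from Proposition~\ref{coroUniformCVHolder} with the Efron--Stein fluctuation bounds from Proposition~\ref{propDensityPointwiseCv} and Proposition~\ref{propCVSigmaN}, using that the four subsamples are independent. The main point is that all error terms split cleanly into a \emph{bias} part (controlled by the regularity class $\cP$ away from the singular set) and a \emph{fluctuation} part (controlled by concentration), and that moving from $x$ to a neighbouring point $y$ with $|x-y|\le\rho$ never escapes the good set since $\fS^{C\rho}$ is already excluded.

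First I would prove \eqref{eqConcentrationThetadeltaN}. Write $\theta_{\delta,N}^{\bf Y}(x)-\theta(x) = (\theta_{\delta,N}^{\bf Y}(x)-\theta_\delta(x)) + (\theta_\delta(x)-\theta(x))$. Taking expectation, the first term is bounded by $M(N\delta^d)^{-1/2}$ by \eqref{densityestim} in Proposition~\ref{propDensityPointwiseCv}; the second term is deterministic and, since $x\in S\setminus\fS^{C\delta}\supset S\setminus\fS^{C\rho}$, is bounded by $M\delta^{\min(a,b)}$ by \eqref{eqUniformCVThetaHolder} in Proposition~\ref{coroUniformCVHolder}. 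For \eqref{eqMeanLipThetaN}, I would use the triangle inequality
\[
|\theta_{\delta,N}^{\bf Y}(x)-\theta_{\delta,N}^{\bf Y}(y)| \le |\theta_{\delta,N}^{\bf Y}(x)-\theta(x)| + |\theta(x)-\theta(y)| + |\theta(y)-\theta_{\delta,N}^{\bf Y}(y)|,
\]
take expectation, apply \eqref{eqConcentrationThetadeltaN} to the outer two terms (valid since both $x$ and $y$ lie in $S\setminus\fS^{C\rho}$), and bound the middle term by $C|x-y|^b$ using \eqref{hypH7} --- here I need that $x\in S\setminus\fS^{C\rho}\subset S\setminus\Theta_{sg}^{C\rho}$ and $|x-y|\le\rho\le R$ so that $y\in S\cap B(x,\rho)$ and the local Hölder bound \eqref{hypH7} applies with $r=\rho$.

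The estimates \eqref{eqConcentrationSigmaN}--\eqref{eqMeanLipSigmaN} for $\sigma_{r,\delta,N}^{\tilde{\bf Y},{\bf Z}} = \Phi(\theta_{\delta,N}^{\tilde{\bf Y}})\Sigma_r(\cdot,\mu_N^{\bf Z})$ follow the same template, now exploiting that $\tilde{\bf Y}$ and ${\bf Z}$ are independent. For \eqref{eqConcentrationSigmaN}, decompose
\[
\bigl\|\sigma_{r,\delta,N}^{\tilde{\bf Y},{\bf Z}}(x)-\Pi_{T_xS}\bigr\|
\le \bigl\|\sigma_{r,\delta,N}^{\tilde{\bf Y},{\bf Z}}(x)-\sigma_{r,\delta}(x)\bigr\| + \bigl\|\sigma_{r,\delta}(x)-\Pi_{T_xS}\bigr\|,
\]
where $\sigma_{r,\delta}(x)=\Phi(\theta_\delta(x))\Sigma_r(x,\mu)$; the second term is $\le M(\delta^{\min(a,b)}+r^{\min(a,b)})$ by the third line of \eqref{eqUniformCVCovarianceHolder} (using $x\in S\setminus\fS^{C\max(\delta,r)}=S\setminus\fS^{C\rho}$), and the first term is controlled exactly as in \eqref{eqsigmardeltaNrdelta} in the proof of Proposition~\ref{propCVSigmaN}: using $\|\Phi\|_\infty+\xLip(\Phi)\le M$ and $\|\Sigma_r(\cdot,\mu)\|\le M$, it is bounded by $M(\|\Sigma_r(x,\mu_N^{\bf Z})-\Sigma_r(x,\mu)\|+|\theta_{\delta,N}^{\tilde{\bf Y}}(x)-\theta_\delta(x)|)$, whose expectation is $\le M((Nr^d)^{-1/2}+(N\delta^d)^{-1/2})$ by \eqref{eqConcentrationSigmaN0} in Proposition~\ref{propCVSigmaN} and \eqref{densityestim} in Proposition~\ref{propDensityPointwiseCv}. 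Finally \eqref{eqMeanLipSigmaN} follows by inserting $\pm\Pi_{T_xS}\pm\Pi_{T_yS}$:
\[
\bigl\|\sigma_{r,\delta,N}^{\tilde{\bf Y},{\bf Z}}(x)-\sigma_{r,\delta,N}^{\tilde{\bf Y},{\bf Z}}(y)\bigr\|
\le \bigl\|\sigma_{r,\delta,N}^{\tilde{\bf Y},{\bf Z}}(x)-\Pi_{T_xS}\bigr\| + \|\Pi_{T_xS}-\Pi_{T_yS}\| + \bigl\|\Pi_{T_yS}-\sigma_{r,\delta,N}^{\tilde{\bf Y},{\bf Z}}(y)\bigr\|,
\]
apply \eqref{eqConcentrationSigmaN} to the outer terms, and bound $\|\Pi_{T_xS}-\Pi_{T_yS}\|\le M|x-y|^a$ using the $\xC^{1,a}$ graph property \eqref{hypH5}: since $x\in S\setminus S_{sg}^{C\rho}$, $S\cap B(x,\rho)$ is the image of a $\xC^{1,a}$ map $u$ with $\|Du(z)-Du(w)\|\le C|z-w|^a$, and the orthogonal projector onto the tangent space of a graph is a smooth (indeed Lipschitz, with $a$-Hölder modulus on the graph) function of $Du$, so $\|\Pi_{T_xS}-\Pi_{T_yS}\|\lesssim \|Du(u^{-1}(x))-Du(u^{-1}(y))\|\lesssim |x-y|^a$ after controlling the bi-Lipschitz constant of $u^{-1}$ by a constant depending only on $C$ (as in the proof of Lemma~\ref{lemmaConvspeedtgt}).

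The main obstacle I anticipate is the quantitative Hölder estimate $\|\Pi_{T_xS}-\Pi_{T_yS}\|\le M|x-y|^a$ for two nearby points on the graph: one must show that the parametrisation $u$ from \eqref{hypH5} has a derivative $Du$ that is $a$-Hölder \emph{with respect to the ambient distance on the graph} (not just the parameter distance), which requires a lower bound on the Jacobian of $({\rm id},u)$ — available because $Du(0)=0$ after the normalisation and $\|Du\|$ is controlled on $\cU$, exactly the argument underlying \eqref{eqInnerBallRadius} in Lemma~\ref{lemmaConvspeedtgt}. Everything else is a bookkeeping exercise in triangle inequalities and the already-established bias/fluctuation bounds, with the independence of the four subsamples ensuring that the fluctuation estimates of Propositions~\ref{propDensityPointwiseCv} and~\ref{propCVSigmaN} apply verbatim to $\theta_{\delta,N}^{\bf Y}$, $\theta_{\delta,N}^{\tilde{\bf Y}}$ and $\Sigma_r(\cdot,\mu_N^{\bf Z})$ separately.
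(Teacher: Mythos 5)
Your proposal is correct and follows essentially the same route as the paper: each estimate is a triangle inequality splitting into a bias term controlled by Proposition~\ref{coroUniformCVHolder} (and \eqref{hypH5}, \eqref{hypH7}) and a fluctuation term controlled by Propositions~\ref{propDensityPointwiseCv} and~\ref{propCVSigmaN}, with independence of the subsamples letting those bounds apply verbatim. The only (harmless) divergences are that for \eqref{eqConcentrationSigmaN} you insert $\sigma_{r,\delta}(x)$ as the intermediate term where the paper inserts $\Phi(\theta_{\delta,N}^{\tilde{\bf Y}}(x))\,\theta(x)\,\Pi_{T_xS}$, and that you spell out the justification of $\|\Pi_{T_xS}-\Pi_{T_yS}\|\leq M|x-y|^a$ from \eqref{hypH5}, which the paper simply asserts.
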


\begin{proof}
The proof is a straightforward combination of previously stated results, we recall that $\fS = S_{sg} \cup \Theta_{sg}$.
Let $x, y \in S \setminus \fS^{C\rho} \subset S \setminus \fS^{C\delta}$ with $|x - y | \leq \rho$.
Applying Proposition~\ref{propDensityPointwiseCv} and Proposition~\ref{coroUniformCVHolder}, we have
\begin{equation*}
\E\left[ \left| \theta_{\delta, N}^{\bf Y}(x) -\theta(x) \right| \right] \leq \E\left[\left|\theta_{\delta, N}^{\bf Y}(x) -\theta_\delta(x)\right|\right] + \left| \theta_\delta(x)-\theta(x) \right|  \leq M \left( \frac{1}{\sqrt{N \delta^d}} + \delta^{\min(a,b)} \right) \: .
\end{equation*}
The same estimate holds at point $y$ and thus,
triangular inequality and \eqref{hypH7} conclude the proof of \eqref{eqMeanLipThetaN}:
\begin{align*}
\E \left[ | \theta_{\delta, N}^{\bf Y}(x) - \theta_{\delta, N}^{\bf Y}(y)| \right] & \leq \E\left[\left|\theta_{\delta, N}^{\bf Y}(x) -\theta(x)\right|\right] + \E\left[\left|\theta_{\delta, N}^{\bf Y}(y) -\theta(y)\right|\right] + \left| \theta(x) - \theta (y) \right| \\
&\leq 2M \left(\delta^{\min(a,b)} + \frac{1}{\sqrt{N \delta^d}}\right) +  C |x-y|^b \: . 
\end{align*}

\noindent Thanks to Proposition~\ref{propCVSigmaN} and Proposition~\ref{coroUniformCVHolder}, we similarly have
\begin{equation*}
\E\left[\left\| \Sigma_{r} (x , \mu_N^{\bf Z}) -\theta(x) \Pi_{T_x S} \right\| \right] \leq  M \left( \frac{1}{\sqrt{N r^d}} + r^{\min(a,b)} \right) \: .
\end{equation*}
Then, recalling that $\Pi_{T_x S} = \Phi(\theta(x)) \theta(x) \Pi_{T_x S}$, we have
\begin{align*}
& \E \left[ \left\| \sigma_{r,\delta,N}^{\tilde{\bf Y}, {\bf Z}} (x)  - \Pi_{T_x S} \right\| \right]  \\
\leq & \E  \left[ \left\| \Phi \left( \theta_{ \delta, N}^{\tilde{\bf Y}} (x) \right)  \Sigma_{r} (x , \mu_N^{\bf Z}) - \Phi \left( \theta_{ \delta, N}^{\tilde{\bf Y}} (x) \right) \theta(x) \Pi_{T_x S} \right\| \right] + \E \left[ \left\| \Phi \left( \theta_{ \delta, N}^{\tilde{\bf Y}} (x)\right) \theta(x) \Pi_{T_x S} - \Phi(\theta(x)) \theta(x) \Pi_{T_x S} \right\| \right] \\
 \leq & \| \Phi \|_\infty M \left( \frac{1}{\sqrt{N r^d}} + r^{\min(a,b)} \right) + \theta_{max} \xLip(\Phi) \E\left[\left|\theta_{\delta, N}^{\bf \tilde{Y}}(x) -\theta(x)\right|\right]  \\
\leq & M \left( \frac{1}{\sqrt{N r^d}} + r^{\min(a,b)} +  \frac{1}{\sqrt{N \delta^d}} + \delta^{\min(a,b)} \right) \quad \text{thanks to \eqref{eqConcentrationThetadeltaN}, } {\bf \tilde{Y}}, {\bf Y} \text{ having same law,} 
\end{align*}
hence proving \eqref{eqConcentrationSigmaN}.
It remains to combine \eqref{eqConcentrationSigmaN} at $x$ and $y$ with
\eqref{hypH5}: $\displaystyle \left\| \Pi_{T_x S} - \Pi_{T_y S} \right\| \leq C |x - y |^a$, to obtain \eqref{eqMeanLipSigmaN} and conclude the proof of Lemma~\ref{lemMeanLipEstimators}.
\end{proof}

We observe that when adding up bounds \eqref{eqMeanLipThetaN} and \eqref{eqMeanLipSigmaN}, $\delta$ and $r$ play symmetric roles, that is also true concerning the control we obtained in Proposition~\ref{propCvHolderW} (see the second estimate) for the deterministic term $\beta_B (W_{r,\delta}, W_S)$ and we consequently fix $\delta = r$ hereafter. We thus eventually consider
\begin{equation} \label{eqDfnNuHatWHat}
\widehat{\nu}_{\delta, N} = 
\Phi \left( \theta_{ \delta, N}^{\bf Y} \right) \mu_N^{\bf X} \quad \text{and} \quad \widehat{W}_{\delta, N} = \widehat{\nu}_{\delta, N} \otimes \delta_{\sigma_{\delta,N}} \quad \text{with} \quad  \sigma_{\delta,N} = \sigma_{\delta,\delta,N}^{\tilde{\bf Y}, {\bf Z}} = \Phi \left( \theta_{ \delta, N}^{\tilde{\bf Y}} \right) \Sigma_\delta (\cdot, \mu_N^{\bf Z}) \: .
\end{equation}
We also define the varifold estimator 
\begin{equation} \label{eqDfnVN}
\widehat{V}_{\delta, N} = \widehat{\nu}_{\delta, N} \otimes \delta_{\pi_{\delta,N}} \: , 
\end{equation}
where we recall that $\pi_{\delta,N} = \pi_{\delta, r ,N}$ for $\delta = r$ is the truncation of the symmetric matrix $\sigma_{\delta,N}$ according to its $d$ highest eigenvalues as explained in Section~\ref{secCVvarifold} and we recall \eqref{eqFrobPiNsigmaN}:
for $x \in S$,
\begin{equation} \label{eqStabSigmardeltaN}
 \| \sigma_{\delta,N}(x) - \pi_{\delta,N}(x)  \| \leq  \|  \sigma_{\delta,N}(x) - \Pi_{T_x S}  \|  \: .
\end{equation}

\begin{theorem} \label{thmCvHolderSplit}
Let $0 < a,b \leq 1$.
We assume that 
$S$ and $\theta$ satisfy assumptions \hyperref[hypH1]{$(H_1)$} to \eqref{hypH7} i.e. $\mu = \theta \cH^d_{| S} \in \cP $. Let $D \subset \R^n$ be a bounded open set. We recall that $R = \min (R_{S,sg}, R_{\theta, sg}) < 1$ and
let $N \in \N^\ast$ be large enough so that $\delta_N := N^{\displaystyle -\frac{1}{d + 2 \min (a,b)}} \leq \frac{R}{20C}$.
Then,
\begin{align} \label{eqThmVFfoldCV1SplitNuD}
\left.
\begin{array}{l}
\E \left[ \beta_D \left(\widehat{\nu}_{\delta_N,N} , \nu \right) \right] \\
 \E \left[ \beta_D (\widehat{W}_{\delta_N,N}, W_S) \right] \\
\E \left[ \beta_D (\widehat{V}_{\delta_N,N}, W_S) \right] 
\end{array} \right\rbrace \leq
M \mu(D^{\delta_N})  \delta_N^{\min(a,b)}  + M \sum_{l=0}^{d-1} \delta_N^{d-l} \cH^{l} \left( \fS_l \cap D^{20 C \delta_N}  \right) \leq \widetilde{M} \:  \delta_N^{\min(a,b)}  \: ,
\end{align}
with $\widetilde{M} = M \left( \mu (D^1) + \sum_{l=0}^{d-1} \cH^{l} \left( \fS_l \cap D^1  \right) \right) \leq M \left( 1 + \sum_{l=0}^{d-1} \cH^{l} ( \fS_l) \right)$.\\
In the particular case where $D = B$ is a ball of radius $R_B \leq \frac{R}{3}$, we have the local estimate
\begin{align} \label{eqThmVFfoldCV1SplitNu}
\left.
\begin{array}{l}
\E \left[ \beta_B \left(\widehat{\nu}_{\delta_N,N} , \nu \right) \right] \\
 \E \left[ \beta_B (\widehat{W}_{\delta_N,N}, W_S) \right] \\
\E \left[ \beta_B (\widehat{V}_{\delta_N,N}, W_S) \right] 
\end{array} \right\rbrace \leq
M \mu(B) \left(  \delta_N^{\min(a,b)} + \sum_{l=0}^{d-1} \xi_l \frac{ \delta_N^{d-l} }{R_B^{d-l}}\right)  \: ,
\end{align}
where $\xi_l = \xi_l (20C \delta_N , B)$ satisfies $\xi_l = 1$ if $B \cap \fS^{20C\delta_N} \neq \emptyset$ and $\xi_l = 0$ otherwise.
\end{theorem}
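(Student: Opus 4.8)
The plan is to follow the same triangular decomposition strategy already used for Theorem~\ref{thmCvHolderV}, but replacing the rough Lipschitz estimates with the finer mean Lipschitz estimates from Lemma~\ref{lemMeanLipEstimators}, which is precisely where the gain of the factor $\delta_N$ comes from. First I would write, by triangular inequality and the stability estimate \eqref{eqStabSigmardeltaN} of the eigenvalue truncation together with Proposition~\ref{propBiLipI} (to reduce estimating varifolds to estimating the associated measures in $\R^n \times \xSym_+(n)$),
\[
 \beta_D (\widehat{V}_{\delta_N,N}, W_S) \leq \beta_D (\widehat{V}_{\delta_N,N}, \widehat{W}_{\delta_N,N}) + \beta_D (\widehat{W}_{\delta_N,N}, W_S) \: ,
\]
and similarly relate $\widehat{\nu}_{\delta_N,N}$ to $\nu$; the first term is controlled by $\int_D \| \sigma_{\delta_N,N}(x) - \pi_{\delta_N,N}(x) \| \: d \widehat{\nu}_{\delta_N,N} \leq \int_D \| \sigma_{\delta_N,N}(x) - \Pi_{T_x S} \| \: d \widehat{\nu}_{\delta_N,N}$ so everything reduces to estimating $\E [\beta_D (\widehat{W}_{\delta_N,N}, W_S)]$ and the analogous expression for $\widehat{\nu}$.

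Next I would split $\widehat{W}_{\delta_N,N}$ from $W_S$ through the deterministic intermediate $W_{\delta_N, \delta_N}$ (with $r = \delta = \delta_N$): $\beta_D (\widehat{W}_{\delta_N,N}, W_S) \leq \beta_D (\widehat{W}_{\delta_N,N}, W_{\delta_N,\delta_N}) + \beta_D (W_{\delta_N,\delta_N}, W_S)$. The deterministic term $\beta_D (W_{\delta_N,\delta_N}, W_S)$ is already bounded by Proposition~\ref{propCvHolderW} (estimate \eqref{eqCvHolderWrdeltaD}, resp. its ball version \eqref{eqCvHolderWrdelta}), giving $M \mu(D^{\delta_N}) \delta_N^{\min(a,b)} + M \sum_l \delta_N^{d-l} \cH^l(\fS_l \cap D^{2C\delta_N})$, which is already of the claimed form (with $20C$ replacing $2C$ to leave room for the neighbourhood enlargements appearing below). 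For the stochastic term $\beta_D (\widehat{W}_{\delta_N,N}, W_{\delta_N,\delta_N})$, I would decompose exactly as in \eqref{eq:flagfoldA123}: $A_1$ comparing $\mu_N^{\bf X}$ and $\mu$, $A_2$ comparing $\Phi(\theta_{\delta_N,N}^{\bf Y})$ and $\Phi(\theta_{\delta_N})$, and $A_3$ comparing $\sigma_{\delta_N,N}$ and $\sigma_{\delta_N,\delta_N}$. The key difference now is that, thanks to the decoupling into independent subsamples ${\bf X}, {\bf Y}, \tilde{\bf Y}, {\bf Z}$, the test function $g(x) = f(x, \sigma_{\delta_N,N}(x)) \Phi(\theta_{\delta_N,N}^{\bf Y}(x))$ appearing in $A_1$ is \emph{independent} of $\mu_N^{\bf X}$, so one can condition on $({\bf Y}, \tilde{\bf Y}, {\bf Z})$ and apply a version of Proposition~\ref{propBetaLoc} (or rather, re-run its proof) in which the deterministic Lipschitz estimate $\frac{\kappa_1}{\delta}\Delta_{\delta,N}$ is replaced by the mean Lipschitz bound for $g$: from Lemma~\ref{lemMeanLipEstimators}, \eqref{eqMeanLipThetaN} and \eqref{eqMeanLipSigmaN}, on the good set $S \setminus \fS^{C\delta_N}$ one has $\E[|g(x) - g(y)|] \leq M(|x-y|^{\min(a,b)} + \delta_N^{\min(a,b)} + (N\delta_N^d)^{-1/2})$, and with $\delta_N = N^{-1/(d+2\min(a,b))}$ the term $(N\delta_N^d)^{-1/2} = \delta_N^{\min(a,b)}$, so that off the singular set $g$ is, in mean, \emph{Hölder} with a controlled constant rather than merely Lipschitz with constant $\delta_N^{-1}$.

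The main obstacle — and the place requiring the most care — is re-deriving the Dudley-type bound of Proposition~\ref{propBetaLoc} with this improved mean Hölder estimate instead of the Lipschitz one, while still handling the singular set. Concretely, in the nested partition argument (Step 1–Step 4 of the proof of Proposition~\ref{propBetaLoc}), wherever a piece $A_j^u$ of diameter $\lesssim \varepsilon_u$ is entirely contained in $S \setminus \fS^{C\delta_N}$, the oscillation of $g$ across $A_j^u$ is now bounded in mean by $M(\varepsilon_u^{\min(a,b)} + \delta_N^{\min(a,b)})$ rather than $M \varepsilon_u \delta_N^{-1}$; this changes the summand in \eqref{eqItBis_0}–\eqref{eqItg} from $\varepsilon_u^{-d/2+1}$ to $\varepsilon_u^{-d/2+\min(a,b)} + \delta_N^{\min(a,b)}\varepsilon_u^{-d/2}$, and optimizing the finest scale $\varepsilon_t \sim N^{-1/d}$ against the partition count via the balance $\varepsilon_t^{\min(a,b)} \sim (N\varepsilon_t^{d})^{-1/2}$ forces $\varepsilon_t \sim \delta_N$, which is exactly why $\delta_N = N^{-1/(d+2\min(a,b))}$ is the right choice and produces the rate $\delta_N^{\min(a,b)} = N^{-\min(a,b)/(d+2\min(a,b))}$. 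For the pieces meeting $\fS^{C\delta_N}$, one cannot use the Hölder estimate but can fall back on the crude bounds $\|g\|_\infty \leq M$ and the measure control of $\fS^{C\delta_N} \cap D \cap S$ from Lemma~\ref{lemSingSet}(i) (resp. (iii) in the ball case), which contributes exactly the $\sum_l \delta_N^{d-l}\cH^l(\fS_l \cap D^{20C\delta_N})$ term after accounting for the various $O(\delta_N)$ enlargements introduced by the partition pieces and the kernel supports. The terms $A_2$ and $A_3$ are handled directly by integrating the mean bounds \eqref{eqConcentrationThetadeltaN} and \eqref{eqConcentrationSigmaN} over the good set and using the singular-set measure control over the bad set, giving contributions $\leq M\mu(D^{\delta_N})\delta_N^{\min(a,b)} + M\sum_l \delta_N^{d-l}\cH^l(\fS_l \cap D^{\cdots})$ as well. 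Finally, collecting all pieces, absorbing the $(N\delta_N^d)^{-1/2}$ terms into $\delta_N^{\min(a,b)}$, and using $\mu(D^{\delta_N}) \leq \mu(D^1)$, $\cH^l(\fS_l \cap D^{20C\delta_N}) \leq \cH^l(\fS_l \cap D^1)$ for $N$ large (so that $20C\delta_N \leq 1$) together with $\delta_N^{d-l} \leq \delta_N^{\min(a,b)}$ when $d - l \geq 1 \geq \min(a,b)$ yields the simplified bound $\widetilde{M}\delta_N^{\min(a,b)}$; the local ball version \eqref{eqThmVFfoldCV1SplitNu} follows identically by invoking the ball-case estimates of Proposition~\ref{propBetaLoc}(ii), Proposition~\ref{propCvHolderW} \eqref{eqCvHolderWrdelta} and Lemma~\ref{lemSingSet}(iii) throughout.
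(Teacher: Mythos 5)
Your proposal is correct and follows essentially the same route as the paper: sample splitting to make the test function $g_{\delta_N,N}$ independent of $\mu_N^{\bf X}$, re-running the nested-partition (Dudley-type) argument of Proposition~\ref{propBetaLoc} with the mean Hölder estimates of Lemma~\ref{lemMeanLipEstimators} on $D \cap S \setminus \fS^{O(C\delta_N)}$ while controlling the measure of the excised neighbourhood of $\fS$ via Lemma~\ref{lemSingSet}, combining with the deterministic bounds of Propositions~\ref{cvnudel} and~\ref{propCvHolderW} and the truncation stability \eqref{eqStabSigmardeltaN}, and optimizing $\delta_N$ through the balance $\delta^{\min(a,b)} \sim (N\delta^d)^{-1/2}$. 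The only loose point is notational (the balance determines the coarsest partition scale $\varepsilon_s \sim \delta_N$, not the finest scale $\varepsilon_t \sim N^{-1/d}$), and the geometric-sum case analysis according to the sign of $\min(a,b)-\tfrac{d}{2}$ is left implicit, but neither affects the validity of the argument.
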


\begin{proof}[Proof of Theorem~\ref{thmCvHolderSplit}]
Let $D \subset \R^n$ be a bounded open set.
We fix $N \in \N^\ast$, $0 < \delta  \leq \frac{R}{20C}$ satisfying $N^{-\frac{1}{d}} \leq \delta$ (and we recall that we fixed $r = \delta$ in \eqref{eqDfnNuHatWHat}). 
In view of proving \eqref{eqThmVFfoldCV1SplitNu}, we will consider two types of test functions:
\begin{enumerate}[$(i)$]
 \item either $f \in \xC_c (\R^n, \R)$ is a $1$--Lipschitz function such that $\| f \|_\infty \leq 1$ and 
 $\supp f \subset D$, and in this case we introduce $g_{\delta,N}(x) = g_{\delta,N}^{\bf Y}(x) = f(x) \Phi \left( \theta_{\delta, N}^{\bf Y}(x) \right)$ and $g_{\delta}(x) = f(x) \Phi \left( \theta_{\delta}(x) \right)$;
 \item or $f \in \xC_c (\R^n \times \xSym_+(n), \R)$ is a $1$--Lipschitz function such that $\| f \|_\infty \leq 1$ and
 $\supp f \subset D \times \xSym_+(n)$, and in this case we introduce $g_{\delta,N}(x) = g_{\delta,N}^{{\bf Y}, \tilde{\bf Y}, {\bf Z}}(x) = f\left( x , \sigma_{\delta,N}^{\tilde{\bf Y}, {\bf Z}}(x) \right) \Phi \left( \theta_{\delta, N}^{\bf Y}(x) \right)$ and $g_{\delta}(x) = f\left( x , \sigma_{\delta,\delta}(x) \right) \Phi \left( \theta_{\delta}(x) \right)$.
\end{enumerate}
In both cases, 
$g_{\delta,N}$ and $\mu_N^{\bf X}$ are independent, we will shorten $(i)$ or $(ii)$ with $f \in \xBL$ so that
\begin{align} \label{eqInitSplit}
\left.
\begin{array}{r}
 \displaystyle \E \left[ \beta_D \left(\widehat{\nu}_{\delta,N} , \nu_\delta \right) \right] \text{ in case } (i) \\
 \displaystyle  \E \left[ \beta_D \left(\widehat{W}_{\delta,N} , W_{\delta, \delta} \right) \right] \text{ in case } (ii) 
 \end{array}
 \right\rbrace & = \E \left[ \sup_{f \in \xBL} \left| \int_D  g_{\delta,N}  \: d \mu_{N}^{\bf X} - \int_D g_\delta \:  d \mu  \right| \right] \nonumber \\
 & \leq 
 \E \left[ \sup_{f \in \xBL} \left| \int_D  g_{\delta,N}  \: \left( d \mu_{N}^{\bf X} -  d \mu \right) \right| \right] + \E \left[ \sup_{f \in \xBL} \left| \int_D  \left( g_{\delta,N} - g_\delta \right)  \:  d \mu  \right| \right] \nonumber \\
 & \leq \E \left[ \sup_{f \in \xBL} \left| \int_D  g_{\delta,N}  \: \left( d \mu_{N}^{\bf X} -  d \mu \right) \right| \right] + \frac{M}{\sqrt{N \delta^d}} \mu(D) \: ,
\end{align}
where the last estimate follows from Proposition~\ref{propDensityPointwiseCv} and Proposition~\ref{propCVSigmaN} since 
\begin{equation*}
\E \left[ \sup_{f \in \xBL} \int_D \left| g_{\delta,N} - g_\delta \right| \: d \mu \right] \leq \E \left[ \| \Phi \|_\infty  \int_D \| \sigma_{\delta,N} - \sigma_{\delta,\delta} \| \: d \mu + \xLip(\Phi) \int_D \left| \theta_{\delta,N} - \theta_\delta \right| \: d \mu \right] \leq \frac{M}{\sqrt{N \delta^d}} \mu(D) \: .
\end{equation*}
We are left with estimating $\displaystyle \E \left[ \sup_{f \in \xBL} \left| \int_D  g_{\delta,N}  \: \left( d \mu_{N}^{\bf X} -  d \mu \right) \right| \right]$, which is the purpose of Steps $1$ and $2$. As we explained in the introduction of Section~\ref{secSplit}, we want to leverage the mean Lipschitz estimates \eqref{eqMeanLipThetaN} and \eqref{eqMeanLipSigmaN} that are finer than the worst case Lipschitz estimate in the regularity class $\cP$, which prevents from applying Proposition~\ref{propBetaLoc}. Instead, we follow and adapt the steps of the proof of Proposition~\ref{propBetaLoc} hereafter.

We use the notations $\epsilon = N^{-\frac{1}{d}} \leq \delta$, $\varepsilon_u := 3^{-u}$ for $u \in \N$, and $T := D \cap S \setminus \fS^{10C\delta}$.

\noindent We define integers $0 \leq s \leq t$ such that
\[
 3^{-(t+1)}<  \epsilon \leq 3^{-t} = \varepsilon_t \quad \text{and}\quad 
3^{-(s+1)}< \epsilon^{\alpha} \leq 3^{-s} = \varepsilon_s \quad \text{and}\quad \epsilon_s \leq \delta \: ,
\]
where $0 < \alpha \leq 1$ is defined hereafter depending on the case $(i)$ or $(ii)$.
We recall that $T$ can be partitioned with $m_u$ pieces of diameter $\leq \varepsilon_u$ and $m_u$ furthermore satisfies the estimate \eqref{eqmr}
\begin{equation*} 
m_u \leq 4^d C_0 \varepsilon_u^{-d} \mu \left(D^\frac{\varepsilon_u}{4} \right) \: .
\end{equation*}
We can apply Lemma~\ref{lemNestedPartitions} to such partitions and define nested partitions $\left\lbrace A_j^u \: : \: u = s, \ldots, t, \, j= 1, \ldots, m_u \right\rbrace$ satisfying \eqref{eqPartitionProperty} and \eqref{eqNestedProperty}.
For each $u=s,\dots , t$ and $j=1,\dots,m_u$ we choose $x^u_j\in A_j^u$ and 
we introduce 
\[
M_u := \sum_{j=1}^{m_u} | \mu_N^{\bf X}(A^u_j)-\mu(A^u_j) | 
\quad \text{and} \quad
I_u:=\left|\sum_{j=1}^{m_u}g_{\delta,N}(x_j^u)(\mu_N^{\bf X} (A^u_j)-\mu(A^u_j))\right| 
 \: .
\]

\smallskip
\noindent {\bf Step $1$:} we can prove the following control:
\begin{multline} \label{eqBetagBis}
 \E \left[ \sup_{f \in \xBL} \left| \int_D  g_{\delta,N}  \: \left( d \mu_{N}^{\bf X} -  d \mu \right) \right| \right] \\
 \leq \E \left[ \sup_{f \in \xBL} I_t \right] + M \left( \epsilon^{\min(a,b)} + \delta^{\min (a,b)} + \frac{1}{\sqrt{N \delta^d}}   \right) \mu(D) +  M \sum_{l=0}^{d-1} \delta^{d-l} \cH^l (\fS_l \cap D^{20C\delta}) \: .
\end{multline}
Indeed, we remind (see \eqref{eqPartitionProperty}) that $(A_j^t)_{j=1}^{m_t}$ is a partition of $T =D \cap S \setminus \fS^{10C\delta}$ and for all $j$, $\xdiam A_j^t \leq 3 \varepsilon_t \leq 9 \epsilon$. As $\| g_{\delta,N} \|_\infty \leq \|f\|_\infty \| \Phi \|_\infty \leq M$, we first have
\begin{equation} \label{eqRemoveSingSet}
\Bigg| \int_D  g_{\delta,N}   \: \left( d \mu_{N}^{\bf X} -  d \mu \right) \Bigg|  
\leq M \left( \mu_N^{\bf X} \ \left( D \cap \fS^{10C\delta} \right) 
+ \mu  \left( D \cap \fS^{10C\delta}  \right) \right)  + 
\left| \int_T  g_{\delta,N}   \: \left( d \mu_{N}^{\bf X} -  d \mu \right) \right| \: .
\end{equation}
The first term in the right hand side of \eqref{eqRemoveSingSet} is now independent of $f$ and when taking the mean value, we have by \eqref{eqThickD} (with $\rho = 10 C\delta \leq R$):
\begin{align} \label{eqBetagBis1}
\E \left[ \mu_N^{\bf X} \left( D \cap \fS^{10C\delta}  \right) \right]  & = \mu  \left( D \cap \fS^{10C\delta}  \right) \leq \theta_{max} \cH^ d \left( D \cap \fS^{10C\delta} \cap S \right) \leq M \sum_{l=0}^{d-1} \delta^{d-l} \cH^l (\fS_l \cap D^{20C\delta})  \: .
\end{align}
Moreover, regarding the last term in the right hand side of \eqref{eqRemoveSingSet}, we have for $x \in A_j^t$, $| x - x_j^t| \leq \xdiam A_j^t \leq 9 \epsilon$ and thus, in case $(i)$ or $(ii)$:
\begin{align} \label{eqLipgBis}
 \left| g_{\delta,N}(x) - g_{\delta,N}(x_j^t) \right|
 & \leq  \| f \|_\infty \xLip(\Phi) \left| \theta_{\delta, N}^{\bf Y}(x) -\theta_{\delta,N}^{\bf Y}(x_j^t) \right| + \| \Phi \|_\infty \left\lbrace
 \begin{array}{lr}
 \left| f(x) - f(x_j^t) \right| & (i)\\ 
 \left| f\left( x , \sigma_{\delta,N}(x) \right) - f\left( x_j^t , \sigma_{\delta,N}(x_j^t) \right) \right| & (ii)
 \end{array}
 \right. \nonumber \\
 & \leq M \left| \theta_{\delta, N}^{\bf Y}(x) -\theta_{\delta,N}^{\bf Y}(x_j^t) \right| + M \left\lbrace
 \begin{array}{lr}
 9 \epsilon & (i) \\ 
 9 \epsilon + \left\| \sigma_{\delta,N}(x) -  \sigma_{\delta,N}(x_j^t)  \right\| & (ii)
 \end{array}
 \right. \: .
\end{align}
Observing that for $\delta = r$, the right hand side of \eqref{eqMeanLipThetaN} and \eqref{eqMeanLipSigmaN} are similar and bounded by 
\[
M \left( |x-x_j^t|^{\min(a,b)} +  \delta^{\min(a,b)} + \frac{1}{\sqrt{N \delta^d}} \right) \leq M \left( (9\epsilon)^{\min(a,b)} +  \delta^{\min(a,b)} + \frac{1}{\sqrt{N \delta^d}} \right) \: ,
\]
we can handle $(i)$ and $(ii)$ in the same way and we obtain thanks to \eqref{eqLipgBis}: 
\begin{multline} \label{eqBetagBis2}
\left| \int_T  g_{\delta,N}  \left( d \mu_{N}^{\bf X} -  d \mu \right) \right|  = \left| \sum^{m_t}_{j=1} \int_{A_j^t} g_{\delta,N}(x) \: d(\mu_N^{\bf X} -\mu)(x)\right|
\leq I_t + \left| \sum^{m_t}_{j=1} \int_{A_j^t} \left(g_{\delta,N}(x) - g_{\delta,N}(x_j^t) \right) \: d(\mu_N^{\bf X} -\mu)(x)\right| \\
\leq  I_t+ M \epsilon \left( \mu_N^{\bf X} (D) + \mu(D) \right) + M \sum^{m_t}_{j=1}\int \limits_{A_j^t} \left| \theta_{\delta, N}^{\bf Y}(x) -\theta_{\delta,N}^{\bf Y}(x_j^t) \right| + \left\| \sigma_{\delta,N}(x) -  \sigma_{\delta,N}(x_j^t)  \right\| \: d (\mu_N^{\bf X} +\mu)(x) \: . 
\end{multline}
By independence of ${\bf X}$ and ${\bf Y}$ and Lemma~\ref{lemMeanLipEstimators} (noting that $x, x_j^t \in S \setminus \fS^{C10\delta}$ and $|x-x_j^t| \leq 9 \epsilon \leq 10 \delta$), we have
\begin{align} \label{eqBetagBis3}
\E \left[ \sum^{m_t}_{j=1}\int \limits_{A_j^t} \left| \theta_{\delta, N}^{\bf Y}(x) -\theta_{\delta,N}^{\bf Y}(x_j^t) \right| \: d (\mu_N^{\bf X} +\mu)(x) \right] & = \E_{\bf X} \left[ \sum^{m_t}_{j=1}\int \limits_{A_j^t} \E_{\bf Y} \left[ \left| \theta_{\delta, N}^{\bf Y}(x) -\theta_{\delta,N}^{\bf Y}(x_j^t) \right| \right] \: d (\mu_N^{\bf X} +\mu)(x) \right] \nonumber \\
& \leq 2 M \left(  \epsilon^{\min(a,b)} + \delta^{\min(a,b)} + \frac{1}{\sqrt{N \delta^d}}\right) \mu (D) \: ,
\end{align}
and similarly
\begin{align} \label{eqBetagBis4}
\E \left[ \sum^{m_t}_{j=1}\int \limits_{A_j^t} \left\| \sigma_{\delta,N}(x) -  \sigma_{\delta,N}(x_j^t)  \right\| \: d (\mu_N^{\bf X} +\mu)(x) \right]  \leq  M \left( \epsilon^{\min(a,b)} + \delta^{\min(a,b)} + \frac{1}{\sqrt{N \delta^d}}\right) \mu (D) \: .
\end{align}
Combining \eqref{eqRemoveSingSet}, \eqref{eqBetagBis1}, \eqref{eqBetagBis2}, \eqref{eqBetagBis3}, \eqref{eqBetagBis4}  with $\epsilon \leq \epsilon^{\min(a,b)}$ we finally infer \eqref{eqBetagBis} and conclude Step $1$.

\smallskip
\noindent {\bf Step $2$:} We now estimate $I_t$ by induction on $u$ from $t$ to $s$:
\begin{equation} \label{eqItgBis}
\E \left[ \sup_{f \in \xBL} I_t \right] \leq M \mu \left( D^\delta \right) \left(  \epsilon^{(1- \alpha)\frac{d}{2}} +  \delta^{\min (a,b)} + \frac{\epsilon^{\frac{d}{2}}}{\delta^{\frac{d}{2}}}   +
\epsilon^{\frac{d}{2}} \sum_{u=s+1}^t  \epsilon_u^{\min(a,b) -\frac{d}{2}} \right) \: .
\end{equation}
Indeed, first recall that $\| g_{\delta,N} \|_\infty \leq \| \Phi \|_\infty \leq M$ so that $I_s \leq M M_s$.
Let now $s < u \leq t$, using \eqref{eqNestedProperty} and the Lipschitz estimate \eqref{eqLipgBis} for $g_{\delta,N}$ (between $x_j^u$ and $x_q^{u-1}$ satisfying $\left| x_j^u - x_q^{u-1} \right| \leq 3 \epsilon_{u-1} \leq 9 \epsilon_u$ instead of $x$ and $x_j^t$) we obtain the following recurrence relation:
\begin{align*}
I_u & \leq\Bigg| \sum^{m_{u-1}}_{q=1} \sum_{j \in I_{q,u}}  \left( g_{\delta,N}(x_j^u) -g_{\delta,N}(x_q^{u-1}) \right)  \: \left(\mu_N^{\bf X} (A_j^u) -\mu(A_j^u) \right) \Bigg| + \Bigg|\sum^{m_{u-1}}_{q=1} g_{\delta,N}(x_q^{u-1}) \: \underbrace{ \sum_{j \in I_{q,u}}  \left(\mu_N^{\bf X} (A_j^u) -\mu(A_j^u) \right) }_{= \mu_N^{\bf X}(A_q^{u-1}) - \mu (A_q^{u-1}) } \Bigg| \nonumber \\
& \leq  \sum^{m_{u-1}}_{q=1} \sum_{j \in I_{q,u}} M \left( \epsilon_{u} + \left| \theta_{\delta, N}^{\bf Y}(x_j^u) -\theta_{\delta,N}^{\bf Y}(x_q^{u-1}) \right| + \left\| \sigma_{\delta,N}(x_j^u) -  \sigma_{\delta,N}(x_q^{u-1})  \right\| \right)   \left| \mu_N^{\bf X} (A_j^u) -\mu(A_j^u) \right|  + I_{u-1} \: . \nonumber \\
\end{align*}
We can now take the supremum with respect to $f \in  \xBL$ and use the independence of ${\bf X}$ and $\left( {\bf Y}, \tilde{\bf Y}, {\bf Z} \right)$ to infer the following relation:
\begin{align}
\E  \left[ \sup_{f \in \xBL} I_u \right] 
& \leq  \E \left[ \sup_{f \in \xBL} I_{u-1} \right] 
 + M  \sum^{m_{u-1}}_{q=1} \sum_{j \in I_{q,u}}  \E_{\bf X} \left[  \left| \mu_N^{\bf X} (A_j^u) -\mu(A_j^u) \right| \right] \times \nonumber \\
 & \qquad \qquad \qquad \qquad \left( \epsilon_{u} +  \E_{\bf Y} \left[ \left| \theta_{\delta, N}^{\bf Y}(x_j^u) -\theta_{\delta,N}^{\bf Y}(x_q^{u-1}) \right| \right] + \E_{\tilde{\bf Y}, {\bf Z}} \left[ \left\| \sigma_{\delta,N}(x_j^u) -  \sigma_{\delta,N}(x_q^{u-1})  \right\| \right] \right)    \nonumber \\
& \leq  \E \left[ \sup_{f \in \xBL} I_{u-1} \right] + M \left(  \epsilon_u^{\min(a,b)} + \delta^{\min (a,b)} + \frac{1}{\sqrt{N \delta^d}}  \right) \E \left[ M_u \right] \: . 
\label{eqItRecBis} 
\end{align}
By induction from $u=t$ down to $s$ on \eqref{eqItRecBis} and recalling that $I_s \leq M M_s$, we have the following control:
\begin{equation*} 
\E \left[ \sup_{f \in \xBL} I_t \right] \leq M \left(  \E \left[ M_s \right] +  \left( \delta^{\min (a,b)} + \frac{1}{\sqrt{N \delta^d}} \right)  \sum_{u=s+1}^t \E \left[ M_u \right] +
\sum_{u=s+1}^t  \epsilon_u^{\min(a,b)}
\E \left[ M_u \right]  \right) \: .
\end{equation*}
We recall \eqref{eqmrMuN}:
$\displaystyle
\E\left[ M_u \right] \leq  \frac{M}{N^\frac{1}{2}} \varepsilon_u^{-\frac{d}{2}} \mu \left( D^\frac{\varepsilon_s}{4} \right)$ (and $N^\frac{1}{2} = \epsilon^{-\frac{d}{2}}$)
and moreover $\frac{\epsilon_s}{4} \leq \delta$ so that 
$\mu \left( D^\frac{\varepsilon_s}{4} \right) \leq M \mu (D^\delta)$.
Consequently
\begin{align} \label{eqSupIt1}
\E \left[ \sup_{f \in \xBL} I_t \right] & \leq \frac{M}{\sqrt{N}} \mu \left( D^\delta \right) \left(  \epsilon_s^{-\frac{d}{2}} +  \left( \delta^{\min (a,b)} + \frac{1}{\sqrt{N \delta^d}} \right)  \sum_{u=s+1}^t \epsilon_u^{-\frac{d}{2}} +
\sum_{u=s+1}^t  \epsilon_u^{\min(a,b) -\frac{d}{2}} \right)  \nonumber \\
& \leq M \mu \left( D^\delta \right) \left(  \epsilon^{(1-\alpha)\frac{d}{2}} +  \epsilon^{\frac{d}{2}}\left( \delta^{\min (a,b)} + \frac{\epsilon^{\frac{d}{2}}}{\delta^{
\frac{d}{2}}} \right)  \sum_{u=s+1}^t \epsilon_u^{-\frac{d}{2}} + \epsilon^{\frac{d}{2}}
\sum_{u=s+1}^t  \epsilon_u^{\min(a,b) -\frac{d}{2}} \right)  \: .
\end{align}
Note that for $\varepsilon_u^{-d/2}=(3^{-u})^{-d/2}=(3^{d/2})^{u}$ and $3^{d/2} \geq \sqrt{3} > 1$ so that 
\begin{equation*} 
\sum_{u=s+1}^t\varepsilon_u^{-d/2} \leq \sum_{u=0}^t (3^{d/2})^{u} \leq \frac{(3^{d/2})^{t+1}}{3^{d/2} - 1} \leq \frac{3^{d/2}}{3^{d/2} - 1} \varepsilon_t^{-d/2} \leq 3 \epsilon^{-d/2} \leq 3 N^{1/2} \quad \text{and} \quad N^{1/2} = \epsilon^{-\frac{d}{2}} \: ,
\end{equation*}
and together with \eqref{eqSupIt1} we can infer \eqref{eqItgBis}.

\smallskip
\noindent {\bf Conclusion of Step $1$ and $2$:} 
We can now draw an intermediate conclusion in the proof of \eqref{eqThmVFfoldCV1SplitNu}. Indeed, thanks to \eqref{eqInitSplit}, \eqref{eqBetagBis} and \eqref{eqItgBis},
\begin{multline*} 
 \left.
\begin{array}{c}
 \displaystyle \E \left[ \beta_D \left(\widehat{\nu}_{\delta,N} , \nu_\delta \right) \right] \\
 \displaystyle  \E \left[ \beta_D (\widehat{W}_{\delta,N} , W_{\delta, \delta} ) \right] 
 \end{array}
 \right\rbrace \leq  M \sum_{l=0}^{d-1} \delta^{d-l} \cH^l (\fS_l \cap D^{20C\delta})\\
 + M  \mu(D^\delta)  \left( \epsilon^{(1 - \alpha)\frac{d}{2}} + \epsilon^{\min(a,b)} + \delta^{\min (a,b)} + \frac{\epsilon^{\frac{d}{2}}}{\delta^{\frac{d}{2}}} 
 + \epsilon^{\frac{d}{2}} \sum_{u=s+1}^t  \epsilon_u^{\min(a,b) -\frac{d}{2}} \right)  \: , 
\end{multline*}
and thanks to Proposition~\ref{cvnudel} and~\ref{propCvHolderW} (with $r = \delta$) and triangular inequality, we infer the same control for $\nu$, $W_S$ instead of $\nu_\delta$, $W_{\delta,\delta}$:
\begin{multline} \label{eqIntermediateConclBis}
 \left.
\begin{array}{c}
 \displaystyle \E \left[ \beta_D \left(\widehat{\nu}_{\delta,N} , \nu \right) \right] \\
 \displaystyle  \E \left[ \beta_D (\widehat{W}_{\delta,N} , W_S ) \right] 
 \end{array}
 \right\rbrace \leq  M \sum_{l=0}^{d-1} \delta^{d-l} \cH^l (\fS_l \cap D^{20C\delta})\\
 + M  \mu(D^\delta)  \left( \epsilon^{(1 - \alpha)\frac{d}{2}} + \epsilon^{\min(a,b)} + \delta^{\min (a,b)} + \frac{\epsilon^{\frac{d}{2}}}{\delta^{\frac{d}{2}}} 
 + \epsilon^{\frac{d}{2}} \sum_{u=s+1}^t  \epsilon_u^{\min(a,b) -\frac{d}{2}} \right)  \: .
\end{multline}
It remains to simplify the last sum above depending on $\min(a,b) -\frac{d}{2}$.

\smallskip
\noindent {\bf Step $3$:} we now prove that
\begin{equation}\label{eqCvHolderSplitStep3}
\left.
\begin{array}{c}
 \displaystyle \E \left[ \beta_D \left(\widehat{\nu}_{\delta,N} , \nu \right) \right] \\
 \displaystyle  \E \left[ \beta_D (\widehat{W}_{\delta,N} , W_S ) \right] 
 \end{array}
 \right\rbrace \leq M \sum_{l=0}^{d-1} \delta^{d-l} \cH^l (\fS_l \cap D^{20C\delta}) + M \delta^{ \min(a,b)}\: \mu(D^\delta) \quad \text{with} \quad \delta = \delta_N = N^{\displaystyle -\frac{1}{d + 2 \min (a,b)}} \: .
\end{equation}

{\bf -- Case $\min(a,b) -\frac{d}{2} < 0$:} note that $\min(a,b) \leq 1$ whence the current case covers $d > 2$, $d = 2$ and $\min(a,b) \neq 1$, $d = 1$ and $\min(a,b) < \frac{1}{2}$.\\
Let us temporarily use the notation $c = \min(a,b)$.
Similarly to \eqref{eqBetaT_2}, we have $\varepsilon_u^{-d/2+ c}=(3^{-u})^{-d/2+ c}=(3^{d/2-c})^{u}$ and $\frac{d}{2} - c > 0$ implies $3^{d/2-c} > 1$ so that
\begin{equation*} 
\sum_{u=s+1}^t\varepsilon_u^{-d/2+c} \leq \sum_{u=0}^t (3^{d/2-c})^{u} \leq \frac{(3^{d/2-c})^{t+1}}{3^{d/2-c} - 1} \leq \frac{3^{d/2-c}}{3^{d/2-c} - 1} \varepsilon_t^{-d/2+c} \leq M \epsilon^{-d/2+c} \ \: .
\end{equation*}
We recall that we require $\epsilon^\alpha \leq \delta$ and writing $\delta = \epsilon^\beta$, it amounts to require $0 <  \beta \leq \alpha < 1$.
We can thus rewrites \eqref{eqIntermediateConclBis} as follows:
\begin{equation*} 
 \left.
\begin{array}{c}
 \displaystyle \E \left[ \beta_D \left(\widehat{\nu}_{\delta,N} , \nu \right) \right] \\
 \displaystyle  \E \left[ \beta_D (\widehat{W}_{\delta,N} , W_S ) \right] 
 \end{array}
 \right\rbrace \leq M \mu(D^\delta)  \left( \epsilon^{(1 - \alpha)\frac{d}{2}} + \epsilon^{c} + \delta^{c} + \frac{\epsilon^{\frac{d}{2}}}{\delta^{\frac{d}{2}}}  \right) + M \sum_{l=0}^{d-1} \delta^{d-l} \cH^l (\fS_l \cap D^{20C\delta}) \: ,
\end{equation*}
and we are left with carefully choosing $\alpha$ and $\delta = \epsilon^\beta$ satisfying $0 < \beta \leq \alpha < 1$. Note that $\alpha$ is a parameter of the proof while $\delta$ is a parameter of the estimators $\widehat{\nu}_{\delta, N}$, $\widehat{W}_{\delta, N}$.
Let us first focus on the $3$ terms involving $\delta$: $\displaystyle \delta^c = \epsilon^{\beta c}$, $\displaystyle \frac{\epsilon^{\frac{d}{2}}}{\delta^{\frac{d}{2}}} = \epsilon^{\frac{d}{2} - \frac{d}{2} \beta}$ and $\displaystyle \delta^{d-l} = \epsilon^{ (d-l) \beta}$, since $(d-l) \geq 1 \geq c$, we can optimize $\displaystyle \epsilon^{\beta c} + \epsilon^{\frac{d}{2} - \frac{d}{2} \beta}$ which gives
\begin{equation*}
 \beta c = \frac{d}{2} - \frac{d}{2} \beta \quad \Longleftrightarrow \quad \beta = \frac{d}{d + 2 c} 
\end{equation*}
and since $\displaystyle \frac{d c}{d + 2 c} \leq c \leq 1$ we then have
$\displaystyle \max \left\lbrace \delta^{c} , \: \left(\frac{\epsilon}{\delta} \right)^\frac{d}{2} , \: \delta^{d-l} , \epsilon^{c} \right\rbrace \leq \epsilon^{\displaystyle\frac{d c}{d + 2 c}}$.
Furthermore minimizing $\displaystyle \epsilon^\frac{d c}{d + 2 c} + \epsilon^{(1 - \alpha) \frac{d}{2}}$ we obtain
\[
 (1 - \alpha) \frac{d}{2} = \frac{d c}{d + 2 c } \quad \Longleftrightarrow \quad  \alpha =  \frac{d}{d + 2 \min(a,b)} = \beta
\]
so that we have $\epsilon^\alpha \leq \delta$ is satisfied (actually with equality $\delta = \epsilon^\alpha$) and \eqref{eqCvHolderSplitStep3} is proved for $\min(a,b) - \frac{d}{2} < 0$.

{\bf -- Case $\min(a,b) = \frac{d}{2}$:} which corresponds to $d = 2$ and $a = b = 1$ or $d = 1$ and $\min(a,b) = \frac{1}{2}$. 
In this case, $\epsilon^{\frac{d}{2}} = \epsilon^c$, $\epsilon_u^{-d/2+c} = 1$ and then
\[
\epsilon^{\frac{d}{2}} \sum_{u=s+1}^t \epsilon_u^{-d/2+c} \leq \epsilon^c t \leq M \epsilon^c |\ln \epsilon|  \: .
\]
The $3$ terms involving $\delta$ in the right hand side of \eqref{eqIntermediateConclBis} are unchanged and the optimization again yields $\delta = \epsilon^{\frac{d}{d+2 c}} = \epsilon^\frac{1}{2}$ (since $2c = d$) and we note that for $\epsilon \in (0,1]$, $\epsilon^{\frac{1}{2}} |\ln \epsilon|  \leq 1$, $\epsilon^{\frac{1}{4}} |\ln \epsilon|  \leq 2$, and thus
\begin{equation*}
\epsilon^c |\ln \epsilon| = \left\lbrace
\begin{array}{lcl}
 \epsilon |\ln \epsilon| \leq \epsilon^{\frac{1}{2}} = \delta = \delta^c & \text{if} & d = 2, \: c= 1 \\
\epsilon^{\frac{1}{2}} |\ln \epsilon| \leq 2 \epsilon^{\frac{1}{4}} = 2 \delta^c & \text{if} & d = 1, \: c= \frac{1}{2}
\end{array}
\right\rbrace \leq 2 \delta^c = 2 \delta^{\min(a,b)}
\end{equation*}
and we obtain the same control \eqref{eqCvHolderSplitStep3} for $\min(a,b) = \frac{d}{2}$.

{\bf -- Case $\min(a,b) - \frac{d}{2} > 0$:} 
in this case $d = 1$ and we have $\varepsilon_u^{-d/2+c} = \left( 3^{\frac{1}{2} - c} \right)^u$, and $3^{\frac{1}{2} - c} < 1$ thus
\[
 \epsilon^{\frac{d}{2}} \sum_{u=s+1}^t \epsilon_u^{-d/2+c} \leq M \epsilon^{\frac{1}{2}} | \ln \epsilon |
 = M \epsilon^{\frac{1}{2(1 + 2 c)}} | \ln \epsilon | \epsilon^{\frac{c}{1 + 2 c}} 
 \leq M \epsilon^{\frac{c}{1 + 2 c}} = M \delta^c
\]
so that the control by $\delta^c$ still holds in this last case and we conclude Step $3$ and the proof of \eqref{eqCvHolderSplitStep3}.

\smallskip
\noindent {\bf Step $4$:} we now compare $\widehat{V}_{\delta, N}$ and $\widehat{W}_{\delta, N}$ and show that
\begin{equation} \label{eqBLVrdeltaNWrdeltaN}
\E \left[ \beta_D \left( \widehat{V}_{\delta, N} , \widehat{W}_{\delta, N} \right) \right]  \leq M \sum_{l=0}^{d-1} \delta^{d-l} \cH^l (\fS_l \cap D^{20C\delta}) + M \delta^{ \min(a,b)}\: \mu(D) \quad \text{with} \quad \delta = \delta_N = N^{\displaystyle -\frac{1}{d + 2 \min (a,b)}} \: .
\end{equation}
At this step, we let $f \in \xC_c (\R^n \times \xSym_+(n))$ be a $1$--Lipschitz function satisfying $\supp f \subset D \times \xSym_+(n)$ as in case $(ii)$ and such that $\| f \|_\infty \leq 1$. As in Step $1$, we write $D = (D \cap S \cap \fS^{10C\delta}) \sqcup T$ and then
\begin{align*}
 \left| \int_{D \times \xSym_+(n)} \right. & \left. \hspace{-1pt} f \: d \widehat{V}_{\delta,N} - \int_{D \times \xSym_+(n)} \hspace{-1pt} f \: d \widehat{W}_{\delta,N} \right| = \int_{D} \left| f(x, \pi_{\delta,N}(x)) - f(x, \sigma_{\delta,N}(x)) \right| \Phi \left( \theta_{\delta,N} (x) \right) \: d \mu_N (x)\\
 & \leq \| \Phi \|_\infty \int_{T} \left\|  \pi_{\delta,N}(x) - \sigma_{\delta,N}(x) \right\| \: d \mu_N (x) + 2 \| \Phi_\infty \| \| f \|_\infty \mu_N \left( \fS^{10C\delta} \cap D \cap S  \right) \\
 & \leq M \left( \int_{T} \left\|  \Pi_{T_x S} - \sigma_{\delta,N}(x) \right\| \: d \mu_N (x) + \mu_N \left(  \fS^{10C\delta} \cap D \right)  \right) \quad \text{thanks to } \eqref{eqStabSigmardeltaN} \: .
\end{align*}
The right hand side above is independent of $f$ and therefore, taking the supremum with respect to $f$ and then the mean value, we obtain thanks to \eqref{eqBetagBis1} and \eqref{eqConcentrationSigmaN}:
\begin{align*}
\E \left[ \beta_B \left( \widehat{V}_{\delta, N} , \widehat{W}_{\delta, N} \right) \right] & \leq 
M \left(  \E_{\bf X} \left[ \int_{D} \E_{\tilde{\bf Y},{\bf Z}} \left[ \left\|  \Pi_{T_x S} - \sigma_{\delta,N}^{\tilde{\bf Y},{\bf Z}} \right\| \right] \: d \mu_N^{\bf X} \right] + \mu \left( D \cap \fS^{10C\delta} \right) 
 \right) \\
& \leq M \mu(D) \left( \delta^{\min(a,b)} + \frac{\epsilon^{\frac{d}{2}}}{\delta^{\frac{d}{2}}}  \right) + M \sum_{l=0}^{d-1} \delta^{d-l} \cH^l (\fS_l \cap D^{20C\delta})
\end{align*}
which conclude the proof of \eqref{eqBLVrdeltaNWrdeltaN} with $\displaystyle \delta = \delta_N = N^{\displaystyle -\frac{1}{d + 2 \min (a,b)}}$.

\smallskip
\noindent 
We conclude the proof of \eqref{eqThmVFfoldCV1SplitNuD} by triangular inequality on the Bounded Lipschitz distance, combining \eqref{eqCvHolderSplitStep3} and \eqref{eqBLVrdeltaNWrdeltaN}.

\smallskip
\noindent {\bf Step $5$:} In the particular case where $D = B$ is a ball of radius $R_B \leq \frac{R}{3}$ and $N$ is large enough so that $\delta_N \leq R_B$, we have the local estimate \eqref{eqThmVFfoldCV1SplitNu}. Indeed, for $l \in \{ 0, 1, \ldots, d-1\}$, if $B^{20C\delta} \cap \fS_l = \emptyset$ then $\cH^l ( B^{20C\delta} \cap \fS_l) = 0$ and otherwise, there exists $x \in \fS_l$ such that $B \subset B(x, 2R_B + 20C\delta) \subset B(x, (2 + 20C)R_B)$ and
recalling \eqref{hypH4} and \eqref{hypH6}, we have: $\cH^l (\fS_l \cap B(x, (2 + 20C)R_B)) \leq M R_B^l$ so that by Ahlfors regularity of $\mu$,
\[
 \sum_{l=0}^{d-1} \delta^{d-l} \cH^l (\fS_l \cap B^{20C\delta}) \leq M \sum_{l=0}^{d-1} \xi_l \delta^{d-l} R_B^l \leq M \sum_{l=0}^{d-1} \xi_l \frac{\delta^{d-l}}{ R_B^{d-l}} \mu(B) \: .
\]
Since we also have $\mu (B^\delta) \leq \mu (2B) \leq M \mu(B)$, we conclude the proof of \eqref{eqThmVFfoldCV1SplitNu} thanks to \eqref{eqThmVFfoldCV1SplitNuD}.
\end{proof}

We finally mention the simpler following $\| \cdot \|_{\xL^1(\mu)}$ convergences that hold under the same assumptions as those of Theorem~\ref{thmCvHolderSplit}.

\begin{proposition} \label{propFinalCV}
Let $0 < a,b \leq 1$.
We assume that $S$ and $\theta$ satisfy assumptions \hyperref[hypH1]{$(H_1)$} to \eqref{hypH7} i.e. $\mu = \theta \cH^d_{| S} \in \cP$. Let $D \subset \R^n$ be an open set. 
Let $N \in \N^\ast$ be large enough so that $\delta_N := N^{\displaystyle -\frac{1}{d + 2 \min (a,b)}} \leq \frac{R}{2C}$. Then
\begin{align}
\label{eqThetaNL1D}
\left.
\begin{array}{c}
\displaystyle \E \left[ \int_D | \theta_{\delta_N,N} - \theta | \: d \mu \right] \\
\displaystyle \E \left[ \int_D \| \sigma_{\delta_N,N}(x) - \Pi_{T_x S} \| \right] \: d \mu 
\end{array} \right\rbrace \leq
M \mu(D)  \delta_N^{\min(a,b)}  + M \sum_{l=0}^{d-1} \delta_N^{d-l} \cH^{l} \left( \fS_l \cap D^{ 4C\delta_N}  \right) \leq \widetilde{M} \:  \delta_N^{\min(a,b)}  \: ,
\end{align}
with $\widetilde{M} = M \left( 1 + \sum_{l=0}^{d-1} \cH^{l} \left( \fS_l   \right) \right)$.\\
In the particular case where $\fS = \emptyset$, we also have the uniform estimates: for all $x \in S$,
\begin{equation} \label{eqThetaNHolder}
 \E \left[ | \theta_{\delta_N,N}(x) - \theta(x) | \right] \leq M  \delta_N^{\min(a,b)} \quad \text{and} \quad \E \left[  \| \sigma_{\delta_N,N}(x) - \Pi_{T_x S} \| \right] \leq M  \delta_N^{\min(a,b)} \: .
\end{equation}

\end{proposition}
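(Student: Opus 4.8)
\textbf{Plan of proof for Proposition~\ref{propFinalCV}.}
The strategy is to essentially repackage the ingredients already assembled in Lemma~\ref{lemMeanLipEstimators}, Proposition~\ref{coroUniformCVHolder}, and the density-fluctuation bounds (Proposition~\ref{propDensityPointwiseCv}, Proposition~\ref{propCVSigmaN}), this time in the $\xL^1(\mu)$ topology rather than the bounded Lipschitz one, so that no Dudley-type partitioning argument is needed and the proof is genuinely routine. First I would split the open set $D$ (intersected with $S$) into the ``good'' region $D \cap S \setminus \fS^{C\delta_N}$ and the ``bad'' region $D \cap S \cap \fS^{C\delta_N}$, exactly as in the proof of \eqref{eqThetaDeltaThetaL1D} and Step~1 of Theorem~\ref{thmCvHolderSplit}. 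On the good region, for each fixed $x$ the triangle inequality
\[
 \E \left[ | \theta_{\delta_N,N}(x) - \theta(x) | \right] \leq \E \left[ | \theta_{\delta_N,N}(x) - \theta_{\delta_N}(x) | \right] + | \theta_{\delta_N}(x) - \theta(x) |
\]
together with Proposition~\ref{propDensityPointwiseCv} (giving $M (N \delta_N^d)^{-1/2}$) and \eqref{eqUniformCVThetaHolder} in Proposition~\ref{coroUniformCVHolder} (giving $M \delta_N^{\min(a,b)}$) yields a pointwise bound; integrating over $D$ against $\mu$ contributes $M \mu(D) \left( \delta_N^{\min(a,b)} + (N\delta_N^d)^{-1/2} \right)$, and the choice $\delta_N = N^{-1/(d + 2\min(a,b))}$ makes $(N \delta_N^d)^{-1/2} = \delta_N^{\min(a,b)}$, so this is $M \mu(D) \delta_N^{\min(a,b)}$. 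The same scheme applied with $\sigma_{\delta_N,N} = \sigma_{\delta_N,\delta_N,N}$, using \eqref{eqConcentrationSigmaN0} of Proposition~\ref{propCVSigmaN} and the third line of \eqref{eqUniformCVCovarianceHolder}, handles the tangent-plane term on the good region.

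On the bad region I would use the uniform boundedness of the integrands: $|\theta_{\delta_N,N}| \leq M$ is not needed since $\theta$ is bounded and one can instead bound $\E[\theta_{\delta_N,N}(x)]=\theta_{\delta_N}(x)\le M$ by \eqref{eqThetaDeltaUpperBound}; similarly $\E[\|\sigma_{\delta_N,N}(x)\|] \leq M$ by \eqref{eqInfBoundSigma} and $\|\Phi\|_\infty \leq M$. Taking expectations and using Fubini, the bad-region contribution is controlled by $M \, \cH^d(\fS^{C\delta_N} \cap D \cap S)$, which by Lemma~\ref{lemSingSet}$(i)$ (estimate \eqref{eqThickD} with $\rho = C\delta_N \leq R$, absorbing the factor $C^{d-l}$ and the $2C\delta_N \le 4C\delta_N$ enlargement into $M$) is at most $M \sum_{l=0}^{d-1} \delta_N^{d-l} \cH^l(\fS_l \cap D^{4C\delta_N})$. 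Adding the two regions gives the first inequality in \eqref{eqThetaNL1D}. The second inequality follows since $\delta_N \leq 1$ forces $\delta_N^{d-l} \leq \delta_N^{\min(a,b)}$ (because $d - l \geq 1 \geq \min(a,b)$) and $D^{4C\delta_N} \subset D^1$, $\mu(D) \leq \mu(D^1)$, then $\mu(D^1)\le 1$ and $\cH^l(\fS_l\cap D^1)\le\cH^l(\fS_l)$.

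The case $\fS = \emptyset$ is immediate and needs no integration: there is no bad region, so for every $x \in S$ the pointwise triangle-inequality bound from the good-region analysis applies verbatim, giving \eqref{eqThetaNHolder} directly from Proposition~\ref{propDensityPointwiseCv} and \eqref{eqConcentrationSigmaN0} combined with \eqref{eqUniformCVThetaHolder} and \eqref{eqUniformCVCovarianceHolder}, again using $(N\delta_N^d)^{-1/2} = \delta_N^{\min(a,b)}$ for the chosen $\delta_N$. I do not expect any genuine obstacle here: every estimate invoked has already been proved earlier in the paper, and the only mild bookkeeping points are (i) tracking that $\delta_N = N^{-1/(d+2\min(a,b))}$ is precisely the exponent balancing the stochastic rate $(N\delta_N^d)^{-1/2}$ against the bias rate $\delta_N^{\min(a,b)}$, and (ii) checking the neighbourhood enlargement constants ($C\delta_N$, $2C\delta_N$, $4C\delta_N$) line up so that $C\delta_N \leq R$ under the hypothesis $\delta_N \leq R/(2C)$, which is exactly why that hypothesis is stated.
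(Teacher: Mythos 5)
Your proof is correct and takes essentially the same route as the paper's: the paper first applies the triangle inequality to separate the stochastic fluctuation (Propositions~\ref{propDensityPointwiseCv} and~\ref{propCVSigmaN}) from the deterministic bias and then simply cites Propositions~\ref{cvnudel} and~\ref{propCvHolderW} for the latter, whereas you unpack those two propositions by redoing their good/bad-region decomposition $D = (D\setminus \fS^{C\delta_N}) \cup (D\cap\fS^{C\delta_N})$ inline via \eqref{eqUniformCVThetaHolder}, \eqref{eqUniformCVCovarianceHolder} and \eqref{eqThickD}. The ingredients, the neighbourhood-constant bookkeeping, and the balancing $(N\delta_N^d)^{-1/2}=\delta_N^{\min(a,b)}$ are identical.
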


\begin{proof}
 We apply Proposition~\ref{propDensityPointwiseCv} and Proposition~\ref{cvnudel} so that
 \begin{align*}
  \E \left[ \int_D | \theta_{\delta_N,N} - \theta | \: d \mu \right] & \leq  \int_D \E \left[ | \theta_{\delta_N,N} - \theta_{\delta_N} | \right] \: d \mu  + \int_D | \theta_{\delta_N} - \theta | \: d \mu \\
  & \leq M \mu(D) \left( \frac{1}{\sqrt{N \delta_N^d}} + \delta_N^{\min(a,b)} \right) + M  \sum_{l=0}^{d-1} \delta_N^{d-l} \: \cH^l (\fS_l \cap D^{2C\delta_N})
 \end{align*}
 and we infer \eqref{eqThetaNL1D} (first line) since we recall that with $\delta_N = N^{\displaystyle -\frac{1}{d + 2 \min (a,b)}}$, we have $\displaystyle \frac{1}{\sqrt{N \delta_N^d}} = \delta_N^{\min(a,b)}$.\\
Similarly applying Proposition~\ref{propCVSigmaN} and Proposition~\ref{propCvHolderW}, we obtain
\begin{align*}
  \E \left[ \int_D \| \sigma_{\delta_N,N}(x) - \Pi_{T_x S} \| \: d \mu \right] & \leq  \int_D \E \left[ \| \sigma_{\delta_N,N} - \sigma_{\delta_N,\delta_N} \| \right] \: d \mu  + \int_D \| \sigma_{\delta_N,\delta_N}(x) - \Pi_{T_x S} \| \: d \mu \\
  & \leq M \mu(D) \left( \frac{2}{\sqrt{N \delta_N^d}} + 2 \delta_N^{\min(a,b)} \right) + M  \sum_{l=0}^{d-1} \delta_N^{d-l} \: \cH^l (\fS_l \cap D^{4C \delta_N})
 \end{align*}
 and we conclude the proof of \eqref{eqThetaNL1D}. The proof of \eqref{eqThetaNHolder} is a straightforward application of the aforementioned Propositions~\ref{propDensityPointwiseCv},~\ref{cvnudel}~\ref{propCVSigmaN} and~\ref{propCvHolderW}
\end{proof}

\bibliographystyle{alpha}
\bibliography{bibli.bib}

\end{document}